\providecommand\@dotsep{5}
\renewcommand{\listoftodos}[1][\@todonotes@todolistname]{%
	\@starttoc{tdo}{#1}}
\newenvironment{cproof}{\begin{proof}[Proof of the claim]}{\end{proof}}
\title{Quantitative measure equivalence between amenable groups}
\author{Thiebout Delabie\footnote{supported by grant P2NEP2 181564 of the Swiss National Science Foundation}, Juhani Koivisto, François Le Maître\footnote{Research 
		partially supported by Projet ANR-17-CE40-0026 AGRUME, Projet 
		ANR-19-CE40-0008 AODynG and IdEx Université de Paris, ANR-18-IDEX-0001.}  and Romain Tessera}
\date{\today}
\newtheorem{theorem}{Theorem}[section]
\newtheorem{lemma}[theorem]{Lemma}
\newtheorem{proposition}[theorem]{Proposition}
\newtheorem{corollary}[theorem]{Corollary}
\newtheorem{thmintro}{Theorem}
\newtheorem{corintro}[thmintro]{Corollary}
\theoremstyle{definition}
\newtheorem{example}[theorem]{Example}
\newtheorem*{convention}{Convention}
\newtheorem{definition}[theorem]{Definition}
\newtheorem{remark}[theorem]{Remark}
\newtheorem{clai}[theorem]{Claim}
\newtheorem{ques}[theorem]{Question}
\newtheorem*{ack}{Acknowledgments}
\newcommand{\abs}[1]{\left\lvert #1\right\rvert}
\newcommand{\norm}[1]{\left\lVert #1\right\rVert}
\newcommand{\la}{\left\langle}
\newcommand{\ra}{\right\rangle}
\newcommand{\vol}{\operatorname{Vol}}
\renewcommand{\leq}{\leqslant}
\renewcommand{\geq}{\geqslant}
\renewcommand{\le}{\leqslant}
\renewcommand{\ge}{\geqslant}
\renewcommand{\epsilon}{\varepsilon}
\newcommand{\asdim}{\operatorname{asdim}}
\newcommand{\BS}{\operatorname{BS}}
\newcommand{\supp}{\operatorname{supp}}
\newcommand{\PSL}{\operatorname{PSL}}
\newcommand{\N}{\mathbb{N}}
\newcommand{\Z}{\mathbb{Z}}
\newcommand{\Q}{\mathbb{Q}}
\newcommand{\R}{\mathbb{R}}
\newcommand{\Heis}{\mathbb{H}}
\newcommand{\eps}{\varepsilon}
\newcommand{\LL}{\mathrm{L}}
\newcommand{\act}{\curvearrowright}
\newcommand{\inv}{^{-1}}
\newcommand{\dcof}{\rho}
\newcommand{\actom}{*}
\newcommand{\actx}{\cdot}
\newcommand{\AlignRight}[1]{\ifmeasuring@#1\else\omit\hfill$\displaystyle#1$\fi\ignorespaces}
\begin{document}
	
	\maketitle

\begin{abstract}	We initiate a quantitative study of measure equivalence (and orbit equivalence) between finitely 
	generated groups, which extends the classical setting of $\LL^p$ measure equivalence. In this paper, our main focus will be on amenable groups, for which we prove both rigidity and flexibility results. 
	
	On the rigidity side, we prove a general monotonicity property satisfied by the isoperimetric profile, which implies in particular its invariance under $\LL^1$ measure equivalence. This yields explicit  ``lower bounds" on how integrable a measure coupling between two amenable groups can be. This result also has an unexpected application to geometric group theory:  the isoperimetric profile turns out to be monotonous under coarse embedding between amenable groups. This has various applications, among which the existence of an uncountable family of $3$-solvable groups which pairwise do not coarsely embed into one another. 
	
	On the flexibility side, we construct explicit orbit equivalences between amenable groups with prescribed integrability conditions. Our main tool is a new notion of Følner tiling sequences. We show in a number of instances that the bounds derived from the isoperimetric profile are sharp up to a logarithmic factor. We also deduce from this study that two important quasi-isometry invariants are {\it not} preserved under $\LL^1$ orbit equivalence: the asymptotic dimension and finite presentability.
\end{abstract}
	\tableofcontents

\section{Introduction}

Gromov introduced measure equivalence between countable groups as a measured analogue of quasi-isometry. A classical instance of a pair of measure equivalent groups is given by lattices in a common locally compact group. Another source of examples is given by orbit equivalent groups. Recall that two groups $\Gamma$ and $\Lambda$ are orbit-equivalent if they admit free measure-preserving actions on a same standard probability space $(X,\mu)$ which share the same orbits, i.e.\ such that for almost every $x\in X$, $\Gamma\cdot x=\Lambda\cdot x$.

The notion of measure equivalence has been extensively studied over the past 20 years, and we refer the reader to \cite[\S 2]{gaboriauExamplesGroupsThat2005} for an overview of its main properties as well as its tight connections with invariants such as cost or $\ell^2$ Betti numbers.  Various rigidity phenomenons have also been uncovered. 
A famous example is Furman's superrigidity results for lattices in higher rank semi-simple Lie groups \cite{furmanGromovMeasureEquivalence1999}, which implies for instance that any countable group which is measure-equivalent to a lattice in $\PSL_3(\R)$ is commensurable up to finite kernel to another lattice in $\PSL_3(\R)$. Another example is provided by Kida's work on mapping class groups of surfaces: he showed that most surfaces can be reconstructed from the measure equivalence class of their mapping class group \cite{kidaMappingClassGroup2008}, and that every group which is measure equivalent to a mapping class group must actually be commensurable up to finite kernel to it \cite{kidaMeasureEquivalenceRigidity2010}.

In the opposite direction of flexibility, a  celebrated result of Ornstein and Weiss  implies that all infinite countable amenable groups are orbit equivalent and hence measure equivalent \cite{ornsteinErgodicTheoryAmenable1980}.
So most coarse geometric invariants (such as volume growth) are not preserved under orbit equivalence. 
Also, it is known that the class of groups measure equivalent to lattices in $\PSL_2(\R)$ is very diverse and contains groups that are not virtually isomorphic to lattices of the latter (for instance, all free products of infinite amenable groups belong to this class, see \cite[{$\mathbf{P}_{\mathrm{ME}}6$}]{gaboriauExamplesGroupsThat2005}). 
But as we will now see, measure equivalence admits natural refinements which capture meaningful coarse geometric invariants.

Assume for simplicity that we are given an \emph{orbit equivalence coupling} of two finitely generated groups $\Gamma=\la S_\Gamma\ra$ and $\Lambda=\la S_\Lambda\ra$ over a probability space $(X,\mu)$, i.e. two measure-preserving free actions of $\Gamma$ and $\Lambda$ on $(X,\mu)$ which share the same orbits.  Then we can equip the space $(X,\mu)$ with the Schreier graph metrics $d_{S_\Gamma}$ and $d_{S_\Lambda}$, and consider for each $\gamma\in \Gamma$ and each $\lambda\in\Lambda$ the following \emph{distance maps}
$$x\mapsto d_{S_\Lambda}(x,\gamma\cdot x)\text{ and } x\mapsto d_{S_\Gamma}(x,\lambda\cdot x).$$
The fact that the two actions share the same orbits means that these functions do not take the value $+\infty$, in other words they belong to the space $\LL^0(X,\mu,\R)$ of measurable maps
$X\to \R$.
We then say that $\Gamma$ and $\Lambda$ are $\LL^p$ orbit equivalent when all these functions are in $\LL^p(X,\mu;\R)$. A similar definition can be given for measure equivalence, yielding the notion of $\LL^p$ measure equivalence (see Section \ref{sec: integrability conditions}).

In the past ten years, $\LL^1$ measure equivalence has been intensely investigated. In the context of non-amenable groups, Bader, Furman and Sauer showed that any group that is $\LL^1$ measure equivalent to a lattice in SO$(n,1)$ for a given $n\geq 2$ must be virtually isomorphic to another such lattice \cite{baderIntegrableMeasureEquivalence2013}. In particular, there is a $\LL^1$-measure equivalence rigidity phenomenon for lattices in $\PSL_2(\R)$, as opposed to the pure measure equivalence case.   
Somewhat more surprising is a result of Austin in the context of amenable groups: he showed that $\LL^1$ measure equivalent virtually nilpotent groups have bi-Lipschitz asymptotic cones \cite{austinIntegrableMeasureEquivalence2016}. Another important result is due to Bowen, who showed in the appendix of Austin's aforementioned paper that volume growth is invariant under $\LL^1$ measure equivalence.\\  

This paper is in the direct continuation of such results, but we also aim at a deeper understanding by going in two new directions: finer integrability notions and asymmetric versions of measure equivalence.  For this introduction, let us however only give the definitions in the context of orbit equivalence (see Sec. \ref{sec: integrability conditions} for the measure equivalence versions and its asymmetric counterparts). 

Assume again that we are given two orbit equivalent actions of two finitely generated groups $\Gamma=\la S_\Gamma\ra$ and $\Lambda=\la S_\Lambda\ra$. 
Given any two unbounded increasing positive function $\varphi,\psi\colon (0,\infty)\to (0,\infty)$, we say that we have {\it a $(\varphi,\psi)$-integrable orbit equivalence coupling from $\Gamma$ to $\Lambda$} if for each $\gamma\in\Gamma$ and each $\lambda\in\Lambda$, there are constants $c_\gamma,c_\lambda>0$ such that the associated distance functions satisfy the following conditions:
\[\int_{X}\varphi\left(\frac{d_{S_\Lambda}(x,\gamma\cdot x)}{c_\gamma}\right)d\mu(x)<\infty \text{ and }\int_{X}\psi\left(\frac{d_{S_\Gamma}(x,\lambda\cdot x)}{c_\lambda}\right)d\mu(x)<\infty.\]

Two  remarks are in order: since $X$ has finite measure, the $(\varphi,\psi)$-integrability of the distance functions is only sensitive to the speed at which $\varphi$ and $\psi$ tend to infinity. The constants $c_\gamma,c_\lambda>0$ are partly motivated by the fact that  we want  our notion of $(\varphi,\psi)$-integrability to be independent of the choice of generating subset.  We address these technical points  in Section \ref{sec: integrability conditions}.

For consistency with the literature, we will often abbreviate integrability conditions as follows: for $p\in (0,\infty)$ we write $\LL^p$ instead of $(t\mapsto t^p)$-integrable. By extension, $\LL^{\infty}$ means that the distance maps are essentially bounded. 
On the opposite, if no assumption is made on the distance maps we shall write $\LL^0$. 
For instance, an $(\LL^\infty,\LL^0)$  orbit coupling  from $\Gamma$ to $\Lambda$ is an orbit equivalence coupling such that for all $\gamma\in \Gamma$, the function $x\mapsto d_{S_\Lambda}(x,\gamma\cdot x)$ is essentially bounded on $X$.

Saying that two groups are $\LL^p$ orbit equivalent for some $p\geq 1$ means in our terminology that there exists an $(\LL^p,\LL^p)$ orbit equivalence coupling between them. Note that for $p<1$, what one would like to call $\LL^p$ orbit equivalence fails to be a transitive relation, so we will refrain from using such a terminology. Nevertheless, such a notion actually defines a very natural pseudo-distance on the space of all finitely generated groups (see Sec. \ref{secIntro:Space}), and we will see for instance that the Heisenberg group and $\Z^4$ are at distance zero, while the distance between $\Z^n$ and $\Z^m$ is equal to $\abs{\log n-\log m}$.


The rest of this introduction contains a summarized description of our main results, and is organized in three subsections.  Section \ref{secIntro:isop} contains a general rigidity result related to the isoperimetric profile, while Section \ref{secIntro:Constructions} has a flexibility flavor as it deals with constructions of concrete orbit equivalences. 
Finally, in Section \ref{secIntro:Space} we briefly discuss a point of view that emerges from the notion of $(\varphi,\psi)$-integrability (this can be seen as an introduction to the results of Section \ref{sec: prelim}).

\subsection{Monotonicity of the isoperimetric profile (and applications)}\label{secIntro:isop}

\paragraph{The isoperimetric profile.}
Bowen's aforementioned result allows to distinguish groups with different growth functions up to  $\LL^1$ measure equivalence. However this fails to address the case of solvable groups that are not virtually nilpotent, as these have exponential growth. By contrast the 
isoperimetric profile offers a fine invariant of amenable groups with exponential growth. Its asymptotic behavior has notably been computed for a wide class of solvable groups.  Let us start recalling its definition.

Let $\Gamma$ be a finitely generated group, and let 
$S_\Gamma$ be a finite generating subset. Given a function $f \colon \Gamma 
\rightarrow \mathbb{R}$ and $p>0$ we define the $\LL^p$-norm of its left 
gradient by the equation 
\[\norm{\nabla^l_{S_\Gamma} f}_p^p=\sum_{g\in \Gamma, s\in S}|f(sg)-f(g)|^p.\]
Given a function $f\colon \Gamma\to\R$, its support is the set 
$\supp f=\{g\in\Gamma\colon f(g)\neq 0\}$.
Following \cite{coulhonRandomWalksGeometry2000,nowakExactnessIsoperimetricProfiles2007,tesseraLargeScaleSobolev2008a}, we define for every $0< p\leq  \infty$, the $\ell^p$-isoperimetric profile\footnote{Note that in \cite{brieusselSpeedRandomWalks2021}, the isoperimetric profile is defined as $I_{p,\Gamma}=\frac{1}{j_{p,\Gamma}}$.} of $\Gamma$ as
the non-decreasing function
\[j_{p,\Gamma}(n) = \sup_{|\supp f|\leq n} \frac{\norm f_p}{\norm{\nabla^l_{S_\Gamma} 
		f}_p}.\]
Of course this quantity depends on a choice of generating subset, but we will be interested in the asymptotic behavior of $j_{p,\Gamma}$, which does not.

Given two monotonous real-valued functions $f$ and $g$ we say that $f$ is \emph{asymptotically less} than $g$ and write $f\preccurlyeq g$ if there exists a positive constant $C$ such that $f(n)=O(g(Cn))$ as $n\to \infty$. We say that $f$ and $g$ are \emph{asymptotically equivalent} and write $f\approx g$ if $f\preccurlyeq g$ and $g\preccurlyeq f$. The asymptotic behavior of $f$ is its equivalence class modulo $\approx$.  

Recall that for $p=1$, the $\ell^1$ isoperimetric profile 
is simply called the isoperimetric profile because of the following geometric interpretation \cite{coulhonRandomWalksGeometry2000}:
\[j_{1,\Gamma}(n)\approx \sup_{|A|\leq n} \frac{|A|}{|\partial A|},\]
where $\partial A=S_\Gamma A\bigtriangleup A$.
Certain authors prefer to work with the  F\o lner 
function, defined for all $k\geq 1$ as follows
\[\text{F\o l}_\Gamma(k)= \inf\left\{|A|\colon  \frac{|\partial A|}{|A|}\leq 
1/k\right\}.\]
Note that the isoperimetric profile and the F\o lner 
function are generalized inverses of one another.

For $p=2$, the asymptotic behavior of $j_{2,\Gamma}$ is intimately related to the probability of return of the simple random walks on the Cayley graph $(\Gamma,S_{\Gamma})$ as described in \cite{coulhonIsoperimetriePourGroupes1993}.

Note that a group is amenable if and only if its isoperimetric profile is unbounded. So the isoperimetric profile can be interpreted as a measurement of amenability: the faster it tends to infinity, the ``more amenable'' the group is.
This is well illustrated by the following examples. For all $p\geq 1$ we have\footnote{The notion of wreath product of groups (denoted by $G\wr H$) is recalled in Section \ref{sec: Zlamp}.
}: 

\begin{itemize}
	\item $j_{p,\Gamma}(n)\approx n^{1/d}$ for a group of polynomial growth of degree $d$ (e.g.\ for $\Gamma=\Z^d$) (see  \cite{coulhonRandomWalksGeometry2000});
	\item $j_{p,\Gamma}(n)\preccurlyeq \log n$ for all amenable groups with exponential growth \cite{coulhonIsoperimetriePourGroupes1993};
	\item $j_{p,\Gamma}(n)\approx \log n$ for a wide class of solvable groups with exponential growth including those that are polycyclic, or the so-called ``lamplighter'' groups, i.e.\ group of the form $F\wr \Z$, where $F$ is a finite non-trivial group \cite{coulhonGeometricApproachOndiagonal2001,pittetFolnerSequencesPolycyclic1995,pittetIsoperimetricProfileHomogeneous2000,tesseraLargeScaleSobolev2008a};
	\item $j_{1,\Gamma}(n)\approx (\log n)^{1/d}$ for a group of the form $F\wr \Sigma$, where $F$ is a non-trivial finite group and $\Sigma$ has polynomial growth of degree $d$ \cite{erschlerIsoperimetricProfilesFinitely2003}.
\end{itemize}

\paragraph{A general monotonicity result.}

Our first main result is the following theorem. We refer to Definition \ref{def:phi-couplings} for the notion of $(\varphi,\psi)$-integrable measure equivalence coupling. 

\begin{thmintro}\label{thmintroIsop}
	Assume that there exists $(\varphi,\LL^0)$ measure equivalence coupling from a finitely generated group $\Gamma$ to a finitely generated group $\Lambda$. Then 
	\begin{itemize}
		\item if $\varphi(t)=t^p$ for some $p\geq 1$, then \[j_{p,\Gamma}\succcurlyeq j_{p,\Lambda};\]
		\item if $\varphi$ and $t\mapsto t/\varphi(t)$ are increasing then
		\[
		j_{1,\Gamma}\succcurlyeq \varphi\circ j_{1,\Lambda}.
		\]
	\end{itemize}
\end{thmintro}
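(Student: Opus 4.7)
The plan is to transfer a near-extremal test function for $j_{p,\Lambda}(n)$ from $\Lambda$ to $\Gamma$ through the coupling. After reducing the measure equivalence coupling to an associated (possibly stable) orbit equivalence coupling---a standard operation that preserves the integrability condition up to multiplicative constants---we may assume that $\Gamma$ and $\Lambda$ both act freely and in a measure-preserving manner on a probability space $(X,\mu)$ with the same orbits. Let $\sigma\colon\Gamma\times X\to\Lambda$ denote the associated cocycle, defined by $\gamma\cdot x=\sigma(\gamma,x)\cdot x$; since the actions are free and share orbits, one checks that $|\sigma(\gamma,x)|_{S_\Lambda}=d_{S_\Lambda}(x,\gamma x)$, so the hypothesis reads $\int_X \varphi(|\sigma(s,x)|_{S_\Lambda})\,d\mu(x)<\infty$ for every $s\in S_\Gamma$.

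The key construction is elementary: given a finitely supported $f\colon\Lambda\to\R$, and for each $x\in X$, set
\[F_x(\gamma)=f\bigl(\sigma(\gamma,x)\bigr).\]
Because $\sigma(\cdot,x)$ is a bijection $\Gamma\to\Lambda$, one has $\|F_x\|_p=\|f\|_p$ and $|\supp F_x|=|\supp f|$ for \emph{every} $x\in X$. This is the main advantage over a naive averaging: no a posteriori control of the support is required. It remains to bound the average gradient $\int_X\|\nabla F_x\|_p^p\,d\mu(x)$.

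Using the cocycle identity $\sigma(s\gamma,x)=\sigma(s,\gamma x)\sigma(\gamma,x)$, the change of variables $\lambda=\sigma(\gamma,x)$ (a bijection $\Gamma\to\Lambda$), and the measure-preserving substitution $y=\lambda x$, the average gradient reduces to
\[\sum_{s\in S_\Gamma}\int_X\sum_{\lambda\in\Lambda}\bigl|f(\sigma(s,y)\lambda)-f(\lambda)\bigr|^p\,d\mu(y).\]
For the first item ($\varphi(t)=t^p$), I would expand $\sigma(s,y)$ along a geodesic word in $S_\Lambda$ and apply the power-mean inequality $(a_1+\cdots+a_k)^p\le k^{p-1}\sum a_i^p$ to get the pointwise estimate $\sum_\lambda|f(\sigma(s,y)\lambda)-f(\lambda)|^p\le|\sigma(s,y)|^p\|\nabla f\|_p^p$; integrating and summing over $s\in S_\Gamma$ yields $\int_X\|\nabla F_x\|_p^p\,d\mu\le C\|\nabla f\|_p^p$ by the $\LL^p$-integrability of $\sigma$. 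For the second item, I would specialize to $f=\mathbf{1}_A$ with $|A|=n$ near-optimal. The inner sum then equals $|\{\lambda\in A\colon\sigma(s,y)\lambda\notin A\}|\le\min(|A|,|\sigma(s,y)|\cdot|\partial A|)$; splitting at the threshold $j=j_{1,\Lambda}(n)$, Markov's inequality bounds the tail $\Pr[|\sigma(s,y)|>j]\le C/\varphi(j)$, while the monotonicity of $t\mapsto t/\varphi(t)$ yields the truncated first-moment estimate $\mathbb{E}[|\sigma(s,y)|\,;\,|\sigma|\le j]\le(j/\varphi(j))\int\varphi(|\sigma|)\,d\mu$. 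These combine to give $\int_X\|\nabla F_x\|_1\,d\mu\lesssim n/\varphi(j_{1,\Lambda}(n))$.

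The argument concludes via the trivial max-of-ratios inequality: since $\|F_x\|_p^p=\|f\|_p^p$ is independent of $x$, some $x\in X$ satisfies $\|\nabla F_x\|_p^p\le(1/\mu(X))\int\|\nabla F_x\|_p^p\,d\mu$; this $x$ yields a test function on $\Gamma$ of support exactly $n$ and isoperimetric ratio at least a constant multiple of $\|f\|_p/\|\nabla f\|_p$ (respectively $\varphi(j_{1,\Lambda}(n))$), which witnesses the desired lower bound on $j_{p,\Gamma}(n)$. I expect the main technical obstacle to be the truncated first-moment estimate of the second case: it is precisely there that both monotonicity hypotheses on $\varphi$ are essential, and it is what drives the exact form $\varphi\circ j_{1,\Lambda}$ appearing in the conclusion.
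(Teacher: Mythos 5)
Your construction $F_x(\gamma)=f(\sigma(\gamma,x))$ relies on $\sigma(\cdot,x)$ being a bijection from $\Gamma$ onto $\Lambda$, which is exactly what gives you $\norm{F_x}_p=\norm f_p$ and $\abs{\supp F_x}=\abs{\supp f}$ for every $x$; as you say, this is the main advantage over a naive averaging. But bijectivity only holds for a genuine orbit equivalence coupling. For a measure equivalence coupling $(\Omega,X_\Gamma,X_\Lambda,\mu)$, the cocycle $\alpha\colon\Gamma\times X_\Lambda\to\Lambda$, defined by $\alpha(\gamma,x)\ast\gamma\ast x\in X_\Lambda$, need be neither injective nor surjective as a map $\gamma\mapsto\alpha(\gamma,x)$: its fiber over $\lambda\in\Lambda$ is $\{g\in\Gamma:g\ast(\lambda\ast x)\in X_\Lambda\}$, whose size is the number of times the $\Gamma$-orbit of $\lambda\ast x$ meets the $\Lambda$-fundamental domain, and there is no a priori bound on this. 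Consequently $\norm{F_x}_p^p=\sum_\lambda\abs{\alpha(\cdot,x)^{-1}(\lambda)}\abs{f(\lambda)}^p$ and $\abs{\supp F_x}$ can be arbitrarily larger than $\norm f_p^p$ and $\abs{\supp f}$, and your argument breaks at precisely the step you advertise as its principal simplification. This is the multiplicity problem described in Remark \ref{rem:explanation}.

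Nor is the reduction you invoke in your opening paragraph available. A $(\varphi,\LL^0)$ measure equivalence coupling yields in general only a \emph{stable} orbit equivalence (as you concede parenthetically), for which $\sigma(\cdot,x)$ is at best a partially defined injection and your formulae lose their meaning. Even when the coupling index is $1$ and one can arrange a common fundamental domain $X'$ for both actions (so that Proposition \ref{prop:OrbitQuotient} would give an orbit equivalence coupling), the new $\Lambda$-fundamental domain $X'$ need not be $\varphi$-equivalent to $X_\Lambda$ in the sense of Definition \ref{def: Lp metric for fundamental domains}; the $\LL^0$ hypothesis on the $\Lambda$-to-$\Gamma$ cocycle gives no control over the distance between them, so the $\varphi$-integrability of the $\Gamma$-to-$\Lambda$ cocycle does not survive the substitution. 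The paper avoids both problems by working directly with the measure quotient coupling: $\tilde f$ is induced on $\Omega$, its $\Gamma$-gradient is controlled via Lemma \ref{lem: pushing through different points}, and---this is the hard part---a lower bound on $\norm{f^x}_p$ over a positive-measure subset of $X_\Gamma$ is extracted not for free as in your setup, but via the averaged function $f_2$ together with the pigeonhole argument of Lemma \ref{lem:LowerBoundnorm}. Your plan is valid, and indeed somewhat cleaner than the paper's, in the orbit equivalence setting (equivalently, the at most $1$-to-one sub-quotient case of Theorem \ref{thm: monotonicity of isop under Lp}(ii)); but the stated theorem concerns measure equivalence, and that requires the more delicate argument of Section \ref{sec:L1 measure quotient iso profile down}.
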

In particular, the $\ell^p$-isoperimetric profile is stable under $\LL^p$ 
measure equivalence for all $p\geq 1$. 
As a concrete application we deduce for instance that $\Z\wr \Z$ is not $\LL^1$ measure equivalence to $\Z/2\Z \wr \Z$, nor to any polycyclic group. Indeed, for these groups, the isoperimetric profile is $\succcurlyeq\log n$, while for $\Z\wr \Z$ it is $\approx \log n/\log \log n$ \cite{erschlerIsoperimetricProfilesFinitely2003}.

A special case of interest is when $\Lambda=\Z$: since $j_{1,\Lambda}\approx n$, the second statement of Theorem~\ref{thmintroIsop} says in this case that $\varphi \preccurlyeq j_{1,\Gamma}$. This triggers the following question asking for some kind of converse.

\begin{ques}\label{qu: optimal coupling?}
	Given an amenable finitely generated group $\Gamma$,
	does there exist a $(j_{1,\Gamma},\LL^0)$ 
	orbit equivalence coupling from $\Gamma$ to $\Z$?
\end{ques}
We shall see in the next 
subsection that this holds up to a logarithmic error in various examples.
Let us also mention that since the  F\o lner function and the $\ell^1$-isoperimetric profile are generalized inverses of each other,  Theorem \ref{thmintroIsop} can be reformulated as follows for $p=1$.
\begin{corintro}\label{corintroFoln}
	Assume that there exists $(\varphi,\LL^0)$ measure equivalence coupling from a finitely generated group $\Gamma$ to a finitely generated group $\Lambda$. Then 
	\begin{itemize}
		\item if $\varphi(t)=t$, then \[\text{F\o l}_\Gamma\preccurlyeq \text{F\o l}_\Lambda;\]
		\item if $\varphi$ and $t\mapsto t/\varphi(t)$ are increasing then
		\[ \text{F\o l}_\Gamma \circ \varphi\preccurlyeq \text{F\o l}_\Lambda .\]
	\end{itemize}
\end{corintro}

\begin{remark} In particular we get that if $\Gamma$ and $\Lambda$ are $\LL^1$ measure equivalent, then $\text{F\o l}_\Lambda\approx  \text{F\o l}_\Gamma$.
	This strengthens the following unpublished result obtained by Bowen in 2013: 
	assuming 
	$\Gamma$ and $\Lambda$ are $\LL^1$ orbit equivalent, he proved that there exists $r>1$ such 
	that $\text{F\o l}_\Gamma(k)\leq \text{F\o l}_\Lambda(r^k)$. 
\end{remark}

\paragraph{Application to regular maps.}
Theorem \ref{thmintroIsop} turns out to be a special case of a more general 
monotonicity property of the $\ell^p$-isoperimetric profile (Theorem \ref{thm: 
	monotonicity of isop under Lp} and Theorem \ref{thm: monotonicity of isop under phi}) 
which encompasses its well-known monotonicity under subgroups and quotients \cite[Thm.~1]{tesseraLargeScaleSobolev2008a}. 
Writing down this statement motivated us to introduce and study natural 
``measured generalizations" of the notions of subgroup,  quotient, and subgroup of 
a quotient (see Sec.~\ref{sec: asymmetric couplings}).

Let us mention here a striking application of this 
monotonicity result.  Recall that a regular map $f\colon \Gamma\to \Lambda$ between two finitely generated groups  is 
a Lipschitz map such that $\sup_{\lambda\in \Lambda}|f^{-1}(\{\lambda\})|<\infty$. 
Particular cases of  regular maps are Lipchitz injective maps and 
coarse embeddings. We recall that a coarse embedding is a map $f\colon \Gamma\to \Lambda$ such that there exists two proper non-decreasing functions $\rho_-,\rho_+\colon[0,\infty)\to [0,\infty)$ satisfying \[\rho_-(d_\Gamma(\gamma,\gamma'))\leq d_\Lambda(f(\gamma),f(\gamma'))\leq \rho_+(d_\Gamma(\gamma,\gamma')),\]
for all $\gamma,\gamma'\in \Gamma$. Note that since the distance on $\Lambda$ is defined as the Cayley graph distance associated to some generating set, we get that such a map is actually 
$\rho_+(1)$-Lipschitz. Moreover, for all $\lambda\in \Lambda$, $f^{-1}(\{\lambda\})$ is contained a ball of radius $r$, for any $r$ such that $\rho_-(r)>0$:  hence a coarse embedding is duly a regular map.

A trick essentially due to Shalom allows us to interpret the existence of a regular map from an amenable group $\Gamma$ to a group $\Lambda$ as ``$\Gamma$ being an  $\LL^{\infty}$ measure subgroup of $\Lambda$'' (see Sec.~\ref{sec: integrability conditions}). This enable us to deduce from Theorem~\ref{thm: monotonicity of isop under Lp}  the following rather 
unexpected fact.

\begin{thmintro}[see Cor. \ref{cor:iso profile monotonous reg 
		emb}]\label{thmintroIsopregular}
	For every $1\leq p\leq \infty$, the $\ell^p$-isoperimetric profile is monotonous under regular maps between amenable groups. 
\end{thmintro}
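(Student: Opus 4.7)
The strategy is to combine two ingredients already developed in the paper into a one-line argument. The first is Shalom's trick (\S\ref{sec: integrability conditions}), which converts a regular map $f\colon \Gamma\to\Lambda$ with $\Gamma$ amenable into a measured coupling realizing $\Gamma$ as an $\LL^\infty$ measure subgroup of $\Lambda$ in the asymmetric sense introduced in \S\ref{sec: asymmetric couplings}. The second is the general monotonicity theorem for $\ell^p$-isoperimetric profiles (Theorem~\ref{thm: monotonicity of isop under Lp}), which, specialised to asymmetric subgroup couplings, yields the desired inequality between profiles.

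Concretely, once Shalom's trick is invoked, one obtains a standard probability space $(\Omega,\mu)$ carrying commuting measure-preserving actions of $\Gamma$ and $\Lambda$ with the fundamental-domain structure required by the measure-subgroup relation. The key quantitative input is that the displacement map $x\mapsto d_{S_\Lambda}(x,\gamma\cdot x)$ is essentially bounded by $L\abs{\gamma}_{S_\Gamma}$, where $L$ is a Lipschitz constant of $f$; this is precisely the $\LL^\infty$ integrability one needs. Amenability of $\Gamma$ is used to produce the invariant probability measure on $\Omega$ (for instance by averaging counting measures along a F\o lner sequence of $\Gamma$), while the uniform bound on the fibers of $f$ makes the coupling proper enough to encode the subgroup relation.

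Plugging this coupling into Theorem~\ref{thm: monotonicity of isop under Lp} with $\varphi(t)=t^p$ gives $j_{p,\Gamma}\succcurlyeq j_{p,\Lambda}$ for every finite $p\ge 1$, since $\LL^\infty$ integrability trivially implies $\LL^p$ integrability. The endpoint $p=\infty$ either falls under the same statement or is obtained by letting $p\to\infty$ and tracking the multiplicative constants, which remain uniform because the cocycle is bounded.

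The genuine work sits entirely upstream of this deduction: formulating the asymmetric measure-subgroup framework of \S\ref{sec: asymmetric couplings}, proving the asymmetric monotonicity statement in Theorem~\ref{thm: monotonicity of isop under Lp}, and verifying that Shalom's construction really supplies an $\LL^\infty$ (and not merely $\LL^p$ for all finite $p$) version of that framework. Once these three pieces are in place, \cref{cor:iso profile monotonous reg emb} is a formally immediate consequence, and the potentially surprising monotonicity of $j_{p,\cdot}$ under regular maps is revealed as a clean corollary of the measure-equivalence machinery rather than of any further geometric argument.
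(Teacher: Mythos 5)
Your proposal matches the paper's route almost exactly: regular map plus amenability of $\Gamma$ yields (Shalom's trick, \cref{thm:regular/Linfty}) an $\LL^\infty$ measure subgroup coupling from $\Gamma$ to $\Lambda$, and this is fed into \cref{thm: monotonicity of isop under Lp}. One point should be made sharper, however. A measure subgroup coupling is \emph{not} a measure quotient, so item (i) of \cref{thm: monotonicity of isop under Lp} does not apply; you must invoke item (ii), which requires the coupling to be at most $m$-to-one. This is exactly what the uniform bound on the fibers of $f$ supplies, and it is the reason the paper phrases \cref{thm:regular/Linfty} as producing an \emph{at most $m$-to-one} $\LL^\infty$ measure subgroup. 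Your phrase ``proper enough to encode the subgroup relation'' gestures at this, but the $m$-to-one hypothesis is flagged in the paper as a genuine (possibly unavoidable) restriction on the monotonicity theorem, so it deserves to be named explicitly rather than folded into ``proper enough.'' Apart from this, and a small slip of notation (you write $\varphi(t)=t^p$, but \cref{thm: monotonicity of isop under Lp} is already phrased for $\LL^p$ and $\varphi$ belongs to \cref{thm: monotonicity of isop under phi}), the deduction is the paper's own.
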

The assumption that the groups are amenable is not an artefact of the proof: the statement becomes false in general if the embedded group is not amenable.  For instance the group $\Z/2\Z\wr \Z$ admits a bi-Lipschitz embedded 4-regular tree (see for instance \cite{Woess-LAMPLIGHTERS_HARMONIC_FUNCTIONS}). Since the 4-regular tree can be seen as the standard Cayley graph of the free group  $F_2$ on two generators, we deduce that $F_2$ bi-Lipschitz embeds into $\Z/2\Z\wr \Z$. But $F_2$ is non-amenable and therefore its isoperimetric profile is bounded, while $\Z/2\Z\wr \Z$ is obviously amenable, so its isoperimetric profile tends to infinity.

Only very few geometric invariants are known to be monotonous under coarse embedding: these are the volume growth, the asymptotic dimension, the separation profile \cite{benjaminiSeparationProfileInfinite2012}, and more recently the Poincaré profiles \cite{humePoincareProfilesGroups2020}. All of these turn out to be monotonous under regular map as well (see \cite{benjaminiSeparationProfileInfinite2012} for the case of asymptotic dimension). For solvable groups of exponential growth, the asymptotic dimension is generally infinite, and the Poincaré profiles have only been computed in very specific examples.   By contrast, our theorem provides us with a powerful obstruction. As a concrete application, it prevents the existence of a regular map from  $\Z\wr \Z^a$ to $\Z\wr \Z^b$ as soon as $a>b$ (note that these groups have infinite asymptotic dimension). Combining it with a construction of Erschler and Zheng \cite[Cor. 3.3]{erschlerIsoperimetricInequalitiesShapes2017} (see also \cite[Thm. 1.1]{brieusselSpeedRandomWalks2021}) we obtain the following result.

\begin{thmintro}[see Cor. \ref{cor:3solvable}]\label{thm:continuumsolvable}
	There exists an uncountable family of 3-step solvable groups $\Gamma_i$ that do not pairwise regularly embed into one another, and that are pairwise non $\LL^1$ measure equivalent. Moreover, one can assume that these groups all have asymptotic dimension one.
\end{thmintro}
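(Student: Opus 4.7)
The plan is to combine the monotonicity theorems \cref{thmintroIsop} and \cref{thmintroIsopregular} with the flexibility offered by Erschler--Zheng's construction of 3-step solvable groups with prescribed isoperimetric profile. The overall strategy is to produce an uncountable antichain of such groups with respect to the $\preccurlyeq$-order on the $\ell^1$-isoperimetric profile, and then read off both non-regular-embedding and non-$\LL^1$-measure-equivalence statements from the two monotonicity results.

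For the construction, I would invoke \cite[Cor. 3.3]{erschlerIsoperimetricInequalitiesShapes2017} (or the closely related \cite[Thm. 1.1]{brieusselSpeedRandomWalks2021}), which, given a suitable increasing integer sequence $\sigma=(n_k)$, yields a 3-step solvable group $\Gamma_\sigma$ with a finely prescribed $\ell^1$-isoperimetric profile. These groups are diagonal products of lamplighter-type groups built over $\Z$; in particular they can be arranged to have asymptotic dimension one. The key combinatorial point is then to engineer the parameters so as to obtain an uncountable family $\{\Gamma_{\sigma_\alpha}\}_{\alpha\in I}$ whose profiles are pairwise $\preccurlyeq$-incomparable. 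A concrete recipe is to let the profile $j_{1,\Gamma_{\sigma_\alpha}}$ oscillate between two distinct asymptotic regimes (say $\log n$ and $\log n / h(\log n)$ for some unbounded slowly-varying $h$) on dyadic scales indexed by the ``digits'' of $\alpha\in\{0,1\}^{\N}$; choosing $I$ to be an uncountable antichain of infinite binary sequences then guarantees that for $\alpha\neq\beta$ the profiles of $\Gamma_{\sigma_\alpha}$ and $\Gamma_{\sigma_\beta}$ are neither $\preccurlyeq$-comparable.

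Once the antichain has been secured, the two conclusions are formal. Any regular map $\Gamma_{\sigma_\alpha}\to\Gamma_{\sigma_\beta}$ would, by \cref{thmintroIsopregular}, force $j_{1,\Gamma_{\sigma_\alpha}}\succcurlyeq j_{1,\Gamma_{\sigma_\beta}}$, contradicting incomparability; applying the same theorem with the roles swapped rules out a regular map in the reverse direction. Similarly, an $\LL^1$ measure equivalence coupling between $\Gamma_{\sigma_\alpha}$ and $\Gamma_{\sigma_\beta}$ would, by two applications of \cref{thmintroIsop} with $\varphi(t)=t$, yield $j_{1,\Gamma_{\sigma_\alpha}}\approx j_{1,\Gamma_{\sigma_\beta}}$, again a contradiction.

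The main obstacle is the antichain step: varying a continuous parameter typically produces a chain of profiles rather than an antichain, so one has to arrange the defining sequences $\sigma_\alpha$ so that the oscillations of the profiles occur at different scales and in incompatible directions. This is precisely where the scale-by-scale control afforded by the diagonal product construction of Erschler--Zheng is needed, and where one must also verify that the resulting diagonal products retain asymptotic dimension one.
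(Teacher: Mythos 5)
Your strategy matches the paper's: invoke the Erschler--Zheng construction to produce $3$-step solvable groups of the form $N\rtimes\Z$ with prescribed Følner function $\exp(\tau(n))$, then rule out regular embeddings and $\LL^1$-couplings via the monotonicity results (\cref{cor:iso profile monotonous reg emb} and \cref{thm: monotonicity of isop under Lp}). The gap is in the antichain step, which you flag as ``the main obstacle'' but do not resolve, and the resolution is not automatic. You assert that letting the profile oscillate between two regimes on dyadic scales indexed by the digits of $\alpha\in\{0,1\}^{\N}$, and then selecting an uncountable antichain of binary sequences, ``guarantees'' pairwise $\preccurlyeq$-incomparability. As stated it does not: $f\preccurlyeq g$ means $f(n)=O(g(Cn))$ for \emph{some} constant $C$, and that multiplicative shift in the argument can jump past oscillations confined to isolated or short scale windows. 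To defeat the constant $C$, the set of scales on which a given profile sits in its ``low'' regime must contain arbitrarily long intervals; this is exactly condition (3) of the Claim inside the paper's proof of \cref{thm:3-solvable}, and it is what your sketch leaves unaddressed.

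The paper's implementation also sidesteps the need to choose a combinatorial antichain in $\{0,1\}^{\N}$: it uses a one-parameter family $(\Gamma_{1+\eps,2-\eps})_{0<\eps<1/2}$ built from functions $\tau_{a,b}$ that (1) lie between $n^a$ and $n^b$ for all large $n$, (2) hit $n^b$ infinitely often, and (3) equal $n^a$ on arbitrarily long intervals; strict nesting of the envelopes for $1<a'<a<b<b'<2$ then yields incomparability directly from (1)--(3). A further point your sketch glosses over is that the Erschler--Zheng input must satisfy $\tau(n)\ge n$ and $\tau(n+1)-\tau(n)\le n$, so the oscillation has to be carried out within a fixed Lipschitz budget; constructing $\tau_{a,b}$ consistently with this constraint \emph{and} with (1)--(3) is the technical content of the Claim, and your proposed profile oscillating between $\log n$ and $\log n/h(\log n)$ would have to be checked against these constraints after passing to the Følner function.
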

We note that finding an uncountable family of groups that do not pairwise regularly embed into one another can be done by other means.  Indeed, one can produce an uncountable  familiy of groups of intermediate growth whose growth functions are pairwise incomparable (this can be done exploiting \cite[Thm.~7.1]{grigorchukDegreesGrowthFinitely1985}, or more directly invoking \cite[Thm.~7.2]{brieusselGrowthBehaviorsRange2014}). Moreover by Lewis Bowen's theorem (see Theorem \ref{thm:Bowen}), these groups  are pairwise non $\LL^1$ measure equivalent.
Other examples of uncountable families of groups that  pairwise do not coarsely embed into one another are due to Hume \cite[Thm.~1.2]{humeContinuumExpanders2017}: they involve (non-amenable) groups that contain isometrically embedded families of expanders. 
However none of these other methods can yield families of solvable (nor even elementary amenable) groups.

\paragraph{No quantitative version of Orstein-Weiss.}

In a recent groundbreaking work, Brieussel and Zheng managed to construct amenable groups with prescribed isoperimetric profiles \cite{brieusselSpeedRandomWalks2021}. In particular, they show that for every increasing unbounded function $\varphi$, there exists an amenable group whose isoperimetric profile does not dominate $\varphi$. Combining this with Theorem \ref{thmintroIsop}, we deduce that Orstein-Weiss' orbit equivalence coupling between amenable groups can be as poorly integrable as possible. More precisely, we obtain the following corollary.

\begin{corintro}[{see Sec.~\ref{sec:OW}}]\label{corintroOrnstein}
	For every amenable finitely generated group $\Lambda$ and every increasing unbounded function $\varphi$, there exists an amenable finitely generated group $\Gamma$ such that 
	there are  no $(\varphi,\LL^0)$ measure equivalence coupling from $\Gamma$ to $\Lambda$.
\end{corintro}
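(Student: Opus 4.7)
My plan is to combine the second item of \cref{thmintroIsop} with the Brieussel--Zheng construction of amenable groups with prescribed (very slow) isoperimetric profile, via a contrapositive argument. The strategy has three steps: first regularize $\varphi$ to a concave minorant $\tilde\varphi$, so that the second case of \cref{thmintroIsop} becomes applicable; then use Brieussel--Zheng to produce an amenable finitely generated group $\Gamma$ whose $\ell^1$-isoperimetric profile is too small to be compatible with a $(\tilde\varphi,\LL^0)$ coupling to $\Lambda$; and finally contradict \cref{thmintroIsop} to rule out the coupling.

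For the regularization step, I would replace the given increasing unbounded $\varphi$ by a concave increasing unbounded $\tilde\varphi$ satisfying $\tilde\varphi(t)\leq\varphi(t)$ eventually. This can be achieved by picking integers $n$ and points $a_n\to\infty$ with $\varphi(a_n)\geq 2^n$, thinning the sequence $(a_n)$ to a subsequence $(a_{n_k})$ along which the successive differences grow (so that the piecewise linear function with vertices $(a_{n_k},k)$ has decreasing slopes), and taking $\tilde\varphi$ to be that piecewise linear interpolation, extended linearly from the origin. By construction $\tilde\varphi$ is concave, increasing, unbounded, vanishes at $0$, and lies below $\varphi$ beyond some point. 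Concavity together with $\tilde\varphi(0)=0$ forces $t\mapsto t/\tilde\varphi(t)$ to be non-decreasing, so the hypothesis of the second item of \cref{thmintroIsop} is satisfied, and since $\tilde\varphi\leq \varphi$ (up to absorbing a finite initial segment into the constants $c_\gamma$) any $(\varphi,\LL^0)$ measure equivalence coupling is in particular $(\tilde\varphi,\LL^0)$, so it suffices to exclude the latter.

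Having done this reduction, I would consider the increasing unbounded function $\tilde\varphi\circ j_{1,\Lambda}$ (where one may assume $\Lambda$ infinite, as the finite case is immediate) and apply the Brieussel--Zheng theorem cited in the paragraph preceding the corollary to produce an amenable finitely generated group $\Gamma$ whose $\ell^1$-isoperimetric profile fails to asymptotically dominate $\tilde\varphi\circ j_{1,\Lambda}$. Then if a $(\tilde\varphi,\LL^0)$ measure equivalence coupling from $\Gamma$ to $\Lambda$ existed, the second case of \cref{thmintroIsop} would force $j_{1,\Gamma}\succcurlyeq \tilde\varphi\circ j_{1,\Lambda}$, contradicting our choice of $\Gamma$; hence no $(\tilde\varphi,\LL^0)$ coupling and a fortiori no $(\varphi,\LL^0)$ coupling exists. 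The only mild technical point is the concave regularization of Step 1; once $\tilde\varphi$ is in place, the rest of the argument is an essentially mechanical assembly of the two named results.
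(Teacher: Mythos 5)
Your proof is correct, and it takes a genuinely more direct route than the paper's. The paper proves this corollary by first composing the hypothetical coupling from $\Gamma$ to $\Lambda$ with a quantitative Ornstein--Weiss coupling from $\Lambda$ to $\Z$ (Corollary \ref{cor: ow made quant}), and then applying the isoperimetric monotonicity through Corollary \ref{cor:Ornsteinsub-quotient}, which is stated in terms of couplings with $\Z$. In effect they transport the problem to $\Z$, where $j_{1,\Z}(n)\approx n$, and the obstruction becomes $\varphi\circ\theta\preccurlyeq j_{1,\Gamma}$ for the composition function. You instead apply the second item of Theorem~\ref{thmintroIsop} directly between $\Gamma$ and $\Lambda$, so the obstruction is $j_{1,\Gamma}\succcurlyeq\tilde\varphi\circ j_{1,\Lambda}$, and the group $\Gamma$ from Brieussel--Zheng is chosen to violate this bound. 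This avoids invoking the quantitative Ornstein--Weiss theorem and the extraneous function $\theta$ entirely, and replaces them with the intrinsic quantity $j_{1,\Lambda}$, which is cleaner. Both arguments share the same initial concave regularization of $\varphi$ (the paper also reduces to concave $\phi'\leq\phi$) and the same essential ingredients --- isoperimetric monotonicity and the Brieussel--Zheng existence theorem --- but you dispense with the intermediate detour through $\Z$.

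One small point worth making explicit: when you invoke Brieussel--Zheng to produce $\Gamma$ with $j_{1,\Gamma}\not\succcurlyeq\tilde\varphi\circ j_{1,\Lambda}$, you are using the reformulation quoted in the introduction (``for every increasing unbounded function there exists an amenable group whose isoperimetric profile does not dominate it'') rather than Theorem~\ref{thm:BZ} verbatim, which only prescribes profiles $F$ with $F(t)/\log t$ non-increasing. That reformulation does follow --- one just picks an admissible $F$ growing strictly more slowly than $\min(\tilde\varphi\circ j_{1,\Lambda},\log)$ --- but it is worth flagging, since $\tilde\varphi\circ j_{1,\Lambda}$ need not itself satisfy the hypotheses of Theorem~\ref{thm:BZ}.
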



\paragraph{Stability of Liouville property among lamplighter groups.}
Let $G$ be a countable infinite group and let $\mu$ be a probability measure on $G$ whose
support generates $G$ (as a group). A function $f:G\to \mathbb{R}$ is called harmonic if for all $g\in G$, \[f(g) =\sum_{g'\in G}\mu(g g').\]
If all bounded harmonic functions are constant, then we say that $(G, \mu)$ has the Liouville property. 
In what follows, to simplify the discussion, we shall say that a measure $\mu$ is admissible if it is symmetric, and its support is finite and generates $G$. We refer to \cite{kaimanovichRandomWalksDiscrete1983} for an introduction to this topic.

A well-known open question due to Itai Benjamini is whether Liouville property is ``invariant under quasi-isometry''. More precisely
does there exist $(G_1,\mu_1)$ and $(G_2,\mu_2)$ such that the measures $\mu_i$ are admissible, the groups $G_i$ are quasi-isometric, and $(G_1,\mu_1)$ is Liouville while $(G_2,\mu_2)$ is not? 
Intuitively, being Liouville is a property of ``small groups'', so we might even expect it to be stable under coarse (or even regular) embeddings. However, this is false in general as shown by the following counterexample: being non-amenable, the free group on two generators is non-Liouville (with respect to any $\mu$), while $(\Z/2\Z\wr \Z,\mu)$ is Liouville for any admissible measure $\mu$ (see \cite{kaimanovichRandomWalksDiscrete1983}). But as noticed earlier, the free group coarsely embeds into $\Z/2\Z\wr \Z$. 

By contrast, Theorem \ref{thm: monotonicity of isop under Lp} can be used to prove that being Liouville is stable under regular embedding (and under $L^1$ measure equivalence) among lamplighter groups. More precisely, we obtain:

\begin{theorem}[See Thm.~\ref{thm:SUBQLiouville}]\label{thm:CoarseLiouville}
	For $i=1,2$, let $F_i$ be a non-trivial finite group, and let $H_i$ be a finitely generated group. Let $\mu_i$ be an admissible probability measure on $\Gamma_i=F_i \wr H_i$. Assume either that 
	\begin{itemize}
		\item $\Gamma_1$ regularly embeds into $\Gamma_2$, 
		\item or there exists a $(L^1,L^0)$ measure equivalence equivalence coupling from $\Gamma_1$ to $\Gamma_2$. 
	\end{itemize}
	Then if $(\Gamma_2,\mu_2)$ is Liouville, then so is $(\Gamma_1,\mu_1)$.
\end{theorem}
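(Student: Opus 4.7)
The plan is to combine the isoperimetric profile monotonicity proved earlier in the paper with a classical characterization of the Liouville property for lamplighter groups. Under either hypothesis, both $\Gamma_1$ and $\Gamma_2$ should be taken amenable: in the measure-equivalence case this is automatic (measure equivalence preserves amenability), and in the regular-embedding case one may reduce to $\Gamma_1$ amenable, the remaining situation being trivially vacuous (and one should verify separately that it is incompatible with the hypothesis when $\Gamma_2$ is itself Liouville, hence amenable). Applying Theorem~\ref{thmintroIsop} with $p=1$ and $\varphi(t)=t$ in the first case, or Theorem~\ref{thmintroIsopregular} in the second case, then yields the fundamental comparison
\[
  j_{1,\Gamma_1}(n) \succcurlyeq j_{1,\Gamma_2}(n).
\]

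The second ingredient is the characterization of the Liouville property for lamplighter groups due to Kaimanovich--Vershik and its extensions (notably by Erschler): for $F$ finite non-trivial and $\mu$ an admissible probability measure on $F\wr H$, the pair $(F\wr H,\mu)$ is Liouville if and only if the symmetric random walk on $H$ driven by the projection $\bar\mu$ of $\mu$ to $H$ is recurrent. Since $\bar\mu$ is itself symmetric and finitely supported on $H$, Varopoulos' theorem identifies this recurrence with $H$ being virtually trivial, $\Z$, or $\Z^2$, i.e.\ of polynomial growth of degree at most $2$. Crucially, this means that the Liouville property for admissible $\mu$ on $F\wr H$ depends only on the base group $H$.

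To conclude, assume $(\Gamma_2,\mu_2)$ is Liouville, so that $H_2$ has polynomial growth of degree $d_2\le 2$. The asymptotic $j_{1,F\wr H}(n)\approx(\log n)^{1/d}$ cited in the introduction for $H$ of polynomial growth degree $d$ gives $j_{1,\Gamma_2}(n)\succcurlyeq(\log n)^{1/2}$, and the monotonicity inequality forces $j_{1,\Gamma_1}(n)\succcurlyeq(\log n)^{1/2}$. Standard upper estimates coming from the Erschler wreath-product formula rule out super-polynomial growth for $H_1$: when $H$ has exponential growth $j_{1,F\wr H}$ grows only like $\log\log\log n$, far slower than $(\log n)^{1/2}$, and intermediate growth gives similarly slow profiles. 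Combined with the lower bound, $H_1$ must therefore have polynomial growth of degree at most $2$; by Gromov's theorem it is virtually trivial, $\Z$, or $\Z^2$, and the lamplighter characterization then delivers that $(\Gamma_1,\mu_1)$ is Liouville.

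The principal obstacle is to secure the Kaimanovich--Vershik characterization for arbitrary admissible symmetric measures on a lamplighter, as their original proof treats the switch-walk-switch measure; the direction ``recurrent base implies Liouville lamplighter'' for a general admissible $\mu$ requires some care, even though this extension is by now essentially standard. A minor secondary point is the precise upper bound on $j_{1,F\wr H}$ when $H$ has super-polynomial growth, which should follow from the general wreath-product isoperimetric formula and is used to rule out a super-polynomial base $H_1$.
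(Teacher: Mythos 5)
Your proposal follows the same two-ingredient strategy as the paper: monotonicity of the isoperimetric/Følner invariant under the given coupling, combined with the Kaimanovich--Lyons--Peres characterization of Liouville lamplighters via recurrence of the base. Both arguments are sound, but the paper's execution is tighter where yours introduces a detour. The paper invokes Erschler's general formula $\text{F\o l}_{F\wr H}(n)\approx e^{\text{F\o l}_H(n)}$, which is valid for \emph{any} finitely generated base $H$, so the inequality $\text{F\o l}_{\Gamma_1}\preccurlyeq\text{F\o l}_{\Gamma_2}$ coming from \cref{remark:Folner} (or, in the regular-embedding case, from \cref{cor:iso profile monotonous reg emb}) immediately yields $\text{F\o l}_{H_1}\preccurlyeq\text{F\o l}_{H_2}\preccurlyeq n^2$ and closes the argument in two lines. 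You instead work with the profile formula $j_{1,F\wr H}(n)\approx(\log n)^{1/d}$, which is only stated for $H$ of polynomial growth $d$, forcing you to run a case analysis that first rules out superpolynomial growth of $H_1$ by comparing with $(\log n)^{1/2}$. That can be made rigorous (since $\text{F\o l}_H\succcurlyeq V_H$ forces $j_{1,F\wr H}\preccurlyeq(\log n)^{1/k}$ for every $k$ when $V_H$ is superpolynomial), but it is exactly what the general Erschler formula lets the paper skip.

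Two smaller remarks. First, your worry about extending Kaimanovich--Vershik beyond the switch-walk-switch measure is addressed in the paper by citing Kaimanovich (for abelian lamps) and Lyons--Peres (for general finite $F$), which cover all finitely supported generating $\mu$; nothing extra is needed on your side. Second, your observation about amenability of $\Gamma_1$ is the right thing to flag. In the measure-quotient case it is automatic: \cref{thm: monotonicity of isop under phi} requires no amenability hypothesis, and an unbounded lower bound on $j_{1,\Gamma_1}$ forces $\Gamma_1$ amenable. In the regular-embedding case, however, \cref{thm:regular/Linfty} does require $\Gamma_1$ amenable to produce the $\LL^\infty$ subgroup coupling, and this is left implicit in \cref{cor:CoarseLiouville}; it has to be secured by an additional argument (or added as a hypothesis), since non-amenable groups can regularly embed into Liouville lamplighters -- the point being that here $\Gamma_1$ is itself of lamplighter form, which should suffice but is not spelled out. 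This is a legitimate loose end you correctly identified.
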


\subsection{Construction of measure couplings between amenable groups}\label{secIntro:Constructions}

\paragraph{F\o lner tilings.}

Most of the known examples of $\LL^p$ measure equivalence couplings come from lattices in semi-simple Lie groups.  Here we introduce a general method for constructing orbit equivalent couplings with prescribed integrability conditions between amenable groups. Our main tool is a notion of Følner tiling, developed in Section \ref{section:Folner tilings}. A Følner tiling sequence in a group $\Gamma$ is a sequence of finite subsets $T_n$ satisfying conditions:
\begin{enumerate}[(i)]
	\item $(T_n)$ is a left Følner sequence for $\Gamma$,
	\item each $T_n$ is obtained as a union of right translates of $T_{n-1}$.
\end{enumerate} 
We note that Følner sequences $T_n$ such that $\Gamma$ can be obtained as a 
disjoint union of right translates of $T_n$ do exist in all elementary amenable 
groups and all residually finite amenable groups thanks to the work of Weiss 
\cite{weissMonotileableAmenableGroups2001}. Our second condition appears to be 
new, and is crucial in our construction. An easy example of a Følner tiling 
sequence is provided by $T_n=\{1,...,2^n\}$ in $\Z$.

Starting from a Følner tiling sequence for a group $\Gamma$, we construct a free measure preserving action of $\Gamma$ on the infinite product space $\prod_n (F_n,\nu_n)$, where $F_n$ is a sequence of finite subsets of $\Gamma$ such that $T_{n}=\bigsqcup_{\gamma\in F_{n}}T_{n-1}\gamma$, and where $\nu_n$ is the renormalized counting measure on $F_n$. We show that 
the equivalence relation generated by the action is the co-finite equivalence relation (in the above example the action is given by the dyadic odometer): two sequences are equivalent if they coincide except possibly on finitely many indices. 
It follows from our construction that if another group $\Lambda$ admits a Følner tiling sequence whose basic tiles $T'_n$ satisfy $|T_n|=|T'_n|$, then the corresponding action of $\Lambda$ is orbit equivalent to that of $\Gamma$. Moreover, the degree of integrability of this orbit equivalence is controlled by the properties of the two Følner sequences $(T_n)$ and $(T'_n)$ (see Proposition \ref{prop:tilingOEquantitative}).


Theorem~\ref{thmintroIsop} provided us with an obstruction for finding $\varphi$-integrable couplings with certain functions $\varphi$ between two amenable groups. We will now show that in many cases of interest (and especially in the case of couplings from certain amenable groups to $\Z$) this obstruction is close to being optimal.

Let us start with the case of groups with polynomial growth. We deduce from a straightforward extension of  Bowen's theorem on volume growth (see Corollary \ref{cor:BowenPolynomial}), or alternatively from the monotonicity of the isoperimetric profile (Corollary \ref{cor:isoperimetricPolynomialGrowth}), that $\Z^m$ and $\Z^{n}$ do not admit $\LL^p$ measure equivalence couplings for $p>m/n$ if $n<m$. We show that this threshold is sharp:
\begin{thmintro}[see Thm. \ref{thm:OE between Zd and Zd'}]\label{thmi: Zn Zm}
	For every pair of positive integers $n<m$, there exists an orbit equivalence coupling between $\Z^m$ and $\Z^n$ that is $\LL^p$ for every $p<n/m$.
\end{thmintro}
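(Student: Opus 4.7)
The plan is to apply the Følner-tiling orbit equivalence construction developed in \S\ref{section:Folner tilings} (specifically \cref{prop:tilingOEquantitative}) to a carefully matched pair of cubical Følner tilings. Set $a_k = 2^{nk}$ and $b_k = 2^{mk}$, and take
\[
T_k = [0,a_k)^m \subset \Z^m, \qquad T'_k = [0,b_k)^n \subset \Z^n.
\]
Since $a_k/a_{k-1} = 2^n$ and $b_k/b_{k-1} = 2^m$ are integers, each $T_k$ (resp.\ $T'_k$) is a disjoint union of right-translates of $T_{k-1}$ (resp.\ $T'_{k-1}$) by an explicit finite set $F_k \subset \Z^m$ (resp.\ $F'_k \subset \Z^n$); both sequences are left Følner since the side lengths go to infinity. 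The matching identity $|T_k| = a_k^m = 2^{nmk} = b_k^n = |T'_k|$ forces $|F_k|=|F'_k|$ at every level, which lets us identify $\prod_k(F_k,\nu_k)$ with $\prod_k(F'_k,\nu'_k)$ as one probability space $(X,\mu)$. Feeding the two tilings into the construction yields free p.m.p.\ actions $\Z^m \act (X,\mu)$ and $\Z^n \act (X,\mu)$ with a common orbit relation (the cofinite equivalence relation on $X$): this is the candidate orbit equivalence coupling.

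It remains to estimate the two distance maps. Fix a canonical generator $e_i$ of $\Z^m$ and, for $\mu$-a.e.\ $x$, let $k(x)$ be the smallest level such that $x$ and $e_i \cdot x$ sit in a common level-$k$ $\Z^m$-tile. By the product structure of $\mu$, the event $\{k(x) > k\}$ says that the level-$k$ position of $x$ lies on the $e_i$-boundary slice of $T_k$, and so has probability $a_k^{m-1}/a_k^m = 1/a_k$. On $\{k(x) = k\}$ both endpoints lie in a single level-$k$ $\Z^n$-tile, itself of diameter $O(b_k)$, whence $d_{S_{\Z^n}}(x, e_i \cdot x) = O(b_k)$. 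Summing,
\[
\int_X d_{S_{\Z^n}}(x, e_i \cdot x)^p \, d\mu(x) \;\lesssim\; \sum_{k \geq 1} \frac{b_k^p}{a_{k-1}} \;\asymp\; \sum_{k \geq 1} 2^{(pm - n)k},
\]
which is finite iff $p < n/m$. The symmetric computation with a generator of $\Z^n$ gives $\LL^p$ integrability of the reverse distance map for every $p < m/n$, a vacuous constraint since $n < m$; hence the coupling is $\LL^p$ for every $p < n/m$, as desired.

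The technical heart, and the main obstacle, is to verify that an escape ``at level $k$'' in the $\Z^m$-tiling genuinely translates into staying inside a single level-$k$ $\Z^n$-tile (and not being pushed up to level $k+1$ or higher) under the identification $X = \prod F_k = \prod F'_k$. This rests on the level-by-level compatibility of the bijections $F_k \leftrightarrow F'_k$ with the nested tiling structure, which is precisely what \cref{prop:tilingOEquantitative} is engineered to encode. Once this bookkeeping is in hand, the threshold $p < n/m$ is simply the balance point between the per-level escape probability $\sim a_k^{-1}$ and the per-level tile diameter $\sim b_k$, forced by the cardinality constraint $a_k^m = b_k^n$; any rebalancing of $a_k,b_k$ respecting this constraint yields the same exponent.
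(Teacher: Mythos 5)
Your proposal is correct and follows essentially the same route as the paper: Theorem~\ref{thm:OE between Zd and Zd'} is proved there by feeding the cubical Følner tilings $[0,2^{n(k+1)})^m$ for $\Z^m$ and $[0,2^{m(k+1)})^n$ for $\Z^n$ (matched via $|T_k|=2^{nm(k+1)}=|T'_k|$, i.e.\ exactly your $a_k^m=b_k^n$ identity up to an index shift) into Proposition~\ref{prop:tilingOEquantitative}, and the threshold $p<n/m$ arises from the very same balance $\sum_k R_k'^{\,p}\,\varepsilon_{k-1}\asymp\sum_k 2^{(pm-n)k}$ that you compute. The ``technical heart'' you flag in your last paragraph is not actually an obstacle: the identification $\Psi$ is defined level-by-level, so it tautologically preserves the cofinite distance $\dcof$, which is precisely what Lemma~\ref{lem: distance on X/lenght in G} and Proposition~\ref{prop:tilingOEquantitative} exploit.
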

Let us mention that these couplings between $\Z^m$ and $\Z^n$ were independently discovered by Matthieu Joseph (unpublished result). 

Note that the growth function of $\Z^n$ being slower than that of $\Z^m$, we expect more constraint on the integrability condition from $\Z^m$ to $\Z^n$ than from $\Z^n$ to $\Z^m$.
This is reflected by the more precise statement of Theorem \ref{thm:OE between Zd and Zd'}: the orbit equivalence coupling which is constructed from $\Z^m$ to $\Z^n$ is $(\LL^p,\LL^{p'})$ for all $p<n/m$ and $p'<m/n$. 

Finally, we point out that the $L^p$-integrability from $\Z^m$ to $\Z^n$ is almost sharp, as Corollary \ref{cor:BowenPolynomial} implies that there is no $(\LL^p,\LL^0)$ measure equivalence coupling from $\Z^m$ to $\Z^{n}$ for $p>n/m$. The only case that remains unclear is when $p=n/m$. More precisely, 
Theorem \ref{thmi: Zn Zm} leaves the following question open (see also Question \ref{qu: refined optimal coupling for Z^n}).
\begin{ques}\label{qu: optimal coupling for Z^n}
	Let $n<m$, are $\Z^m$ and  $\Z^n$ $\LL^{n/m}$ measure equivalent?
\end{ques}
We may also consider the following asymmetric version of this question: is there a $(\LL^{n/m},\LL^0)$ measure coupling from $\Z^m$ to  $\Z^n$?

Regarding groups with same degree of polynomial growth, Austin proved that $\LL^1$ 
measure equivalent groups of polynomial growth have bi-Lipschitz asymptotic 
cones \cite{austinIntegrableMeasureEquivalence2016}. Combined with a famous 
theorem of Pansu \cite[Thm. 
3]{pansuMetriquesCarnotCaratheodoryQuasiisometries1989}, this implies for 
instance that a non-virtually abelian nilpotent group cannot be $\LL^1$ measure 
equivalent to an abelian group. The following result shows that this rigidity statement cannot be extended to $\LL^p$ for $p<1$.
\begin{thmintro}[{see Thm. \ref{thm:OE between Heis and Z4}}]\label{thmi: 
		heisenberg}
	There exists an orbit equivalence coupling between $\Z^4$ and $\Heis(\Z)$ that is $\LL^p$ for every $p<1$.
\end{thmintro}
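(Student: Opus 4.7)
The plan is to apply the F\o lner tiling machinery developed in \S\ref{section:Folner tilings}: I construct F\o lner tiling sequences $(T_n)$ in $\Z^4$ and $(T'_n)$ in $\Heis(\Z)$ with matching cardinalities and comparable diameters, then invoke \cref{prop:tilingOEquantitative} to extract the orbit equivalence and estimate its integrability.

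For $\Z^4$, take the standard dyadic tiling $T_n=\{0,1,\ldots,2^n-1\}^4$, which has cardinality $2^{4n}$, diameter $O(2^n)$, and is trivially a F\o lner tiling since it decomposes into $16$ right translates of $T_{n-1}$. For $\Heis(\Z)$, using the Malcev normal form $g=x^a y^b z^c$ where $z=[x,y]$ is central, I take the parabolic box
\[
T'_n=\{x^a y^b z^c:0\le a,b<2^n,\ 0\le c<2^{2n}\}.
\]
Since the word metric satisfies $d_{\Heis(\Z)}(1,x^a y^b z^c)\approx |a|+|b|+\sqrt{|c|}$, the set $T'_n$ has cardinality $2^{4n}$, diameter $O(2^n)$ and boundary of size $O(2^{3n})$, so $(T'_n)$ is a left F\o lner sequence. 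Using the commutation $yx=xy z^{-1}$, one verifies that $T'_n$ is the disjoint union of $16$ right translates of $T'_{n-1}$ by the elements $x^{i2^{n-1}} y^{j2^{n-1}} z^{k2^{2n-2}}$ with $i,j\in\{0,1\}$ and $k\in\{0,1,2,3\}$; here the freedom in $k$ absorbs the central shear created when the horizontal parameter $i=1$.

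Since $|T_n|=|T'_n|=2^{4n}$, \cref{prop:tilingOEquantitative} produces orbit equivalent free measure-preserving actions of $\Z^4$ and $\Heis(\Z)$ on a common probability space $(X,\mu)$. Moreover, for any generator $\gamma$ of either group, the same proposition yields the displacement bound $d(x,\gamma\actx x)\le\diam(T_n)+\diam(T'_n)=O(2^n)$ on the level $n$ at which the two tilings resolve the pair $(x,\gamma\actx x)$, and this level exceeds $n$ only on a set of measure at most a constant times $|\partial T_n|/|T_n|=O(2^{-n})$. Therefore, for every $p>0$ there is a constant $C>0$ such that
\[
\int_X d(x,\gamma\actx x)^p\,d\mu(x)\le C\sum_{n\ge 0}2^{np}\cdot 2^{-n}=C\sum_{n\ge 0}2^{n(p-1)},
\]
which converges whenever $p<1$. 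Running this estimate for generators of both groups yields an $(\LL^p,\LL^p)$ orbit equivalence coupling for every $p<1$.

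The main obstacle is the construction of the parabolic F\o lner tiling in $\Heis(\Z)$: the commutation relation means that right-multiplication does not preserve the rectangular product structure of $T'_n$, and the $16$ translation vectors must be chosen so that the central shear created by horizontal shifts is precisely compensated by matched $z$-shifts, yielding an exact partition. Once this tiling is established, the orbit equivalence and its integrability follow from the general machinery of \S\ref{section:Folner tilings}. The threshold $p<1$ is sharp in view of Austin's theorem combined with Pansu's theorem, which together forbid any $\LL^1$ measure equivalence coupling between $\Z^4$ and $\Heis(\Z)$.
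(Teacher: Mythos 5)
Your proposal is correct and follows essentially the same route as the paper: dyadic tiling of $\Z^4$ matched against the parabolic box tiling $\{(a,b,c):0\le a,b<2^n,\ 0\le c<4^n\}$ of $\Heis(\Z)$, then \cref{prop:tilingOEquantitative} with the same $\sum 2^{n(p-1)}$ estimate. The one place where you gesture and the paper works is \cref{prop:heis folner tiling}: verifying the exact partition (via uniqueness of the ``binary expansion'' of an element of the box) and, more delicately, the bound $|E_iT_k\setminus T_k|\le 2^{-k}|T_k|$ despite the central shear — that case analysis is the technical heart of the argument and deserves to be written out.
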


In this paper, the actions we construct by means of Følner tiling sequences turn out to be (explicit) profinite actions. To the best of our knowledge, this way of producing orbit equivalences between amenable groups is new, and quite different from the much more abstract quasi-tiling machinery of Ornstein and Weiss \cite{ornsteinEntropyisomorphismtheorems1987}.  Note that latter allows them to prove that \emph{all} free measure-preserving ergodic actions of \emph{all} amenable groups are orbit equivalent. This brings us to the complementary problem of studying quantitatively orbit equivalence between actions of a fixed amenable group. This is obviously a vast program, so we shall  simply mention the following natural question, which is already interesting for the group $\Z$.
\begin{ques}
	Let $\Gamma$ be an amenable finitely generated group.
	Does there exist a single unbounded increasing function $\varphi$ such that any two free measure-preserving ergodic actions actions of $\Gamma$ are $\varphi$ orbit equivalent? 
\end{ques}

\paragraph{Orbit equivalence and wreath products.}
Many interesting solvable groups can be constructed by means of (iterated) wreath products. By the work of Erschler  \cite{erschlerIsoperimetricProfilesFinitely2003}, their isoperimetric profile can be estimated precisely. Hence those groups offer a useful playground to test the optimality of the obstruction provided by Theorem \ref{thmintroIsop}. In Section \ref{sec:wreathproduct}, we define a natural notion of ``wreath product of orbit equivalence couplings''. This construction allows us to obtain orbit equivalence couplings between wreath product of groups whose integrability conditions are best possible (see Corollary \ref{cor: wreath product} for a precise statement). 

In this paragraph, we will focus on some concrete applications of this general result.

\begin{thmintro}\label{thmintro:LamplighteroverZ^d}
	For every pair of positive integers $a<b$, and every non-trivial finite group $F$ there exists an $(\LL^p,\LL^{1/p})$ orbit equivalence coupling from $F\wr \Z^b$ to $F\wr\Z^a$ for every $p<a/b$. 
\end{thmintro}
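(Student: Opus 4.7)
The plan is to combine two ingredients already established in the paper. First, \cref{thmi: Zn Zm}, in the asymmetric form described immediately after its statement, produces an OE coupling between $\Z^b$ and $\Z^a$ that is $(\LL^p,\LL^{p'})$-integrable for every $p<a/b$ and every $p'<b/a$. This coupling is built from two matched F\o lner tiling sequences whose basic tile at level $n$ has a common cardinality but $\Z^b$- and $\Z^a$-diameters of respective orders $2^n$ and $2^{nb/a}$. Second, \cref{cor: wreath product} supplies a general ``wreath product of OE couplings'' construction which, starting from an OE coupling between $\Gamma_1$ and $\Gamma_2$, returns an OE coupling between $F\wr\Gamma_1$ and $F\wr\Gamma_2$ with quantitative integrability tracked from the base coupling.

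The proof then consists in applying \cref{cor: wreath product} to the base coupling above with lamp group $F$. The resulting coupling space is an extension of $(X,\mu)$ by a Bernoulli factor of $F$-valued lamps indexed by the common orbit, on which each $F\wr\Z^i$ acts through its canonical semidirect structure. Both actions share orbits, since the accessible lamp changes from either side are exactly those finitely supported along the common orbit.

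To extract the integrability one inspects the generators. Lamp generators have uniformly bounded displacement on both sides. A $\Z^b$-shift $e_i$ has $F\wr\Z^a$-displacement controlled, up to a bounded additive correction coming from lamp realignment on a bounded neighborhood, by the $\Z^a$-cocycle of $e_i$ in the base coupling, which is $\LL^p$-integrable for every $p<a/b$. Symmetrically, the $F\wr\Z^b$-displacement of a $\Z^a$-generator decomposes as the base $\Z^b$-cocycle plus its own lamp correction, and the quantitative estimate packaged in \cref{cor: wreath product} yields the stated $\LL^{1/p}$ bound.

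The main technical obstacle is securing this backward exponent $\LL^{1/p}$ rather than the naive $\LL^{p'}$ with $p'<b/a$ inherited from the base coupling, since $1/p>b/a$ whenever $p<a/b$. The improvement emerges from the internal H\"older-type balance inside the F\o lner tiling proof of \cref{cor: wreath product}: the lamp volume $|F|^{|T_n|}$ matches up with the base tile diameters $2^n$ and $2^{nb/a}$ in such a way that the forward-backward exponent pair $(p,p')$ of the base coupling transforms, after the wreath product operation, into the wreath pair $(p,1/p)$.
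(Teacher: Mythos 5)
Your approach is the same as the paper's: the statement is the corollary that follows \cref{cor: wreath product}, and the intended proof is the combination of \cref{thm:OE between Zd and Zd'} with \cref{cor: wreath product}. You identify both ingredients correctly.

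The gap is in the final step, where you claim that the backward exponent can be upgraded from any $p'<b/a$ to $1/p$ by an ``internal H\"older-type balance inside the F\o lner tiling proof of \cref{cor: wreath product}''. But \cref{cor: wreath product} has no F\o lner-tiling content: it follows at once from \cref{prop:wreath product}, whose conclusion consists of the \emph{exact equalities}
\[
d_{S_{\Lambda_2}\cup S_{\Gamma_2}}\bigl((x,l),\gamma_1\cdot(x,l)\bigr)=d_{S_{\Gamma_2}}(x,\gamma_1\cdot x),\qquad
d_{S_{\Lambda_2}\cup S_{\Gamma_2}}\bigl((x,l),\lambda_1\cdot(x,l)\bigr)=d_{S_{\Lambda_2}}(l_{e_\Gamma},\lambda_1\cdot l_{e_\Gamma}).
\]
Since $F$ is finite, the second displacement is uniformly bounded and the first is literally the base cocycle, so the wreath coupling is $(\varphi,\psi)$-integrable \emph{exactly} when the base coupling is --- there is no trade-off to exploit. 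Moreover, the F\o lner-tiling construction of \cref{thm:OE between Zd and Zd'} produces level-$k$ tiles in $\Z^b$ and $\Z^a$ with diameters of order $2^{ak}$ and $2^{bk}$ and F\o lner constants of order $2^{-ak}$ and $2^{-bk}$; the summability criteria of \cref{prop:tilingOEquantitative} then force $p<a/b$ and $p'<b/a$, hence $pp'<1$ for every achievable pair, placing $(p,1/p)$ precisely on the excluded boundary $pp'=1$. What the two ingredients actually yield is an $(\LL^p,\LL^{p'})$ coupling for every $p<a/b$ and every $p'<b/a$ --- nothing with backward exponent at or beyond $b/a$. You are honest in flagging this obstacle, but the proposed fix does not overcome it; since the paper's own terse proof invokes exactly the same two results, the exponent $1/p$ appearing in the statement itself seems to be a misprint for an arbitrary $p'<b/a$.
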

We refer to Corollary \ref{cor:iteratedwreathZ^ab} for an iterated version of this result.
By \cite{erschlerIsoperimetricProfilesFinitely2003}, the isoperimetric profile of $F\wr \Z^a$ is $\approx (\log n)^{1/a}$.
Hence we deduce from the second part of Theorem \ref{thmintroIsop} that the existence of a $(\LL^p,\LL^0)$ measure equivalence coupling from $F\wr \Z^a$ to $F\wr\Z^b$ forces the inequality $p\leq a/b$, which is shown to be (almost) sharp by Theorem \ref{thmintro:LamplighteroverZ^d}.

We now focus on the existence of couplings with $\Z$. We first consider couplings from the lamplighter group to $\Z$. Since the isoperimetric profile of the lamplighter group grows like $\log n$, we know that a $(\varphi,\LL^0)$ coupling from the latter to $\Z$ must satisfy $\varphi(t)\preccurlyeq \log t$. This turns out to be sharp up to a logarithmic error. 
\begin{thmintro}[{see Prop. \ref{prop:couplingZLamp}}]
	For every finite group $F$, there exits an orbit equivalence coupling from $F\wr\Z$ to $\Z$ which is $(\varphi_{\eps},\exp)$-integrable for every $\eps>0$, where $\varphi_\eps(x)=\frac{\log (x)}{\log(\log (x))^{1+\eps}}$.
\end{thmintro}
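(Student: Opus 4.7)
The plan is to apply the Følner tiling framework of Section~\ref{section:Folner tilings}: construct Følner tiling sequences $(T_n)$ in $\Z$ and $(T'_n)$ in $F\wr\Z$ with matching cardinalities, then invoke Proposition~\ref{prop:tilingOEquantitative}. The two integrabilities will drop out from the exponential gap between the diameters of the $\Z$-tiles ($\sim|F|^{2^n}$) and the $F\wr\Z$-tiles ($\sim 2^n$).

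Set $N_n = 2^n\,|F|^{2^{n+1}+1}$. In $\Z$, let $T_n=[0,N_n)$: this is trivially a Følner tiling sequence since $N_{n-1}\mid N_n$ with ratio $2\,|F|^{2^n}$. In $F\wr\Z$ I adopt the ``$t^j f$'' convention, writing elements as $(j,f)$ so that left multiplication by $t$ sends $(j,f)\mapsto(j+1,f)$ without shifting the lamp function, while a lamp generator $s\in F$ sends $(j,f)\mapsto(j,\sigma^{-j}(s)f)$. Define
\[T'_n=\bigl\{(j,f)\,:\,j\in[0,2^n),\ \supp f\subseteq[-2^n,2^n]\bigr\}.\]
Then $|T'_n|=2^n\,|F|^{2^{n+1}+1}=N_n$ and the word-diameter of $T'_n$ in $F\wr\Z$ is $O(2^n)$. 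In this convention lamp generators preserve $T'_n$ entirely (since $-j\in[-2^n,2^n]$ for $j\in[0,2^n)$), and only $t^{\pm1}$ contributes to the Følner boundary, with density $O(2^{-n})$. The right-tiling $T'_n=T'_{n-1}\cdot F'_n$ is realised by the explicit transversal
\[F'_n=\{(0,g_0):\supp g_0\subseteq[-2^n,-2^{n-1}{-}1]\cup[2^{n-1}{+}1,2^n]\}\,\cup\,\{(2^{n-1},g_1):\supp g_1\subseteq[1,2^n]\},\]
of cardinality $2\,|F|^{2^n}=|T'_n|/|T'_{n-1}|$; bijectivity is a direct decomposition of $(J,F)\in T'_n$ according to whether $J<2^{n-1}$ or $J\geq 2^{n-1}$, followed by splitting the lamp part of $F$ along the corresponding subintervals.

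Invoking Proposition~\ref{prop:tilingOEquantitative} then produces an orbit equivalence coupling from $F\wr\Z$ to $\Z$. For $\gamma\in F\wr\Z$ of word length $\ell$, a direct inspection of the action $(j,f)\mapsto(k+j,\sigma^{-j}(g)f)$ for $\gamma=(k,g)$ shows that the probability that $\gamma$ fails to resolve at level $n$ is $O(\ell/2^n)$ (both $t$-escape of the $j$-coordinate and lamp-support escape contribute this order), and at resolution level $n$ the $\Z$-distance is at most $N_n$. Since $\varphi_\eps(N_n)\sim 2^n\log|F|/(n\log 2)^{1+\eps}$, this gives
\[\int\varphi_\eps\bigl(d_\Z(x,\gamma\cdot x)\bigr)\,d\mu\lesssim\sum_n\frac{\ell}{2^{n-1}}\cdot\frac{2^n\log|F|}{(n\log 2)^{1+\eps}}=O\!\Bigl(\ell\sum_n n^{-(1+\eps)}\Bigr)<\infty.\]
Conversely, for $\lambda\in\Z$ with $|\lambda|=L$, the escape probability at level $n$ is $O(L/N_n)$ and the $F\wr\Z$-distance at level $n$ is $O(2^n)$, so
\[\int\exp\!\Bigl(\tfrac{d_{F\wr\Z}(x,\lambda\cdot x)}{c}\Bigr)\,d\mu\lesssim\sum_n\frac{L}{N_{n-1}}\exp(C\,2^n/c)\lesssim\sum_n L\,\exp\!\bigl(2^{n-1}(2C/c-\log|F|)\bigr),\]
which converges geometrically for any $c>2C/\log|F|$, giving $\exp$-integrability.

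The main technical step is the Følner tiling construction above, whose feasibility hinges on working in the ``$t^j f$'' convention that renders the lamp generators invisible on the tile boundary; this is crucial because the naive rectangular tile $\{(f,j):\supp f\subseteq[0,h),\ j\in[0,h)\}$ fails to be left Følner (the $t$-shift spills lamps out, creating a boundary of positive density). Once the tiling is in place, the two integrability exponents come out of a mechanical balancing of the exponential tile cardinality against the polynomial tile diameter, with the $\varphi_\eps$ exponent on the $\Z$-side being sharp up to the $(\log\log)^{-\eps}$ factor predicted by Theorem~\ref{thmintroIsop}.
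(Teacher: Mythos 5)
Your proposal is correct and takes essentially the same route as the paper: you build Følner tiling sequences of matching cardinalities in $F\wr\Z$ and $\Z$ (cf.\ Propositions~\ref{prop:FolnerLamplighter} and~\ref{prop:couplingZLamp}) and then apply Proposition~\ref{prop:tilingOEquantitative}, with the two integrability exponents dropping out of the same exponential gap between the $\Z$-tile diameter ($\sim |F|^{2^n}$) and the lamplighter-tile diameter ($\sim 2^n$). The only difference is cosmetic: you package the lamplighter tiles as a \emph{left} Følner tiling in the walker-first ($t^jf$) convention with a symmetric lamp interval $[-2^n,2^n]$, whereas the paper writes the analogous boxes as a \emph{right} Følner tiling in the lamp-first convention — the two viewpoints being interchangeable via the inversion remark after Definition~\ref{def: Folner tiling}.
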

This result is obtained by means of F\o lner tilings. Once again, the asymmetry of the condition reflects the intuition that $\Z$ is a ``small'' group compared to $F\wr\Z$.

More generally, one can consider iterated wreath products as follows. Let $G_0=\Z$, and for all $n\geq 1$, $G_n=F\wr G_{n-1}$. By \cite{erschlerIsoperimetricProfilesFinitely2003}, we know that the isoperimetric profile of $G_k$ is asymptotically equivalent to $\log^{\circ n}$, namely the $n$-times iteration of $\log$ with itself. Similarly, using our wreath product construction of orbit equivalences, we show that this is almost sharp: see  Corollary \ref{cor:iteratedwreath}.

\paragraph{Exponentially integrable couplings.}

The lamplighter group $\Z/k\Z\wr \Z$ and the Baumslag-Solitar group $\BS(1,k)$ are known to have very similar geometries. Indeed the first one can be seen as a horocyclic product of two $(k+1)$-regular trees  \cite{Woess-LAMPLIGHTERS_HARMONIC_FUNCTIONS}, while the second is quasi-isometric to a horocyclic product of a $(k+1)$-regular tree with a hyperbolic plane \cite[\S 3]{MR1608595}. A consequence of this description is that they have isometric asymptotic cones (this follows from the analysis in \cite[\S 9]{Cornulier_dimcone}). They are also known to have same isoperimetric profile \cite{coulhonGeometricApproachOndiagonal2001} 
(or \cite[Thm.~4]{tesseraLargeScaleSobolev2008a}). However they are not quasi-isometric, as $\BS(1,k)$ is finitely presented, while $\Z/k\Z\wr \Z$ is not. 

In Section \ref{sec:FP_Unstable}, we construct a very natural orbit coupling between these groups, which satisfies a very strong integrability condition.
\begin{thmintro}[{see Thm. \ref{thm:Lamplighter et BS}}]
	For every $k\geq 2$, there exists an $(\LL^{\infty},\exp)$ orbit equivalence coupling from $\Z/k\Z \wr \Z$ to $\BS(1,k)$. 
\end{thmintro}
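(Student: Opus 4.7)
My approach would be to apply the Følner tiling orbit equivalence construction from Section~\ref{section:Folner tilings} to a pair of matched Følner tiling sequences in $\BS(1,k)$ and $L = \Z/k\Z \wr \Z$, and then extract the quantitative integrability bounds directly from the $k$-adic carry dynamics.

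In $\BS(1,k) \cong \Z[1/k] \rtimes_k \Z$ (with multiplication $(r_1, j_1)(r_2, j_2) = (r_1 + k^{j_1} r_2,\ j_1 + j_2)$), I would build a Følner tiling sequence of ``horocyclic bricks''
\[
T_n = \bigl\{(r, j) : -h_n \leq j \leq 0,\ r \in k^j \cdot \{0, 1, \ldots, w_n - 1\} \bigr\},
\]
with suitably chosen height $h_n$ and power-of-$k$ width $w_n$ (so that $w_n$ dominates $k^{h_n}$, ensuring the left Følner condition under $a$, and $(h_n+1) w_n$ has integer successive ratios so that $T_n$ is a disjoint union of right translates of $T_{n-1}$). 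In the lamplighter $L = \bigoplus_\Z \Z/k\Z \rtimes \Z$, I would take a matching Følner tiling sequence $T'_n$ with $|T'_n| = |T_n|$, built along the horocyclic axis of the Diestel--Leader graph $\mathrm{DL}(k,k)$ in a similarly brick-shaped way. The bijection $T_n \leftrightarrow T'_n$ is induced by the base-$k$ digit expansion: $(r, j) \leftrightarrow (\phi, j)$ where $\phi_i$ are the digits of $r/k^j$.

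Since $|T_n| = |T'_n|$, Proposition~\ref{prop:tilingOEquantitative} yields free measure-preserving actions of both $\BS(1,k)$ and $L$ on a common profinite probability space $X$ with the same orbits, giving an orbit equivalence coupling. For the $\LL^\infty$ bound in the $L \to \BS(1,k)$ direction, each generator of $L$ acts on tiles $T'_n$ by a bounded-range operation, which under the bijection corresponds to a bounded-range operation on $T_n$; geometrically, this is the statement that $\mathrm{DL}(k,k)$ embeds quasi-isometrically into $T_k \times \mathbb{H}^2$. For the $\exp$ bound in the opposite direction, applying the $\BS(1,k)$-generator $a$ to $(r, j)$ corresponds to incrementing the base-$k$ expansion of $r/k^j$: the carry propagates across $N$ digits with probability $(1 - 1/k) k^{-N}$, and each such event costs an $L$-displacement of order $N$ (one moves the cursor out to position $N$, toggles the lamps, and returns). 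Therefore
\[
\int_X \exp\!\bigl(c\, d_L(x, a \cdot x)\bigr)\, d\mu(x) \;\le\; \sum_{N \ge 0} (1 - 1/k)\, k^{-N}\, e^{C N c} \;<\; \infty
\]
for $c > 0$ small enough, giving the exponential integrability; the generator $t$ contributes only bounded $L$-displacement.

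The main technical obstacle I anticipate is the coherent alignment of the tile bijections $T_n \leftrightarrow T'_n$ with the nested right-translate decompositions $T_n = \bigsqcup_{\gamma \in F_n} T_{n-1}\gamma$ and $T'_n = \bigsqcup_{\gamma' \in F'_n} T'_{n-1}\gamma'$: one must choose these decompositions and bijections compatibly so that the induced actions on the inverse limit share the same orbits and so that the identification truly sends lamplighter generators to bounded $\BS(1,k)$-moves. This coherence is ultimately enforced by the common $k$-adic structure of $\Z[1/k]$ and $\bigoplus_\Z \Z/k\Z$, but the precise bookkeeping (in particular, the choice of $h_n, w_n$ and the alignment of cursor position $j$ with horocyclic height) is the delicate part of the argument.
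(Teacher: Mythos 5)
Your proposal takes a genuinely different route from the paper. The paper (Theorem~\ref{thm:Lamplighter et BS}) does \emph{not} use the Følner tiling machinery at all: it constructs the two actions directly on $X=\prod_{\Z}\Z/k\Z$, letting $\Z/k\Z\wr\Z$ act by coordinatewise addition and shift, and letting $\BS(1,k)=\Z[1/k]\rtimes\Z$ act via the $k$-adic odometer (for $\Z[1/k]$) and shift (for $\Z$). These two actions visibly generate the same equivalence relation (the cofinite one on lamp configurations, crossed with shift), so the orbit equivalence is immediate, and the integrability bounds are read off directly from the algebra.

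For the $\exp$-integrability direction your carry-propagation heuristic is sound and essentially parallels the paper's estimate \eqref{eq: estimate for BS and lamplighter}, so no concern there. The genuine gap is in the $\LL^\infty$ direction. Proposition~\ref{prop:tilingOEquantitative} only produces \emph{tail estimates} of the form $\mu\{d_{S_{\Gamma'}}(x,s\cdot x)>2R'_k\}\le\eps_k$ with $\eps_k>0$ for every $k$; the framework, as developed, yields $\varphi$-integrability for various $\varphi$ but never an $\LL^\infty$ coupling, because the displacement is a priori unbounded across levels of the tiling. Your sentence ``each generator of $L$ acts on tiles $T'_n$ by a bounded-range operation, which under the bijection corresponds to a bounded-range operation on $T_n$'' is precisely the missing content: it is not a consequence of having matched Følner tiling sequences with $|T_n|=|T'_n|$, but an extra structural property of one very particular choice of bijections. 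You acknowledge this as the delicate part, but without it the argument does not go through. The paper sidesteps the issue entirely by exhibiting the right identification explicitly: it observes that the lamplighter generator $\delta_0$ (toggle the lamp at position $0$) coincides almost everywhere with $s^m$ for some $\lvert m\rvert\le k-1$, where $s=1_{\Z[1/k]}$, giving the uniform bound $d_{T^\pm}(\delta_0\cdot x,x)\le k-1$. To rescue your approach you would need to choose the tile bijections so that the resulting orbit equivalence is \emph{exactly} this one, at which point the Følner tiling formalism is doing no work and you have reconstructed the paper's direct argument in a more cumbersome way. In short: the $\exp$ side is fine, but the $\LL^\infty$ side requires the explicit algebraic identification, and invoking Proposition~\ref{prop:tilingOEquantitative} cannot supply it.
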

The asymmetry is not an artifact of the proof: indeed we could not have an $(\LL^{\infty},\exp)$-orbit equivalence coupling  from $\BS(1,k)$ to $\Z/k\Z\wr \Z$  as the asymptotic dimension is monotonous under $(\LL^{\infty},\LL^0)$ orbit equivalence (Corollary \ref{cor:asdimLinfty}), and    $\BS(1,k)$ has asymptotic dimension 2, while $\Z/k\Z\wr \Z$ has asymptotic dimension 1. 

The previous discussion also shows that the asymptotic dimension is not preserved under $\LL^p$ orbit equivalence for all $p<\infty$. Considering slightly more sophisticated examples, we similarly show that the finiteness of the asymptotic dimension is not preserved either.


\subsection{A sort of ``asymmetric distance" between finitely generated groups}\label{secIntro:Space}

We may view the functions $\varphi$  and $\psi$ for which there exists a $(\varphi,\psi)$ measurable coupling from a group $\Gamma$ to a group $\Lambda$ as an asymmetric way of quantifying how geometrically close these groups are from one another. 
This is of course only interesting within a class of groups that are measure equivalent, such as lattices (uniform and non-uniform ones) in a given locally compact group, or among amenable groups. 

Let us illustrate this point of view in the following special case: given two finitely generated groups $\Gamma$ and $\Lambda$, let $\alpha(\Gamma,\Lambda)$ be the infimum  of $-\log p$ over all  $p\leq 1$ such that there exists an $(\LL^p,\LL^p)$ integrable measure equivalence coupling between  $\Gamma$ and $\Lambda$. It follows from Proposition \ref{prop:CombineIntCouplingsConcave} that $\alpha$ satisfies the triangle inequality. Therefore, it defines a pseudo-distance between (isomorphism classes of) finitely generated groups. Note that Theorem~\ref{thmi: heisenberg} yields $\alpha(\Z^4,\operatorname{Heis}(\Z))=0$, while Theorem~\ref{thmi: Zn Zm} and the remark that precedes imply $\alpha(\Z^n,\Z^m)=\abs{\log n-\log m}$.

It is interesting to note that for $p\geq 1$ (and more generally for convex functions $\varphi$), things behave differently, as admitting an $(\LL^p,\LL^p)$ integrable measure equivalence coupling is an equivalence relation. This means that balls of radius $0$ for $\alpha$ can be equipped with an ultrametric distance $\beta(\Gamma,\Lambda)$ defined  for instance by the infimum of $1/p$ over all  $p\geq1$ such that the groups are $\LL^p$ measure equivalent. In other words, we can distinguish two scales: a ``large" scale measured by the pseudo distance $\alpha$, and a finer scale, measured by  the ultrametric pseudo-distance $\beta$.

More generally, we will see in Section \ref{sec:compositioncouplings} that if the functions $\varphi$ and $\psi$ are concave, then $(\varphi,\psi)$-integrable measurable equivalence couplings satisfy a natural composition rule, which we may view as analogous to a ``triangle inequality". 
By contrast, for functions that grow faster than any polynomial, e.g.\ $t\to \exp (t^a)$ for $a>0$, there is no clear composition rule. This led us to introduce a stronger version of the integrability condition (which is automatic for $\LL^p$) to make it well-behaved under composition: see Definition \ref{def:strong}. As in the $L^p$ case discussed above, we then obtain a composition rule that is analogous to an ``ultrametric inequality" when the functions are at least linear.

As a matter of fact, these composition rules are better stated in an even more asymmetric situation, where the measure coupling is no longer a measure {\it equivalence} coupling. If we drop on one side the  finiteness of the measure of the fundamental domain, we obtain a natural notion of measured subgroup, while if we drop the freeness for one of the actions, we obtain a measurable notion of quotient. Combining them, we get a measurable notion of sub-quotient. In these situations, an integrability condition only makes sense in one direction, so that we obtain notions of $\varphi$-integrable measurable subgroups, quotients or sub-quotients, which we develop in the next section. To see how these notions can be useful, note that Theorem~\ref{thmintroIsop} is a corollary of the monotonicity of the isoperimetric profile under $\LL^1$ measure quotient, while Theorem \ref{thmintroIsopregular} involves the notion of $\LL^{\infty}$ measure subgroup. 

\subsection{Plan of the paper}

In Section \ref{sec: prelim}, we set the general framework of this paper. 
More precisely, in Section \ref{sec:smooth} we introduce new notions of measured 
subgroups, quotients and sub-quotients that are defined via asymmetric couplings 
where the actions of both groups are smooth but where the freeness or the 
finite covolume assumption are dropped for one of the actions. Such notions 
satisfy natural composition rules that are gathered in Proposition 
\ref{prop:CombineCouplings}. In Section \ref{sec: integrability conditions}, we make 
these notions quantitative by adding integrability conditions. The behavior of 
these integrability conditions under composition is studied in Section 
\ref{sec:compositioncouplings}. Finally,  Section \ref{sec: OE couplings} is 
dedicated to similar notions in the orbit equivalence setting.

In Section \ref{sec:Bowen}, we prove a general monotonicity result for the 
volume growth under measured sub-quotients satisfying certain integrability 
conditions. The proof is largely inspired (although different) from Bowen's 
original proof of the invariance of volume growth under $\LL^1$ measure 
equivalence. 

Section \ref{sec:isoperimetricprofile} focuses on two monotonicity results 
regarding the isoperimetric profile, namely Theorems \ref{thm: monotonicity of 
	isop under Lp} and \ref{thm: monotonicity of isop under phi}, which generalize 
Theorem~\ref{thmintroIsop}. In Section \ref{sec:overviewisop} a quick overview of 
their proofs is given. Sections \ref{sec:GammaGradient} to \ref{sec:L1 measure 
	quotient iso profile down} are dedicated to the actual proofs of these 
theorems. This section ends with Section \ref{sec:OW} where we prove Corollary 
\ref{corintroOrnstein}. 

In Section \ref{sec:Linfty}, we show that the existence of a regular map from an amenable finitely generated group $\Gamma$ to a finitely generated group $\Lambda$ can be used to construct an $\LL^{\infty}$ subgroup coupling from $\Gamma$ to $\Lambda$. This  allows us to deduce Theorem~\ref{thmintroIsopregular} from 
Theorem~\ref{thm: monotonicity of isop under Lp}. We then exploit this result  
in Section \ref{sec:continuum3solvable} to establish Theorem~
\ref{thm:continuumsolvable}.

In Section \ref{sec:Ftiling} we show how  F\o lner tilings can be used to 
produce orbit equivalent probability measure-preserving actions of groups with a control on the 
integrability properties of the corresponding cocycles. In Section 
\ref{sec:polynomialgrowth} we give applications to groups with polynomial 
growth, and in Section \ref{sec: Zlamp} we treat the case of $\Z$ and the 
Lamplighter group. 

In Section \ref{sec:wreathproduct}, we study the behavior of orbit equivalence 
under 
wreath products, and we show that the integrability conditions pass through. 
This enables us to construct couplings between $\Z$ and iterated wreath 
products with nearly sharp integrability conditions.

Section \ref{sec: unstable} deals with properties that are ``very" unstable.
In Section \ref{sec:FP_Unstable}, we construct a very explicit exponential orbit equivalence coupling between $BS(1,n)$ and $\Z/n\Z\wr \Z$, proving that finite presentability is not preserved. Section \ref{cor:finitePresentabilityUnstable}, we construct another explicit coupling between two solvable groups, one with finite asymptotic dimension and the other not. 

Appart from 
Section \ref{sec: prelim}, which contains all the relevant notions used throughout the paper, each other section can be read independently.

\begin{ack} We thank Matthieu Joseph for many helpful conversations around this 
	project. 
	We also thank Yves 
	Cornulier and Amandine Escalier for their useful remarks on a preliminary version of this work. We
	thank Lewis Bowen for 
	sharing with us his unpublished note on the behavior of the F\o lner function 
	under $\LL^1$ orbit equivalence. Finally, we are  indepted to the anonymous referee for their numerous remarks and corrections which greatly improved the quality of the exposition.
\end{ack}

\section{Variations on measure equivalence}\label{sec: prelim}

\begin{convention}
	Throughout the paper, we allow metrics and pseudo-metrics to take the value $+\infty$. 
\end{convention}

\subsection{Smooth actions and fundamental domains}\label{sec:smooth}


A \textbf{standard Borel space} $(\Omega, \mathcal B(\Omega))$ is a measurable space whose $\sigma$-algebra $\mathcal B(\Omega)$ consists of the the Borel subsets coming from some Polish (separable and completely metrizable) topology on $\Omega$. The elements of $\mathcal{B}(\Omega)$ are called \textbf{Borel}, and so are called measurable maps between standard Borel spaces.
A \textbf{standard measure space} is a standard Borel space $(\Omega,\mathcal B(\Omega))$ equipped with a nonzero $\sigma$-finite measure $\mu$ on $\mathcal B(\Omega)$. We simply denote such a space by $(\Omega,\mu)$.  A Borel subset $\Omega_0 \subseteq \Omega$ is said to be \textbf{conull} or of \textbf{full measure} if $\mu(\Omega \setminus \Omega_0)=0$ and a property that holds for all $\omega \in \Omega_0$ is said to hold \textbf{almost surely} or \textbf{for almost every} $\omega\in\Omega$. The space $(\Omega,\mu)$ is said to be a standard probability space if $\mu(\Omega)=1$.

A \textbf{measure-preserving action} of a discrete countable group $\Gamma$ on $(\Omega,\mu)$, for short $\Gamma \curvearrowright (\Omega,\mu)$, is a $\Gamma$-action on $\Omega$ such that the action map $(\gamma,x)\mapsto \gamma\cdot x$ is Borel and that $\mu( \gamma\cdot  E) = \mu(E)$ for all $\gamma \in \Gamma$ and all Borel $E\subseteq \Omega$. A measure-preserving action on a standard probability space is said to be probability measure-preserving. 

Since the groups that we are dealing with are countable, 
if we are given $\Gamma\curvearrowright(\Omega,\mu)$,
then every full measure Borel subset $A\subseteq \Omega$ contains a $\Gamma$-invariant full measure Borel subset, 
namely the set $\bigcap_{\gamma\in \Gamma} \gamma\cdot A$. Moreover, since any full measure Borel subset of a standard measure space is a standard measure space, 
there will be no harm in considering that some properties that hold almost everywhere actually hold everywhere, so we will often do so without explicitly restricting to a full measure Borel subset. 
For instance, one says that a measure-preserving $\Gamma$-action on $(\Omega,\mu)$ is \emph{free} if 
for all $\gamma\in\Gamma\setminus\{e_\Gamma\}$, we have $\gamma\cdot x\neq x$ for almost every $x\in \Omega$. We may as well assume that the latter holds for \emph{every} $x\in \Omega$ since this becomes true once we restrict the action to a full measure $\Gamma$-invariant Borel set.

Finally, given $\Gamma\act(\Omega,\mu)$, the \textbf{full pseudo-group} of the action  $\Gamma\act\Omega$ is the set of all partially defined Borel bijections $\varphi\colon A\to B$, where $A$ and $B$ are Borel subsets of $\Omega$, such that for all $x\in A$, we have $\varphi(x)\in\Gamma\cdot x$. Every such element is measure-preserving, see 
\cite[Prop. 2.1]{kechrisTopicsOrbitEquivalence2004}. 

\begin{definition} 
	A \textbf{fundamental domain} for $\Gamma \curvearrowright (\Omega,\mu)$ is a Borel set $X_{\Gamma} \subseteq \Omega$ which intersects almost every $\Gamma$-orbit at exactly one point: there is a full measure $\Gamma$-invariant Borel set $\Omega_0\subseteq \Omega$ such that for all $x\in \Omega_0$ we have that the intersection of the $\Gamma$-orbit of $x$ with $X_\Gamma$ is a singleton
\end{definition}

Equivalently, a Borel set $X_{\Gamma}\subseteq\Omega$ is a fundamental domain if and only if after throwing away a null set, the quotient map $\pi\colon X_{\Gamma}\to \Omega/\Gamma$ is a bijection.
Note that since a fundamental domain for $\Gamma \curvearrowright (\Omega,\mu)$ intersects almost every orbit, the union of its translates has full measure, so every fundamental domain must have strictly positive measure whenever this is the case with $\Omega$.	

Moreover, the existence of a Borel set $X_\Gamma\subseteq\Omega$ that intersects every $\Gamma$-orbit exactly once is equivalent to the fact that $\Omega/\Gamma$ is standard Borel, or in other words that the quotient map $\pi\colon X_\Gamma\to \Omega/\Gamma$ is a Borel bijection between standard Borel spaces (see \cite[Prop.~6.3]{kechrisTopicsOrbitEquivalence2004}). In this case, the orbit equivalence relation is said to be smooth, and we make the following definition. 

\begin{definition}
	A measure-preserving action of a countable group $\Gamma$ on a standard measured space $(\Omega,\mu)$ is \textbf{smooth} if it admits a fundamental domain. 
\end{definition}

Given a smooth action $\Gamma\act(\Omega,\mu)$, if $X$ is a fundamental domain for the $\Gamma$-action, we denote by $\pi_X$ the map which takes (almost) every $\omega\in\Omega$ to the unique element of the $\Gamma$-orbit of $\omega$ which belongs to $X$. Observe that by definition, if $X_1$ and $X_2$ are two measure-fundamental domains, then the restriction of $\pi_{X_1}$ to $X_2$ is an element of the full pseudo-group of the $\Gamma$-action whose inverse is the restriction of $\pi_{X_2}$ to $X_1$. 

In particular, $X_1$ and $X_2$ have the same measure, and so given any smooth action $\Gamma\act(\Omega,\mu)$, we can unambiguously endow the quotient space $\Omega/\Gamma$ with the measure obtained by identifying it with one of the fundamental domains. We will still denote this measure by $\mu$. 

Finally, given a fundamental domain $X$, we denote by $\iota_{X}$ the inverse of the projection map $X\to \Omega/\Gamma$.

\begin{convention}
	We shall use the notation ``$ \gamma\actom  x$" instead of ``$ \gamma\actx x$" for smooth $\Gamma$-actions. This distinction will prove useful later on since we will also have induced actions on fundamental domains, which we will denote by $\cdot$.
\end{convention}

\subsection{Asymmetric couplings}\label{sec: asymmetric couplings}
We start by recalling the notion of measure equivalence coupling, introduced by Gromov.
\begin{definition}
	Let $\Gamma$ and $\Lambda$ be countable groups, a \textbf{measure equivalence coupling} from $\Gamma$ to $\Lambda$ is a quadruple $(\Omega, X_\Gamma, X_\Lambda,\mu)$, where $(\Omega,\mu)$ is a standard Borel measure space equipped with commuting measure-preserving smooth $\Gamma$ and $\Lambda$ actions such that 
	\begin{enumerate}[(i)]
		\item both the $\Gamma$-action and the $\Lambda$-action are free;
		\item $X_\Lambda$ (resp.\ $X_\Gamma$) is a fixed fundamental domain for the $\Lambda$-action (resp.\ for the $\Gamma$-action);
		\item $X_\Lambda$ and $X_\Gamma$ have finite measures.
	\end{enumerate}
	When there exists such a coupling, we say that $\Gamma$ and $\Lambda$ are \textbf{measure equivalent}.
\end{definition}

The most basic example of a measure equivalence coupling is the following. Given 
a countable group $\Lambda$ viewed as a standard measured space equipped with the counting measure $c$,
we take $\Gamma=\Lambda$ and we have a measure equivalence coupling between $\Gamma$ and $\Lambda$
by making $\Gamma$ act on $\Lambda$ by left translation, and $\Lambda$ act on itself by right translation. A finite measure fundamental domain for both actions is given by $X_\Gamma=X_\Lambda=\{e\}$, so $(\Gamma,\{e_\Lambda\},\{e_\Lambda\},c)$ is a measure equivalence coupling
between $\Gamma$ and $\Lambda$. Motivated by this example, we see that if $\Gamma$ was only an \emph{infinite index subgroup} of $\Lambda$, then the action cannot have a finite measure fundamental domain,
although it has an infinite measure fundamental domain.
We thus first relax the notion of measure equivalence coupling as the following asymmetric version.

\begin{definition}
	Let $\Gamma$ and $\Lambda$ be countable groups.
	A \textbf{measure subgroup coupling} from $\Gamma$ to $\Lambda$ is a triple $(\Omega, X_\Lambda,\mu)$, where $(\Omega,\mu)$ is a standard Borel measure space equipped with commuting measure-preserving smooth  $\Gamma$ and $\Lambda$ actions such that  
	\begin{enumerate}[(i)]
		\item both actions are free;
		\item $X_\Lambda$ is a fixed fundamental domain for the $\Lambda$-action;
		\item $X_\Lambda$ has finite measure.
	\end{enumerate}
	When there exists such a coupling, we say that $\Gamma$ is a \textbf{measure subgroup} of $\Lambda$.
\end{definition}
As explained above, the basic example of a measure subgroup
coupling is  obtained when $\Gamma\leq\Lambda$: it is the triple $(\Lambda,\{e_\Lambda\},c)$ making $\Gamma$ act by left-translation on $\Lambda$ and $\Lambda$ by right-translation on itself.

\begin{remark} 
	Being symmetric in $\Gamma$ and $\Lambda$, measure equivalence couplings require the data of two fundamental domains $X_\Gamma$ and $X_\Lambda$. By contrast, for the above asymmetric notion
	and its latter variants, only an explicit fundamental domain for the $\Lambda$-action is required.  
	The latter will be crucial when making these notions quantitative (see Section \ref{sec: integrability conditions}).
\end{remark}

Let
us now tackle the case when $\Gamma$ is a quotient of $\Lambda$. Here we can make $\Gamma$
act on itself by left translation, and $\Lambda$ act on $\Gamma$ by right translation via the quotient
map. Compared to measure subgroup couplings, we have lost the freeness of the $\Lambda$-action, but we gained that
$\Gamma$ has a finite measure fundamental domain, arriving at the following other asymmetric definition.

\begin{definition}
	Let $\Gamma$ and $\Lambda$ be countable groups. 
	A \textbf{measure quotient coupling} from $\Gamma$ to $\Lambda$ is a triple $(\Omega, X_\Lambda,\mu)$, where $(\Omega,\mu)$ is a standard Borel measure space equipped with commuting measure-preserving smooth  $\Gamma$ and $\Lambda$ actions such that  
	\begin{enumerate}[(i)]
		\item the $\Gamma$-action is free and admits a finite measure fundamental domain;
		\item $X_\Lambda$ is a fixed fundamental domain for the $\Lambda$-action;
		\item $X_\Lambda$ has finite measure.
	\end{enumerate}
	When there exists such a coupling, we say that $\Gamma$ is a \textbf{measure quotient} of $\Lambda$.
\end{definition}

Finally, one can find a common denominator to measure subgroup couplings
and measure quotient couplings as follows.

\begin{definition}
	Let $\Gamma$ and $\Lambda$ be countable groups. 
	A \textbf{measure sub-quotient coupling} from $\Gamma$ to $\Lambda$ is a triple $(\Omega, X_\Lambda,\mu)$, where $(\Omega,\mu)$ is a standard Borel measure space equipped with commuting measure-preserving smooth  $\Gamma$ and $\Lambda$ actions such that  
	\begin{enumerate}[(i)]
		\item the $\Gamma$ action is free;
		\item $X_\Lambda$ is a fixed fundamental domain for the $\Lambda$-action;
		\item $X_\Lambda$ has finite measure.
	\end{enumerate}
	When there exists such a coupling, we say that $\Gamma$ is a \textbf{measure sub-quotient} of $\Lambda$.
\end{definition}

It is a good exercise for the reader to describe how such a coupling arises when
$\Gamma$ is a subgroup of a quotient of $\Lambda$.

\begin{remark} Every measure equivalence coupling yields two measure sub-quotient couplings (one from $\Gamma$ to $\Lambda$, the other one from $\Lambda$ to $\Gamma$) which are both subgroup and quotient  couplings. \end{remark}

To lighten the presentation, and since there will be no ambiguity, we shall frequently remove the word “measure”
in front of the word “coupling” from here on.
Let us end this section by pointing a few nontrivial examples of couplings:

\begin{itemize}
	
	\item We start with a concrete example: let $G$ be a locally compact Polish group with a Haar measure $\mu$, $\Gamma\leq G$ be a discrete subgroup and $\Lambda\leq G$ be lattice. Let $\Lambda$ and $\Gamma$ act respectively by left and right translations on $G$, and let $X_\Lambda$ be a fundamental domain for the $\Lambda$-action. Then $(G,X_\Lambda,\mu)$ is a measure subgroup coupling from $\Gamma$ to $\Lambda.$
	
	\item An important instance of subgroup coupling arises via regular embeddings,
	using amenability. It will be described in Section \ref{sec:Linfty}. 
	
	\item Other examples arising from the notion of orbit equivalence will be given in Section \ref{sec: OE couplings}.
	
	\item 
	Another way to produce more sophisticated examples of couplings is to compose them, as we will see
	now.
\end{itemize}


\subsection{Composition of asymmetric measure couplings}
Our goal here is to extend the (classical) notion of composition of measure equivalence couplings to our asymmetric setting (see for instance \cite{baderIntegrableMeasureEquivalence2013} for compositions of measure equivalence couplings). Starting from a triplet of groups $\Gamma$, $\Lambda$ and $\Sigma$, a coupling from $\Gamma$ to $\Lambda$ and a coupling from $\Lambda$ to $\Sigma$, we would like to define a coupling from $\Gamma$ to $\Sigma$. Such a composition rule should be coherent with those holding for usual notions of subgroups, quotients... For instance we will get that a measure subgroup of a measure subgroup is again a measure subgroup. Similarly, we will obtain that a measure subgroup of a measure quotient is a measure sub-quotient (see Proposition \ref{prop:CombineCouplings}). 
However, it is not true in general that the composition of a measured quotient with another measure quotient remains a measure quotient (see Remark \ref{rmk: prob with quotients}).

\

\noindent{\bf Composition of couplings.}	
Let $\Gamma$, $\Lambda$ and $\Sigma$ be three countable groups  let $(\Omega_1,X_{1,\Lambda},\mu_1)$ be a subgroup coupling from $\Gamma$ to $\Lambda$ and let $(\Omega_2,X_{2,\Sigma},\mu_2)$ be a sub-quotient coupling from $\Lambda$ to $\Sigma$. The \textbf{composition} of these two couplings $(\Omega_3,X_{3,\Sigma},\mu_3)$ is the sub-quotient coupling from $\Gamma$ to $\Sigma$ obtained as follows. 
\begin{itemize}
	\item We consider the diagonal action $\Lambda\act(\Omega_1\times \Omega_2,\mu_1\otimes\mu_2),$ which is smooth and commutes with the $\Gamma$-action on the first coordinate and the $\Sigma$-action on the second coordinate. 
	\item Then the measured space of the coupling is $\Omega_3\coloneqq(\Omega_1\times\Omega_2)/\Lambda$ equipped with the induced $\Gamma$ and $\Sigma$-actions. 
	\item Identifying $\Omega_3$ with a $\Lambda$-fundamental domain of $\Omega_1\times \Omega_2$, we define the measure $\mu_3$ as the restriction of the product measure.
	
	\item We let $X_{3,\Sigma}=\pi_{\Omega_3}(X_{1,\Lambda}\times X_{2,\Sigma})$, where $\pi_{\Omega_3}$ is the projection $\Omega_1\times\Omega_2\to \Omega_3$.
	
	\item
	Furthermore, letting $X_{1,\Gamma}$ (resp.\ $X_{2,\Lambda})$ be fundamental domains for the action of $\Gamma$ on $\Omega_1$ (resp.\ for the action of $\Lambda$ on $\Omega_2$), the subset   
	$X_{3,\Gamma}=\pi_{\Omega_3}(X_{1,\Gamma}\times X_{2,\Lambda})$ of $\Omega_3$ defines a fundamental domain for the action of $\Gamma$. 
\end{itemize}


\begin{proposition}\label{prop:CombineCouplings}
	Let $\Gamma$, $\Lambda$ and $\Sigma$ be three countable groups, let $(\Omega_1,X_{1,\Lambda},\mu_1)$ be a subgroup coupling from $\Gamma$ to $\Lambda$ and let $(\Omega_2,X_{2,\Sigma},\mu_2)$ be a sub-quotient coupling from $\Lambda$ to $\Sigma$. The composition of these two couplings is a sub-quotient coupling from $\Gamma$ to $\Sigma$. 
	
	If both couplings are subgroup (resp.  equivalence) couplings, then their composition is also a subgroup (resp. equivalence) coupling. 
	
	Finally if the coupling from $\Gamma$ to $\Lambda$ is a measure equivalence coupling and the coupling from $\Lambda$ to $\Sigma$ is a quotient coupling, then their composition is a quotient coupling.
\end{proposition}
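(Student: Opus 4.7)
The plan is to work out the $\Lambda$-quotient $\Omega_3$ concretely and then check the three axioms case by case; most of the work is organizational bookkeeping. Since the subgroup coupling from $\Gamma$ to $\Lambda$ provides a free $\Lambda$-action on $\Omega_1$, the diagonal $\Lambda$-action on $(\Omega_1\times\Omega_2,\mu_1\otimes\mu_2)$ is also free, and $X_{1,\Lambda}\times\Omega_2$ is a Borel fundamental domain, witnessing smoothness. This gives a canonical identification of $\Omega_3$ with $X_{1,\Lambda}\times\Omega_2$ equipped with the restriction of $\mu_1\otimes\mu_2$. Under this identification, the induced $\Sigma$-action becomes $\sigma\actom(x,y)=(x,\sigma\actom y)$ (it commuted with $\Lambda$ and acted only on the second factor), while the induced $\Gamma$-action is twisted by the unique $\lambda\in\Lambda$ needed to pull $\gamma\actom x$ back into $X_{1,\Lambda}$. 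Both induced actions are smooth since they will shortly be shown to admit Borel fundamental domains.

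Next I verify the three sub-quotient axioms for the composition. For freeness of $\Gamma\act\Omega_3$: if $\gamma\actom[(x,y)]=[(x,y)]$ with $(x,y)\in X_{1,\Lambda}\times\Omega_2$, there exists $\lambda\in\Lambda$ with $\gamma\actom x=\lambda\actom x$ and $\lambda\actom y=y$; freeness of $\Lambda\act\Omega_2$ (which holds in any sub-quotient coupling) forces $\lambda=e_\Lambda$, then freeness of $\Gamma\act\Omega_1$ forces $\gamma=e_\Gamma$. The concrete form of the $\Sigma$-action makes it obvious that $X_{3,\Sigma}=\pi_{\Omega_3}(X_{1,\Lambda}\times X_{2,\Sigma})$ is a Borel fundamental domain for $\Sigma\act\Omega_3$, and Fubini gives $\mu_3(X_{3,\Sigma})=\mu_1(X_{1,\Lambda})\mu_2(X_{2,\Sigma})<\infty$.

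For the upgraded conclusions, if the second coupling is also a subgroup coupling then $\Sigma\act\Omega_2$ is free, and the same orbit argument with the roles of $\Gamma$ and $\Sigma$ swapped yields freeness of $\Sigma\act\Omega_3$, so the composition is a subgroup coupling. If both inputs are measure equivalence couplings (or if the first is an equivalence coupling and the second a quotient coupling), then $X_{1,\Gamma}$ and $X_{2,\Lambda}$ both have finite measure; I would then verify that $X_{1,\Gamma}\times X_{2,\Lambda}$ is a fundamental domain in $\Omega_1\times\Omega_2$ for the joint action $(\gamma,\lambda)\actom(x,y)=(\gamma\lambda^{-1}\actom x,\lambda\actom y)$ by a two-step existence-and-uniqueness argument (fix the $\Lambda$-coordinate using $X_{2,\Lambda}$, then adjust the $\Gamma$-coordinate using $X_{1,\Gamma}$). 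It follows that $X_{3,\Gamma}=\pi_{\Omega_3}(X_{1,\Gamma}\times X_{2,\Lambda})$ is a $\Gamma$-fundamental domain in $\Omega_3$, of finite measure $\mu_1(X_{1,\Gamma})\mu_2(X_{2,\Lambda})$ by Fubini, covering both upgraded cases simultaneously. The main obstacle here is not conceptual but notational: keeping the left/right conventions of the three commuting actions consistent while writing down the diagonal $\Lambda$-action, so that the freeness arguments and the fundamental-domain identifications above go through without sign errors.
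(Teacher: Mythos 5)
Your proof is correct and covers all three parts of the statement; the axioms are verified in the right order and the key concrete formulas (the $\Sigma$-action on $X_{1,\Lambda}\times\Omega_2$, the candidate fundamental domains $X_{3,\Sigma}$ and $X_{3,\Gamma}$) match what the paper uses. There is, however, one stylistic difference worth noting. For the freeness of $\Gamma\curvearrowright\Omega_3$ and the finite-measure $\Gamma$-fundamental domain, the paper switches to the \emph{second} identification $\Omega_3\cong\Omega_1\times X_{2,\Lambda}$ (fixing a $\Lambda$-fundamental domain in $\Omega_2$), under which $\Gamma$ just acts on the first factor; freeness of $\Gamma\curvearrowright\Omega_1$ then transfers verbatim, and $X_{1,\Gamma}\times X_{2,\Lambda}$ is visibly a fundamental domain. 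You instead stay with the single identification $\Omega_3\cong X_{1,\Lambda}\times\Omega_2$ and argue freeness by unwinding the cocycle and invoking freeness of $\Lambda\curvearrowright\Omega_2$ and $\Gamma\curvearrowright\Omega_1$ in sequence, and then prove the fundamental-domain statement by a two-step argument back in $\Omega_1\times\Omega_2$. Both routes work; the paper's dual-identification trick is cleaner in that it removes the cocycle bookkeeping entirely, which is exactly the ``notational obstacle'' you flagged.

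Two small imprecisions worth fixing in your write-up. First, the joint $\Gamma\times\Lambda$-action on $\Omega_1\times\Omega_2$ (with $\Gamma$ on the first factor and $\Lambda$ diagonal) is $(\gamma,\lambda)\actom(x,y)=(\gamma\actom\lambda\actom x,\lambda\actom y)$, not $(\gamma\lambda^{-1}\actom x,\lambda\actom y)$. Second, freeness of $\Sigma\curvearrowright\Omega_3$ when the second coupling is a subgroup coupling is not ``the same orbit argument with roles swapped'': since $\sigma\actom(x,y)=(x,\sigma\actom y)$ already lies in $X_{1,\Lambda}\times\Omega_2$, no cocycle appears, and freeness follows immediately from freeness of $\Sigma\curvearrowright\Omega_2$ -- it is strictly easier than the $\Gamma$ case. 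Neither issue affects the validity of the conclusion.
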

Before proceeding to the proof, we need to introduce some notation. 
Given a sub-quotient coupling $(\Omega,X_\Lambda,\mu)$ from $\Gamma$ to $\Lambda$, we denote their two commuting actions by $*$. We also have a natural action of $\Gamma$ on $\Omega/\Lambda$ which we call the \textbf{induced action} and denote by $\cdot$. Through the natural identification of $\Omega/\Lambda$ to $X_\Lambda$, this induced action can be described as follows: for all $x\in X_\Lambda$, $\gamma\in\Gamma$,
$$\{\gamma\cdot x\}=\Lambda\actom \gamma\actom x\cap X_\Lambda.$$
Note that the induced $\Gamma$-action defines elements of the pseudo full group of the $\Lambda$-action on $\Omega$, so it is a measure-preserving action. 

\begin{proof}[Proof of Proposition \ref{prop:CombineCouplings}]
	First, the diagonal $\Lambda$-action on $\Omega_1\times \Omega_2$ is smooth because $X_{1,\Lambda}\times \Omega_2$ is a fundamental domain for it. We then define $\Omega_3\coloneqq (\Omega_1\times\Omega_2)/\Lambda$, and we denote by $\mu_3$ the measure induced on $\Omega_3$ from the product measure on $\Omega_1\times \Omega_2$. Finally, we denote by $\star$ the induced actions of both $\Gamma$ and $\Sigma$ on $\Omega_3$.
	Through the identification of $\Omega_3$ with the  fundamental domain $X_{1,\Lambda}\times \Omega_2$ we see that the induced $\Sigma$-action is given by: 
	\[\sigma\star (x,\omega)=(x,\sigma\actom \omega),\]
	for all $(x,\omega)\in X_{1,\Lambda}\times \Omega_2$ and all $\sigma\in\Sigma.$ In particular, we see that $X_{3,\Sigma}\coloneqq\pi_{\Omega_3}(X_{1,\Lambda}\times X_{2,\Sigma})$ is a fundamental domain for it. If the $\Sigma$-action on $\Omega_2$ is free then so is the $\Sigma$-action on $\Omega_3$. Note that $X_{3,\Sigma}$ has finite measure because $X_{1,\Lambda}$ and $X_{2,\Sigma}$ do. 
	Finally, if we fix a fundamental domain $X_{2,\Lambda}$ for the $\Lambda$-action on $\Omega_2$, then we also have a natural identification of $\Omega_3$ with $\Omega_1\times X_{2,\Lambda}$. This shows that the $\Gamma$-action on $\Omega_3$ is free. 
	
	All the properties stated in the proposition can now directly be inferred from the two previous paragraphs. 
\end{proof}

\begin{remark}\label{rmk: induced action}
	Keeping the notation of the proof, we can also describe the $\Gamma$-action on $\Omega_3$ when the latter is identified with $X_{1,\Lambda}\times \Omega_2$ as follows:
	$$\gamma\star (x,\omega)=(\gamma\cdot x,\alpha(\gamma,x)\actom \omega),$$
	where $\cdot$ denotes the induced $\Gamma$-action on $X_{1,\Lambda}$, and where $\alpha\colon \Gamma\times X_{1,\Lambda} \to \Lambda$ is the cocycle given by the equation $\alpha(\gamma,x)\ast\gamma\ast x = \gamma\cdot x$ (we shall come back to this important notion in the next section). Note that the induced $\Gamma$-action (denoted by $\bullet$) on the fundamental domain $X_{3,\Sigma}$, identified to $X_{1,\Lambda}\times X_{2,\Sigma}$, is then given by: 
	\[
	\gamma\bullet(x,y)=(\gamma\cdot x, \alpha(\gamma,x)\cdot y).
	\]
	This point of view will be useful when we explore how integrability conditions behave under composition of couplings.\end{remark}

\begin{remark}\label{rmk: prob with quotients}
	The composition of a quotient coupling with any kind of coupling (even a measure equivalence coupling) may not be itself a sub-quotient coupling, as shown by the following example:
	Let $\Omega_1=\{1\}$ be the quotient coupling from the trivial group $\Gamma=\{1\}$ to $\Lambda=\Z$ such that $\Gamma$ acts trivially on $\{1\}$, and let $\Omega_2=\R$  be the measure equivalence coupling from $\Lambda$ to $\Sigma=\Z$ where the $\Lambda$-action is by translation by $1$, and the $\Sigma$-action is by translation by $\sqrt{2}$. The composition of these couplings is $\Omega_3=\{1\}\times \R/\Z$, where the action of $\Sigma$ is trivial on $\Omega_1$, and by irrational translation on $\R/\Z$. This is not a quotient coupling as the $\Sigma$-action is not smooth.
\end{remark}
\begin{remark}
	It is possible to relax the notions of $\Gamma$ being a measure sub-quotient (resp.\ subgroup, quotient) of $\Lambda$, by replacing the assumption that the $\Lambda$-action has a fundamental domain of finite measure by the weaker assumption that the action has finite co-measure: there exists a subset $A$ of finite measure such that $\Lambda \actom A$ has full measure. Let us refer to this as $\Gamma$ being a weak measured subquotient of $\Lambda$.
	The proof of Proposition \ref{prop:CombineCouplings} shows that composition makes sense between weak measured sub-quotients, and gives rise to a weak measured sub-quotient. In particular a composition of two measured sub-quotients is itself a weak measured sub-quotient.
	Weak measured sub-quotients seem to be interesting objects of study. 
	Let us illustrate this by an example: let $G$ be a locally compact Polish unimodular group, $\Gamma$ be a discrete subgroup of $G$, and $\Lambda$ be a dense subgroup. 
	One easily checks that $G$ equipped with its Haar measure defines a weak measure sub-quotient from $\Lambda$ to $\Gamma$, letting $\Lambda$ act by left-translation and $\Gamma$ by right-translation. 
	We decided not pursue the study of weak couplings here because we were not able to prove Theorem~\ref{thm: monotonicity of isop under Lp} under this generality (i.e.\ replacing sub-quotient by its weak counterpart). 
	In particular, when $G$ is a unimodular locally compact Polish group with $\Gamma\leq G$ discrete and $\Lambda\leq G$ dense, we do not know whether the isoperimetric profile of $\Gamma$ is necessarily asymptotically smaller than that of $\Lambda$.
\end{remark}

\subsection{Integrability conditions}\label{sec: integrability conditions}

We now come to the central notion studied in this paper. Our goal is to add quantitative constraints on a coupling that extend the well-known $\LL^p$-integrability condition for measure equivalence couplings. Classically (for measure equivalence couplings), this is done via the notion of cocycle. Given a subgroup coupling  $(\Omega, X_\Lambda,\mu)$ from $\Gamma$ to $\Lambda$, we can define the cocycle
$\alpha\colon \Gamma\times X_{\Lambda} \to \Lambda$ via by the equation \[\alpha(\gamma,x)\ast\gamma\ast x = \gamma\cdot x.\]
It is non-ambiguously defined for a.e. $x\in X_\Lambda$, as we assume that the $\Lambda$-action is free. 

\begin{definition}\label{def:intCocycle}
	Given any non-decreasing map $\varphi\colon \R^+\to \R^+$ and a finite generating subset $S_\Lambda\subseteq \Lambda$, we say that the subgroup coupling $(\Omega, X_\Lambda,\mu)$ is $\varphi$-integrable if for all $\gamma\in \Gamma$ there is $c_\gamma>0$ such that 
	$$\int_{X_\Lambda} \varphi\left(\frac{|\alpha(\gamma,x)|_{S_\Lambda}}{c_\gamma}\right)d\mu(x) <+\infty.$$
\end{definition}
This definition is very convenient but only makes sense when the $\Lambda$ action is free. So we need to find a substitute for measure quotients or measure sub-quotients.   
To that purpose, it will be useful to work with Schreier graph metrics. 
\begin{definition} Let $\Lambda$ be a finitely generated group  acting smoothly on a standard measured space $(\Omega,\mu)$. We denote by $d_{S_\Lambda}$ the \textbf{Schreier graph metric} on the $\Lambda$-orbits, namely for $y\in \Lambda\actom x$, we let 
	$$d_{S_\Lambda}(x,y)=\min\{n\in\N\colon \exists s_1,...,s_n\in S_\Lambda, y=s_1\cdots s_n\actom x\}.$$
\end{definition}

Observe that if we are given $\Lambda\act(\Omega,\mu)$ and two finite generating sets $S_1$ and $S_2$ for $\Lambda$, then there is $C>0$ such that for all $x\in\Omega$ and all $y\in \Lambda\actom x$,
\begin{equation}\label{ineq: integrability well def}
	\frac 1 Cd_{S_1}(x,y)\leq d_{S_2}(x,y)\leq Cd_{S_1}(x,y).
\end{equation}

We recall that given a smooth action $\Lambda\act(\Omega,\mu)$ and a measure fundamental domain $X$, the map $\iota_X$ is the inverse of the (bijective) projection $\pi_{\Omega/\Lambda}\colon X\to \Omega/\Lambda$. 
\begin{definition}\label{def: Lp metric for fundamental domains}
	Given any non-decreasing map $\varphi\colon \R^+\to \R^+$ and two fundamental domains $X_1$ and $X_2$ of a smooth $\Lambda$-action on
	$(\Omega,\mu)$, we say that they are $\boldsymbol\varphi$\textbf{-equivalent} if there is $c>0$ such that 
	$$ \int_{\Omega/\Lambda}\varphi\left(\frac{d_{S_{\Lambda}}(\iota_{X_1}(x),\iota_{X_2}(x))}{c}\right)d\mu(x)<+\infty.$$
\end{definition}
Note that this does not depend on the  choice of the symmetric generating set $S_\Lambda$ by virtue of inequality \eqref{ineq: integrability well def}. 

\begin{remark}\label{rmk: no c in phi equivalence}
	If $\varphi$ satisfies that for every $c>0$, there is a constant $C>0$ such that for all $x\geq 0$, $\varphi(cx)\leq C\varphi(x)$, then $X_1$ and $X_2$ are $\varphi$-equivalent if and only if
	$$ \int_{\Omega/\Lambda}\varphi\left({d_{S_{\Lambda}}(\iota_{X_1}(x),\iota_{X_2}(x))}\right)d\mu(x)<+\infty,$$
	which is then also equivalent to: for \emph{every} $c>0$ we have
	$$ \int_{\Omega/\Lambda}\varphi\left(\frac{d_{S_{\Lambda}}(\iota_{X_1}(x),\iota_{X_2}(x))}c\right)d\mu(x)<+\infty.$$
	This is the case if $\varphi(x)=x^p$ for some $p>0$, or if $\varphi$ is subadditive. An example where this is not true is when $\varphi$ is the exponential map.
\end{remark}

In order to show that $\varphi$-equivalence is indeed an equivalence relation, we introduce the following quantity: given a smooth $\Lambda$-action and two fundamental domains $X_1$ and $X_2$, we let 
$$c_{\varphi,S_\Lambda}(X_1,X_2)=\inf\left\{c>0\colon \int_{\Omega/\Lambda}\varphi\left(\frac{d_{S_{\Lambda}}(\iota_{X_1}(x),\iota_{X_2}(x))}{c}\right)d\mu(x)<+\infty \right\}.$$

\begin{proposition}\label{prop: c is pseudo-metric}
	The map $c_{\varphi,S_\Lambda}$ is a pseudo-metric on the set of fundamental domains (recall that pseudo-metrics are allowed to take the value $+\infty$). 
\end{proposition}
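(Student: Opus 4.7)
My plan is to verify the three defining properties of a pseudo-metric: symmetry, vanishing on the diagonal, and the triangle inequality (non-negativity being immediate from $\varphi \geq 0$). Symmetry is clear since the Schreier graph metric $d_{S_\Lambda}$ is symmetric, so swapping $X_1$ and $X_2$ does not change the integrand. For vanishing on the diagonal, if $X_1 = X_2$ then $d_{S_\Lambda}(\iota_{X_1}(x), \iota_{X_1}(x)) = 0$ for every $x \in \Omega/\Lambda$; since the induced measure on $\Omega/\Lambda$ is identified with the measure on the (finite-measure) fundamental domain $X_1$, the integral reduces to the finite constant $\varphi(0) \mu(X_1)$ for every $c > 0$, so $c_{\varphi,S_\Lambda}(X_1,X_1) = 0$.

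The substantive point is the triangle inequality $c_{\varphi,S_\Lambda}(X_1,X_3) \leq c_{\varphi,S_\Lambda}(X_1,X_2) + c_{\varphi,S_\Lambda}(X_2,X_3)$. Pick $c_{12} > c_{\varphi,S_\Lambda}(X_1,X_2)$ and $c_{23} > c_{\varphi,S_\Lambda}(X_2,X_3)$ such that the two corresponding integrals are finite. The Schreier triangle inequality applied pointwise gives
\[
\frac{d_{S_\Lambda}(\iota_{X_1}(x), \iota_{X_3}(x))}{c_{12}+c_{23}} \leq \frac{d_{S_\Lambda}(\iota_{X_1}(x), \iota_{X_2}(x)) + d_{S_\Lambda}(\iota_{X_2}(x), \iota_{X_3}(x))}{c_{12}+c_{23}},
\]
and the right-hand side is a convex combination of $d_{S_\Lambda}(\iota_{X_1}(x), \iota_{X_2}(x))/c_{12}$ and $d_{S_\Lambda}(\iota_{X_2}(x), \iota_{X_3}(x))/c_{23}$ with weights $c_{12}/(c_{12}+c_{23})$ and $c_{23}/(c_{12}+c_{23})$. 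In particular it is bounded above by the maximum of these two quantities.

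The key trick is then that for any non-decreasing $\varphi \colon \R^+ \to \R^+$ and $a,b \geq 0$ one has $\varphi(\max(a,b)) \leq \varphi(a) + \varphi(b)$. Applying this after the monotonicity of $\varphi$, I obtain the pointwise bound
\[
\varphi\!\left(\frac{d_{S_\Lambda}(\iota_{X_1}(x), \iota_{X_3}(x))}{c_{12}+c_{23}}\right) \leq \varphi\!\left(\frac{d_{S_\Lambda}(\iota_{X_1}(x), \iota_{X_2}(x))}{c_{12}}\right) + \varphi\!\left(\frac{d_{S_\Lambda}(\iota_{X_2}(x), \iota_{X_3}(x))}{c_{23}}\right).
\]
Integrating against $\mu$ over $\Omega/\Lambda$ and using the defining finiteness of the two right-hand integrals shows that $c_{\varphi,S_\Lambda}(X_1,X_3) \leq c_{12}+c_{23}$. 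Taking the infimum over admissible $c_{12}$ and $c_{23}$ yields the triangle inequality. I do not foresee a serious obstacle: the one thing to keep honest is the passage from the infimum being realized by a finite integral (which the definition does not automatically guarantee) to the additive bound, but it is handled cleanly by working with $c_{12}, c_{23}$ strictly above the respective infima throughout.
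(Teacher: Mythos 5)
Your proof is correct and is essentially the paper's argument: the paper isolates the key inequality $\varphi\bigl(\tfrac{a+b}{c+d}\bigr)\leq\varphi\bigl(\tfrac{a}{c}\bigr)+\varphi\bigl(\tfrac{b}{d}\bigr)$ as a standalone lemma and proves it by the same WLOG/monotonicity observation that you phrase via convex combinations and $\varphi(\max(a,b))\leq\varphi(a)+\varphi(b)$. The remaining steps (integrate, take infima over $c_{12},c_{23}$ strictly above the respective infima) are identical.
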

The proof relies on the following elementary observation. 
\begin{lemma}
	Let $\varphi\colon\R^+\to\R^+$ be a non-decreasing function, let $a,b\geq 0$ and $c,d>0$. Then 
	\[
	\varphi\left(\frac{a+b}{c+d}\right)\leq \varphi\left(\frac a c\right)+\varphi\left(\frac b d\right).
	\]
\end{lemma}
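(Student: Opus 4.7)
The plan is to reduce the inequality to the classical \emph{mediant inequality}: for $a,b\ge 0$ and $c,d>0$, the fraction $\frac{a+b}{c+d}$ lies between $\frac ac$ and $\frac bd$. I would first verify this by a two-line computation, namely that $\frac{a+b}{c+d}-\frac ac=\frac{c(a+b)-a(c+d)}{c(c+d)}=\frac{cb-ad}{c(c+d)}$ has the same sign as $\frac bd-\frac ac=\frac{cb-ad}{cd}$, so $\frac{a+b}{c+d}$ sits (weakly) between the two values $\frac ac$ and $\frac bd$.

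From here the lemma follows in one line. Without loss of generality assume $\frac ac\le \frac bd$ (otherwise swap the roles of the two pairs). Then by the mediant inequality $\frac{a+b}{c+d}\le \frac bd$, and the monotonicity of $\varphi$ yields
\[
\varphi\!\left(\frac{a+b}{c+d}\right)\le \varphi\!\left(\frac bd\right)\le \varphi\!\left(\frac ac\right)+\varphi\!\left(\frac bd\right),
\]
where the last inequality simply uses that $\varphi$ takes non-negative values.

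There is no real obstacle here; the only mild subtlety is bookkeeping the case where one of $a$ or $b$ vanishes (in which case the mediant inequality is trivial) and keeping track of the fact that we do not need any subadditivity or convexity assumption on $\varphi$, only that it is non-decreasing and non-negative. This observation is exactly what makes the bound useful in the proof of Proposition~\ref{prop: c is pseudo-metric}, since one wants to combine two integrability constants $c$ and $d$ coming from triangle-like concatenations of fundamental domains while preserving a uniform $\varphi$-integrability bound.
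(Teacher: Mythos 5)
Your proof is correct and is essentially the paper's argument: both reduce to the mediant inequality (that $\tfrac{a+b}{c+d}$ lies between $\tfrac ac$ and $\tfrac bd$) and then use monotonicity of $\varphi$ together with its non-negativity, differing only in which of the two fractions is taken as the larger one WLOG. You additionally spell out the one-line verification of the mediant inequality, which the paper leaves implicit.
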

\begin{proof}
	By symmetry, we may as well assume that $\frac ac\geq \frac b d$, in which case $\frac a c\geq \frac{a+b}{c+d}$, so we have $\varphi\left(\frac{a+b}{c+d}\right)\leq\varphi\left(\frac a c\right)\leq \varphi\left(\frac a c\right)+\varphi\left(\frac b d\right)$.
\end{proof}
\begin{proof}[Proof of Proposition \ref{prop: c is pseudo-metric}]
	The map $c_{\varphi,S_\Lambda}$ is clearly symmetric and satisfies $c_{\varphi,S_\Lambda}(X,X)=0$ for every fundamental domain $X$, so we only need to check that it satisfies the triangle inequality.
	
	To this end, let $X_1$, $X_2$ and $X_3$ be fundamental domains, let $c_1>c_{\varphi,S_\Lambda}(X_1,X_2)$ and $c_2>c_{\varphi,S_\Lambda}(X_2,X_3)$.
	We have for every $x\in \Omega/\Lambda$ that
	\begin{align*}
		\frac {d_{S_\Lambda}(\iota_{X_1}(x),\iota_{X_3}(x))} {c_1+c_2} &\leq \frac {d_{S_\Lambda}(\iota_{X_1}(x),\iota_{X_2}(x))+d_{S_\Lambda}(\iota_{X_2}(x),\iota_{X_3}(x))} {c_1+c_2}.
	\end{align*}
	By the previous lemma, we thus have 
	\[
	\varphi\left(\frac {d_{S_\Lambda}(\iota_{X_1}(x),\iota_{X_3}(x))} {c_1+c_2}\right)\leq \varphi\left(\frac{d_{S_\Lambda}(\iota_{X_1}(x),\iota_{X_2}(x))}{c_1}\right)+\varphi\left(\frac{d_{S_\Lambda}(\iota_{X_2}(x),\iota_{X_3}(x))}{c_2}\right).
	\]
	By integrating and using our assumptions on $c_1$ and $c_2$, we then deduce 
	that \[\int_{\Omega/\Lambda} \varphi\left(\frac 
	{d_{S_\Lambda}(\iota_{X_1}(x),\iota_{X_3}(x))} {c_1+c_2}\right) 
	d\mu(x)<+\infty,\] and so $c_{\varphi,S_\Lambda}(X_1,X_3) \le 
	c_{\varphi,S_\Lambda}(X_1,X_2)+c_{\varphi,S_\Lambda}(X_2,X_3)$ as wanted.
\end{proof}

\begin{corollary}
	The notion of $\varphi$-equivalence is an equivalence relation between fundamental domains.
\end{corollary}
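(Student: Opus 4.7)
The plan is to observe that this corollary is essentially a repackaging of Proposition \ref{prop: c is pseudo-metric}: two fundamental domains $X_1$ and $X_2$ are $\varphi$-equivalent precisely when $c_{\varphi,S_\Lambda}(X_1,X_2)<+\infty$, since the defining integral is finite for some $c>0$ if and only if the infimum of admissible $c$'s is finite. Thus $\varphi$-equivalence is the standard ``being at finite pseudo-distance'' equivalence relation attached to a pseudo-metric, and the three axioms of an equivalence relation follow from the three properties of a pseudo-metric.

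More concretely, I would verify reflexivity, symmetry, and transitivity in order. Reflexivity is immediate because $d_{S_\Lambda}(\iota_X(x),\iota_X(x))=0$ for every $x$, so $c_{\varphi,S_\Lambda}(X,X)=0<+\infty$. Symmetry comes from the symmetry of $d_{S_\Lambda}$, which makes $c_{\varphi,S_\Lambda}(X_1,X_2)=c_{\varphi,S_\Lambda}(X_2,X_1)$; hence finiteness of one forces finiteness of the other. For transitivity, if $X_1\sim X_2$ and $X_2\sim X_3$, choose $c_1>c_{\varphi,S_\Lambda}(X_1,X_2)$ and $c_2>c_{\varphi,S_\Lambda}(X_2,X_3)$ both finite, and apply the triangle inequality proved in Proposition \ref{prop: c is pseudo-metric} to obtain $c_{\varphi,S_\Lambda}(X_1,X_3)\le c_1+c_2<+\infty$, which is exactly the statement that $X_1$ and $X_3$ are $\varphi$-equivalent.

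There is no real obstacle here: the work has already been done in establishing that $c_{\varphi,S_\Lambda}$ is a pseudo-metric (in particular the elementary lemma on $\varphi((a+b)/(c+d))$ used to prove the triangle inequality). The only mild care required is to notice that the definition of $\varphi$-equivalence as ``there exists $c>0$ with the integral finite'' is tautologically equivalent to $c_{\varphi,S_\Lambda}(X_1,X_2)<+\infty$, so that the pseudo-metric viewpoint applies verbatim.
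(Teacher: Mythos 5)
Your proposal is correct and follows exactly the paper's approach: the corollary is stated immediately after Proposition~\ref{prop: c is pseudo-metric} precisely because $\varphi$-equivalence is the relation of being at finite $c_{\varphi,S_\Lambda}$-distance, and the equivalence-relation axioms are the pseudo-metric axioms. Your careful unwinding of the ``exists $c>0$'' quantifier into ``$c_{\varphi,S_\Lambda}<+\infty$'' is the only observation needed, and you state it cleanly.
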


We are now ready to define our notion of $\varphi$-integrability for sub-quotient couplings. 

\begin{definition}\label{def:phi-couplings}
	Let $\varphi,\psi\colon \R^+\to \R^+$ be non-decreasing maps. \begin{itemize}
		\item A sub-quotient (resp. subgroup, quotient) coupling $(\Omega,X_\Lambda,\mu)$ from $\Gamma$ to $\Lambda$ is a called $\boldsymbol\varphi$\textbf{-integrable}  if for every $\gamma\in\Gamma$ we have that $X_\Lambda$ and $\gamma\ast X_\Lambda$ are $\varphi$-equivalent as fundamental domains of the $\Lambda$-action. In that case, we call $\Gamma$  a $\boldsymbol{\varphi}\textbf-$\textbf{integrable measure sub-quotient} (resp. \textbf{subgroup} or \textbf{quotient}) of $\Lambda$. 
		
		\item A measure equivalence coupling $(\Omega,X_\Gamma,X_\Lambda,\mu)$ from $\Gamma$ to $\Lambda$ is called  $\boldsymbol(\boldsymbol\varphi\boldsymbol,\boldsymbol\psi\boldsymbol)$\textbf{-integrable} when the coupling $(\Omega,X_\Lambda,\mu)$ from $\Gamma$ to $\Lambda$ is $\varphi$-integrable and the coupling $(\Omega,X_\Gamma,\mu)$ from $\Lambda$ to $\Gamma$ is $\psi$-integrable. We then say that $\Gamma$ and $\Lambda$ are $\boldsymbol(\boldsymbol\varphi\boldsymbol,\boldsymbol\psi\boldsymbol)$\textbf{-integrable measure equivalent}, or simply that they are $(\varphi,\psi)$-measure equivalent.
		
	\end{itemize}
\end{definition}

\begin{remark}\label{rmk: concrete phi equivalence}
	By spelling out what $\varphi$-integrability means, we see that a measure sub-quotient coupling $(\Omega,X_\Lambda,\mu)$ from $\Gamma$ to $\Lambda$  is $\varphi$-integrable if and only if for every $\gamma\in\Gamma$, there is $c_\gamma>0$ such that 
	\[
	\int_{X_\Lambda} \varphi\left(\frac{d_{S_\Lambda}(\gamma\actx x,\gamma\actom x)}{c_\gamma}\right)d\mu(x)<+\infty,
	\]
	where $\actom$ denotes the $\Gamma$-action on $\Omega$ and $\cdot$ denotes the induced $\Gamma$-action on $X_\Lambda$.
	We deduce that a measure subgroup coupling from $\Gamma$ to $\Lambda$ is a $\varphi$-coupling if and only if the cocycle $\alpha\colon\Gamma\times X_\Lambda\to \Lambda$ is $\varphi$-integrable in the sense of Definition~\ref{def:intCocycle}.
\end{remark}

\begin{proposition}\label{prop: only check integrability on generators}
	A sub-quotient coupling $(\Omega,X_\Lambda,\mu)$  from $\Gamma=\la S_\Gamma\ra$ to $\Lambda$ is $\varphi$-integrable if and only if for every $s\in S_\Gamma$ we have that $X_\Lambda$ and $s\ast X_\Lambda$ are $\varphi$-equivalent.
\end{proposition}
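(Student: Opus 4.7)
The ``only if'' direction is immediate since $S_\Gamma\subseteq\Gamma$. For the converse, my plan is to decompose an arbitrary $\gamma\in\Gamma$ as a word $s_1\cdots s_n$ in $S_\Gamma$ (which I may assume symmetric without loss of generality) and then telescope using the triangle inequality for $c_{\varphi,S_\Lambda}$ provided by \cref{prop: c is pseudo-metric}. The critical ingredient will be the following $\Gamma$-invariance property of $c_{\varphi,S_\Lambda}$: for every $\gamma'\in\Gamma$ and every $s\in S_\Gamma$,
\[c_{\varphi,S_\Lambda}\bigl(\gamma'\actom X_\Lambda,\, (\gamma' s)\actom X_\Lambda\bigr)= c_{\varphi,S_\Lambda}(X_\Lambda,\, s\actom X_\Lambda).\]

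To establish this equality, I will exploit the fact that the $\Gamma$- and $\Lambda$-actions commute: the self-map $x\mapsto \gamma'\actom x$ of $\Omega$ permutes the $\Lambda$-orbits and restricts to each of them as an isometry for $d_{S_\Lambda}$, while the induced $\Gamma$-action on $\Omega/\Lambda$, which I shall denote by $\cdot$, preserves $\mu$. The elementary identities
\[\iota_{\gamma'\actom X_\Lambda}(\gamma'\cdot O)=\gamma'\actom \iota_{X_\Lambda}(O) \quad\text{and}\quad \iota_{(\gamma' s)\actom X_\Lambda}(\gamma'\cdot O)=\gamma'\actom \iota_{s\actom X_\Lambda}(O)\]
then combine with the isometry property so that the change of variable $P=\gamma'\cdot O$ in the integral defining $c_{\varphi,S_\Lambda}(\gamma'\actom X_\Lambda,(\gamma' s)\actom X_\Lambda)$ transforms it into the one defining $c_{\varphi,S_\Lambda}(X_\Lambda, s\actom X_\Lambda)$.

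With the invariance in hand, setting $\gamma_i=s_1\cdots s_i$ and applying the triangle inequality from \cref{prop: c is pseudo-metric} yields
\[c_{\varphi,S_\Lambda}(X_\Lambda,\gamma\actom X_\Lambda)\le \sum_{i=1}^n c_{\varphi,S_\Lambda}\bigl(\gamma_{i-1}\actom X_\Lambda,\gamma_i\actom X_\Lambda\bigr)= \sum_{i=1}^n c_{\varphi,S_\Lambda}(X_\Lambda, s_i\actom X_\Lambda),\]
which is finite by assumption, so $X_\Lambda$ and $\gamma\actom X_\Lambda$ are $\varphi$-equivalent as required. The main (though rather mild) obstacle in this plan will be writing the change-of-variable step cleanly, since one must simultaneously keep track of the maps $\iota_Y$ for several fundamental domains $Y$ arising as $\Gamma$-translates of $X_\Lambda$; once the invariance equality is verified, the rest is a straightforward telescoping argument.
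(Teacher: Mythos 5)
Your proposal is correct and follows essentially the same route as the paper: both use the fact that the $\Gamma$-action commutes with the $\Lambda$-action to deduce that $\Gamma$-translation preserves $\varphi$-equivalence (you make this a full isometry of the pseudo-metric $c_{\varphi,S_\Lambda}$, which is a slightly sharper phrasing of the same observation), and then telescope via the triangle inequality from \cref{prop: c is pseudo-metric}. You simply spell out more explicitly the change-of-variable argument the paper compresses into one sentence.
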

\begin{proof}
	Since the $\Gamma$-action commutes with the $\Lambda$-action, it must preserve the equivalence relation of $\varphi$-equivalence, so for every $\gamma\in\Gamma$ and every $s\in S_\Gamma$ we have that $\gamma\actom X_\Lambda$ and $\gamma s\actom X_\Lambda$ are $\varphi$-equivalent. From there, the statement follows by induction from Proposition \ref{prop: c is pseudo-metric}.
\end{proof}


\begin{remark}
	It is not hard to see that being a $\varphi$-integrable sub-quotient coupling only depends on the asymptotic behavior of $\varphi$. More precisely, if $\varphi\preccurlyeq \psi$, then every  $\psi$-integrable sub-quotient coupling is also a $\varphi$-integrable.
\end{remark}

For convenience and consistency with the literature, for $p\in (0,\infty)$ we talk about $\LL^p$ couplings instead of $x\mapsto x^p$-integrable couplings.
We also say that we have an \textbf{$\boldsymbol{\LL^\infty}$ sub-quotient coupling} from $\Gamma$ to $\Lambda$ when the $\Gamma$-action satisfies for every $\gamma\in\Gamma$ that the map $\Omega/\Lambda \to \Lambda\colon x\mapsto d_{S_\Lambda}(\gamma\actx x,\gamma\actom x)$ is essentially bounded. Note that every $\LL^\infty$ sub-quotient coupling is  $\varphi$-integrable for any increasing map $\varphi\colon \R^+\to \R^+$.\\

Let us now explain how various established notions fit into our asymmetric framework.

\begin{itemize}
	\item Two finitely generated groups $\Gamma$ and $\Lambda$ are  $L^p$ \emph{measure equivalent} in the sense of \cite{baderIntegrableMeasureEquivalence2013} when there is an $(\LL^p, \LL^p)$ measure equivalence coupling from $\Gamma$ to $\Lambda$. 
	\item Two finitely generated groups $\Gamma$ and $\Lambda$ are  \emph{uniform measure equivalent} in the sense of \cite{shalomHarmonicAnalysisCohomology2004} when there is an $(\LL^\infty, \LL^\infty)$ measure equivalence coupling between $\Gamma$ and $\Lambda$.
	\item Two finitely generated groups $\Gamma$ and $\Lambda$ are \emph{bounded measure equivalent} in the sense of \cite{sauerMathrmLInvariantsGroups2002} when there is an $(\LL^\infty,\LL^\infty)$ measure equivalence coupling from $\Gamma$ to $\Lambda$ which is cobounded in both directions. 
\end{itemize}

\subsection{Composition of \texorpdfstring{$\varphi$}{phi}-integrable 
	couplings}\label{sec:compositioncouplings}

We study how integrability conditions behave under composition of couplings. We first consider the case where $\varphi\colon\R^+\to\R^+$ is subadditive, e.g. when $\varphi$ is concave. Our arguments will follow closely those from \cite[Sec. A.2]{baderIntegrableMeasureEquivalence2013}.

\begin{lemma}\label{lem:ConcavePhiEquivalence}
	Let $\Gamma$ and $\Lambda$ be two finitely generated groups, let $\varphi\colon \R^+\to \R^+$ be a non-decreasing subadditive map and let $(\Omega,X_\Lambda,\mu)$ be a $\varphi$-integrable sub-quotient coupling from $\Gamma$ to $\Lambda$.
	Then there is a constant $C>0$ such that for every $\gamma\in\Gamma$
	$$\int_{X_\Lambda}\varphi\left(d_{S_{\Lambda}}(\gamma\actx x,\gamma\actom x)\right)d\mu(x)\le C\abs{\gamma}_{S_\Gamma}.$$
\end{lemma}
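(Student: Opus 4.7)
My plan is a telescoping argument along a geodesic word for $\gamma$, using subadditivity of $\varphi$ to distribute it over the resulting sum of generator-level terms.

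The first step is a generator-level estimate. By Proposition~\ref{prop: only check integrability on generators} and Remark~\ref{rmk: concrete phi equivalence}, for each $s\in S_\Gamma$ there exists $c_s>0$ with $\int_{X_\Lambda}\varphi(d_{S_\Lambda}(s\actx x,s\actom x)/c_s)\,d\mu(x)<\infty$. To absorb the constants $c_s$, I note that subadditivity of the non-decreasing $\varphi$ yields $\varphi(at)\le\lceil a\rceil\varphi(t)$ for all $a,t\ge 0$ by an easy induction, whence $\varphi(t)\le\lceil c_s\rceil\varphi(t/c_s)$. Hence $\int_{X_\Lambda}\varphi(d_{S_\Lambda}(s\actx x,s\actom x))\,d\mu(x)$ is still finite, and I let $C$ be its maximum over $s\in S_\Gamma$.

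Next comes the telescoping. Fix a geodesic expression $\gamma=s_1\cdots s_n$ with $n=\abs{\gamma}_{S_\Gamma}$, and set $x_k=(s_{k+1}\cdots s_n)\actx x\in X_\Lambda$, so that $x_0=\gamma\actx x$, $x_n=x$, and $x_{k-1}=s_k\actx x_k$. I introduce the intermediate points $y_k=s_1\actom\cdots\actom s_k\actom x_k\in\Omega$, for which $y_0=\gamma\actx x$ and $y_n=\gamma\actom x$. The crucial observation is that the $\Gamma$-action on $\Omega$ preserves the $\Lambda$-Schreier distance within orbits: if $z=\mu\actom y$ with $\mu\in\Lambda$, then $\sigma\actom z=\mu\actom\sigma\actom y$ for any $\sigma\in\Gamma$ by commutativity of the two actions, so any $\Lambda$-word realizing $z$ from $y$ also takes $\sigma\actom y$ to $\sigma\actom z$. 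Applying this with $\sigma=s_1\cdots s_{k-1}$, and noting that $s_k\actx x_k$ and $s_k\actom x_k$ lie in the same $\Lambda$-orbit by definition of the induced action, I obtain $d_{S_\Lambda}(y_{k-1},y_k)=d_{S_\Lambda}(s_k\actx x_k,s_k\actom x_k)$. The triangle inequality then gives
\[d_{S_\Lambda}(\gamma\actx x,\gamma\actom x)\le\sum_{k=1}^n d_{S_\Lambda}(s_k\actx x_k,s_k\actom x_k).\]

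To conclude, apply subadditivity of $\varphi$ to distribute it over the sum, integrate over $x\in X_\Lambda$, and perform the change of variables $y=x_k$ in each term. Since the induced $\Gamma$-action on $X_\Lambda$ lies in the full pseudo-group of the $\Lambda$-action, it is measure-preserving, so each of the $n$ resulting integrals equals the corresponding generator-level integral and is thus bounded by $C$. Summing over $k$ yields $\int_{X_\Lambda}\varphi(d_{S_\Lambda}(\gamma\actx x,\gamma\actom x))\,d\mu(x)\le nC=C\abs{\gamma}_{S_\Gamma}$, as desired. The main technical point is the identification $d_{S_\Lambda}(y_{k-1},y_k)=d_{S_\Lambda}(s_k\actx x_k,s_k\actom x_k)$; once the $\Gamma$-invariance of the $\Lambda$-Schreier distance along orbits is observed, the rest is a routine combination of subadditivity and Fubini.
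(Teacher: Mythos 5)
Your proof is correct and follows essentially the same approach as the paper's: a telescoping estimate along a geodesic word for $\gamma$, using the $\Gamma$-invariance of the $\Lambda$-Schreier metric (which follows from commutativity of the two actions) and the subadditivity of $\varphi$. The paper packages the same ingredients slightly more abstractly, introducing a $\Gamma$-invariant pseudo-metric $d_{\varphi,S_\Lambda}$ on the set of $\Lambda$-fundamental domains whose triangle inequality already encodes the subadditivity of $\varphi$, so that the telescoping is phrased as $d_{\varphi,S_\Lambda}(s_1\cdots s_n\actom X_\Lambda,X_\Lambda)\leq\sum_{i}d_{\varphi,S_\Lambda}(s_i\actom X_\Lambda,X_\Lambda)\leq Cn$ rather than at the level of the intermediate points $y_k\in\Omega$ as you do.
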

\begin{proof} Assume $(\Omega,X_\Lambda,\mu)$ is a $\varphi$-integrable sub-quotient coupling from $\Gamma$ to $\Lambda$.
	Given two fundamental domains $X_1$ and $X_2$ for the $\Lambda$-action, we define their $\varphi$-distance $d_{\varphi,S_\Lambda}(X_1,X_2)$ by
	\[
	d_{\varphi,S_\Lambda}(X_1,X_2)=\int_{\Omega/\Lambda}\varphi\left({d_{S_{\Lambda}}(\iota_{X_1}(x),\iota_{X_2}(x))}\right)d\mu(x)
	\]
	Note that this distance is symmetric and satisfies the triangle inequality, so it is a pseudometric as soon as $\varphi(0)=0$. Moreover, using Remark \ref{rmk: no c in phi equivalence}, we have for every $\gamma\in \Gamma$ that 
	\[
	d_{\varphi,S_\Lambda}(X_\Lambda,\gamma\actom X_\Lambda)<+\infty,
	\]
	while by Remark \ref{rmk: concrete phi equivalence} we have $d_{\varphi,S_\Lambda}(X_\Lambda,\gamma\actom X_\Lambda)=\int_{X_\Lambda}\varphi\left(d_{S_{\Lambda}}(\gamma\actx x,\gamma\actom x)\right)d\mu(x)$.

	Let $C=\max_{\gamma\in S_\Gamma}d_{\varphi,S_\Lambda}(X_\Lambda,\gamma\actom X_\Lambda)$. The $\Gamma$-action on the set of $\Lambda$-fundamental domains preserves $d_{\varphi,S_\Lambda}$ because it commutes with the $\Lambda$-action, so for every $s_1,...,s_n\in S_\Gamma$ we have by the triangle inequality
	\begin{align*}
		d_{\varphi,S_\Lambda}(s_1\cdots s_n\actom X_\Lambda,X_\Lambda)&\leq 
		\sum_{i=1}^n d_{\varphi,S_\Lambda}(s_1\cdots s_i\actom X_\Lambda, s_1\cdots s_{i-1}\actom X_\Lambda)\\
		&\leq \sum_{i=1}^n d_{\varphi,S_\Lambda}(s_i\actom X_\Lambda,X_\Lambda)\\
		&\leq Cn,
	\end{align*}
	which yields the desired result.
\end{proof}
\begin{remark}
	Note that when $\varphi$ is subadditive and non-decreasing, we always have $\varphi(x)\leq x\varphi(1)+\varphi(1)$, in particular every $\LL^1$ sub-quotient coupling is  $\varphi$-integrable.
\end{remark}
Let us now study how couplings compose in the subadditive regime.

\begin{proposition}\label{prop:CombineIntCouplingsConcave}
	Let $\varphi,\psi\colon \R^+\to \R^+$ be non-decreasing subadditive maps with $\varphi$ moreover concave and let $\Gamma$, $\Lambda$ and $\Sigma$ be three finitely generated groups. Let $(\Omega_1,X_{1,\Lambda},\mu_1)$ be a $\varphi$-integrable subgroup coupling from $\Gamma$ to $\Lambda$ and let $(\Omega_2,X_{2,\Sigma},\mu_2)$ be a $\psi$-integrable sub-quotient coupling from $\Lambda$ to $\Sigma$. 
	Then the composition of these two couplings is a $\varphi\circ\psi$-integrable sub-quotient coupling from $\Gamma$ to $\Sigma$.
\end{proposition}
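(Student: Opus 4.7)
The plan is to work with the concrete description of the composed coupling supplied by Remark~\ref{rmk: induced action}. Identifying $\Omega_3$ with the $\Lambda$-fundamental domain $X_{1,\Lambda}\times\Omega_2$ and $X_{3,\Sigma}$ with $X_{1,\Lambda}\times X_{2,\Sigma}$, the $\Gamma$-action on $\Omega_3$ reads $\gamma\star(x,\omega)=(\gamma\cdot x,\alpha(\gamma,x)\actom\omega)$ while the induced $\Gamma$-action on $X_{3,\Sigma}$ reads $\gamma\bullet(x,y)=(\gamma\cdot x,\alpha(\gamma,x)\cdot y)$, where $\alpha\colon\Gamma\times X_{1,\Lambda}\to\Lambda$ is the cocycle of the first coupling. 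Since the $\Sigma$-action only moves the second coordinate, the Schreier distance on $\Omega_3$ between these two points is just the $\Sigma$-Schreier distance in $\Omega_2$ between $\alpha(\gamma,x)\actom y$ and $\alpha(\gamma,x)\cdot y$. By Proposition~\ref{prop: only check integrability on generators} and Remark~\ref{rmk: concrete phi equivalence}, it suffices to show that for every $s\in S_\Gamma$,
\[
\int_{X_{1,\Lambda}\times X_{2,\Sigma}}\varphi\!\left(\psi\!\left(d_{S_\Sigma}(\alpha(s,x)\actom y,\alpha(s,x)\cdot y)\right)\right)\,d\mu_1(x)\,d\mu_2(y)<+\infty,
\]
up to absorbing a constant into $\varphi\circ\psi$, which is allowed by subadditivity.

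The next step is to split the integral with Fubini and estimate the inner integral. For fixed $x$, set $\lambda=\alpha(s,x)\in\Lambda$. Since $\psi$ is subadditive and the $\Lambda$-to-$\Sigma$ coupling is $\psi$-integrable, Lemma~\ref{lem:ConcavePhiEquivalence} gives a constant $C_\psi$ with
\[
\int_{X_{2,\Sigma}}\psi\!\left(d_{S_\Sigma}(\lambda\actom y,\lambda\cdot y)\right)d\mu_2(y)\le C_\psi\,|\lambda|_{S_\Lambda}.
\]
Write $M=\mu_2(X_{2,\Sigma})<\infty$ and apply Jensen's inequality to the concave function $\varphi$ with respect to the probability measure $\mu_2/M$ on $X_{2,\Sigma}$:
\[
\int_{X_{2,\Sigma}}\varphi\!\left(\psi(d_{S_\Sigma}(\lambda\actom y,\lambda\cdot y))\right)d\mu_2(y)\le M\,\varphi\!\left(\frac{C_\psi}{M}\,|\lambda|_{S_\Lambda}\right).
\]
Using that $\varphi$ is non-decreasing and subadditive (hence $\varphi(at)\le\lceil a\rceil\varphi(t)$ for $a,t\ge0$), this is bounded by $MK\,\varphi(|\alpha(s,x)|_{S_\Lambda})$ where $K=\lceil C_\psi/M\rceil$.

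Integrating over $x\in X_{1,\Lambda}$ then reduces matters to showing that $\int_{X_{1,\Lambda}}\varphi(|\alpha(s,x)|_{S_\Lambda})\,d\mu_1(x)<+\infty$. But this follows from $\varphi$-integrability of the first coupling (Remark~\ref{rmk: concrete phi equivalence} combined with the definition) once we again use the subadditivity of $\varphi$ to remove the arbitrary renormalization constant $c_s$. Combining these estimates yields the desired bound on the full integral, which by Remark~\ref{rmk: no c in phi equivalence} (applicable because $\varphi\circ\psi$ is subadditive) establishes the $\varphi\circ\psi$-integrability of the composed coupling.

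The main subtlety, and where most of the bookkeeping lies, is the juggling of multiplicative constants: the renormalization constants $c_\gamma$ hidden in the definition of $\varphi$-integrability, the factor $M=\mu_2(X_{2,\Sigma})$ coming from passing to a probability measure for Jensen, and the constant $C_\psi$ from Lemma~\ref{lem:ConcavePhiEquivalence}. All three are handled uniformly by the subadditivity of $\varphi$, which lets us translate multiplicative constants inside $\varphi$ into multiplicative constants outside; concavity of $\varphi$ is only used once, precisely to push $\varphi$ past an integral via Jensen's inequality. Both assumptions on $\varphi$ therefore appear essential, whereas $\psi$ only needs to be subadditive to make Lemma~\ref{lem:ConcavePhiEquivalence} applicable.
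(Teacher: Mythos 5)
Your proof is correct and follows essentially the same route as the paper's: reduce via Remark~\ref{rmk: induced action} to the double integral over $X_{1,\Lambda}\times X_{2,\Sigma}$, bound the inner integral by Lemma~\ref{lem:ConcavePhiEquivalence}, apply Jensen's inequality for the concave $\varphi$, and conclude from the $\varphi$-integrability of the first coupling together with Remark~\ref{rmk: no c in phi equivalence}. The only cosmetic difference is that the paper normalizes $\mu_2(X_{2,\Sigma})=1$ at the outset so Jensen applies directly, whereas you keep $M=\mu_2(X_{2,\Sigma})$ and absorb it afterwards via subadditivity; both are fine.
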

\begin{proof}
	Thanks to Lemma \ref{lem:ConcavePhiEquivalence} we find $C>0$ such that
	\begin{equation}\label{eq:ConcaveInt}
		\int_{X_{2,\Sigma}}\psi\left(d_{S_{\Sigma}}(\lambda\cdot x,\lambda\actom x))\right)d\mu(x)\le C|\lambda|_{S_\Lambda} \text{ for every } \lambda\in\Lambda.
	\end{equation}
	By scaling the measure $\mu_2$ we may assume that $\mu_2(X_{2,\Sigma})=1$. Denote by $\alpha\colon \Gamma\times X_{1,\Lambda}\to\Lambda$ the cocycle defined by the equation  $\alpha(\gamma,x)\actom \gamma\actom  x=\gamma\cdot x $. By Remark \ref{rmk: induced action} and the definition of the composition of our two couplings, we need to show that the following quantity is finite:
	\[
	\int_{X_{1,\Lambda}}\int_{X_{2,\Sigma}} \varphi\circ \psi\left(d_{S_\Sigma}(\alpha(\gamma,x)\actx y, \alpha(\gamma,x)\actom y  )\right) d\mu_2(y)d\mu_1(x).
	\]
	Now by Jensen's inequality, this is at most
	\[
	\int_{X_{1,\Lambda}}\varphi\left(\int_{X_{2,\Sigma}} \psi\left(d_{S_\Sigma}(\alpha(\gamma,x)\actx y, \alpha(\gamma,x)\actom y  )\right) d\mu_2(y)\right)d\mu_1(x),
	\]
	which by inequality \eqref{eq:ConcaveInt} is bounded above by
	$\int_{X_{1,\Lambda}}\varphi(C\abs{\alpha(\gamma,x)}_{S_\Lambda})d\mu_1(x)$. The 
	latter is indeed finite
	by our assumption on the first coupling and Remark \ref{rmk: no c in phi equivalence}.
\end{proof}

The above result can be combined with Proposition \ref{prop:CombineCouplings} to obtain a $\varphi\circ\psi$-integrable
subgroup coupling or a $\varphi\circ\psi$-integrable quotient coupling  by composition. 

\begin{remark}
	Given two finitely generated groups $\Gamma$ and $\Lambda$, one could define
	\[
	\alpha(\Gamma,\Lambda)=
	-\log\left(\sup\{p\leq 1\colon \Gamma\text{ and }\Lambda\text{ have an }\LL^p\text{ measure equivalence coupling}\}\right)
	\]
	The previous proposition implies that this is a pseudo-metric on the space of isomorphism classes of finitely generated groups. It would be interesting to understand this pseudo-metric further. For instance, Theorem~\ref{thm:OE between Zd and Zd'} implies that $\alpha(\Z^{n},\Z^{m})=\abs{\log n-\log m}$.
\end{remark}

For non subadditive maps $\varphi$, we need a stronger notion of $\varphi$-integrability so that it behaves well with respect to composition.
\begin{definition}\label{def:strong}
	Let $\varphi\colon \R^+\to \R^+$ be an increasing map.
	We say that a coupling $(\Omega,X_\Lambda,\mu)$ from $\Gamma$ to $\Lambda$ is  \textbf{strongly $\varphi$-integrable} or $\varphi^\diamond$-integrable if for every $\varepsilon>0$ there are $\delta>0$ and $C>0$ such that for every $\gamma\in\Gamma$,
	\[\int_{X_\Lambda} \varphi\left(\delta\, d_{S_{\Lambda}}(\gamma\actx x,\gamma\actom x)\right) d\mu(x) \le C\varphi\left(\varepsilon |\gamma|_{S_\Gamma}\right)\]
\end{definition}
Note again that thanks to inequality \eqref{ineq: integrability well def} strong integrability does not depend on the choice of the finite generating set  $S_\Lambda$. However, the above condition has to be checked on \emph{every} element of $\Gamma$.
\begin{proposition}\label{prop:CombineIntCouplings}
	Let $\varphi\colon \R^+\to \R^+$ be an increasing map and let $\Gamma$, $\Lambda$ and $\Sigma$ be three finitely generated groups.
	Let $(\Omega_1,X_{1,\Lambda},\mu_1)$ be an strongly $\varphi$-integrable  subgroup coupling from $\Gamma$ to $\Lambda$ and let $(\Omega_2,X_{2,\Sigma},\mu_2)$ be a strongly $\varphi$-integrable sub-quotient coupling from $\Lambda$ to $\Sigma$. 
	Then the composition of these two couplings is a strongly $\varphi$-integrable sub-quotient coupling from $\Gamma$ to $\Sigma$.
\end{proposition}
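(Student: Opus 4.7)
The plan is to mimic the proof of \cref{prop:CombineIntCouplingsConcave} but replace the use of Jensen's inequality (which required concavity of $\varphi$) by a direct chaining of the strong integrability estimates, which is precisely what the $\varphi^\diamond$ condition is designed to allow.

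First I would identify the quantity to estimate. Using Remark \ref{rmk: induced action}, the composition coupling identifies $\Omega_3\cong X_{1,\Lambda}\times\Omega_2$ and $X_{3,\Sigma}\cong X_{1,\Lambda}\times X_{2,\Sigma}$, and the $\Gamma$-actions are given by $\gamma\star(x,\omega)=(\gamma\cdot x,\alpha(\gamma,x)\actom\omega)$ and $\gamma\bullet(x,y)=(\gamma\cdot x,\alpha(\gamma,x)\cdot y)$, where $\alpha$ is the cocycle of the first coupling. Since the $\Sigma$-action on $\Omega_3$ preserves the first coordinate, we have
\[
d_{S_\Sigma}(\gamma\bullet(x,y),\gamma\star(x,y))=d_{S_\Sigma}\!\bigl(\alpha(\gamma,x)\cdot y,\,\alpha(\gamma,x)\actom y\bigr),
\]
so what must be shown is that for every $\eps>0$ there exist $\delta,C>0$ such that for all $\gamma\in\Gamma$,
\[
I(\gamma)\coloneqq\int_{X_{1,\Lambda}}\!\int_{X_{2,\Sigma}}\varphi\!\left(\delta\, d_{S_\Sigma}\bigl(\alpha(\gamma,x)\cdot y,\alpha(\gamma,x)\actom y\bigr)\right)d\mu_2(y)\,d\mu_1(x)\le C\,\varphi\bigl(\eps|\gamma|_{S_\Gamma}\bigr).
\]

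Next I would chain the two strong integrability conditions. Fix $\eps>0$. Applying the strong $\varphi$-integrability of the first coupling at parameter $\eps$, we obtain $\delta_1,C_1>0$ such that for all $\gamma\in\Gamma$,
\[
\int_{X_{1,\Lambda}}\varphi\bigl(\delta_1\,|\alpha(\gamma,x)|_{S_\Lambda}\bigr)\,d\mu_1(x)\le C_1\,\varphi\bigl(\eps|\gamma|_{S_\Gamma}\bigr),
\]
using that $d_{S_\Lambda}(\gamma\cdot x,\gamma\actom x)=|\alpha(\gamma,x)|_{S_\Lambda}$ by freeness of the $\Lambda$-action on $\Omega_1$. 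Applying the strong $\varphi$-integrability of the second coupling at parameter $\delta_1$, we obtain $\delta_2,C_2>0$ such that for every $\lambda\in\Lambda$,
\[
\int_{X_{2,\Sigma}}\varphi\bigl(\delta_2\, d_{S_\Sigma}(\lambda\cdot y,\lambda\actom y)\bigr)\,d\mu_2(y)\le C_2\,\varphi\bigl(\delta_1|\lambda|_{S_\Lambda}\bigr).
\]

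Finally, specializing the last inequality to $\lambda=\alpha(\gamma,x)$ for a.e.\ $x\in X_{1,\Lambda}$ and integrating against $\mu_1$ by Fubini yields
\[
I(\gamma)\le C_2\int_{X_{1,\Lambda}}\varphi\bigl(\delta_1\,|\alpha(\gamma,x)|_{S_\Lambda}\bigr)\,d\mu_1(x)\le C_1C_2\,\varphi\bigl(\eps|\gamma|_{S_\Gamma}\bigr),
\]
so the composition is strongly $\varphi$-integrable with constants $\delta=\delta_2$ and $C=C_1C_2$. There is no real obstacle here; the only point that needs care is the correct matching of parameters $\eps_2=\delta_1$ between the two invocations of the $\varphi^\diamond$ condition, which replaces the role of concavity and Jensen's inequality in \cref{prop:CombineIntCouplingsConcave}.
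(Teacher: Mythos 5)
Your proof is correct and follows essentially the same route as the paper's: fix $\eps$, apply the first strong integrability estimate to get $\delta_1,C_1$, then apply the second with parameter $\delta_1$ playing the role of the ``$\eps$'' to get $\delta_2,C_2$, and chain via Fubini using Remark \ref{rmk: induced action}. The notation differs cosmetically (the paper writes $\delta_\Lambda,C_\Gamma,\delta_\Sigma,C_\Lambda$) but the matching of parameters and the resulting constants $\delta=\delta_2$, $C=C_1C_2$ are identical.
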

\begin{proof}
	Let $\varepsilon>0$. As $(\Omega_1,X_{1,\Lambda},\mu_1)$ is strongly $\varphi$-integrable, there are $\delta_\Lambda>0$ and $C_\Gamma>0$ such that
	\begin{equation}\label{eq: strong ineq for gamma}
		\int_{X_{1,\Lambda}} \varphi\left(\delta_\Lambda d_{S_{\Lambda}}(\gamma\actx x,\gamma\actom x)\right) d\mu_1(x) \leq C_\Gamma\,\varphi\left(\varepsilon \abs{\gamma}_{S_\Gamma}\right)\text{ for every $\gamma\in\Gamma$.}
	\end{equation}
	and as $(\Omega_2,X_{2,\Sigma},\mu)$ is strongly $\varphi$-integrable, there exist $\delta_\Sigma>0$ and $C_\Lambda>0$ such that
	\begin{equation}\label{eq: strong ineq for lambda}
		\int_{X_{2,\Sigma}} \varphi\left(\delta_\Sigma d_{S_{\Sigma}}(\lambda\actx x,\lambda\actom x)\right) d\mu_2(x) \leq C_\Lambda\,\varphi\left(\delta_\Lambda |\lambda|\right)\text{ for every $\lambda\in\Lambda$.}
	\end{equation}
	By Remark \ref{rmk: induced action} and the definition of the composition of our two couplings, we need to estimate the following quantity:
	\[
	\int_{X_{1,\Lambda}}\int_{X_{2,\Sigma}} \varphi\left(\delta_\Sigma d_{S_\Sigma}(\alpha(\gamma,x)\actx y, \alpha(\gamma,x)\actom y  )\right) d\mu_2(y)d\mu_1(x).
	\]
	By inequality \eqref{eq: strong ineq for lambda}, this is bounded above by 
	\[
	\int_{X_{1,\Lambda}}C_\Lambda \varphi(\delta_\Lambda \abs{\alpha(\gamma,x)}_{S_\Lambda})\leq C_\Lambda C_\Gamma\,\varphi\left(\varepsilon |\gamma|_{S_\Gamma}\right)
	\]
	as wanted, where the last inequality is a consequence of inequality \eqref{eq: strong ineq for gamma}, and the fact that by definition $\abs{\alpha(\gamma,x)}_{S_\Lambda}=d_{S_\Lambda}(\gamma\cdot x,\gamma\actom x)$.
\end{proof}

For some maps $\varphi$ we can weaken the strong integrability condition. Most 
notably for $\LL^p$ couplings, where $p\geq 1$. 
\begin{proposition}
	Let $p\ge 1$. Every $\LL^p$ sub-quotient coupling from $\Gamma$ to $\Lambda$ is a strongly $\LL^p$ sub-quotient coupling.
\end{proposition}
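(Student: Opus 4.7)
The plan is to mimic the proof of Lemma \ref{lem:ConcavePhiEquivalence}, but replace subadditivity of $\varphi$ with Minkowski's inequality, which is available precisely because $p\geq 1$. The key point is to put a genuine $\LL^p$-type pseudo-metric on the set of $\Lambda$-fundamental domains, for which the $\Gamma$-action is isometric.

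First, for two $\Lambda$-fundamental domains $X_1, X_2$, I would set
\[
d_p(X_1,X_2) = \left(\int_{\Omega/\Lambda} d_{S_\Lambda}\bigl(\iota_{X_1}(x),\iota_{X_2}(x)\bigr)^p \, d\mu(x)\right)^{1/p}.
\]
This is symmetric and vanishes on the diagonal. The triangle inequality
$d_p(X_1,X_3)\le d_p(X_1,X_2)+d_p(X_2,X_3)$ follows from the pointwise triangle inequality for $d_{S_\Lambda}$ combined with Minkowski's inequality on $\LL^p(\Omega/\Lambda,\mu)$, where we crucially use $p\ge 1$. Exactly as in Remark \ref{rmk: concrete phi equivalence}, the $\LL^p$-integrability of the coupling translates (since $t\mapsto t^p$ is homogeneous, so constants $c_\gamma$ are irrelevant for finiteness) into $d_p(X_\Lambda,\gamma\actom X_\Lambda)<\infty$ for every $\gamma\in\Gamma$.

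Next, because the $\Gamma$-action on $\Omega$ commutes with the $\Lambda$-action, it sends $\Lambda$-fundamental domains to $\Lambda$-fundamental domains and preserves $d_p$: indeed, $\iota_{\gamma\actom X}(x) = \gamma\actom \iota_X(\gamma^{-1}\cdot x)$, and the $\Gamma$-action on $\Omega/\Lambda$ is measure-preserving, while the $\Gamma$-action on $\Omega$ preserves $d_{S_\Lambda}$ on $\Lambda$-orbits. Setting
\[
K = \max_{s\in S_\Gamma} d_p(X_\Lambda, s\actom X_\Lambda) < \infty,
\]
I can then iterate the triangle inequality as in the proof of Lemma \ref{lem:ConcavePhiEquivalence}: for any $\gamma = s_1\cdots s_n$ with $s_i\in S_\Gamma$, $\Gamma$-invariance of $d_p$ gives
\[
d_p(X_\Lambda, \gamma\actom X_\Lambda) \le \sum_{i=1}^n d_p(s_i\actom X_\Lambda,X_\Lambda) \le K n,
\]
so choosing $n = |\gamma|_{S_\Gamma}$ yields
\[
\int_{X_\Lambda} d_{S_\Lambda}(\gamma\actx x,\gamma\actom x)^p\, d\mu(x) \le K^p\, |\gamma|_{S_\Gamma}^p.
\]

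Finally, to deduce strong $\LL^p$-integrability, I fix $\varepsilon>0$ arbitrary, set $\delta = \varepsilon$ and $C = K^p$, and use homogeneity:
\[
\int_{X_\Lambda} \bigl(\delta\, d_{S_\Lambda}(\gamma\actx x,\gamma\actom x)\bigr)^p d\mu(x) = \varepsilon^p \int_{X_\Lambda} d_{S_\Lambda}(\gamma\actx x,\gamma\actom x)^p d\mu(x) \le K^p (\varepsilon|\gamma|_{S_\Gamma})^p,
\]
which is exactly the inequality required by Definition \ref{def:strong} for $\varphi(t)=t^p$. The main (and only) conceptual step is identifying Minkowski's inequality as the substitute for subadditivity of $\varphi$; once the pseudo-metric $d_p$ is set up, the $\Gamma$-equivariance and triangle-inequality argument is essentially a transcription of Lemma \ref{lem:ConcavePhiEquivalence}.
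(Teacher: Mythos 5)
Your proof is correct and is essentially the same argument as the paper's: both introduce the $\LL^p$ pseudo-metric $d_p(X_1,X_2)=\bigl(\int_{\Omega/\Lambda} d_{S_\Lambda}(\iota_{X_1}(x),\iota_{X_2}(x))^p\,d\mu\bigr)^{1/p}$ on the set of $\Lambda$-fundamental domains, verify $\Gamma$ acts isometrically, iterate the triangle inequality along a word $\gamma=s_1\cdots s_n$ to get $d_p(X_\Lambda,\gamma\actom X_\Lambda)\le K|\gamma|_{S_\Gamma}$, and then invoke homogeneity of $t\mapsto t^p$ to satisfy Definition~\ref{def:strong}. The only cosmetic difference is that you explicitly name Minkowski's inequality where the paper merely asserts $d_{\LL^p,S_\Lambda}$ is a metric.
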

\begin{proof}
	We follow an approach similar to that of  Lemma~\ref{lem:ConcavePhiEquivalence}. For two fundamental domains $X_1$ and $X_2$ for the $\Lambda$-action, we define their $\LL^p$ distance by
	\[
	d_{\LL^p,S_\Lambda}(X_1,X_2)=\left(\int_{\Omega/\Lambda}\left({d_{S_{\Lambda}}(\iota_{X_1}(x),\iota_{X_2}(x))}\right)^pd\mu(x)\right)^{1/p}
	\]
	It is not hard to check that $d_{\LL^p,S_\Lambda}$ is a metric, and since the $\Gamma$-action commutes with the $\Lambda$-action, the group $\Gamma$ acts on the set of $\Lambda$-fundamental domains by isometries. Let  $X_\Lambda$ be one of them. As in the proof of Lemma~\ref{lem:ConcavePhiEquivalence}, we find $C_\Gamma>0$ such that for every $\gamma\in\Gamma$, $d_{\LL^p,S_\Lambda}(X_\Lambda,\gamma\actom X_\Lambda)\leq C_\Gamma\abs{\gamma}_{S_\Gamma}$. This means that for all $\gamma\in\Gamma$, we have
	\[
	\int_{X_\Lambda} d_{S_{\Lambda}}(\gamma\actx x,\gamma\actom x)^p d\mu(x) \leq C_\Gamma^p|\gamma|_{S_\Gamma}^p,
	\]
	from which the result easily follows:  given $\varepsilon>0$, we take 
	$\delta=\varepsilon$, $C=C_\Gamma^p$ and note that 
	\[
	\int_{X_\Lambda} \left(\delta d_{S_{\Lambda}}(\gamma\actx x,\gamma\actom 
	x)\right)^p d\mu(x)=\delta^p	\int_{X_\Lambda} 
	d_{S_{\Lambda}}(\gamma\actx 
	x,\gamma\actom x)^p d\mu(x) \leq 
	C_\Gamma^p(\varepsilon|\gamma|_{S_\Gamma})^p\] as wanted.
\end{proof}

Say for $p\geq 1$ that two finitely generated groups are $\LL^p$ \textbf{measure equivalent} when there is an $(\LL^p,\LL^p)$ measure equivalence coupling from $\Gamma$ to $\Lambda$. We deduce from the two previous results that $\LL^p$ measure equivalence is an equivalence relation between finitely generated groups, as proven by Bader, Furman and Sauer in \cite[Lemma A.2.]{baderIntegrableMeasureEquivalence2013}. Note however that this is not true anymore for $p< 1$; counter-examples are provided by Theorem~\ref{thm:OE between Zd and Zd'}.

For exponentially integrable couplings, the next proposition states that 
it is enough to find a single $\eps$ so
as to witness strong exponential integrability.
\begin{proposition}
	Let $\Gamma$ and $\Lambda$ be two finitely generated groups and let $(\Omega,X_\Lambda,\mu)$ be a sub-quotient coupling from $\Gamma$ to $\Lambda$. If  there are $\varepsilon'>0$, $\delta'>0$ and $C'>0$ such that
	\[\int_{X_\Lambda} \exp\left(\delta'\, d_{S_{\Lambda}}(\gamma\actx 
	x,\gamma\actom x)\right) d\mu(x) \le C'\exp\left(\varepsilon' |\gamma|_{S_\Gamma}\right)\text{ for every $\gamma\in\Gamma$,}\]
	then the coupling is strongly $\exp$-integrable.
\end{proposition}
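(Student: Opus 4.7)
The plan is to show that the single triple $(\varepsilon', \delta', C')$ already furnishes a witness for every $\varepsilon > 0$, by distinguishing two cases and using Jensen's inequality to handle the difficult one.

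The easy case is $\varepsilon \ge \varepsilon'$: since $\exp$ is increasing, we have $\exp(\varepsilon' |\gamma|_{S_\Gamma}) \le \exp(\varepsilon |\gamma|_{S_\Gamma})$, so taking $\delta = \delta'$ and $C = C'$ works directly.

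The interesting case is $0 < \varepsilon < \varepsilon'$, where we need to produce a smaller exponent on the right-hand side at the cost of making $\delta$ smaller. Set $\theta = \varepsilon/\varepsilon' \in (0,1)$ and normalize the measure by working with the probability measure $\nu = \mu / \mu(X_\Lambda)$. The function $t \mapsto t^\theta$ is concave on $[0,\infty)$, so Jensen's inequality applied to the nonnegative random variable $f_\gamma(x) = \exp(\delta'\, d_{S_\Lambda}(\gamma \actx x, \gamma \actom x))$ yields
\[
\int_{X_\Lambda} f_\gamma(x)^\theta\, d\nu(x) \le \left(\int_{X_\Lambda} f_\gamma(x)\, d\nu(x)\right)^\theta.
\]
Since $f_\gamma^\theta = \exp(\theta \delta'\, d_{S_\Lambda}(\gamma \actx x, \gamma \actom x))$, undoing the normalization and invoking the assumed inequality gives
\[
\int_{X_\Lambda} \exp\!\left(\theta \delta'\, d_{S_\Lambda}(\gamma \actx x, \gamma \actom x)\right) d\mu(x) \le \mu(X_\Lambda)^{1-\theta} \left(C' \exp(\varepsilon' |\gamma|_{S_\Gamma})\right)^{\theta}.
\]
Hence $\delta = \theta \delta' = (\varepsilon/\varepsilon')\delta'$ and $C = \mu(X_\Lambda)^{1-\theta} (C')^{\theta}$ satisfy the strong integrability requirement at level $\varepsilon$, since $\theta \varepsilon' = \varepsilon$.

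I do not anticipate a real obstacle here; the entire content is the observation that the exponential moment bound is stable under taking $\theta$-th powers via Jensen, which interpolates between the trivial bound at $\theta = 0$ and the hypothesis at $\theta = 1$. The only minor point to handle cleanly is the rescaling by $\mu(X_\Lambda)$, which can be absorbed into the constant $C$, and the fact that the resulting constants $\delta$ and $C$ depend on $\varepsilon$ but not on $\gamma$, as required by Definition~\ref{def:strong}.
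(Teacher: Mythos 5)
Your proof is correct and follows essentially the same route as the paper: both reduce the case $\varepsilon < \varepsilon'$ to Jensen's inequality applied to the concave map $t \mapsto t^{\theta}$ with $\theta = \varepsilon/\varepsilon'$. If anything you are slightly more careful: the paper invokes the inequality $\int f^{\theta}\,d\mu \le (\int f\,d\mu)^{\theta}$ directly, which is only valid when $\mu(X_\Lambda) \le 1$, whereas you explicitly normalize to the probability measure $\nu = \mu/\mu(X_\Lambda)$ and absorb the resulting factor $\mu(X_\Lambda)^{1-\theta}$ into the constant $C$.
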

\begin{proof} 
	Let $\varepsilon>0$. If $\varepsilon\ge \varepsilon'$, we take $C=C'$ and $\delta=\delta'$
	and check that Definition \ref{def:strong} holds with these constants. 
	If $\varepsilon< \varepsilon'$, then take $C={C'}^\frac{\varepsilon}{\varepsilon'}$ and $\delta=\frac{\delta'\varepsilon}{\varepsilon'}$. By rescaling the measure if necessary, we can assume that $\mu(X_\Lambda)=1$. Then, applying Jensen's inequality, we have for every $\gamma\in\Gamma$ that
	\begin{align*}
		\int_{X_\Lambda} \exp\left(\delta\, d_{S_{\Lambda}}(\gamma\actx 
		x,\gamma\actom x)\right) d\mu(x) & = \int_{X_\Lambda} \exp\left(\delta'\, d_{S_{\Lambda}}(\gamma\actx 
		x,\gamma\actom x)\right)^\frac{\varepsilon}{\varepsilon'} d\mu(x)\\
		& \le \left(\int_{X_\Lambda} \exp\left(\delta'\, d_{S_{\Lambda}}(\gamma\actx 
		x,\gamma\actom x)\right) d\mu(x)\right)^\frac{\varepsilon}{\varepsilon'}\\
		& \le \left(C'\exp(\varepsilon'|\gamma|)\right)^\frac{\varepsilon}{\varepsilon'}\\
		& = C \exp(\varepsilon|\gamma|).
	\end{align*}
	Thus, the coupling is strongly $\exp$-integrable as wanted.
\end{proof}

Interesting examples of strongly $(\exp,\exp)$-integrable measure equivalence couplings will be provided in Section \ref{sec: unstable}.

\subsection{Variations on orbit equivalence} \label{sec: OE couplings}
\par We now turn our attention to orbit equivalence of groups, which implies measure equivalence. To make the connection, we need to introduce measure-preserving equivalence relations (see \cite{kechrisTopicsOrbitEquivalence2004} for details). 

\begin{definition}\label{def: pmp eq rel}
	Given a measure-preserving action of a countable group $\Lambda$ on a standard probability space $(X,\mu)$, we associate to it a \textbf{measure-preserving equivalence relation} $\mathcal R_\Lambda$ defined by 
	$$\mathcal R_\Lambda\coloneqq \{(x,\lambda\cdot x)\colon x\in X,\lambda\in\Lambda\}.$$
\end{definition}

A key property of measure-preserving equivalence relations is that they can be endowed with a natural $\sigma$-finite measure $M$. Namely, for $\mathcal R$ a measure-preserving equivalence relation, and $A$ a Borel subset of $\mathcal R$, we let
$$M(A)=\int_X\abs{A_x}d\mu(x),$$
where $A_x=\{y\in X\colon (x,y)\in A\}$. Such a measure is invariant under the flip map $(x,y)\mapsto (y,x)$ (see \cite[p. 34]{kechrisTopicsOrbitEquivalence2004}), which means that
$$M(A)=\int_X\abs{A^y}d\mu(y),$$ 
where $A^y=\{x\in X\colon (x,y)\in A\}$.

\begin{definition}
	Let $\Gamma$ and $\Lambda$ be two finitely generated groups.
	
	\begin{itemize}	
		\item	An \textbf{orbit sub-quotient coupling} from $\Gamma$ to $\Lambda$ is  a triple $(X,Y,\mu)$  where
		\begin{itemize}
			\item[(i)]	 $(Y,\mu)$ is a standard $\sigma$-finite space equipped with a measure-preserving $\Lambda$-action, 
			\item[(ii)] $X$ is a Borel subset of $Y$ of measure $1$  equipped with a free measure-preserving $\Gamma$-action; 
			\item[(iii)] and finally for almost every $x\in X$ we have that $\Gamma\cdot x \subseteq \Lambda\cdot x$. 
		\end{itemize}
		\item  If the $\Lambda$-action is also free, we say that $(X,Y,\mu)$ is an \textbf{orbit subgroup} coupling of $\Gamma$ with $\Lambda$.
		If $X=Y$ and for almost every $x\in X$ we have that $\Gamma\cdot x = \Lambda\cdot x$, we say that $(X,Y,\mu)$  is an \textbf{orbit quotient coupling}, which we then simply write as $(X,\mu)$. Finally, $(X,\mu)$  is an \textbf{orbit equivalence} coupling of $\Gamma$ with $\Lambda$ if it is both an orbit quotient and an orbit subgroup coupling.
	\end{itemize}
\end{definition}

\begin{remark}
	We note that admitting an orbit equivalence coupling agrees with the definitions of orbit equivalence between countable groups in the literature.
\end{remark}
\begin{remark}
	An orbit sub-quotient coupling $(X,Y,\mu)$  from $\Gamma$ to $\Lambda$ such that for almost every $x\in X$ we have that $\Gamma\cdot x = \Lambda\cdot x$ restricts to an orbit quotient coupling $(X,X,\mu)$.
\end{remark}

\begin{remark}
	To every orbit sub-quotient coupling is naturally associated a measure sub-quotient coupling. We will see later that this statement admits some kind of converse (see Proposition \ref{prop:OrbitSubquotient}). 
	\begin{itemize}
		\item The coupling space is $\Omega\coloneqq\mathcal R_\Lambda\cap(X\times Y)$ equipped with the measure induced by $M$.
		\item The commuting actions are defined as follows: for every $\gamma\in\Gamma$, $\lambda\in\Lambda$ and every $(x,y)\in\mathcal R_\Lambda$,
		\[
		\gamma\actom (x,y)=(\gamma\actx x,y)\text{ and }\lambda\actom(x,y)=(x,\lambda\actx y).
		\]
		\item The chosen $\Lambda$-fundamental domain is the diagonal: $X_\Lambda=\{(x,x)\colon x\in X\}$. 
		\item The $\Gamma$-action is smooth. Indeed, a Borel fundamental domain can be obtained as  the intersection with $X\times Y$ of a disjoint union of graphs of \emph{Borel choice functions} for the subequivalence relation $\mathcal (\mathcal R_\Gamma\cup\Delta_Y) \leq\mathcal R_\Lambda$ (see \cite[Sec. 2.(A)]{ioanaSubequivalenceRelationsPositivedefinite2009}).
	\end{itemize}
	Note furthermore that $(X_\Lambda,M)$ is naturally isomorphic to $(X,\mu)$ via $x\mapsto(x,x)$, and that the induced $\Gamma$-action on $X_\Lambda$ is conjugate via the inverse of this map to the original action on $(X,\mu)$. 
\end{remark}

\begin{definition} An orbit sub-quotient coupling from $\Gamma$ to $\Lambda$ is $\varphi$-integrable when it is $\varphi$-integrable as a measure sub-quotient coupling. 
\end{definition}
Note that $\varphi$-integrability for an orbit sub-quotient coupling as above means that for all $\gamma\in\Gamma$, there is $c_\gamma>0$ such that 

\[
\int_X\varphi\left(\frac{d_{S_\Lambda}(x,\gamma\cdot x)}{c_\gamma}\right)d\mu(x)<+\infty.
\]
\begin{remark}
	When $\varphi(x)=x^p$ for some $p\geq 1$, this means that $\Gamma$ is contained in the $\LL^p$ \emph{full group} of the $\Lambda$-action, as defined in 
	\cite{lemaitremeasurableanaloguesmall2018}.
\end{remark}

\begin{remark} Every orbit equivalence (resp. subgroup, quotient) coupling gives rise to a measure equivalence (resp. subgroup, quotient) coupling.  We can thus define similarly $(\varphi,\psi)$-integrable orbit equivalence couplings. Finally, we can also define strong $\varphi$-integrability conditions for orbit couplings. 
\end{remark}

There is a well-known connection between orbit equivalence couplings and measure equivalence couplings of the form $(\Omega,X_\Gamma,X_\Lambda,\mu)$ with $X_\Gamma=X_\Lambda\coloneqq X$ of measure $1$. This is not exactly a one-to-one correspondence as the actions of $\Lambda$ and $\Gamma$ on $X$ may not be free for such a mesure coupling. This mild issue is taken care of by the following proposition.

\begin{proposition}\label{prop: freeness for induced}
	Let $(\Omega,X_\Lambda, \mu)$ be a measure sub-quotient coupling from $\Gamma$ to $\Lambda$, let $(Y,\nu)$ be a standard probability space equipped with a free $\Gamma$-action. Then $(\Omega\times Y, X_\Lambda\times Y, \mu\otimes\lambda)$ is a measure sub-quotient coupling from $\Gamma$ to $\Lambda$, where $\Gamma$ acts diagonally and $\Lambda$ acts on the first coordinate. Moreover, the induced $\Gamma$-action on $X_\Lambda\times Y$ is free, the coupling $(\Omega\times Y, X_\Lambda\times Y, \mu\otimes\lambda)$ is $\varphi$-integrable if and only if $(\Omega,X_\Lambda,\mu)$ was, and if $X_\Gamma$ was a fundamental domain for $\Gamma\act\Omega$, then $X_\Gamma\times Y$ is a fundamental domain for $\Gamma\act \Omega\times Y$. 
\end{proposition}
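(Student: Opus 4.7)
The proposition is a routine verification once the product structure is unpacked, so the plan is to address each claim by direct computation, and the only delicate bookkeeping is to track which $\Gamma$-action is meant on which coordinate when identifying the induced action.

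First I would check that $(\Omega\times Y, X_\Lambda\times Y,\mu\otimes\nu)$ satisfies the sub-quotient coupling axioms. The diagonal $\Gamma$-action preserves $\mu\otimes\nu$ because both factors are $\Gamma$-invariant, the $\Lambda$-action on the first coordinate preserves $\mu\otimes\nu$ because $\Lambda$ preserves $\mu$, and these two actions commute because the $\Lambda$-action touches only the first coordinate while the $\Gamma$-actions on each coordinate commute with $\Lambda$ individually. The set $X_\Lambda\times Y$ is manifestly a fundamental domain for the $\Lambda$-action, of finite measure $\mu(X_\Lambda)\nu(Y)=\mu(X_\Lambda)<\infty$. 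The diagonal $\Gamma$-action is free on the nose because $\Gamma\act Y$ is free. Smoothness of the diagonal $\Gamma$-action follows from the last claim of the proposition: if $X_\Gamma$ is a fundamental domain for $\Gamma\act\Omega$, then for any $(x,y)$ there is a unique $\gamma\in\Gamma$ with $\gamma\actom x\in X_\Gamma$, and the same $\gamma$ sends $(x,y)$ into $X_\Gamma\times Y$, so $X_\Gamma\times Y$ meets each diagonal orbit exactly once.

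Next I would compute the induced $\Gamma$-action on $X_\Lambda\times Y$. For $(x,y)\in X_\Lambda\times Y$, the element $\gamma\actom(x,y)=(\gamma\actom x,\gamma\actom y)$ is sent back into $X_\Lambda\times Y$ by the unique $\lambda\in\Lambda$ with $\lambda\actom\gamma\actom x\in X_\Lambda$, and since $\Lambda$ fixes the second coordinate this gives
\[\gamma\cdot(x,y)=(\gamma\cdot x,\gamma\actom y),\]
where on the first coordinate $\cdot$ is the induced $\Gamma$-action on $X_\Lambda$ coming from the original coupling, and on the second it is the original free $\Gamma$-action on $Y$. Freeness of this induced action on $X_\Lambda\times Y$ is now immediate from the assumed freeness of $\Gamma\act Y$, which is precisely the punchline of the proposition: freeness can be forced on the sub-quotient side by pulling back against any free probability-measure-preserving action.

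For the $\varphi$-integrability assertion, observe that $\gamma\cdot(x,y)$ and $\gamma\actom(x,y)$ share the second coordinate $\gamma\actom y$, and the Schreier graph metric for the $\Lambda$-action on $\Omega\times Y$ acts only through the first coordinate; therefore
\[d_{S_\Lambda}\bigl(\gamma\cdot(x,y),\gamma\actom(x,y)\bigr)=d_{S_\Lambda}(\gamma\cdot x,\gamma\actom x),\]
which is independent of $y$. By Fubini and $\nu(Y)=1$, the integral of $\varphi(d/c_\gamma)$ over $X_\Lambda\times Y$ equals the integral over $X_\Lambda$, so in view of Remark~\ref{rmk: concrete phi equivalence}, $\varphi$-integrability is transferred identically in both directions. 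No genuine obstacle arises anywhere; the argument is a bookkeeping exercise built around the identity above.
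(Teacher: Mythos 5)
Your proof is correct and follows essentially the same route as the paper's: identify the induced $\Gamma$-action on $X_\Lambda\times Y$ as the diagonal of the original induced action on $X_\Lambda$ with the free action on $Y$, deduce freeness, and observe that the $\Lambda$-Schreier distance ignores the second coordinate so $\varphi$-integrability passes through by Fubini. You are merely a bit more explicit about smoothness (the unique $\gamma$ sending $(x,y)$ into $X_\Gamma\times Y$), which the paper compresses into the remark that freeness of $\Gamma\act\Omega$ makes $X_\Gamma\times Y$ a fundamental domain.
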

\begin{proof}
	It is clear that $X_\Lambda\times Y$ is a fundamental domain for the $\Lambda$-action. Since the $\Gamma$-action on $\Omega$ is free, $X_\Gamma\times Y$ is a fundamental domain for the new diagonal action. The induced $\Gamma$-action on $X_\Lambda\times Y$ is the diagonal action obtained from its induced action on $X_\Lambda$ and its action on $Y$. We deduce that this action is free. Finally, the statement about $\varphi$-integrability follows directly from the fact that for every $y\in Y$ and every $\omega,\omega'\in\Omega$, we have  $d_{S_\Lambda}((\omega,y),(\omega',y))=d_{S_\Lambda}(\omega,\omega')$.
\end{proof}
\begin{remark}
	Note that the above lemma can be applied to any countable group $\Gamma$: if $\Gamma$ is infinite, one can take  $(Y,\nu)$ as a Bernoulli shift of $\Gamma$, and if $\Gamma$ is finite, one can take $Y=\Gamma$ acted upon by left translation, equipped with the normalized counting measure.
\end{remark}

\begin{proposition}\label{prop:OrbitQuotient}
	Let $\Gamma$ and $\Lambda$ be countable groups. If there is a $(\varphi,\psi)$ measure equivalence (resp. measure quotient) coupling from $\Gamma$ to $\Lambda$ where the two fundamental domains coincide, then there is a $(\varphi,\psi)$ orbit equivalence (resp. orbit quotient) coupling from $\Gamma$ to $\Lambda$. 
\end{proposition}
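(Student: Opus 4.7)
The plan is to use the common fundamental domain $X \coloneqq X_\Gamma = X_\Lambda$ (normalized so $\mu(X)=1$), equipped with the induced $\Gamma$- and $\Lambda$-actions, as the orbit equivalent (resp.\ orbit quotient) actions; both induced actions preserve $\mu|_X$ since they lie in the full pseudo-groups of the original $\Lambda$- and $\Gamma$-actions on $\Omega$. For the orbit matching, if $y = \gamma \cdot x$ under the induced $\Gamma$-action, then by definition $y = \lambda \actom \gamma \actom x$ for some $\lambda \in \Lambda$, and commutativity of the two actions on $\Omega$ yields $y = \gamma \actom \lambda \actom x \in \Gamma \actom (\lambda \actom x) \cap X$. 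This intersection is the singleton $\{\lambda \cdot x\}$, where the dot now denotes the induced $\Lambda$-action on $X$ viewed as $X_\Gamma$, so $y = \lambda \cdot x$. In the measure equivalence case the symmetric argument gives coinciding $\Gamma$- and $\Lambda$-orbits on $X$; in the measure quotient case we obtain the inclusion $\Gamma \cdot x \subseteq \Lambda \cdot x$ required for an orbit quotient coupling.

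The main obstacle is freeness: the induced $\Gamma$-action on $X$ need not be free, since one could have $\gamma \actom x = \lambda^{-1} \actom x$ in $\Omega$ with $\gamma \neq e$, forcing $\gamma \cdot x = x$ in the induced action. I would sidestep this by an amplification argument in the spirit of Proposition~\ref{prop: freeness for induced}: replace $(\Omega, X, X, \mu)$ with $(\Omega \times Y, X \times Y, X \times Y, \mu \otimes \nu)$, where $(Y, \nu)$ carries a free probability-measure-preserving action of $\Gamma \times \Lambda$ (e.g.\ a Bernoulli shift), and let $\Gamma$ and $\Lambda$ act diagonally on $\Omega \times Y$. Then $X \times Y$ remains a common fundamental domain, the $\Lambda$-Schreier distances on $\Omega \times Y$ coincide with those on $\Omega$ so integrability is preserved, and both induced actions on $X \times Y$ become free thanks to freeness of the auxiliary $\Gamma \times \Lambda$-action. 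In the quotient case, only $\Gamma$-freeness of the auxiliary action is needed.

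Finally, to transfer the integrability, define $\lambda_0(\gamma, x) \in \Lambda$ by $\gamma \cdot x = \lambda_0(\gamma, x) \actom \gamma \actom x$ (this is exactly the $\lambda$ produced in the first paragraph). A direct check using the same argument shows that $\lambda_0(\gamma, x) \cdot x = \gamma \cdot x$ under the induced $\Lambda$-action on $X$, so $d_{S_\Lambda}^X(x, \gamma \cdot x) \le |\lambda_0(\gamma, x)|_{S_\Lambda} = d_{S_\Lambda}^\Omega(\gamma \actom x, \gamma \cdot x)$. The right-hand side is precisely the quantity controlled by $\varphi$-integrability of the original measure equivalence (or quotient) coupling via Remark~\ref{rmk: concrete phi equivalence}, so integration yields $\varphi$-integrability of the resulting orbit coupling. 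In the measure equivalence case, $\psi$-integrability follows symmetrically by exchanging the roles of $\Gamma$ and $\Lambda$.
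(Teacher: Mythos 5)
Your proof is correct and follows essentially the same strategy as the paper's: take the common fundamental domain $X$ with its induced actions, verify that the orbits agree (resp.\ are nested), use an amplification to make the induced actions free, and then transfer integrability by comparing Schreier metrics. The only real variation is in the freeness step: the paper applies \cref{prop: freeness for induced} twice (once with a free $\Gamma$-action, once with a free $\Lambda$-action), whereas you perform a single amplification by a free $\Gamma\times\Lambda$-action; both work, and yours is marginally more compact. Two small things worth noting: (a) you should re-derive the orbit identity $\Gamma\cdot(x,y)=\Lambda\cdot(x,y)$ on $X\times Y$ after amplification, not just on $X$ — this is routine since the induced actions on $X\times Y$ project to those on $X$, but it is a step; (b) for the integrability transfer, since the induced $\Lambda$-action on $X\times Y$ is free, your inequality $d_{S_\Lambda}^{X}(x,\gamma\cdot x)\le \abs{\lambda_0(\gamma,x)}_{S_\Lambda}$ is in fact an equality, which is what the paper expresses more compactly via the $\Gamma\times\Lambda$-equivariant bijection $(\gamma\cdot x,x)\mapsto\gamma\actom x$ between $\mathcal R_\Gamma$ and $\Omega$ — either phrasing suffices.
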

\begin{proof}
	Let $(\Omega,X_\Gamma,X_\Lambda,\mu)$ be a $(\varphi,\psi)$-integrable measure equivalence coupling from $\Gamma$ to $\Lambda$, and denote by $X=X_\Gamma=X_\Lambda$ the common fundamental domain. Up to rescaling the measure (which does not impact the integrability conditions), we may as well assume that $X$ has measure $1$. Using the previous remark, we can apply the above proposition twice and see that without loss of generality, we can also assume that the induced $\Gamma$- and $\Lambda$-actions on $X$ are free. We denote them by $\cdot$.
	
	Now observe that for every $\gamma\in\Gamma$ and every $x\in X$, we have $\gamma\cdot x\in \Lambda\actom \gamma\actom x$ so there is $\lambda\in\Lambda$ such that $\gamma\cdot x= \lambda\actom \gamma\actom x=\gamma\actom\lambda\actom x$. In particular $\lambda\cdot x=\gamma\cdot x$, so we conclude that $\Gamma\cdot x\subseteq\Lambda\cdot x$. By symmetry, we also have $\Lambda\cdot x\subseteq\Gamma\cdot x$, so we conclude that $(X,\mu)$ is an orbit equivalence coupling. Finally, the map $(\gamma\cdot x,x)\mapsto\gamma\actom x$ is a $\Gamma\times\Lambda$-equivariant bijection from $\mathcal R_\Gamma=\mathcal R_\Lambda$ to $\Omega$ which takes $\{(x,x)\colon x\in X\}$ to $X$, and thus the $(\varphi,\psi)$-integrability of the orbit coupling $(X,\mu)$ follows from that of $(\Omega,X_\Gamma,X_\Lambda,\mu)$.
	
	The statement for orbit quotient couplings follows from the same argument, except we apply the above proposition only once so as to make the induced $\Gamma$-action free. 
\end{proof}

Similar comparisons can be made for orbit subgroup and sub-quotient couplings. Observe that an orbit sub-quotient coupling yields a measure sub-quotient coupling such that $X_\Lambda$ intersects every $\Gamma$-orbit at most once. We have the following converse.
\begin{proposition}\label{prop:OrbitSubquotient}
	Let $\Gamma$ and $\Lambda$ be two finitely generated groups. If there is a $\varphi$-integrable measure sub-quotient (resp. measure subgroup) coupling $(\Omega,X_\Lambda,\mu)$ from $\Gamma$ to $\Lambda$ such that $X_\Lambda$ intersects every $\Gamma$-orbit at most once, then there is a $\varphi$-integrable orbit sub-quotient (resp. orbit subgroup) coupling from $\Gamma$ to $\Lambda$. 
\end{proposition}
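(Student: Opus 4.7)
The plan is to adapt the proof of \cref{prop:OrbitQuotient} by building $Y$ as a cocycle-twisted version of $\Lambda$. A preliminary reduction is needed because, even under the partial transversal hypothesis, the induced $\Gamma$-action on $X_\Lambda$ need not be free: the point $\gamma\actom x$ may lie in the $\Lambda$-orbit of $x$ without lying in $X_\Lambda$ itself. I therefore apply \cref{prop: freeness for induced} with a free measure-preserving $\Gamma$-space $(Y_0,\nu)$, yielding the enlarged coupling $(\Omega\times Y_0, X_\Lambda\times Y_0, \mu\otimes\nu)$. A direct check shows that the partial transversal assumption is preserved (the $Y_0$-coordinate is already rigid under the free $\Gamma$-action), that the induced $\Gamma$-action on $X_\Lambda\times Y_0$ is now free (thanks to the $Y_0$-factor), and that $\varphi$-integrability is unchanged (the Schreier distance computation reduces to the one on $\Omega$). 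From now on I may thus assume that the induced $\Gamma$-action on $X_\Lambda$ is free.

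Consider first the subgroup case. Since $\Lambda\act\Omega$ is free, there is a unique cocycle $\mu\colon \Gamma\times X_\Lambda\to\Lambda$ defined by $\gamma\actom x=\mu(\gamma,x)\actom(\gamma\cdot x)$, and commutativity of the two actions yields the identity
\[
\mu(\gamma_1\gamma_2,x)\;=\;\mu(\gamma_2,x)\,\mu(\gamma_1,\gamma_2\cdot x),
\]
together with $\mu(e,x)=e$ and $\mu(\gamma^{-1},\gamma\cdot x)=\mu(\gamma,x)^{-1}$. I let $Y$ be the quotient of $\Lambda\times X_\Lambda$ by the equivalence relation $\sim$ generated by $(\lambda,x)\sim(\lambda\mu(\gamma,x),\gamma\cdot x)$ for $\gamma\in\Gamma$; the cocycle identity makes $\sim$ a bona fide equivalence relation. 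Equip $Y$ with the $\sigma$-finite pushforward of the product of the counting measure on $\Lambda$ and $\mu$ on $X_\Lambda$, and with the left $\Lambda$-action $\lambda_0\actom[(\lambda,x)]=[(\lambda_0\lambda,x)]$, which is well-defined, measure-preserving, and free (combining the freeness of the induced $\Gamma$-action on $X_\Lambda$ with the freeness of $\Lambda\act\Omega$). Define $X$ as the image of $x\mapsto[(e,x)]$; this map is injective thanks to the partial transversal property combined with the freeness of $\Lambda\act\Omega$. Equipping $X$ with the induced $\Gamma$-action, one computes $[(e,\gamma\cdot x)]=[(\mu(\gamma,x)^{-1},x)]=\mu(\gamma,x)^{-1}\actom[(e,x)]$, so the $\Gamma$-orbit of $[(e,x)]$ in $X$ sits inside its $\Lambda$-orbit in $Y$, making $(X,Y,\mu)$ an orbit subgroup coupling.

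The $\varphi$-integrability is preserved for free: the Schreier distance in $Y$ satisfies $d_{S_\Lambda}(x,\gamma\cdot x)=|\mu(\gamma,x)|_{S_\Lambda}$, which by definition of the cocycle coincides with $d_{S_\Lambda}(\gamma\cdot x,\gamma\actom x)$ in $\Omega$, the very quantity appearing in the $\varphi$-integrability of the original coupling by \cref{rmk: concrete phi equivalence}. The sub-quotient case is handled in the same way, with two technical modifications: since $\Lambda\act\Omega$ is no longer free, I choose a Borel measurable selection $\mu(\gamma,\cdot)$ via a standard Borel selection theorem (the set-valued map $x\mapsto\{\lambda : \lambda\actom(\gamma\cdot x)=\gamma\actom x\}$ has Borel graph and nonempty fibers), and I enlarge $\sim$ by also imposing $(\lambda,x)\sim(\lambda s,x)$ whenever $s\actom x=x$. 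The cocycle rule then holds only modulo the stabilizers $\mathrm{Stab}_\Lambda(\gamma\cdot x)$, but these discrepancies are exactly absorbed by the new identifications, so $Y$ remains well-defined and carries a (no longer necessarily free) measure-preserving $\Lambda$-action. The main obstacle is precisely this last well-definedness check in the sub-quotient setting; once granted, the verification of the orbit sub-quotient axioms and the transfer of $\varphi$-integrability proceed mutatis mutandis as in the subgroup case.
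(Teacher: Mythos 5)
Your subgroup case is, up to notation, the paper's own argument expressed in explicit cocycle coordinates: because $\Lambda\act\Omega$ is free, the map $(\lambda,x)\mapsto\lambda\actom x$ identifies $\Lambda\times X_\Lambda$ with $\Omega$, your relation $\sim$ is pushed to the $\Gamma$-orbit relation, and your $Y$ is precisely $\Omega/\Gamma$ — which is what the paper takes directly. The cocycle identity, the injectivity of $x\mapsto[(e,x)]$, and the Schreier distance computation are all correct.

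There are, however, two genuine problems. The first concerns the measure on $Y$. The quotient map $\Lambda\times X_\Lambda\to Y$ is conull-to-one with fibers of cardinality $|\Gamma|$, so the \emph{pushforward} of the counting measure on $\Lambda$ tensored with $\mu$ is \emph{not} $\sigma$-finite when $\Gamma$ is infinite; it assigns infinite mass to every set of positive measure. The measure you want is the one obtained by restricting to a fundamental domain for $\sim$, i.e.\ the measure on $\Omega/\Gamma$ induced by a $\Gamma$-fundamental domain as in \S\ref{sec:smooth}. As written, your claim that $X$ has measure $1$ and the subsequent $\varphi$-integrability transfer do not make sense with the pushforward measure.

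The second problem is the sub-quotient case, which is where the real content of the proposition lies. Once $\Lambda$ acts with stabilizers, $\Lambda\times X_\Lambda$ is no longer a faithful model of $\Omega$: pushing the counting measure on $\Lambda$ to $\Lambda\actom x\cong\Lambda/\mathrm{Stab}_\Lambda(x)$ overcounts by $|\mathrm{Stab}_\Lambda(x)|$, which may even be infinite, so your quotient construction does not inherit the correct $\sigma$-finite measure. The layer of Borel selections and stabilizer identifications you then introduce is solving a problem that the paper avoids altogether: the paper takes $Y\coloneqq\Omega/\Gamma$ and $X\coloneqq\Omega/\Lambda$ outright, uses the partial-transversal hypothesis to embed $X\cong X_\Lambda$ into $Y$ via $\pi_{\Omega/\Gamma}$, and then exhibits the explicit $\Gamma\times\Lambda$-equivariant bijection $\lambda\actom x\mapsto (x,\lambda\cdot x)$ from $\Omega$ to $\mathcal R_\Lambda\cap(X\times Y)$; no cocycle representative is ever chosen and stabilizers never appear. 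If you prefer to keep a concrete model, replace $\Lambda\times X_\Lambda$ by $\Omega$ itself equipped with the orbit-counting measure of Lemma~\ref{lem: Omega as fibered space}; then $\sim$ becomes the $\Gamma$-orbit relation without any stabilizer identifications, and both the measure and the well-definedness issues disappear.
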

\begin{proof}
	We first assume that $(\Omega,X_\Lambda,\mu)$ is a $\varphi$-integrable measure sub-quotient coupling from $\Gamma$ to $\Lambda$ such that $X_\Lambda$ intersects every $\Gamma$-orbit at most once. Up to scaling the measure and using Proposition \ref{prop: freeness for induced}, we may as well assume that $\mu(X_\Lambda)=1$ and that the induced $\Gamma$-action on $X_\Lambda$ is free.
	
	We define $Y\coloneqq\Omega/\Gamma$ and $X\coloneqq\Omega/\Lambda$, equipped with their respective actions of $\Lambda$ and $\Gamma$. By assumption the restriction of $\pi_{\Omega/\Gamma}$ to $X_\Lambda$ is injective, so identifying $X$ with $\pi(X_\lambda)$ we may as well view $X$ as a subset of $Y$. To prove that this defines an orbit sub-quotient coupling, it remains to show that $\Gamma\cdot x\subseteq \Lambda\cdot x$ for every $x\in X$. 
	For every $\gamma\in\Gamma$ and $x\in X_\Lambda$ there is $\lambda\in\Lambda$ such that $\gamma\cdot x=\lambda\ast\gamma\ast x \in X_\Lambda$. Then $\gamma\ast(\lambda\ast x)\in X_\Lambda$, so $\lambda \cdot x=\gamma\ast(\lambda\ast x)=\gamma\cdot x$.
	Thus, $\Gamma\cdot x\subseteq \Lambda\cdot x$, and $(X,Y,\mu)$ is indeed an orbit sub-quotient coupling. 
	
	We now check that the map $\lambda\actom x\mapsto(x,\lambda\cdot x)$ from $\Omega$ to $\mathcal R_\Lambda\cap X \times Y$ is well-defined and bijective. First, if $\lambda\actom x=\lambda'\actom x'$ for some $x,x'\in X_\Lambda$ and $\lambda,\lambda'\in\Lambda$, we have $x=x'$ since $X_\Lambda$ is a fundamental domain, and then by definition of the induced action we must have $\lambda\cdot x=\lambda'\cdot x'$: our map is well-defined. In order to show it is injective, assume that $\lambda\actx x=\lambda'\actx x$, then $\lambda\inv\lambda'\actx x=x$, so there is $\gamma\in\Gamma$ such that $\lambda\inv\lambda'\actom x=\gamma\actom x$, so $\gamma\cdot x=x$. By freeness of the induced $\Gamma$-action, we deduce that $\gamma=e_\Gamma$, and so $\lambda\actom x=\lambda'\actom x$ as wanted. Surjectivity is clear from the definition.
	Note that this map can be more formally defined as $(\pi_{\Omega/\Lambda},\pi_{\Omega/\Gamma})$. Its $\Gamma\times\Lambda$-equivariance is obvious from that description.
	We conclude that the orbit sub-quotient coupling that we obtained is $\varphi$-integrable if and only if our original coupling was.	
	
	For the orbit subgroup case we proceed as before, except we first make sure that both induced actions are free by using Proposition \ref{prop: freeness for induced} twice.
\end{proof}

\begin{remark}
	The two previous propositions show that orbit couplings can be composed as in Proposition \ref{prop:CombineCouplings}. This is shown by combining Proposition \ref{prop:CombineCouplings} with Proposition \ref{prop:OrbitQuotient} and Proposition \ref{prop:OrbitSubquotient}.
	The integrability of this composition coupling satisfies the same results as in Proposition \ref{prop:CombineIntCouplingsConcave} and Proposition \ref{prop:CombineIntCouplings}.
\end{remark}

\section{Revisiting Lewis Bowen's monotonicity theorem for the volume growth}\label{sec:Bowen}

We let $\Gamma$ be a finitely generated group equipped with a finite generating subset $S_\Gamma$. By a slight abuse of notation, we denote $B_{\Gamma}(\gamma,n)$ the ball of radius $n$ centered at $\gamma\in \Gamma$ for the left-invariant word metric $d_{S_\Gamma}$ (therefore omitting the mention of $S_\Gamma$). We also denote $V_\Gamma(n)=|B_{\Gamma}(\gamma,n)|$.
The volume growth of $\Gamma$ is the asymptotic behavior of $V_\Gamma$ (which does not depend on a specific choice of $S_\Gamma$). 
In \cite[Thm.~B2]{austinIntegrableMeasureEquivalence2016}, Lewis Bowen proves that the volume growth is invariant under $\LL^1$ measure equivalence\footnote{Bowen actually proves a more general statement: he shows that the volume growth is monotonous under ``integrable-embedding". His notion seems strongly related to that of $L^1$-measure subgroup, although we did not investigate this further.}. We strengthen his result as follows.

\begin{theorem}\label{thm:Bowen}
	Let  $\varphi$ be an increasing, subadditive function such that $\varphi(0)=0$ and let $\Gamma$ and $\Lambda$ be finitely generated groups. If $\Gamma$ is a $\varphi$-integrable measure sub-quotient of $\Lambda$, then 
	\[V_\Gamma(n)\preccurlyeq V_\Lambda(\varphi^{-1}(n)),\]
	where $\varphi\inv$ denotes the inverse function of $\varphi$. 
\end{theorem}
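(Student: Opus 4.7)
The plan is to adapt Bowen's strategy to the asymmetric measure sub-quotient setting. First I would reduce to the case where the induced $\Gamma$-action on $X_\Lambda$ is free by applying Proposition~\ref{prop: freeness for induced}, which preserves $\varphi$-integrability. Since $\varphi$ is subadditive, Lemma~\ref{lem:ConcavePhiEquivalence} furnishes a constant $C>0$ with
\[
\int_{X_\Lambda}\varphi\bigl(d_{S_\Lambda}(\gamma\cdot x,\gamma\actom x)\bigr)d\mu(x)\leq C\abs{\gamma}_{S_\Gamma}\quad\text{for every }\gamma\in\Gamma.
\]

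Fix $n\geq 1$ and set $R:=\varphi^{-1}(2Cn/\mu(X_\Lambda))$. For each $\gamma\in B_\Gamma(n)$, Markov's inequality gives $\mu(X_\Lambda\setminus E_R^\gamma)\leq \mu(X_\Lambda)/2$, where $E_R^\gamma:=\{x:d_{S_\Lambda}(\gamma\cdot x,\gamma\actom x)\leq R\}$. Double counting in $X_\Lambda\times B_\Gamma(n)$ then yields some $x\in X_\Lambda$ with $N(x,R):=\abs{\{\gamma\in B_\Gamma(n):x\in E_R^\gamma\}}\geq V_\Gamma(n)/2$. For this $x$, each point $\gamma\actom x$ (for the $N(x,R)$ good $\gamma$'s) lies within $\Lambda$-Schreier distance $R$ of its projection $\gamma\cdot x\in X_\Lambda$, so these $N(x,R)$ pairwise distinct points of $\Omega$ all belong to the $R$-neighborhood $\Omega_R:=\bigcup_{|\lambda|_{S_\Lambda}\leq R}\lambda\actom X_\Lambda$, which has measure at most $V_\Lambda(R)\mu(X_\Lambda)$.

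The crucial step is to turn this containment into the cardinality bound $N(x,R)\preccurlyeq V_\Lambda(R)$. In the orbit-equivalence case this is immediate since all $\gamma\actom x$ lie in a single $\Lambda$-orbit, hence in a ball of cardinality at most $V_\Lambda(R)$. For a general measure sub-quotient, distinct $\gamma$'s produce points $\gamma\actom x$ in distinct $\Lambda$-orbits (by freeness of the induced action), so I would instead compare the total-mass estimate $\sum_\gamma\mu(\gamma\actom E_R^\gamma)\geq V_\Gamma(n)\mu(X_\Lambda)/2$ with $\mu(\Omega_R)\leq V_\Lambda(R)\mu(X_\Lambda)$, while controlling the multiplicity function $y\mapsto\abs{\{\gamma\in B_\Gamma(n):y\in\gamma\actom X_\Lambda\}}$ on $\Omega_R$. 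I expect this multiplicity control to be the main obstacle; a natural way around it is to first reduce to an orbit sub-quotient coupling via Proposition~\ref{prop:OrbitSubquotient}, where $X_\Lambda$ meets each $\Gamma$-orbit at most once and the multiplicity is uniformly bounded by $1$. The final inequality then reads $V_\Gamma(n)\leq 2N(x,R)\preccurlyeq V_\Lambda(R)\preccurlyeq V_\Lambda(\varphi^{-1}(n))$.
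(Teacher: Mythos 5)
Your proposal is on the right track and correctly isolates the crucial difficulty — the multiplicity of the map $\gamma\mapsto\gamma\actom x$ into $\Omega_R$ — but the resolution you offer does not work. \Cref{prop:OrbitSubquotient} cannot be used as a reduction step: its hypothesis is precisely that $X_\Lambda$ already meets every $\Gamma$-orbit at most once, i.e. that the coupling is at most $1$-to-one, which is exactly the property you are trying to force. A general measure sub-quotient coupling has no such bound, and there is no construction in the paper that converts an arbitrary sub-quotient coupling into an at-most-$1$-to-one one. So the argument as stated reduces the theorem to a strictly weaker statement. Similarly, the freeness reduction via \cref{prop: freeness for induced} is unnecessary and also unhelpful here: the issue is not freeness of the induced action on $X_\Lambda$ but the size of $\{\gamma : \gamma\actom x \in \lambda^{-1}\actom X_\Lambda\}$ for a fixed $\lambda$, which freeness of the induced action does not control.

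The paper's proof circumvents the multiplicity obstruction by a different, subtler device. Fix a $\Gamma$-fundamental domain $X_\Gamma$ in $\Omega$, and choose a radius $R_0$ (depending only on the coupling, not on $n$) so that $X_0 := X_\Lambda \cap B_\Gamma(e,R_0)\actom X_\Gamma$ has measure close to $\mu(X_\Lambda)$. One then invokes Bowen's return-time lemma (\cref{lem:Bowen}) to guarantee many returns of $\gamma\actx x$ to a good set $X_2 \subseteq X_0$ with $\gamma\in B_\Gamma(e,n)$. The key point is the claim inside the proof: for fixed $\lambda\in\Lambda$ and $x\in X_\Lambda$, the set of $\gamma\in R_{X_0}(x)$ with $\lambda\actom\gamma\actom x\in X_\Lambda$ has size at most $V_\Gamma(R_0)$, because such $\gamma$ forces $\gamma\actom x\in B_\Gamma(e,R_0)\actom\lambda^{-1}\actom X_\Gamma$, and $\lambda^{-1}\actom X_\Gamma$ is a $\Gamma$-fundamental domain while the $\Gamma$-action on $\Omega$ is free. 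So the multiplicity is controlled not by $1$ but by the \emph{constant} $V_\Gamma(R_0)$, which is enough since it is independent of $n$. Your double-counting and Markov estimates are fine, but you need to carry them out over the smaller set $X_0$ (to exploit the bounded $\Gamma$-word distance to $X_\Gamma$) and replace the hoped-for multiplicity $1$ by the bound $V_\Gamma(R_0)$ coming from this claim.
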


The following key lemma is stated in \cite{austinIntegrableMeasureEquivalence2016} for free actions, but the proof works as well for non-free actions.

\begin{lemma}[{\cite[Lemma~B.11]{austinIntegrableMeasureEquivalence2016}}] \label{lem:Bowen}
	Let $\Gamma$ be a finitely generated group. Let $\Gamma\curvearrowright(X, \mu)$ be a measure-preserving action on a standard probability space, and let $X_0\subseteq X$. For $x\in X_0$ let $R_{X_0}(x) \colon= \{\gamma\in \Gamma\colon \gamma\cdot x \in X_0\}$ be its associated \textbf{return time set}. Then, for every $n \in \mathbb{N}$ we have
	\[\int_{X_0} \frac{|R_{X_0}(x)\cap B_\Gamma(e_\Gamma,n)|}{V_\Gamma(n)} d\mu(x) \ge 2\mu(X_0) - 1.\]
\end{lemma}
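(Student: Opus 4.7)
The plan is to swap the order of integration and summation, replacing the integral over $X_0$ of a counting function by a sum over ball elements of measures of intersections, and then apply the elementary inclusion-exclusion bound for measures of two sets in a probability space.

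First I would rewrite the integrand as a sum of indicators. By definition of $R_{X_0}(x)$, for $x \in X_0$ we have
\[
|R_{X_0}(x) \cap B_\Gamma(e_\Gamma, n)| = \sum_{\gamma \in B_\Gamma(e_\Gamma, n)} \mathbf{1}_{X_0}(\gamma \cdot x),
\]
so extending the outer integral from $X_0$ to $X$ by inserting $\mathbf{1}_{X_0}(x)$ gives
\[
\int_{X_0} |R_{X_0}(x) \cap B_\Gamma(e_\Gamma, n)|\, d\mu(x) = \sum_{\gamma \in B_\Gamma(e_\Gamma, n)} \int_X \mathbf{1}_{X_0}(x)\, \mathbf{1}_{X_0}(\gamma \cdot x)\, d\mu(x).
\]
Each integral on the right equals $\mu(X_0 \cap \gamma^{-1} \cdot X_0)$.

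Second, I would apply the elementary bound $\mu(A \cap B) \geq \mu(A) + \mu(B) - 1$, valid for any two Borel subsets of a standard probability space. Since the action is measure-preserving, $\mu(\gamma^{-1} \cdot X_0) = \mu(X_0)$, and therefore
\[
\mu(X_0 \cap \gamma^{-1} \cdot X_0) \geq 2\mu(X_0) - 1
\]
for every $\gamma \in \Gamma$. Summing over the $V_\Gamma(n)$ elements of $B_\Gamma(e_\Gamma, n)$ and dividing by $V_\Gamma(n)$ yields exactly the desired inequality.

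\textbf{Main obstacle.} There is essentially no obstacle: the argument is a two-line Fubini plus the trivial measure bound, which is presumably why the statement remains unchanged whether or not the action is free. The only small care needed is to notice that measure-preservation is used via $\mu(\gamma^{-1} \cdot X_0) = \mu(X_0)$, which holds because $\Gamma$ acts by measure-preserving Borel bijections, and this is what lets us get a uniform lower bound $2\mu(X_0)-1$ that is independent of $\gamma$.
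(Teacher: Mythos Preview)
Your proof is correct and is exactly the standard argument: Fubini followed by the elementary bound $\mu(A\cap B)\geq \mu(A)+\mu(B)-1$ together with measure-preservation. The paper itself does not supply a proof of this lemma but simply cites \cite[Lemma~B.11]{austinIntegrableMeasureEquivalence2016}, remarking that the original statement is for free actions and that the proof goes through without freeness; your write-up confirms this, since freeness is nowhere used.
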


\begin{proof}[Proof of Theorem~\ref{thm:Bowen}]
	Let $(\Omega,X_\Lambda,\mu)$ be a sub-quotient coupling from $\Gamma$ to $\Lambda$ such that for all $s\in S_{\Gamma}$,  \[\int_{X_\Lambda}\varphi(d_{S_\Lambda}(s\actx 
	x,s\actom x)) d\mu(x)<\infty.\]
	The set $X_\Lambda$ has finite measure,  
	so by replacing $\mu$ by $\frac \mu{\mu(X_{\Lambda})}$, we may as well assume that $\mu(X_\Lambda)=1$.
	We let $X_\Gamma$ be a fundamental domain for the action of $\Gamma$. 
	Since $\Gamma\actom X_\Gamma = \Omega$ there exists $R>0$ such that
	the subset $X_0=X_\Lambda\cap B_\Gamma(e_\Gamma,R)\actom X_\Gamma$ satisfies $\mu(X_0)\ge \frac{9}{10}$. 
	
	By Lemma~\ref{lem:ConcavePhiEquivalence}  there exists $C>0$ such that for every $\gamma\in \Gamma$, 
	\begin{equation}\label{ineq: length bound}
		\int_{X_\Lambda} \varphi(d_{S_\Gamma}(\gamma\actx x,\gamma\actom x))d\mu(x) \leq C |\gamma|.
	\end{equation}
	
	We then fix $n\in\N$ and define
	\[X_1 = \left\{x\in X_\Lambda\colon \sum_{\gamma\in B_{\Gamma}(e_\Gamma,n)}\frac{\varphi(d_{S_\Lambda}(\gamma\actx x,\gamma\actom x))}{|\gamma|}\leq {10CV_\Gamma(n)}\right\}.\]
	Note that by applying Markov's inequality and then inequality \eqref{ineq: length bound}, we have
	\[\mu(X_\Lambda\setminus X_1) \le 
	\frac{1}{10CV_\Gamma(n)}\int_{X_\Lambda}
	\sum_{\gamma\in B_{\Gamma}(e_\Gamma,n)}\frac{\varphi(d_{S_\Lambda}(\gamma\actx x,\gamma\actom x))}{|\gamma|} d\mu(x) \le \frac{1}{10}.\]
	For every $x \in X_\Lambda$, we let $$\Gamma_x =\{\gamma \in B_\Gamma(e_\Gamma,n) \colon \varphi(d_{S_\Lambda}(\gamma\actx x,\gamma\actom x)) \le 60C|\gamma|\}.$$
	Applying Markov's inequality to the finite probability space $B_{\Gamma}(e_{\Gamma},n)$ equipped 
	with the normalized counting measure and the definition of $X_1$, we then have for all $x\in X_1$
	\begin{equation}\label{eq:Gammax}
		|\Gamma_x|\ge \frac{5}{6} V_\Gamma(n).
	\end{equation}
	Defining $X_2 = X_1\cap X_0$, we have
	\begin{equation}\label{eq:X2}
		\mu(X_2)\ge \frac{8}{10}.
	\end{equation} We deduce from Lemma \ref{lem:Bowen} that
	\begin{equation}\label{eq:RX2}\int_{X_2} \frac{|R_{X_2}(x)\cap B_{\Gamma}(e_\Gamma,n)|}{V_\Gamma(n)}\ge 2\mu(X_2) - 1 \ge \frac{6}{10}.
	\end{equation}
	As $\Gamma_x\subseteq B_\Gamma(e_\Gamma,n)$ we have that
	\begin{align*}
		\int_{X_2} |R_{X_2}(x) \cap \Gamma_x| d\mu(x) & \ge  \int_{X_2} \big(|R_{X_2}(x) \cap B_\Gamma(e_\Gamma, n)| + |\Gamma_x| - V_\Gamma(n)\big) d\mu(x)\\
		& \ge V_\Gamma(n)\left(\frac{6}{10} + \frac{5}{6}\mu(X_2) -\mu(X_2)\right) \quad \text{by (\ref{eq:Gammax}) and (\ref{eq:RX2})}\\
		& \ge \frac{13}{30}V_\Gamma(n).  
	\end{align*}
	On the other hand
	\begin{align*}
		\int_{X_2} |R_{X_2}(x) \cap \Gamma_x| d\mu(x) & \le \int_{X_2} \left|\{\gamma\in R_{X_2}(x) \colon  \varphi(d_{S_\Lambda}(\gamma\actx x,\gamma\actom x)) \le 60C n\}\right| d\mu(x)\\
		& \le \sum_{\lambda\in B_\Lambda\left(e_\Lambda,\varphi^{-1}(60C n)\right)} \int_{X_2} |\{\gamma\in R_{X_2}(x) \colon \lambda \actom \gamma\actom x\in X_\Lambda \}| d\mu(x) \\
		& \le V_\Lambda\left(\varphi^{-1}(60C n)\right)V_\Gamma(R) \mu(X_2),
	\end{align*}
	where the last inequality results from the inclusion $X_2\subseteq X_0$ together with the following claim  (recall that $X_0=X_\Lambda\cap B_\Gamma(e_\Gamma,R)\actom X_\Gamma$).
	
	\begin{clai} Given $\lambda\in \Lambda$ and $x\in X_\Lambda$,  the cardinality of 
		$ \{\gamma\in R_{X_0}(x) \colon \lambda\actom \gamma\actom x\in X_\Lambda \}$ is at most $V_\Gamma(R).$
	\end{clai}
	\begin{proof} Indeed, let $\gamma$  be in the above set. We have $\lambda\actom \gamma\actom x=\gamma\actx x\in X_0$. In particular $\lambda\actom \gamma\actom x\in B_\Gamma(e_\Gamma,R)\actom X_\Gamma$, from which we deduce that  $\gamma\in B_\Gamma(e_\Gamma,R)\actom \lambda^{-1} \actom X_\Gamma$.
		The conclusion follows by freeness of the $\Gamma$-action and the fact that $\lambda^{-1}\actom X_\Gamma$ is a fundamental domain for it. 
	\end{proof}
	Exploiting our upper and lower bounds for $\int_{X_2} |R_{X_2}(x) \cap \Gamma_x| d\mu(x)$ we deduce that for all $n\in \N$,
	\[ \frac{13}{30}V_\Gamma(n)	\le V_\Lambda\left(\varphi^{-1}(60C n)\right)V_\Gamma(R),\]	
	which yields the conclusion of  the theorem.
\end{proof}

\begin{corollary}\label{cor:BowenPolynomial}
	Assume that two groups $\Gamma$ and $\Lambda$ satisfy $V_\Gamma(r)\approx r^a$ and $V_\Lambda(r)\approx r^b$, with $a<b$. Then $\Lambda$ is not an $\LL^p$ measured sub-quotient of $\Gamma$ if $p>a/b$.
\end{corollary}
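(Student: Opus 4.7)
The plan is a direct application of Theorem~\ref{thm:Bowen}. The only twist is that the natural choice $\varphi(t)=t^p$ is subadditive (and thus admissible in the theorem) only for $p\le 1$, whereas the hypothesis $p>a/b$ combined with $a>b$ forces $p>1$. This obstruction will be easily bypassed by downgrading the integrability via Jensen's inequality, exploiting the fact that the fundamental domain $X_\Lambda$ has finite measure.

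The first step is to assume, for contradiction, that $\Gamma$ is an $\LL^p$-integrable measure sub-quotient of $\Lambda$ for some $p>a/b$. Since $a>b$ gives $p>1$, and since $X_\Lambda$ has finite measure, Jensen's (equivalently H\"older's) inequality applied to each cocycle length function shows that the same coupling is automatically $\LL^1$-integrable.

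The second step is to apply Theorem~\ref{thm:Bowen} with the subadditive function $\varphi(t)=t$, for which $\varphi^{-1}(n)=n$. This yields $V_\Gamma(n)\preccurlyeq V_\Lambda(n)$; substituting the polynomial growth assumptions $V_\Gamma(n)\approx n^a$ and $V_\Lambda(n)\approx n^b$ produces $n^a\preccurlyeq n^b$, which forces $a\le b$ and contradicts the hypothesis $a>b$.

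There is no real obstacle here: the corollary is an immediate consequence of Theorem~\ref{thm:Bowen}. The same argument in fact yields the sharper threshold $p>b/a$: for the range $0<p\le 1$ one plugs $\varphi(t)=t^p$ directly into Theorem~\ref{thm:Bowen}, and the conclusion $n^a \preccurlyeq n^{b/p}$ then forces $p\le b/a$, which combined with the $p>1$ case above covers all $p>b/a$.
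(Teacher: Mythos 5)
Your proof is correct and takes the same route as the paper: reduce to exponents $p\le 1$ using the finiteness of $\mu(X_\Lambda)$ (H\"older/Jensen), then apply Theorem~\ref{thm:Bowen} with $\varphi(t)=t^p$, which is subadditive on that range. Your closing remark that the argument actually yields the sharper threshold $p>b/a$ is exactly right, and it is in fact what the paper's own proof establishes: the proof opens with ``Since $a/b<1$,'' which is inconsistent with the hypothesis $a>b$ but becomes correct with $b/a$ in place of $a/b$, and feeding $\varphi(t)=t^p$ into Theorem~\ref{thm:Bowen} gives $n^a\preccurlyeq n^{b/p}$, i.e.\ $p\le b/a$; so the exponent $a/b$ in the corollary's statement appears to be a misprint for $b/a$, and you have independently caught it.
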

\begin{proof}
	Since $a/b<1$, and $\LL^p$-integrability gets stronger as $p$ increases, it is enough to check it for $p\leq 1$. In this case, Theorem~\ref{thm:Bowen} applies as $x\mapsto x^p$ is subadditive.
\end{proof}

\section{Monotonicity of the isoperimetric profile}\label{sec:isoperimetricprofile}

In this section we state and prove a general monotonicity result satisfied by the isoperimetric profile. We obtain two different conclusions depending on whether the coupling is $\LL^p$ for some $p\geq 1$, or if it is $\varphi$-integrable for a sublinear function $\varphi$. However the first statement for $p=1$ and the second one for $\varphi(t)=t$ have the same content. 
Roughly speaking the following theorem says that the isoperimetric profile behaves well under measure quotient (and so in particular under measure equivalence). 
In order to obtain a similar statement for measure sub-quotients, and in particular for measure subgroups, we will need the following technical assumption.
\begin{definition}
	Given $m\in \N$, we say that a sub-quotient (resp. subgroup, resp quotient) coupling $(\Omega,X_\Lambda,\mu)$ from $\Gamma$ to $\Lambda$ is \textbf{at most $\boldsymbol m$-to-one} if for every $x\in X_\Lambda$ the map $\gamma\mapsto \gamma\inv \actom ( \gamma\actx x)\in \Lambda\actom  x$ has pre-images of size at most $m$.\end{definition}
\begin{remark}
	Equivalently, this condition says that for every $\lambda\in \Lambda$ and  $x\in X_\Lambda$, there are at most $m$ elements $\gamma$ such that $\gamma\actom x \in \lambda \actom X_{\Lambda}$. 
	Indeed, by definition of the $\Gamma$-action on $X_\Lambda$,  $\gamma\actom x \in \lambda \actom X_{\Lambda}$ if and only if $\gamma\actx x=\lambda^{-1}\actom \gamma\actom x$, which is also equivalent to $\lambda\actom x=\gamma\inv \actom ( \gamma\actx x)$.
\end{remark}

\begin{theorem}\label{thm: monotonicity of isop under Lp}Let $p\geq 1$, let $\Gamma$ and $\Lambda$ be finitely
	generated groups.
	Assume that we are in one of the following situations. 
	\begin{itemize}
		\item[(i)] $\Gamma$ is an $\LL^p$ measure quotient of $\Lambda$;
		\item[(ii)] $\Gamma$ is an at most $m$-to-one $\LL^p$ measure 
		sub-quotient of $\Lambda$.
	\end{itemize}
	Then their $\ell^p$ isoperimetric profiles satisfy the following inequality: \[j_{p,\Gamma}\succcurlyeq j_{p,\Lambda}.\]
\end{theorem}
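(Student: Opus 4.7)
The plan is to transport a test function $g\colon \Lambda \to \R$ nearly witnessing $j_{p,\Lambda}(n)$ back to a test function $f\colon \Gamma \to \R$ along the coupling, while controlling the support size, the $\LL^p$ norm and the $\LL^p$ gradient. Fix an $\LL^p$ measure sub-quotient coupling $(\Omega, X_\Lambda, \mu)$ from $\Gamma$ to $\Lambda$, and in the subgroup case (where $\Lambda$ acts freely) denote by $\alpha\colon \Gamma \times X_\Lambda \to \Lambda$ the associated cocycle, defined by $\alpha(\gamma, x)\actom \gamma\actom x = \gamma\actx x$. In the pure quotient case the $\Lambda$-action is not free, so $\alpha$ must be replaced: one instead works with the identification $\Omega \cong \Gamma \times X_\Gamma$ coming from freeness of the $\Gamma$-action, together with the induced $\Lambda$-action on the finite-measure fundamental domain $X_\Gamma$, and uses the dual cocycle $\beta\colon \Lambda \times X_\Gamma \to \Gamma$ this produces.

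For each $y \in X_\Lambda$ set $f_y\colon \Gamma \to \R$, $f_y(\gamma) = g(\alpha(\gamma, y))$, and aim to establish three properties of this family: a pointwise support bound $|\supp f_y| \leq m\,|\supp g|$, an $\LL^p$-averaged lower bound $\int_{X_\Lambda}\|f_y\|_p^p\,d\mu(y) \geq c\,\|g\|_p^p$, and an averaged gradient upper bound $\int_{X_\Lambda}\|\nabla^l_{S_\Gamma} f_y\|_p^p\,d\mu(y) \leq C\,\|\nabla^l_{S_\Lambda} g\|_p^p$. The support bound is exactly what the at most $m$-to-one assumption encodes, since it says that for each $y$ the map $\gamma \mapsto \alpha(\gamma, y)$ has fibers of cardinality at most $m$; in the pure quotient case one automatically gets $m=1$. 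The averaged $\LL^p$ bound follows from Fubini together with a change of variables: since $X_\Lambda$ is a $\Lambda$-fundamental domain, as $(\gamma,y)$ ranges over $\Gamma \times X_\Lambda$, the pair $(\alpha(\gamma,y), \gamma\actx y)$ effectively parametrizes $\Lambda \times X_\Lambda$, so that each value $|g(\lambda)|^p$ gets integrated correctly.

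The gradient bound is where the $\LL^p$-integrability of the coupling is consumed. Using the cocycle identity $\alpha(s\gamma, y) = \alpha(s, \gamma\actx y)\,\alpha(\gamma, y)$, each gradient term reads
\[
|f_y(s\gamma) - f_y(\gamma)| = |g(\alpha(s, \gamma\actx y)\,\alpha(\gamma, y)) - g(\alpha(\gamma, y))|,
\]
which one bounds by summing $|\nabla^l_{S_\Lambda} g|$ along a geodesic in the Cayley graph of $\Lambda$ of length $|\alpha(s, \gamma\actx y)|_{S_\Lambda}$ from $\alpha(\gamma,y)$ to $\alpha(s,\gamma\actx y)\alpha(\gamma,y)$. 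Raising to the $p$-th power and applying Jensen along this path introduces a weight $|\alpha(s, \gamma\actx y)|_{S_\Lambda}^{p-1}$; after summing over $\gamma\in\Gamma$, using that the induced $\Gamma$-action on $X_\Lambda$ preserves $\mu$, this weight gets absorbed into the $\LL^p$-moment $\int_{X_\Lambda}|\alpha(s, y)|_{S_\Lambda}^p\,d\mu(y)$, which is finite by $\LL^p$-integrability of the coupling. The case $p=1$ is cleaner since no such weight arises and the argument reduces to a direct telescoping.

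Combining the three properties, the ratio $\|f_y\|_p^p/\|\nabla^l_{S_\Gamma} f_y\|_p^p$ is at least a constant multiple of $\|g\|_p^p/\|\nabla^l_{S_\Lambda} g\|_p^p$ on a positive-measure subset of $X_\Lambda$, via a Markov-type argument applied to the ratio of the two integrals. Picking any such $y$ and setting $f = f_y$ gives $|\supp f| \leq m\,|\supp g|$ and $\|f\|_p/\|\nabla^l_{S_\Gamma} f\|_p$ comparable to $\|g\|_p/\|\nabla^l_{S_\Lambda} g\|_p$, whence $j_{p,\Gamma}(mn) \succcurlyeq j_{p,\Lambda}(n)$ and the asserted monotonicity. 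The main technical obstacle is the averaged gradient estimate: the weight $|\alpha(s,\gamma\actx y)|_{S_\Lambda}^{p-1}$ produced by the geodesic-Jensen step must be routed so that only the already-assumed $\LL^p$-moment of the cocycle (and not a higher moment) is required, and in the pure quotient case one must first carefully set up the analogue of the cocycle $\alpha$ without invoking freeness of the $\Lambda$-action.
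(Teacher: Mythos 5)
Your plan has the right overall shape (transport a test function on $\Lambda$ to a family on $\Gamma$, control support, norm and gradient, then pigeonhole), but the family you build and the averaged $\LL^p$ lower bound you claim for it do not work, and the root cause is that you index the family by $X_\Lambda$ rather than by a $\Gamma$-fundamental domain $X_\Gamma$. The assertion that $(\gamma,y)\mapsto(\alpha(\gamma,y),\gamma\actx y)$ ``effectively parametrizes $\Lambda\times X_\Lambda$'' is false in general: the map $(\gamma,y)\mapsto\gamma\actom y$ only reaches the $\Gamma$-saturation $\Gamma\actom X_\Lambda$, which can be a tiny part of $\Omega$, so the cocycle values $\alpha(\gamma,y)$ only visit a small part of $\Lambda$. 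Concretely, take an infinite index inclusion of amenable groups $\Gamma\le\Lambda$, $\Omega=\Lambda$ with $\Gamma$ acting by left translations, $\Lambda$ by right translations, and $X_\Lambda=\{e_\Lambda\}$ (a perfectly legitimate at most $1$-to-one $\LL^\infty$ subgroup coupling). Then $\alpha(\gamma,e_\Lambda)=\gamma^{-1}$, so $\int_{X_\Lambda}\|f_y\|_p^p\,d\mu(y)=\sum_{\gamma\in\Gamma}|g(\gamma)|^p$, which vanishes whenever $g$ is supported on $\Lambda\setminus\Gamma$; in general its ratio to $\|g\|_p^p$ has no uniform positive lower bound over finitely supported $g$. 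Your second property, and with it your Markov step, collapses. The paper sidesteps this by building a single function $\tilde f$ on all of $\Omega$ (via Lemma~\ref{lem: pushing through different points}) and computing $\|\tilde f\|_p^p$ in two ways, once through $X_\Lambda\times\Lambda\cong\Omega$ (giving $\mu(X_\Lambda)\|f\|_p^p$) and once through $X_\Gamma\times\Gamma\cong\Omega$ (giving $\int_{X_\Gamma}\|f^x\|_p^p\,d\mu(x)$); the family indexed by $X_\Gamma$ thus captures \emph{all} the mass of $f$, and the norm identity is exact rather than a mere lower bound, so the contradiction argument works even when $X_\Gamma$ has infinite measure.

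Two further problems. First, the claim that ``in the pure quotient case one automatically gets $m=1$'' is not correct; if it were, case (i) would be a trivial sub-case of (ii), whereas the paper devotes Section~\ref{sec:L1 measure quotient iso profile down} to a substantially more delicate argument for (i), replacing $f$ by a smeared version $f_2$ and working with the set $Z=(W\actom X_\Lambda)\cap X_\Gamma$ precisely because, absent any finite-to-one hypothesis, the supports of the $f^x$ need not be controlled by $|\supp f|$. Your proposal offers nothing in this case. Second, your geodesic-plus-Jensen gradient estimate is in the right spirit but risky as written: the weight $|\alpha(s,\gamma\actx y)|_{S_\Lambda}^{p-1}$ multiplies a sum of gradient terms along a geodesic of that same length, and when you then sum over $\gamma$ you must track how often each location of $\Lambda$ appears in the various geodesics; the paper avoids this bookkeeping entirely via the $\Lambda$-equivariance lemma (Lemma~\ref{lem: pushing through different points}), which gives the clean bound $\|f_{x_0}-f_{x_1}\|_p\le d_{S_\Lambda}(x_0,x_1)\|\nabla^r_{S_\Lambda}f\|_p$ orbit by orbit and reduces everything to the $p$-th moment $\int_{X_\Lambda}d_{S_\Lambda}(s\cdot x,s\actom x)^p\,d\mu(x)$.
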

\begin{theorem}\label{thm: monotonicity of isop under phi}
	Let $\varphi\colon(0,\infty)\to (0,\infty)$ be a function  such that $\varphi$ and $t\mapsto t/\varphi(t)$ are non-decreasing, let $\Gamma$ and $\Lambda$ be finitely generated groups. Assume that we are in one of the following situations. 
	\begin{itemize}
		\item[(i)]  $\Gamma$ is a $\varphi$-integrable measure quotient of $\Lambda$;
		\item[(ii)] $\Gamma$ is an at most $m$-to-one $\varphi$-integrable 
		measure  sub-quotient of $\Lambda$
	\end{itemize}
	Then their isoperimetric profiles satisfy the following inequality:
	\[j_{1,\Gamma}\succcurlyeq \varphi\circ j_{1,\Lambda}.\]
\end{theorem}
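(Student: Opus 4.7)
The overall strategy will be a pullback construction. It suffices to produce, for each $n$ large, a subset $B \subseteq \Gamma$ with $|B| = O(n)$ and $|B|/|\partial B| \gtrsim \varphi(K)$, where $K = j_{1,\Lambda}(n)$: by the coarea formula the isoperimetric profile is (up to constants) realized by indicator functions, so I start from a set $A \subseteq \Lambda$ with $|A| \leq n$ and $|A|/|\partial A| \approx K$. The plan is to pull back $f = \mathbf{1}_A$ along the coupling $(\Omega, X_\Lambda, \mu)$ (normalized so that $\mu(X_\Lambda)=1$), defining $F$ on $\Omega$ as the tile-wise extension $F(\lambda \actom y) = f(\lambda)$ for $y \in X_\Lambda$, and then, for each $x$ in a fundamental domain $X_\Gamma$ for $\Gamma$, restricting to the $\Gamma$-orbit of $x$ via $F_x(\gamma) := F(\gamma \actom x)$ on $\Gamma$.

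The central pointwise estimate, which is where the hypothesis on $\varphi$ enters, is the inequality
\[
\min(t, K) \leq \tfrac{K}{\varphi(K)}\, \varphi(t) \qquad \text{for all } t \geq 0.
\]
The case $t \le K$ uses monotonicity of $t/\varphi(t)$, the case $t > K$ uses monotonicity of $\varphi$. Combined with the trivial bounds $\sum_\lambda |f(\lambda\beta) - f(\lambda)| \leq |\beta|_{S_\Lambda} \|\nabla f\|_1$ and $\leq 2\|f\|_1 \leq 2K\|\nabla f\|_1$, this yields the crucial bound
\[
\sum_\lambda |f(\lambda\beta) - f(\lambda)| \leq 2\tfrac{K}{\varphi(K)}\, \varphi(|\beta|_{S_\Lambda})\, \|\nabla f\|_1 \qquad \text{for all } \beta \in \Lambda.
\]

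Next I will run Fubini. Using the disjoint decomposition $\Omega = \bigsqcup_\gamma \gamma \actom X_\Gamma$ gives $\int_{X_\Gamma} \|F_x\|_1 \, d\mu = \|f\|_1$ and $\int_{X_\Gamma} |\mathrm{supp}(F_x)|\, d\mu = |A| \leq n$. For the gradient, for each $s \in S_\Gamma$, writing $\omega = \lambda \actom y$ uniquely with $y \in X_\Lambda$, I have $F(s\actom\omega) - F(\omega) = f(\lambda \beta(s,y)) - f(\lambda)$, where $|\beta(s,y)|_{S_\Lambda} = d_{S_\Lambda}(s\cdot y, s\actom y)$. Applying the boxed inequality fiber-wise over $y$ and invoking the $\varphi$-integrability of the coupling (after absorbing the constants $c_s$ using $\varphi(ct) \leq c \varphi(t)$ for $c \geq 1$, which follows from $t/\varphi(t)$ being non-decreasing) gives
\[
\int_{X_\Gamma} \|\nabla F_x\|_1 \, d\mu \leq C\, \tfrac{K}{\varphi(K)}\, \|\nabla f\|_1 = \tfrac{C n}{\varphi(K)}
\]
for a constant $C$ depending only on the coupling and the generating sets.

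Finally I need to extract a single good $x$. Passing to the weighted probability measure $d\nu = \|F_x\|_1\, d\mu/\|f\|_1$ on $X_\Gamma$, Markov's inequality on $\|\nabla F_x\|_1/\|F_x\|_1$ shows that on a $\nu$-set of mass $\geq 1/2$ the Folner ratio of $F_x$ is $\gtrsim \varphi(K)/C$; intersecting with the $\mu$-Markov set $\{|\mathrm{supp}(F_x)| \leq O(n/\mu(X_\Gamma))\}$ yields an $x$ where $F_x$ has both properties, and then a coarea-formula level set $B = \{F_x > t\} \subseteq \mathrm{supp}(F_x)$ gives the required subset of $\Gamma$. I expect this last extraction step to be the main obstacle: a naive application of Markov to each of the three quantities $\|F_x\|_1$, $|\mathrm{supp}(F_x)|$, $\|\nabla F_x\|_1$ separately does not combine cleanly, so one must weight carefully (a Lagrange-multiplier-style functional $\Phi(x) = \|F_x\|_1 - \theta |\mathrm{supp}(F_x)| - \eta \|\nabla F_x\|_1$ with $\theta + \eta \cdot O(1)/\varphi(K) < 1$ produces a positive average and hence a good $x$). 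For the sub-quotient case, $\mu(X_\Gamma)$ may be infinite, but the at-most-$m$-to-one hypothesis guarantees that any integral $\int_{X_\Gamma} \Psi \, d\mu$ involving our quantities is bounded above by $m \int_{X_\Lambda} \widetilde\Psi\, d\mu$ for the natural fiber-summed integrand; with this substitution the Fubini estimates above go through with an extra factor of $m$, and the rest of the argument is unchanged.
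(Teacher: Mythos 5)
Your overall plan matches the paper's: pull back a test function along the coupling, use a pointwise inequality built from the monotonicity of $\varphi$ and $t/\varphi(t)$ to control the $\Gamma$-gradient of the induced function, integrate over the $\Gamma$-fundamental domain, and extract a single good fiber. Your ``boxed'' inequality
\[
\sum_\lambda |f(\lambda\beta) - f(\lambda)| \;\leq\; 2\,\tfrac{K}{\varphi(K)}\,\varphi(|\beta|_{S_\Lambda})\,\|\nabla f\|_1
\]
is the same estimate as the paper's Lemma~\ref{lem: pushing through different points l1} (after normalization), the Fubini calculation is the content of Proposition~\ref{prop:gradientSubquotient(phi)} combined with Lemma~\ref{lem: Omega as fibered space}, and for case (ii) your reasoning is essentially correct: the at-most-$m$-to-one hypothesis gives (as in Lemma~\ref{lem:finite-to-one}) the \emph{deterministic} bound $|\supp F_x| \leq m\,|\supp f|$, after which Markov on the single quantity $\|\nabla F_x\|_1/\|F_x\|_1$ suffices.

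However, the extraction in case (i) is where your argument genuinely breaks. You correctly identify it as ``the main obstacle,'' but neither of the two fixes you sketch closes it. The weighted-measure device proves that the Følner ratio $\|F_x\|_1/\|\nabla F_x\|_1$ is good on a $\nu$-set of mass $\geq 1/2$ (where $d\nu = \|F_x\|_1\,d\mu/\|f\|_1$), while Markov for the support is a statement about $\mu$-mass. These do not combine: $\nu$ can be entirely concentrated on a $\mu$-null set. Concretely, if $\|F_x\|_1 = |\supp F_x|$ vanishes for $99\%$ of $x$ (under $\mu$) and equals $100\,|A|$ on the remaining $1\%$, then all the $\nu$-mass sits on the big-support set and the intersection you need is empty. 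The Lagrange-multiplier variant $\Phi(x) = \|F_x\|_1 - \theta|\supp F_x| - \eta\|\nabla F_x\|_1$ has the same flaw: since $F_x$ is $\{0,1\}$-valued, $\|F_x\|_1 = |\supp F_x|$ and the $\theta$-term contributes nothing; $\Phi(x) > 0$ yields only the \emph{ratio} $\|F_x\|_1/\|\nabla F_x\|_1 > \eta/(1-\theta)$, not the absolute upper bound $|\supp F_x| \lesssim n$ needed to conclude $j_{1,\Gamma}(O(n)) \gtrsim \varphi(K)$. (Also note that for case (i) the $\Lambda$-action is not free, so $F(\lambda \actom y) = f(\lambda)$ is ambiguous; you need $F(\omega) = \sum_{\lambda\colon \lambda^{-1}\actom\omega \in X_\Lambda} f(\lambda)$, and then $F_x$ need no longer be $\{0,1\}$-valued.)

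The paper's fix (Section~\ref{sec:L1 measure quotient iso profile down}) is a modification of the test function itself, not of the extraction. One first chooses a finite $W\subseteq\Lambda$ with $\mu\big((W\actom X_\Lambda)\cap X_\Gamma\big) \geq \tfrac34\,\mu(X_\Gamma)$, then replaces $f_1 = \mathbf 1_A$ by the spread-out function $f_2(\lambda) = \sum_{w\in W}|f_1(w^{-1}\lambda)|$, which still nearly realizes the isoperimetric profile. Lemma~\ref{lem:LowerBoundnorm} shows this modification forces the \emph{lower} bound $\|f_2^y\|_1 \geq \tfrac14\|f_1\|_1$ to hold on a $\mu$-set of measure $\geq 1/2$, so that a straightforward three-way $\mu$-Markov intersection (lower bound on the norm, upper bound on the support, upper bound on the gradient) becomes nonempty. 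Without something playing the role of this $W$-trick, the case (i) extraction does not go through.
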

\begin{remark}\label{remark:Folner}
	Formulated with the F\o lner function instead of the isoperimetric profile, we obtain $\text{F\o l}_\Gamma\circ \varphi\preccurlyeq \text{F\o l}_\Lambda $.
\end{remark}

\begin{remark}
	We do not know whether the condition of being ``at most $m$-to-one'' is necessary. Fortunately, we shall see in Section \ref{sec:Linfty} that it is satisfied in a case of interest: when $\Gamma$ admits a regular map to $\Lambda$, then $\Gamma$ turns out to be an at most $m$-to-one $\LL^{\infty}$ measure subgroup of $\Lambda$. 
\end{remark}


We deduce the following corollary.

\begin{corollary}\label{cor:isoperimetricPolynomialGrowth}
	Assume that $\Gamma$ and $\Lambda$ have polynomial growth of degree $b$ and $a$ respectively, with $b>a$, and let $F$ and $K$ be non-trivial finite groups. If $p>a/b$, then
	\begin{itemize}
		\item $\Gamma$ is not an $\LL^p$ measured quotient (nor an at most $m$-to-one $\LL^p$ measured sub-quotient) of $\Lambda$;
		\item $F\wr \Gamma$ is not an $\LL^p$ measured quotient (nor an at most $m$-to-one $\LL^p$ measured  sub-quotient) of $K\wr \Lambda$. 
	\end{itemize}
\end{corollary}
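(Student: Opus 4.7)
The plan is to feed the known asymptotic expressions for the $\ell^p$ isoperimetric profiles of polynomial growth groups and of lamplighters into Theorems \ref{thm: monotonicity of isop under Lp} and \ref{thm: monotonicity of isop under phi} and argue by contradiction in each case.

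For part (1), recall that every finitely generated group of polynomial growth of degree $d$ satisfies $j_{p}(n) \approx n^{1/d}$ for all $p \geq 1$ \cite{coulhonRandomWalksGeometry2000}. Suppose for contradiction that $\Gamma$ is an $\LL^p$ measure quotient (or an at most $m$-to-one $\LL^p$ sub-quotient) of $\Lambda$ with $p > a/b$. If $p \geq 1$, Theorem \ref{thm: monotonicity of isop under Lp} gives $j_{p,\Gamma} \succcurlyeq j_{p,\Lambda}$, i.e.\ $n^{1/b} \succcurlyeq n^{1/a}$, which forces $1/b \geq 1/a$, contradicting $b > a$. If instead $a/b < p < 1$, then $\varphi(t) = t^p$ satisfies the hypotheses of Theorem \ref{thm: monotonicity of isop under phi} (both $\varphi$ and $t/\varphi(t) = t^{1-p}$ are non-decreasing), and that theorem yields $n^{1/b} \succcurlyeq n^{p/a}$, forcing $p \leq a/b$, again a contradiction.

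For part (2), I use Erschler's asymptotic $j_{1, F \wr \Sigma}(n) \approx (\log n)^{1/d}$ for $\Sigma$ of polynomial growth of degree $d$ and $F$ non-trivial finite \cite{erschlerIsoperimetricProfilesFinitely2003}. An elementary computation gives
\[
\Bigl(1 - \frac{1}{a} + \frac{1}{b}\Bigr) - \frac{a}{b} = \frac{(a-1)(b-a)}{ab} \ge 0,
\]
with equality iff $a = 1$; in particular, $p > 1 - 1/a + 1/b$ implies $p > a/b$, so it suffices to rule out $\LL^p$ couplings with $p > a/b$. Assume one exists. For $p > 1$, the coupling is automatically $\LL^1$ (since on a probability space $\LL^p \subseteq \LL^1$ whenever $p \ge 1$), so it is enough to rule out the case $p = 1$: Theorem \ref{thm: monotonicity of isop under phi} with $\varphi(t) = t$ gives $(\log n)^{1/b} \succcurlyeq (\log n)^{1/a}$, forcing $a \geq b$, a contradiction. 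For $a/b < p \leq 1$, Theorem \ref{thm: monotonicity of isop under phi} applied with $\varphi(t) = t^p$ yields $(\log n)^{1/b} \succcurlyeq (\log n)^{p/a}$, forcing $p \leq a/b$, again a contradiction.

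The argument is essentially a mechanical combination of the two monotonicity theorems with known isoperimetric asymptotics, and no serious obstacle arises. The only subtle point, and the reason the large-$p$ case must be handled separately in part (2), is that the hypothesis that $t \mapsto t/\varphi(t)$ be non-decreasing in Theorem \ref{thm: monotonicity of isop under phi} rules out the direct use of $\varphi(t) = t^p$ for $p > 1$; this is circumvented by routing through the inclusion $\LL^p \subseteq \LL^1$ to reduce to the already-handled case $p = 1$.
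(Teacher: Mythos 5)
Your proof is correct, and it takes essentially the same route as the paper: plug the known asymptotics $j_{p}(n)\approx n^{1/d}$ (polynomial growth) and $j_{1,F\wr\Sigma}(n)\approx(\log n)^{1/d}$ (Erschler) into the monotonicity theorems. The paper's own proof is terser and in fact only cites Theorem~\ref{thm: monotonicity of isop under Lp}(ii), which requires $p\geq 1$; since both thresholds $a/b$ and $1-1/a+1/b$ are below $1$ (because $b>a\geq 1$), that citation alone does not settle the regime $p<1$, and one genuinely needs Theorem~\ref{thm: monotonicity of isop under phi} with $\varphi(t)=t^p$ there. You handle this case-split carefully, and your routing of the $p>1$ case in part (2) through $\LL^p\subseteq\LL^1$ (to avoid invoking the $\ell^p$-profile of the lamplighter for $p>1$) is a sound, if slightly longer than necessary, workaround — applying Theorem~\ref{thm: monotonicity of isop under Lp} directly with $p=1$ gives the same contradiction.

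One remark worth spelling out: your argument for the second bullet actually proves the sharper threshold $a/b$, not just $1-1/a+1/b$ — the computation with $\varphi(t)=t^p$ and $j_{1,F\wr\Sigma}(n)\approx(\log n)^{1/d}$ yields $p\leq a/b$, and you only use the inequality $1-1/a+1/b\geq a/b$ to deduce the weaker statement as claimed. This is a feature, not a bug: the paper's stated threshold $1-1/a+1/b$ appears to be chosen so as to match the sharpness discussion in the introduction (where equality with $a/b$ holds iff $a=1$), and your derivation makes transparent that the isoperimetric-profile obstruction in fact lands at $a/b$.
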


\begin{proof}
	We recall the following estimates. \begin{itemize}
		\item $j_{1,\Gamma}(n)\approx n^{1/d}$ for a group of polynomial growth of degree $d$ (e.g.\ for $\Gamma=\Z^d$) (see  \cite{coulhonRandomWalksGeometry2000});
		\item $j_{1,\Gamma}(n)\approx (\log n)^{1/d}$ for a group of the form $F\wr \Sigma$, where $F$ is a non-trivial finite group, and $\Sigma$ has polynomial growth of degree $d$ \cite{erschlerIsoperimetricProfilesFinitely2003}.
	\end{itemize} 
	We obtain both statements by confronting Theorem~\ref{thm: monotonicity of isop under phi}(ii) to these estimates (and using the monotonicity of $p$-integrability).
\end{proof}

\begin{remark}
	The first item is slightly weaker than the conclusion of Corollary \ref{cor:BowenPolynomial} obtained by comparing the volume growths (as the latter does not require the at most $m$-to-one assumption). 
\end{remark}

We end this section discussing some important situations where the condition of being at most $m$-to-one is satisfied.

\begin{proposition}
	A measure sub-quotient coupling $(\Omega,X_\Lambda,\mu)$  from $\Gamma$ to $\Lambda$ is at most $1$-to-one if and only if $X_\Lambda$ intersects every $\Gamma$-orbit at most once.
\end{proposition}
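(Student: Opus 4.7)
The plan is to unpack the definition of being at most $1$-to-one using the cocycle $\alpha\colon\Gamma\times X_\Lambda\to\Lambda$ defined via $\alpha(\gamma,x)\actom\gamma\actom x=\gamma\actx x$, and then translate the injectivity question into a question about $\Gamma$-orbits hitting $X_\Lambda$.

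First I would rewrite the map $f_x\colon\Gamma\to\Lambda\actom x$, $f_x(\gamma)\coloneqq\gamma^{-1}\actom(\gamma\actx x)$, in a more convenient form. Using the cocycle identity together with the fact that the $\Gamma$- and $\Lambda$-actions on $\Omega$ commute, one gets
\[
f_x(\gamma)=\gamma^{-1}\actom\alpha(\gamma,x)\actom\gamma\actom x=\alpha(\gamma,x)\actom x.
\]
Since the $\Lambda$-action on $\Omega$ is free, $f_x$ is injective if and only if $\gamma\mapsto\alpha(\gamma,x)$ is injective. Thus the coupling is at most $1$-to-one exactly when, for every $x\in X_\Lambda$, the equality $\alpha(\gamma_1,x)=\alpha(\gamma_2,x)$ forces $\gamma_1=\gamma_2$.

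Next I would analyse the meaning of $\alpha(\gamma_1,x)=\alpha(\gamma_2,x)=:\lambda$. Setting $z\coloneqq\lambda\actom x\in\Omega$, the cocycle equation gives $\gamma_i\actom z=\gamma_i\actx x\in X_\Lambda$ for $i=1,2$, so $\gamma_1\actx x$ and $\gamma_2\actx x$ are two elements of $X_\Lambda$ lying in the same $\Gamma$-orbit (namely, the orbit of $z$). For the implication $(\Leftarrow)$, assuming $X_\Lambda$ meets every $\Gamma$-orbit at most once, we get $\gamma_1\actx x=\gamma_2\actx x$, hence $\gamma_1\actom z=\gamma_2\actom z$, and freeness of the $\Gamma$-action on $\Omega$ yields $\gamma_1=\gamma_2$.

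For the converse $(\Rightarrow)$, I would argue by contrapositive: suppose there exist $y_1\neq y_2$ in $X_\Lambda$ and $\gamma\in\Gamma\setminus\{e_\Gamma\}$ with $y_2=\gamma\actom y_1$. Taking $x=y_1$, both $e_\Gamma\actom x=x=y_1$ and $\gamma\actom x=y_2$ already lie in $X_\Lambda$, so $\alpha(e_\Gamma,x)=\alpha(\gamma,x)=e_\Lambda$. Hence $f_x(e_\Gamma)=f_x(\gamma)=x$ even though $e_\Gamma\neq\gamma$, so the coupling is not at most $1$-to-one.

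There is no real obstacle here; the whole content is a careful bookkeeping between the $\actom$-action on $\Omega$, the induced $\actx$-action on $X_\Lambda$, and the cocycle $\alpha$. The only point that requires a little care is the rewriting $f_x(\gamma)=\alpha(\gamma,x)\actom x$, which crucially uses that $\Gamma$ and $\Lambda$ commute in $\Omega$, and the use of freeness of both actions at the appropriate moments.
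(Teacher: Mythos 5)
Your proof contains a genuine error in the reduction step, although the underlying geometric idea is the same as the paper's and the argument is easily repaired.

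The problematic assertion is: ``Since the $\Lambda$-action on $\Omega$ is free, $f_x$ is injective if and only if $\gamma\mapsto\alpha(\gamma,x)$ is injective.'' This is false for a measure \emph{sub-quotient} coupling. Re-read the definitions in Section~\ref{sec: asymmetric couplings}: a sub-quotient coupling only requires the $\Gamma$-action to be free, and the $\Lambda$-action to be smooth with a fundamental domain. Freeness of the $\Lambda$-action is the extra hypothesis that distinguishes \emph{subgroup} couplings from sub-quotient couplings, and the paper explicitly introduces the Schreier-metric formulation of $\varphi$-integrability (Definition~\ref{def: Lp metric for fundamental domains}) precisely because the cocycle $\alpha$ ``only makes sense when the $\Lambda$ action is free.'' So for a general sub-quotient coupling $\alpha(\gamma,x)$ is not a well-defined function, and even if you make a measurable choice of it, $f_x(\gamma_1)=f_x(\gamma_2)$ does not imply $\alpha(\gamma_1,x)=\alpha(\gamma_2,x)$. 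Your proof of $(\Leftarrow)$ therefore starts from the wrong hypothesis: to prove $f_x$ is injective you must start from $f_x(\gamma_1)=f_x(\gamma_2)$, not from equality of cocycle values.

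The fix is to bypass $\alpha$ entirely, which is what the paper does. From $f_x(\gamma_1)=f_x(\gamma_2)$, call the common value $z\in\Omega$; then directly $\gamma_i\actom z=\gamma_i\actom\gamma_i^{-1}\actom(\gamma_i\actx x)=\gamma_i\actx x\in X_\Lambda$ for $i=1,2$. This gives you the two points of $X_\Lambda$ in a common $\Gamma$-orbit without mentioning $\alpha$ or needing freeness of the $\Lambda$-action, and the rest of your $(\Leftarrow)$ argument (and your $(\Rightarrow)$ contrapositive, which is fine once one reads ``$\alpha(\gamma,x)=e_\Lambda$'' merely as recording $\gamma\actx x=\gamma\actom x$) goes through exactly as in the paper.
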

\begin{proof}
	Suppose first that $X_\Lambda$ intersects each $\Gamma$-orbit at most once. Let $x\in X_\Lambda$, suppose that $\gamma_1\inv\actom\gamma_1\actx x=\gamma_2\inv\actom\gamma_2\actx x$.  Then $\gamma_1\cdot x$ and $\gamma_2\cdot x$ are two elements of the same $\Gamma$-orbit which belong to $X_\Lambda$. So by our assumption they are equal. Since the $\Gamma$-action on $\Omega$ is free, we conclude that $\gamma_1=\gamma_2$ as wanted.
	
	Conversely, suppose that the coupling is at most $1$-to-one. Let $x\in X_\Lambda$, suppose that $\gamma\actom x\in X_\Lambda$ for some $\gamma\in\Gamma$.
	Then $\gamma\cdot x=\gamma\actom x$, so $\gamma\inv\actom \gamma\actx x=x$, and so by injectivity $\gamma=e_\Gamma$. We conclude that $\gamma\actom x=x$, so $X_\Lambda$ intersects each $\Gamma$-orbit at most once as announced.
\end{proof}

An important example of at most $1$-to-one coupling is provided by the notion of \emph{orbit sub-quotient couplings} (see Section \ref{sec: OE couplings}). In fact, Proposition \ref{prop:OrbitSubquotient} implies that every at most $1$-to-one measure sub-quotient coupling can be turned into such a coupling after making the induced action free. In particular, we have

\begin{corollary}\label{cor:Orbit sub-quotient Isop}
	Let $\varphi\colon(0,\infty)\to (0,\infty)$ be a function  such that $\varphi$ and $t\mapsto t/\varphi(t)$ are non-decreasing. Assume that $\Gamma$ is a $\varphi$-integrable orbit  sub-quotient of $\Lambda$. Then  $j_{1,\Gamma}\succcurlyeq \varphi\circ j_{1,\Lambda}.$
	\qed
\end{corollary}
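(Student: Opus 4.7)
The plan is to reduce the statement directly to Theorem~\ref{thm: monotonicity of isop under phi}(ii) applied with $m=1$. First I would invoke the construction described in the remark preceding the definition of $\varphi$-integrability for orbit couplings: to any orbit sub-quotient coupling $(X,Y,\mu)$ from $\Gamma$ to $\Lambda$ is associated a measure sub-quotient coupling $(\Omega,X_\Lambda,M)$ with $\Omega=\mathcal{R}_\Lambda\cap(X\times Y)$, $X_\Lambda=\{(x,x):x\in X\}$, and commuting actions $\gamma\actom(x,y)=(\gamma\cdot x,y)$ and $\lambda\actom(x,y)=(x,\lambda\cdot y)$. By the very definition of $\varphi$-integrability in the orbit setting, this associated measure coupling is itself $\varphi$-integrable, so I do not need to verify anything on this count.

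The only substantive verification is that the associated coupling is at most $1$-to-one. Thanks to the proposition characterizing at most $1$-to-one couplings (just before Corollary~\ref{cor:Orbit sub-quotient Isop}), it suffices to check that $X_\Lambda$ meets every $\Gamma$-orbit at most once. But the $\Gamma$-orbit of $(x_0,y_0)\in\Omega$ is $\{(\gamma\cdot x_0,y_0):\gamma\in\Gamma\}$, since the $\Gamma$-action affects only the first coordinate; its intersection with the diagonal $X_\Lambda$ consists of those points $(\gamma\cdot x_0,y_0)$ with $\gamma\cdot x_0=y_0$, and freeness of the $\Gamma$-action on $X$ forces any two such $\gamma$'s to coincide, giving at most one intersection point as required.

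Once these two properties are in place, Theorem~\ref{thm: monotonicity of isop under phi}(ii) applied with $m=1$ immediately yields the desired inequality $j_{1,\Gamma}\succcurlyeq \varphi\circ j_{1,\Lambda}$. There is really no obstacle here: the entire content of the corollary is packaged in earlier results, and the only small observation to make is that the natural measure sub-quotient coupling associated to an orbit sub-quotient coupling is automatically at most $1$-to-one because its $\Lambda$-fundamental domain is the diagonal, which trivially intersects each $\Gamma$-orbit in at most one point.
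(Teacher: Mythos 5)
Your proof is correct and follows exactly the route the paper takes: it realizes the orbit sub-quotient coupling as a measure sub-quotient coupling via the construction in the remark of Section~\ref{sec: OE couplings}, observes via the proposition immediately preceding the corollary that the diagonal fundamental domain meets each $\Gamma$-orbit at most once (your freeness argument on the first coordinate is precisely the right check), and then invokes Theorem~\ref{thm: monotonicity of isop under phi}(ii) with $m=1$. The paper states this somewhat tersely, whereas you spell out the at-most-$1$-to-one verification explicitly; the underlying argument is the same.
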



\subsection{Overview of the proof}\label{sec:overviewisop}
Let us start describing the main steps of the proof when the $\Lambda$-action 
is free.

The first step is to start from a finitely supported function $f$ on $\Lambda$ 
with small \emph{right} gradient and extend it to a function $\tilde{f}$ on the 
coupling space $\Omega$ through its identification with $X_{\Lambda}\times 
\Lambda$. In other words, we identify $\Omega$ to a field copies of $\Lambda$ 
which are pointed at $X_\Lambda$. The key 
observation is that the function $\tilde f$ has small gradient with respect 
to the $\Gamma$-action. This is because each element $\gamma\in\Gamma$ induces 
an 
isomorphism of $\Lambda$-spaces, only shifting the basepoint by an amount which 
is precisely controlled by our integrability condition: $\gamma$ takes the 
field of copies of $\Lambda$ pointed at $X_\Lambda$ to the field of copies of 
$\Lambda$ pointed at $\gamma\actom X_\Lambda$. The precise statement which 
allows to control the difference between  $\gamma\cdot\tilde f$ and $\tilde f$ 
is provided by Lemma~\ref{lem: pushing through 
	different points}.

Once this is established, we identify $\Omega$ with $\Gamma \times X_{\Gamma}$ 
and view $\tilde{f}$ as a family $(f^x)$ of finitely supported functions on 
$\Gamma$. By the previous fact, the gradient of these functions is well behaved 
on a large measure subset of $X_\Gamma$. We need to simultaneously control from 
below the $\ell^p$-norm of the function, and control from above the size of the 
support of these functions. 
Let us describe the simpler situation where the coupling is assumed to be at most $m$-to-one. This condition immediately implies that the size of the support of $f^x$ is (almost surely) at most $m$ times the size of the support of $f$. On the other hand, a pigeonhole argument allows us to find a set of 
positive measure of $X_\Gamma$ on which the lower bound on the norm is 
satisfied simultaneously with the upper bound on the gradient. This is enough to conclude for the existence of a subset of positive measure for which $f^x$ satisfies all three conditions simultaneously and therefore for the proof of the theorem.

If we do not assume that  the coupling is at most $m$-to-one, a difficulty 
arises that is overcome in Section \ref{sec:L1 measure quotient iso profile down}. What remains possible using pigeonhole arguments is the following: we can find a subset of $X_\Gamma$  with measure---say at least $1/2$---where an upper bound on both the size of the support of $f^x$ and the norm of its gradient hold simultaneously. However, the issue comes from the lower bound on the norm of $f^x$, which may be only valid on a subset of arbitrarily small measure. 
The solution that we find is rather subtle and consists in slightly modifying $f$ in such a way that the lower bound on the norm of $f^x$ is 
actually satisfied on a subset of $X_\Gamma$ of relatively large measure (see Remark \ref{rem:explanation} for a more detailed overview of the argument).  

Finally, when the action of $\Lambda$ is not free, we still find a way to define a function $\tilde{f}$ on $\Omega$ which in some sense extends $f$. The rest of 
proof is identical.

\subsection{\texorpdfstring{$\Lambda$}{Lambda}-gradients on 
	\texorpdfstring{$\Lambda$}{Lambda}-spaces}\label{sec:GammaGradient}

It will be convenient to define a notion of gradient on functions defined on measure spaces equipped with a measure-preserving group action. 
\begin{definition}
	Let $\Lambda$ be a finitely generated group equipped with a finite symmetric generating set 
	$S_\Lambda$. Given any standard measured $\Lambda$-space $(\Omega,\mu)$ and any function 
	$f\in \LL^p(\Omega,\mu)$, define its gradient to be the function 
	$\nabla_{S_\Lambda} f\colon S_{\Lambda} \times \Omega\to \R$ defined by 
	\[\nabla_{S_\Lambda} f(s,x)=f(x)-f(s\inv \actom x).\]
\end{definition}
\begin{remark}\label{rmk: fubini for gradient}
	We have
	$$\norm{\nabla_{S_\Lambda} f}_p^p=\int_\Omega\sum_{s\in S_\Lambda} \abs{f(\omega)-f(s\inv \actom \omega)}^pd\mu(\omega)=\sum_{s\in S_\Lambda}\norm{f-s\actom f}_p^p,$$
	where $s\actom f(x)=f(s\inv\actom x)$. 
	Note that by symmetry of $S_\Lambda$, we also have
	$$\norm{\nabla_{S_\Lambda} f}_p^p=\int_\Omega\sum_{s\in S_\Lambda} \abs{f(\omega)-f(s\actom \omega)}^pd\mu(\omega)$$
\end{remark}

There are two natural ways to view $\Lambda$ as a $\Lambda$-space (action by left or right multiplication), so we define the  \emph{left} $S_\Lambda$-gradient $f$ as the function $\nabla^l_{S_\Lambda} f\colon S_\Lambda\times \Lambda\to \R $ by the formula 
\[\nabla^l_{S_\Lambda} f(s,\lambda)=f(\lambda)-f(s\inv \lambda);\]
and the \emph{right} $S_\Lambda$-gradient $f$ by 
\[\nabla^r_{S_\Lambda} f(s,\lambda)=f(\lambda)-f(\lambda s).\]
The isoperimetric profile can be alternatively defined using the left or right 
gradient. This clearly does not change its value as the inverse map intertwines the actions by left and right multiplication. We will exploit this remark in our proof as follows: we will start with a function 
on $\Lambda$ with 
small \emph{right} gradient and induce on $\Gamma$ a function of comparable 
support with small \emph{left} gradient.

In the following lemma, we consider a $\Gamma$-space $X$. Starting with a function defined on $\Lambda$, we define a function on $X$ depending on some basepoint $x$. The main objective of the lemma is to control the $\LL^p$-norm of the difference between functions associated to different base points. It is reminiscent of the fact that right Reiter functions 
on a group $\Lambda$ can be pushed to Reiter functions on any equivalence 
relation induced by a $\Lambda$-action (see e.g. \cite[Prop. 
9.2]{kechrisTopicsOrbitEquivalence2004}).

\begin{lemma}\label{lem: pushing through different points}
	Let $X$ be a transitive $\Lambda$-set and let $x_0\in X$.
	Let $p\geq 1$. The map $\R^\Lambda\to \R^X$ which associates to every $f\in\R^\Lambda$ the function $f_{x_0}$ given by $$f_{x_0}(y)=\left(\sum_{\lambda\colon \gamma\actom x_0=y}|f(\lambda)|^p\right)^{1/p}$$
	satisfies $\norm{f_{x_0}}_p=\norm{f}_p$. Moreover, if $S_\Lambda$ is a 
	finite generating set for $\Lambda$ and if $f\in \ell^p(\Lambda)$ then for all $x_1\in X,$
	\begin{equation}\label{ineq: change point in orbit p>1}
		\norm{f_{x_0}-f_{x_1}}_p
		\leq d_{S_\Lambda}(x_0,x_1)\norm{\nabla^r_{S_\Lambda} f}_p.
	\end{equation}
\end{lemma}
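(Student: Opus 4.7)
The plan is to prove the two claims in sequence, with the key observation being that the second part reduces to the distance-one case, where a substitution lets us recognize a right gradient.

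For the norm-preservation statement, I would simply compute
\[
\|f_{x_0}\|_p^p = \sum_{y\in X} f_{x_0}(y)^p = \sum_{y\in X}\sum_{\lambda\colon \lambda\actom x_0 = y}|f(\lambda)|^p = \sum_{\lambda\in\Lambda}|f(\lambda)|^p = \|f\|_p^p,
\]
where the reindexing is valid because the orbit map $\lambda \mapsto \lambda\actom x_0$ partitions $\Lambda$ according to its fibers.

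For the gradient estimate, I would first handle the case $x_1 = s\actom x_0$ with $s\in S_\Lambda$. Reindexing with $\mu = \lambda s$ gives
\[
f_{x_1}(y) = \Bigl(\sum_{\mu\colon \mu\actom x_0 = y}|f(\mu s^{-1})|^p\Bigr)^{1/p}.
\]
Then for each $y$ I would apply the reverse triangle inequality for the $\ell^p$-norm on the fiber $\{\mu \in \Lambda : \mu \actom x_0 = y\}$ to the two vectors $(f(\mu))_\mu$ and $(f(\mu s^{-1}))_\mu$, which gives
\[
|f_{x_0}(y) - f_{x_1}(y)|^p \le \sum_{\mu\colon \mu\actom x_0 = y}|f(\mu) - f(\mu s^{-1})|^p = \sum_{\mu\colon \mu\actom x_0 = y}|\nabla^r_{S_\Lambda} f(s^{-1},\mu)|^p,
\]
using that $s^{-1} \in S_\Lambda$ by symmetry. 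Summing over $y \in X$ and unfolding the partition,
\[
\|f_{x_0} - f_{x_1}\|_p^p \le \sum_{\mu\in\Lambda}|\nabla^r_{S_\Lambda} f(s^{-1},\mu)|^p \le \|\nabla^r_{S_\Lambda} f\|_p^p,
\]
which settles the distance-one case.

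For general $x_0,x_1$, let $n = d_{S_\Lambda}(x_0,x_1)$ and pick $s_1,\dots,s_n \in S_\Lambda$ with $x_1 = s_n\cdots s_1 \actom x_0$; setting $y_i = s_i\cdots s_1\actom x_0$, an application of the triangle inequality in $\ell^p(X)$ together with the previous step yields
\[
\|f_{x_0} - f_{x_1}\|_p \le \sum_{i=0}^{n-1}\|f_{y_i} - f_{y_{i+1}}\|_p \le n\,\|\nabla^r_{S_\Lambda} f\|_p = d_{S_\Lambda}(x_0,x_1)\,\|\nabla^r_{S_\Lambda} f\|_p.
\]
There is no real obstacle here: the only subtlety is making sure to use the \emph{right} gradient (which is what naturally appears after the substitution $\mu = \lambda s$), and to invoke the reverse triangle inequality rather than Minkowski's inequality directly, since we are comparing the $p$-norms of two families indexed by the same fiber.
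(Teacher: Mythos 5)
Your proof is correct and follows essentially the same route as the paper's: reindex via $\mu=\lambda s$ to put both terms over the same fiber, apply the reverse triangle inequality for the $\ell^p$-norm of the fiber families to reduce to the right gradient, and then telescope along a geodesic using the $\ell^p(X)$ triangle inequality. The only cosmetic difference is that you make the telescoping explicit where the paper invokes equivariance of the basepoint-shift map.
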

\begin{proof}
	The verification that $\norm{f_{x_0}}_p=\norm{f}_p$ is immediate. To obtain the inequality \eqref{ineq: change point in orbit p>1}, note that for each $s\in S_\Gamma$ and $x_0\in X$, we have by the triangle difference inequality
	\begin{align*}\norm{f_{x_0}-f_{s\cdot x_0}}_p^p
		&=\sum_{y\in X}\abs{\left(\sum_{\lambda\colon \lambda\actom  x_0=y }|f(\lambda)|^p\right)^{1/p}-\left(\sum_{\lambda\colon \lambda s\actom x_0=y }|f(\lambda)|^p\right)^{1/p}}^p\\
		&=\sum_{y\in X}\abs{\left(\sum_{\lambda\colon \lambda\actom  x_0=y }|f(\lambda)|^p\right)^{1/p}-\left(\sum_{\lambda\colon \lambda\actom x_0=y }|f(\lambda s\inv)|^p\right)^{1/p}}^p\\
		& \leq \sum_{y\in X}\sum_{\lambda\colon \lambda\actom x_0=y }|f(\lambda)-f(\lambda s\inv)|^p\\
		&\leq \sum_{\lambda\in \Lambda}\abs{f(\lambda)-f(\lambda s\inv)}^p,
	\end{align*}
	so by the definition of the gradient we have $\norm{f_{x_0}-f_{s\actom  
			x_0}}_p\leq \norm{\nabla^r_{S_\Lambda} f}_p$.  The conclusion now follows 
	using the triangle inequality for the $\LL^p$-norm and the fact that 
	$\Gamma$ is acting by isometries on $\ell^p(X)$.
\end{proof}

We also need the following variant of Lemma~\ref{lem: pushing through different points}.

\begin{lemma}\label{lem: pushing through different points l1}
	Let $\varphi\colon(0,\infty)\to (0,\infty)$ be a function  such that both $\varphi$ and $t\mapsto t/\varphi(t)$ are non-decreasing. Given a transitive $\Lambda$-set $X$ and a point $x_0\in X$, the map $\R^\Lambda\to \R^X$ which associates to every $f\in\R^\Lambda$ the function $f_{x_0}$ given by $$f_{x_0}(y)=\sum_{\lambda\colon \lambda\actom x_0=y}|f(\lambda)|$$
	satisfies $\norm{f_{x_0}}_1= \norm{f}_1$. Moreover, if $S_\Lambda$ is a  finite generating set for $\Lambda$, $x_0,x_1\in X$ and $f$ is a finitely supported function on $\Lambda$ such that $\norm{\nabla^r_{S_\Lambda}f}_1=1$, then 
	\begin{equation}\label{ineq: change point in orbit p=1}
		\frac{ \norm{f}_1}{\norm{f_{x_0}-f_{x_1}}_1}\geq \frac{\varphi(\norm{f}_1)}{2\varphi(d_{S_\Lambda}(x_0,x_1))}.
	\end{equation}
\end{lemma}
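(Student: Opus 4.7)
The plan is to adapt the computation of Lemma \ref{lem: pushing through different points} to $p=1$ and then combine the resulting linear bound with the trivial total-mass bound via the two monotonicity hypotheses on $\varphi$.

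The first assertion is an immediate Fubini-type rearrangement: summing $f_{x_0}(y)$ over $y\in X$ partitions $\Lambda$ according to the fibres of $\lambda\mapsto \lambda\actom x_0$, so every $|f(\lambda)|$ is counted exactly once. In particular, this gives for free the trivial bound $\norm{f_{x_0}-f_{x_1}}_1 \leq 2\norm{f}_1$ for any $x_0,x_1\in X$.

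Next I would prove the $\ell^1$-analogue of \eqref{ineq: change point in orbit p>1}. For $s\in S_\Lambda$ and $x\in X$, the reverse triangle inequality $||a|-|b||\le|a-b|$ together with the change of variable $\mu=\lambda s$ yields
\begin{align*}
\norm{f_x - f_{s\actom x}}_1
&= \sum_{y\in X}\Bigl|\sum_{\lambda\actom x = y}|f(\lambda)| - \sum_{\mu\actom x = y}|f(\mu s^{-1})|\Bigr| \\
&\le \sum_{\lambda\in\Lambda}|f(\lambda)-f(\lambda s^{-1})| \le \norm{\nabla^r_{S_\Lambda} f}_1 = 1,
\end{align*}
where the last inequality follows by rewriting the preceding sum (via the substitution $\nu=\lambda s^{-1}$) as $\sum_\nu |f(\nu s)-f(\nu)|$, which is a single $s$-summand of $\norm{\nabla^r_{S_\Lambda} f}_1$. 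Iterating this estimate along a Schreier-geodesic from $x_0$ to $x_1$ and summing via the triangle inequality in $\ell^1$ delivers
\begin{equation*}
\norm{f_{x_0}-f_{x_1}}_1 \leq d_{S_\Lambda}(x_0,x_1).
\end{equation*}

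The conclusion is then a short case analysis. Setting $a=d_{S_\Lambda}(x_0,x_1)$ and $b=\norm{f}_1$, the desired inequality rewrites as $\norm{f_{x_0}-f_{x_1}}_1 \leq 2b\,\varphi(a)/\varphi(b)$. If $a\geq b$, monotonicity of $\varphi$ gives $\varphi(a)\geq\varphi(b)$, so $2b\,\varphi(a)/\varphi(b)\geq 2b$ and the trivial total-mass bound closes the case. If $a<b$, monotonicity of $t\mapsto t/\varphi(t)$ gives $a/\varphi(a)\leq b/\varphi(b)$, i.e.\ $\varphi(a)/\varphi(b)\geq a/b$, so $2b\,\varphi(a)/\varphi(b)\geq 2a\geq a$ and the Schreier-geodesic bound closes the case. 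The only mild subtlety is that both monotonicity hypotheses on $\varphi$ are genuinely used — each handling exactly one of the two regimes $a\geq b$ and $a<b$ — which explains why the hypothesis is stated in this peculiar two-pronged form.
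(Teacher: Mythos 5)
Your proof is correct and follows essentially the same route as the paper's: both arguments combine the trivial bound $\norm{f_{x_0}-f_{x_1}}_1\le 2\norm{f}_1$ with the $p=1$ geodesic bound $\norm{f_{x_0}-f_{x_1}}_1\le d_{S_\Lambda}(x_0,x_1)$ (which is just Lemma \ref{lem: pushing through different points} at $p=1$, so you could have cited it rather than re-deriving it), and both invoke the two monotonicity hypotheses on $\varphi$. The only difference is cosmetic: the paper finishes by writing $\norm{f_{x_0}-f_{x_1}}_1=\varphi(\norm{f_{x_0}-f_{x_1}}_1)\cdot\frac{\norm{f_{x_0}-f_{x_1}}_1}{\varphi(\norm{f_{x_0}-f_{x_1}}_1)}$ and applying the two monotonicities to the two factors, whereas you split into the cases $d_{S_\Lambda}(x_0,x_1)\gtrless\norm{f}_1$ and use one monotonicity hypothesis per case — an arguably cleaner bookkeeping of the same idea.
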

\begin{proof}
	By triangle inequality, we obtain \[\norm{f_{x_0}-f_{x_1}}_1\leq 2\max_{i=0,1}\|f_{x_i}\|_1=2\norm{f}_1.\]
	Using the monotonicity of $t/\varphi(t)$ we have
	\[\norm{f_{x_0}-f_{x_1}}_1= \varphi(\norm{f_{x_0}-f_{x_1}}_1) \frac{\norm{f_{x_0}-f_{x_1}}_1}{\varphi(\norm{f_{x_0}-f_{x_1}}_1)}\leq \varphi(\norm{f_{x_0}-f_{x_1}}_1) \frac{2\norm{f}_1}{\varphi(2\norm{f}_1)}.\]
	Now, since $\norm{\nabla f}_1=1$, by applying  Lemma~\ref{lem: pushing through different points} for $p=1$ and using the monotonicity of $\varphi$ we get
	\[\norm{f_{x_0}-f_{x_1}}_1\leq  2\varphi(d_{S_\Gamma}(x_0,x_1))\frac{\norm{f}_1}{\varphi(\norm{f}_1)}\]
	So the lemma follows. 
\end{proof}

\subsection{The induction technique}\label{sec:induction}

In this section, we shall prove that if $p\geq 1$ and $\Gamma$ is an $\LL^p$ measure \emph{sub-quotient} of $\Lambda$, then every $\ell^p$ function on $\Lambda$ induces a function on $\Omega$ whose $S_\Gamma$-gradient is well behaved (and a similar statement for $\varphi$-integrable measure sub-quotient). To deal with sub-quotients, we need a non-free analogue of the fact that if $\Lambda$ is acting freely on $\Omega$ and $X_\Lambda$ is a fundamental domain, the map $(x,\lambda)\mapsto \lambda\actom  x$ is a measure-preserving bijection between $X_\Lambda\times \Lambda$ and $\Omega$.

\begin{lemma}\label{lem: Omega as fibered space}
	Suppose $X_\Lambda$ is a fundamental domain for a measure-preserving $\Lambda$-action on a standard measured space $(\Omega,\mu)$. Then for every Borel $A\subseteq \Omega$, we have 
	$$\mu(A)=\int_{X_\Lambda}\abs{( \Lambda\actom  x)\cap A}d\mu(x).$$
	In particular, for every measurable function $f\colon \Omega\to\R$, we have
	$$\int_\Omega f\;d\mu= \int_{X_\Lambda}\sum_{y\in \Lambda\actom  x} f(y)\;d\mu(x).$$
\end{lemma}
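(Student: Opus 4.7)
The plan is to reduce the identity to a countable sum of measure-preserving identifications by partitioning $\Omega$ into Borel pieces each of which meets every $\Lambda$-orbit in at most one point. The main obstacle to overcome compared with the free case is that the natural map $\Lambda\times X_\Lambda\to \Omega$, $(\lambda,x)\mapsto\lambda\actom x$, is no longer injective, so we cannot simply write $\Omega=\bigsqcup_\lambda\lambda\actom X_\Lambda$. Instead, we will carve out a disjoint decomposition by removing previously-seen translates.

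First, I would enumerate $\Lambda=\{\lambda_n\colon n\in\N\}$ with $\lambda_0=e$ and set
\[
X_n=\lambda_n\actom X_\Lambda\setminus\bigcup_{m<n}\lambda_m\actom X_\Lambda.
\]
Each $X_n$ is Borel, the $X_n$'s are pairwise disjoint, and their union equals $\Omega$ up to a null set because $X_\Lambda$ is a fundamental domain. The key observation is that each $X_n$ meets each $\Lambda$-orbit at most once: if $\lambda_n\actom x_1$ and $\lambda_n\actom x_2$ (with $x_1,x_2\in X_\Lambda$) lie in the same orbit, then so do $x_1$ and $x_2$, forcing $x_1=x_2$ by the fundamental domain property.

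Next, for each $n$ let $Y_n=\{x\in X_\Lambda\colon(\Lambda\actom x)\cap X_n\neq\emptyset\}$. On $Y_n$, the assignment $\phi_n(x)\coloneqq$ the unique point of $(\Lambda\actom x)\cap X_n$ is a Borel bijection onto $X_n$, with inverse the restriction of the projection $\pi_{X_\Lambda}\colon X_n\to X_\Lambda$ (which is Borel since the $\Lambda$-action is smooth). Both $\phi_n$ and its inverse belong to the full pseudo-group of the $\Lambda$-action, hence are measure-preserving by \cite[Prop.~2.1]{kechrisTopicsOrbitEquivalence2004}. Therefore, for any Borel $A\subseteq\Omega$, setting $A_n=A\cap X_n$,
\[
\int_{X_\Lambda}\mathbf{1}\!\left[(\Lambda\actom x)\cap A_n\neq\emptyset\right]d\mu(x)=\int_{Y_n}\mathbf{1}_{A_n}(\phi_n(x))\,d\mu(x)=\mu(A_n).
\]

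Finally, since each $X_n$ meets each orbit at most once, we have $|(\Lambda\actom x)\cap A_n|=\mathbf{1}[(\Lambda\actom x)\cap A_n\neq\emptyset]$ and $|(\Lambda\actom x)\cap A|=\sum_n|(\Lambda\actom x)\cap A_n|$. Summing over $n$ using monotone convergence yields
\[
\int_{X_\Lambda}|(\Lambda\actom x)\cap A|\,d\mu(x)=\sum_n\mu(A_n)=\mu(A),
\]
proving the first identity. The statement about integrable $f$ then follows by the usual monotone-class argument: it is immediate for indicator functions by what we just showed, extends to non-negative simple functions by linearity, to arbitrary non-negative measurable functions by monotone convergence, and to general $f$ by writing $f=f^+-f^-$.
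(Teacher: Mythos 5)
Your proof is correct, and the underlying decomposition is the same one the paper leans on --- both arguments rest on writing $\Omega$, up to a null set, as a countable disjoint union of $\Lambda$-translates of Borel subsets of $X_\Lambda$ (your $X_n=\lambda_n\actom X_\Lambda\setminus\bigcup_{m<n}\lambda_m\actom X_\Lambda$ is precisely the standard realization of this). The difference lies in how the identity is closed out. The paper's proof is the more abstract one: having fixed such a decomposition, it observes that $A\mapsto\int_{X_\Lambda}|(\Lambda\actom x)\cap A|\,d\mu(x)$ defines a $\Lambda$-invariant Borel measure which restricts to $\mu$ on $X_\Lambda$, and then invokes uniqueness of such measures. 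You instead avoid the uniqueness argument entirely: you exploit that each $X_n$ meets every orbit at most once, so the piece $A_n=A\cap X_n$ is carried bijectively to a subset of $X_\Lambda$ by an element $\phi_n$ of the full pseudo-group, and measure-preservation of the pseudo-group (Prop.~2.1 of \cite{kechrisTopicsOrbitEquivalence2004}) gives the identity piece by piece; summing by monotone convergence finishes. The two routes are comparable in length and both are clean; yours is arguably more elementary, since it replaces the uniqueness-of-measure step by a direct appeal to a single cited fact about the pseudo-group, at the price of being a bit more hands-on about the decomposition. One cosmetic note: the lemma is stated for measurable (not necessarily integrable) $f$, so the final reduction should be phrased as an equality in $[0,+\infty]$ for $f\geq 0$, with $f=f^+-f^-$ handled whenever the difference makes sense --- which is essentially what your monotone-class argument already gives.
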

\begin{proof}
	Since every subset of $\Omega$ can be written as a countable disjoint union of $\Lambda$-translates of Borel subsets of $X_\Lambda$, it suffices to show that the right term defines a Borel measure on $\Omega$ which is $\Lambda$-invariant and coincides with $\mu$ when restricted to $X_\Lambda$. The fact that the formula
	$$m(A)=\int_{X_\Lambda}\abs{( \Lambda\actom  x)\cap A}d\mu(x).$$
	defines a Borel measure follows from the fact that the map $x\mapsto \abs{( \Lambda\actom  x)\cap A}$ is Borel. $\Lambda$-invariance is clear, and the fact that the two measures coincide when restricted on $X_\Lambda$ is also straightforward to check. 
\end{proof}

\begin{proposition}[Monotonicity of the 
	$\ell^p$-gradient under $\LL^p$ measured 
	sub-quotient]\label{prop:gradientLpSubquotient}
	Let $p\geq 1$.
	If $\Gamma$ is an $\LL^p$ measure sub-quotient of $\Lambda$ via a 
	coupling $(\Omega,X_\Lambda,\mu)$, and if 
	$f\in\ell^p(\Lambda)$, then the induced function $\tilde f$ on $\Omega$ 
	defined by $\tilde f(\omega)=\left(\sum_{\lambda\colon 
		\lambda\inv\actom\omega\in X_\Lambda}|f(\lambda)|^p\right)^{1/p}$ 
	satisfies \[\norm{\tilde{f}}_p^p=\norm{f}^p_p\mu(X_\Lambda),\] and
	\[
	\norm{\nabla_{S_\Gamma} \tilde f}_p^p\leq C \norm{\nabla^r_{S_\Lambda} f}_p^p,
	\]
	where $\displaystyle C=\abs{S_\Gamma}\max_{s\in S_\Gamma}\int_{X_\Lambda}d_{S_\Lambda}(s\actx x,s\actom x)^pd\mu(x)$.
\end{proposition}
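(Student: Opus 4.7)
The plan is to combine the fibered decomposition of $\mu$ from Lemma~\ref{lem: Omega as fibered space} with the base-point comparison of Lemma~\ref{lem: pushing through different points}, applied orbit-by-orbit under the $\Lambda$-action. For the norm identity, the fibered formula gives $\norm{\tilde f}_p^p = \int_{X_\Lambda} \sum_{y \in \Lambda \actom x_0} \abs{\tilde f(y)}^p\, d\mu(x_0)$. Since $X_\Lambda$ meets each $\Lambda$-orbit $\mathcal O$ exactly once at its base point $x_0$, the condition $\lambda^{-1}\actom\omega \in X_\Lambda$ forces $\lambda^{-1}\actom\omega = x_0$ for $\omega \in \mathcal O$, so $\tilde f|_{\mathcal O}$ is precisely the function $f_{x_0}$ of Lemma~\ref{lem: pushing through different points} on the transitive $\Lambda$-set $\mathcal O$. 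Thus the inner sum equals $\norm{f_{x_0}}_p^p = \norm{f}_p^p$, and integrating gives $\norm{\tilde f}_p^p = \norm{f}_p^p \mu(X_\Lambda)$.

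For the gradient bound I will treat one $s \in S_\Gamma$ at a time and apply the fibered formula to $\abs{\tilde f - s\actom\tilde f}^p$. The key identification is the following: because the $\Gamma$- and $\Lambda$-actions commute, the map $y \mapsto s^{-1}\actom y$ is a $\Lambda$-equivariant bijection from $\mathcal O$ onto the orbit through $s^{-1}\actom x_0$, whose base point is $x_0' \coloneqq s^{-1}\cdot x_0$. A short calculation using this commutativity shows that $\tilde f(s^{-1}\actom y) = f_{x_0'}(s^{-1}\actom y) = f_z(y)$ for $y \in \mathcal O$, where $z \coloneqq s\actom x_0' \in \mathcal O$. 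Consequently, on $\mathcal O$ the difference $\tilde f(y) - \tilde f(s^{-1}\actom y)$ equals $f_{x_0}(y) - f_z(y)$, which lives entirely on the single transitive $\Lambda$-set $\mathcal O$.

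Applying inequality \eqref{ineq: change point in orbit p>1} of Lemma~\ref{lem: pushing through different points} on $\mathcal O$ now yields $\sum_{y \in \mathcal O}\abs{f_{x_0}(y) - f_z(y)}^p \leq d_{S_\Lambda}(x_0, z)^p \norm{\nabla^r_{S_\Lambda} f}_p^p$. Writing $y_0 \coloneqq s^{-1}\cdot x_0$ and using that the induced $\Gamma$-action on $X_\Lambda$ is a genuine action (so $s\cdot y_0 = x_0$), we have $d_{S_\Lambda}(x_0, z) = d_{S_\Lambda}(s\cdot y_0, s\actom y_0)$. Integrating over $x_0 \in X_\Lambda$ and performing the measure-preserving change of variable $y_0 = s^{-1}\cdot x_0$—valid because the induced $\Gamma$-action belongs to the full pseudo-group of the $\Lambda$-action on $\Omega$ and is therefore measure-preserving—produces
$$\norm{\tilde f - s\actom \tilde f}_p^p \leq \norm{\nabla^r_{S_\Lambda} f}_p^p \int_{X_\Lambda} d_{S_\Lambda}(s\actom y, s\cdot y)^p\, d\mu(y).$$
Summing over $s \in S_\Gamma$ and bounding $\sum_s \leq \abs{S_\Gamma} \max_s$ produces the constant $C$ as stated.

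The main obstacle I anticipate is precisely this orbit bookkeeping: identifying $\tilde f(s^{-1}\actom y)$ on $\mathcal O$ with the value $f_z(y)$ of a base-point function along $\mathcal O$ itself, rather than along the shifted orbit $\mathcal O' = s^{-1}\actom\mathcal O$, is what makes Lemma~\ref{lem: pushing through different points} applicable and produces the correct distance $d_{S_\Lambda}(s\actom y, s\cdot y)$ whose $p$-th moment defines $C$. The non-freeness of the $\Lambda$-action in the sub-quotient case is only a notational complication, since Lemma~\ref{lem: pushing through different points} is already stated for arbitrary transitive $\Lambda$-sets and since $X_\Lambda$, being a fundamental domain, intersects each $\Lambda$-orbit at a single point.
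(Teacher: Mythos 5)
Your proof is correct and follows essentially the same strategy as the paper's: both combine the fibered decomposition of $\mu$ over $X_\Lambda$ (Lemma~\ref{lem: Omega as fibered space}) with the base-point comparison inequality of Lemma~\ref{lem: pushing through different points}, using the commutativity of the two actions to recognize $\tilde f(s^{-1}\actom y)$ as a shifted base-point function on the same transitive $\Lambda$-set. The only cosmetic difference is the bookkeeping: the paper compares $f_{s\actx x}$ and $f_{s\actom x}$ on the image orbit $\Lambda\actom s\actom x$ (absorbing the relabelling $y\mapsto s\actom y$ into the orbit sum), whereas you compare $f_{x_0}$ and $f_{s\actom(s^{-1}\cdot x_0)}$ on the original orbit and then change variables $x_0\mapsto s^{-1}\cdot x_0$ on $X_\Lambda$—both manoeuvres land on the same integrand $\int_{X_\Lambda}d_{S_\Lambda}(s\actx x,s\actom x)^p\,d\mu(x)$.
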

\begin{proof} The first equality is obvious.
	For all $(x,y)\in \Omega^2$ such that $y\in \Lambda\actom x$, we let 
	$$f_x(y)=\left(\sum_{\lambda\colon \lambda\actom  x=y}|f(\lambda)|^p\right)^{1/p}.$$
	Identifying $\Omega$ with the set of pairs $(x,y)$ such that $x\in X_\Lambda$ and $y\in \Lambda\actom x$, we have
	\[\tilde{f}(x,y)=f_x(y).\]
	For every $\gamma\in \Gamma$, 
	\[\tilde f(\gamma\actom(x,y))=f_{\gamma\actx x}(\gamma\actom y).\]
	Note that since the actions of $\Gamma$ and $\Lambda$ commute, we have for all $\gamma\in \Gamma$
	\[ f_{\gamma\actom x}(\gamma\actom y)=f_x(y).\]
	Hence, denoting $\gamma\inv\actom \tilde f\colon\omega\mapsto \tilde f(\gamma\actom\omega)$, 
	\[\gamma\inv\actom \tilde f(x,y)-\tilde f(x,y)=f_{\gamma\actx x}(\gamma\actom y)-f_{\gamma\actom x}(\gamma\actom y).\]
	We can now invoke Lemma~\ref{lem: pushing through different points}: letting $s\in S_\Gamma$, we have
	\begin{eqnarray*}
		\sum_{y\in \Lambda\actom x} \abs{s\inv\actom\tilde f(x,y)-\tilde f(x,y)}^p 
		&=& \sum_{y\in \Lambda\actom x} \abs{f_{s\actx x}(s\actom y)-f_{s\actom x}(s\actom y)}^p\\
		& =& \sum_{y\in \Lambda\actom s\actom x} \abs{f_{s\actx x}(y)-f_{s\actom x}(y)}^p\\
		&\leq& d_{S_\Lambda}(s\actx x,s\actom x)^p\norm{\nabla^r_{S_\Lambda} f}_p^p
	\end{eqnarray*}
	We can now use Lemma~\ref{lem: Omega as fibered space} to compute for all $s\in S_\Gamma$
	\begin{align*}
		\norm{s\inv\actom\tilde f-\tilde f}_p^p&=\int_{X_\Lambda}\sum_{y\in \Lambda\actom x} \abs{s\inv\actom\tilde f(x,y)-\tilde f(x,y)}^pd\mu(x)\\
		&\leq \int_{X_\Lambda}d_{S_\Lambda}(s\actx x,s\actom x)^p\norm{\nabla^r_{S_\Lambda} f}_p^pd\mu(x)
	\end{align*}
	The desired inequality now follows by the definition of $\nabla_{S_\Gamma}\tilde f$ and the symmetry of $S_\Gamma$.
\end{proof}

\begin{proposition}[Monotonicity of the 
	$\ell^1$-gradient under $\varphi$-integrable measured sub-quotient] 
	\label{prop:gradientSubquotient(phi)}
	Let $\varphi\colon(0,\infty)\to (0,\infty)$ be a function such that $\varphi$ and $t/\varphi(t)$ 
	are non-decreasing.  If $\Gamma$ is a $\varphi$-integrable measure sub-quotient of $\Lambda$ via a coupling 
	$(\Omega,X_\Lambda,\mu)$, and if $f$ is a finitely supported function on $\Lambda$ such that $\norm{\nabla^r_{S_\Lambda} f}_1=1$, then the induced function $\tilde f$ on $\Omega$ defined by $\tilde f(\omega)=\sum_{\lambda\colon \lambda\inv\actom\omega\in X_\Lambda}f(\lambda)$ satisfies 
	\[
	\frac{\norm{\nabla_{S_\Gamma} \tilde f}_1}{\norm{\tilde f}_1}\leq \frac{2C}{\varphi(\norm{f}_1)},
	\]
	where	$\displaystyle C=\frac{\abs{S_\Gamma}}{\mu(X_\Lambda)}\max_{s\in S_\Gamma}\int_{X_\Lambda}\varphi(d_{S_\Lambda}(s\actx x,s\actom x))d\mu(x)$.
\end{proposition}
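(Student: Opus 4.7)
My plan is to mimic the proof of Proposition~\ref{prop:gradientLpSubquotient} almost verbatim, substituting Lemma~\ref{lem: pushing through different points l1} for Lemma~\ref{lem: pushing through different points} wherever we need to compare $f_{x_0}$ and $f_{x_1}$. The only genuinely new ingredient is that the normalization $\norm{\nabla^r_{S_\Lambda} f}_1=1$ assumed in the hypothesis is precisely what is required to feed Lemma~\ref{lem: pushing through different points l1}, whose output already contains the factor $\norm{f}_1/\varphi(\norm{f}_1)$ that ultimately appears in the conclusion. I will first replace $f$ by $\abs{f}$, which only decreases $\norm{\nabla^r_{S_\Lambda} f}_1$ and preserves $\norm{f}_1$, so I may assume $f\geq 0$ throughout.

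For each $x\in X_\Lambda$ and $y\in\Lambda\actom x$ I set $f_x(y)=\sum_{\lambda\,:\,\lambda\actom x=y}f(\lambda)$, and identify $\Omega$ with the set of pairs $(x,y)$ such that $x\in X_\Lambda$ and $y\in\Lambda\actom x$, so that $\tilde f(x,y)=f_x(y)$. Exactly as in the proof of Proposition~\ref{prop:gradientLpSubquotient}, the fact that the $\Gamma$- and $\Lambda$-actions commute gives, for every $s\in S_\Gamma$,
\[
s\inv\actom\tilde f(x,y)-\tilde f(x,y)=f_{s\actx x}(s\actom y)-f_{s\actom x}(s\actom y),
\]
and summing over $y\in\Lambda\actom x$ (re-indexing by $z=s\actom y$, which just permutes the common $\Lambda$-orbit of $s\actx x$ and $s\actom x$) one obtains
\[
\sum_{y\in\Lambda\actom x}\abs{s\inv\actom\tilde f(x,y)-\tilde f(x,y)}=\norm{f_{s\actx x}-f_{s\actom x}}_1.
\]

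At this stage Lemma~\ref{lem: pushing through different points l1}, applied with $x_0=s\actx x$ and $x_1=s\actom x$ (the hypothesis $\norm{\nabla^r_{S_\Lambda} f}_1=1$ being used exactly here), yields the pointwise bound
\[
\norm{f_{s\actx x}-f_{s\actom x}}_1\;\leq\;\frac{2\norm{f}_1}{\varphi(\norm{f}_1)}\,\varphi\bigl(d_{S_\Lambda}(s\actx x,s\actom x)\bigr).
\]
Integrating this over $X_\Lambda$ via Lemma~\ref{lem: Omega as fibered space}, summing over $s\in S_\Gamma$, and bounding each summand crudely by $\abs{S_\Gamma}$ times the maximum over $s$ produces
\[
\norm{\nabla_{S_\Gamma}\tilde f}_1\;\leq\;\frac{2\norm{f}_1}{\varphi(\norm{f}_1)}\cdot C\mu(X_\Lambda),
\]
with $C$ as in the statement. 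Combining this with the identity $\norm{\tilde f}_1=\norm{f}_1\mu(X_\Lambda)$, which is immediate from Lemma~\ref{lem: Omega as fibered space} together with the sign convention $f\geq 0$, closes the argument. The only mild obstacle is to recognize that the role played by the clean $\LL^p$-triangle estimate of Lemma~\ref{lem: pushing through different points} in the proof of Proposition~\ref{prop:gradientLpSubquotient} is now played by the monotonicity-of-$t/\varphi(t)$ estimate from Lemma~\ref{lem: pushing through different points l1}; everything else is routine bookkeeping.
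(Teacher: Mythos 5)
Your argument is exactly the paper's proof: mimic Proposition~\ref{prop:gradientLpSubquotient}, substituting Lemma~\ref{lem: pushing through different points l1} for Lemma~\ref{lem: pushing through different points}, integrating via Lemma~\ref{lem: Omega as fibered space}, and dividing by $\norm{\tilde f}_1=\mu(X_\Lambda)\norm{f}_1$ to reach the stated bound. The only thing to fix is the preliminary reduction to $f\geq 0$: replacing $f$ by $\abs f$ can strictly decrease $\norm{\nabla^r_{S_\Lambda}f}_1$ below $1$, so you can no longer invoke Lemma~\ref{lem: pushing through different points l1}, which demands equality. Drop that step — it is unnecessary, since Lemma~\ref{lem: pushing through different points l1} already defines $f_{x_0}$ via $\abs{f(\lambda)}$ (and this matches the intended reading of $\tilde f$, in parallel with Proposition~\ref{prop:gradientLpSubquotient}); alternatively, keep the reduction but note that the lemma's proof requires only $\norm{\nabla^r_{S_\Lambda}f}_1\leq 1$.
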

\begin{proof}
	The proof is similar to that of Proposition \ref{prop:gradientLpSubquotient}: using Lemma~\ref{lem: pushing through different points l1} applied to $f_x(y)\coloneqq \sum_{\lambda\colon \lambda\actom  x=y}|f(\lambda)|$, we obtain 
	\[
	\frac{\sum_{y\in \Lambda\actom x} \abs{s\inv\actom\tilde f(x,y)-\tilde f(x,y)}}{\norm{f}_1}\leq \frac{2\varphi(d_{S_\Lambda}(s\actx x,s\actom x))}{\varphi(\norm{f}_1)},
	\]
	which together with Lemma~\ref{lem: Omega as fibered space} and the fact that $\|\tilde f\|_1=\mu(X_\Lambda)\norm f_1$ yields the desired inequality.
\end{proof}
\begin{remark}\label{rmk: gradient rewritten for SG}
	Using Fubini's theorem and the natural identification of $X_\Gamma\times \Gamma$ with $\Omega$ given by $(x,g)\mapsto  g\actom  x$, we may rewrite 	$\norm{\nabla_{S_\Gamma} \tilde f}_p^p$ as
	\begin{align*}
		\norm{\nabla_{S_\Gamma} \tilde f}_p^p&=\int_{X_\Gamma}\sum_{s\in S_\Gamma}\sum_{\gamma\in \Gamma}\abs{\tilde f( \gamma\actom  x)-\tilde f(s \gamma\actom  x)}^pd\mu(x).
	\end{align*}
	Now for each $x\in X_\Gamma$, the function $\tilde f$ defines a function 
	$f^x$ on $\Gamma$ given by $f^x(\gamma)=\tilde f( \gamma\actom  x)$, and the previous 
	equality may thus be rewritten as
	\begin{align*}
		\norm{\nabla_{S_\Gamma} \tilde f}_p^p&=\int_{X_\Gamma}\norm{\nabla^l_{S_\Gamma}f^x}^p_pd\mu(x).
	\end{align*}
	So the conclusion of the two previous propositions is that under the right 
	assumption, any function $f$ on $\Lambda$ of small \emph{right} 
	$\ell^p$-gradient induces functions $f^x$ on $\Gamma$ which have on average 
	a small \emph{left} $\ell^p$ gradient. From there on, as we will see in the 
	next section, it is not hard to conclude the proof that the isoperimetric 
	profile goes down for $p\geq 1$ under the assumption that the coupling is 
	finite-to-one. 
	
	But without the finite-to-one assumption, we lose the uniform control on the size of the support of the functions $f^x$. We will circumvent this by simultaneously controlling the size of the support of $f^x$ and bounding  its norm from below on a large portion of the fundamental domain of $\Gamma$. This will be done in Section \ref{sec:L1 measure quotient iso profile down}.
	
\end{remark}

\subsection{Monotonicity under at most \texorpdfstring{$m$}{m}-to-one measure sub-quotients}

In this section we shall prove the second items of Theorem~\ref{thm: monotonicity of isop under Lp} and Theorem~\ref{thm: monotonicity of isop under phi}.

The common feature between these statements is that the coupling is supposed to 
be at most $m$-to-one. This has the following consequence. 

\begin{lemma}\label{lem:finite-to-one}
	Let $(\Omega,X_\Lambda,\mu)$ be an at most $m$-to-one measure 
	sub-quotient coupling between $\Gamma$ and $\Lambda$. 
	Let $X_\Gamma$ be a fundamental domain for the $\Gamma$-action.
	Let $f$ be a function 
	on $\Lambda$ whose support has cardinality at most $K$, and let 
	$(f^x)_{x\in X_\Gamma}$ be the family of functions on $\Gamma$ defined by
	$$f^x(\gamma)=\left(\sum_{\lambda\colon \lambda\actom  ( \gamma\actom  x)\in 
		X_\Lambda}|f(\lambda)|^p\right)^{1/p},$$
	for some $p>0$. 
	Then for each $x\in X_\Gamma$, the function $f^x$ has support of cardinality at most $mK$. 
\end{lemma}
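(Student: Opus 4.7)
The plan is to construct a map $\Phi \colon \supp f^x \to \supp f$ whose fibers have cardinality at most $m$; this will immediately give $|\supp f^x| \le m|\supp f| \le m K$. By the definition of $f^x$, for each $\gamma \in \supp f^x$ the set
$$A_{\gamma,x} = \{\lambda \in \Lambda : \lambda \actom \gamma \actom x \in X_\Lambda\}$$
meets $\supp f$, so we may pick an arbitrary witness $\lambda_\gamma \in A_{\gamma,x} \cap \supp f$ and set $\Phi(\gamma) = \lambda_\gamma$.

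Next I would rewrite $A_{\gamma,x}$ as a left coset in $\Lambda$ in order to apply the at-most-$m$-to-one hypothesis. Let $y$ be the unique point in $X_\Lambda \cap \Lambda\actom x$, and fix $\tau \in \Lambda$ with $y = \tau \actom x$. Commutation of the two actions gives $\gamma \actom x = \tau^{-1}\actom \gamma \actom y$, so the unique element of $X_\Lambda$ in $\Lambda \actom \gamma \actom x$ equals $\gamma \actx y = \alpha(\gamma,y) \actom \gamma \actom y$, where $\alpha(\gamma,y) \in \Lambda$ is any element realizing this equality (well-defined modulo $\operatorname{Stab}_\Lambda(y)$). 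Using that stabilizers are preserved by the $\Gamma$-action and $\operatorname{Stab}_\Lambda(y) = \tau \operatorname{Stab}_\Lambda(x) \tau^{-1}$, a direct computation yields
$$A_{\gamma,x} = \alpha(\gamma,y)\tau \operatorname{Stab}_\Lambda(x) = \alpha(\gamma,y) \operatorname{Stab}_\Lambda(y)\, \tau.$$

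Finally I would fix $\lambda \in \supp f$ and bound $|\Phi^{-1}(\lambda)|$. The condition $\lambda \in A_{\gamma,x}$ forces $\alpha(\gamma,y) \operatorname{Stab}_\Lambda(y) = \lambda \tau^{-1} \operatorname{Stab}_\Lambda(y)$, or equivalently $\alpha(\gamma,y)\actom y = \lambda \tau^{-1}\actom y$ (this element of $\Lambda \actom y$ being unambiguous). Since the coupling is at-most-$m$-to-one at $y \in X_\Lambda$, the map $\gamma \mapsto \gamma^{-1}\actom (\gamma \actx y) = \alpha(\gamma,y)\actom y$ has fibers of cardinality at most $m$, so $|\Phi^{-1}(\lambda)| \le m$ and the bound follows. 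The argument is essentially bookkeeping; the only delicate point, arising because the $\Lambda$-action in a sub-quotient coupling need not be free, is that $\alpha(\gamma,y)$ is defined only modulo $\operatorname{Stab}_\Lambda(y)$, but this ambiguity is precisely absorbed by the coset description of $A_{\gamma,x}$.
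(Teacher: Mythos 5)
Your argument is correct and rests on the same core observation as the paper's proof: after locating the unique point $y=\tau\actom x$ of $X_\Lambda$ in the $\Lambda$-orbit of $x$, the condition $\lambda\in A_{\gamma,x}$ pins down the value of the map $\gamma\mapsto\gamma^{-1}\actom(\gamma\actx y)$ on which the at-most-$m$-to-one hypothesis is stated. The paper phrases this more directly, avoiding any explicit cocycle: it simply notes that $\gamma\in\supp f^x$ forces $\gamma^{-1}\actom(\gamma\actx y)$ to lie in the $K$-element set $\{\lambda\actom x:\lambda\in\supp f\}$, and then invokes the at-most-$m$-to-one condition once. Your version routes the same information through the coset description $A_{\gamma,x}=\alpha(\gamma,y)\operatorname{Stab}_\Lambda(y)\tau$ and a choice-dependent witness map $\Phi$, which is correct but adds bookkeeping (stabilizers, well-definedness of $\alpha$ modulo $\operatorname{Stab}_\Lambda(y)$) that the paper sidesteps by never introducing $\alpha$ as an element of $\Lambda$ and instead working directly with the point $\gamma^{-1}\actom(\gamma\actx y)\in\Omega$.
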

\begin{proof}
	Let $x\in X_\Gamma$, let $\lambda_0\in \Lambda$ such that $\lambda_0\actom x\in X_\Lambda$. 
	By definition for every $\gamma\in \Gamma$ and $\lambda\in\Lambda$ we have $ \lambda\actom  ( \gamma\actom  x)\in X_\Lambda$ if and only if $ \lambda\actom ( \gamma\actom  x)=\gamma\cdot (\lambda_0\actom x)$, which since the $\Gamma$ and $\Lambda$-actions on $\Omega$ commute is in turn equivalent to $ \lambda\actom  x=\gamma\inv \actom(\gamma\cdot (\lambda_0\actom x))$. So we have 
	$$f^x(\gamma)=\sum_{\lambda\colon \lambda\actom  x=\gamma\inv \actom(\gamma\cdot (\lambda_0\actom x))}f(\lambda).$$
	The conclusion now follows from  our assumption that the map $\gamma\mapsto 
	\gamma\inv\actom(\gamma\cdot (\lambda_0\actom x))$ is at most $m$-to-one. 
\end{proof}

\begin{proof}[{Proof of Theorem~\ref{thm: monotonicity of isop under Lp}(ii)}]
	We  start with a function $f$ that realizes the $\LL^p$-isoperimetric profile of 
	$\Lambda$, and we consider the function $\tilde{f}$ on $\Omega$ defined in 
	Proposition \ref{prop:gradientLpSubquotient}. 
	By Proposition \ref{prop:gradientLpSubquotient}, there exists $C'$ only 
	depending on the coupling such that 
	\begin{equation}\label{eq:FinalGradient}
		\frac{\|\nabla_{S_\Gamma} \tilde f\|_p^p}{\|\tilde{f}\|_p^p}\leq C'\frac{\norm{\nabla^r_{S_\Lambda} f}_p^p}{\norm{f}_p^p}.
	\end{equation}
	This implies that on a set of positive measure, the function $f^x$ on $\Gamma$ satisfies
	\begin{equation}\label{eq:fxGradient}
		\frac{\|\nabla^l_{S_\Gamma} f^x\|_p^p}{\norm{f^x}_p^p}< 2C'\frac{\norm{\nabla^r_{S_\Lambda} f}_p^p}{\norm{f}_p^p}.
	\end{equation}
	Indeed, assume by contradiction that the reverse inequality holds on a subset $Z\subseteq X_\Gamma$ of full measure. This means that for all $x\in Z$,
	\[\|\nabla^l_{S_\Gamma} f^x\|_p^p\geq 2C'\frac{\norm{\nabla^r_{S_\Lambda} f}_p^p}{\norm{f}_p^p}{\norm{f^x}_p^p}.\]
	But integrating over $x\in Z$, we get a contradiction with (\ref{eq:FinalGradient}).
	On the other hand, by Lemma~\ref{lem:finite-to-one}, the support of $f^x$ has size at most $m|\supp(f)|$. 
	Hence we deduce that  $ j_{p,\Gamma}(mn)\geq \frac{j_{p,\Lambda}(n)}{2C'}$, so we are done.
\end{proof}

\begin{remark}
	Note that the previous argument does not provide any information on the measure of the set on which inequality (\ref{eq:fxGradient}) holds. 
\end{remark}

\begin{proof}[{Proof of Theorem~\ref{thm: monotonicity of isop under phi}(ii)}]
	As before we start with a function $f$ that realizes the $\LL^1$-isoperimetric profile of $\Lambda$. We then normalize $f$ such that $\norm{\nabla^r_{S_\Lambda} f}_1=1$  and consider the function $\tilde{f}$ on $\Omega$ defined in Proposition \ref{prop:gradientLpSubquotient}. 
	We deduce from Proposition \ref{prop:gradientSubquotient(phi)} that  
	\[\frac{\|\nabla_{S_\Gamma} \tilde f\|_1}{\|\tilde{f}\|_1}\leq \frac{C'}{\varphi(\norm{f}_1)},\]
	and the rest of the proof is identical. 
\end{proof}

\subsection{Monotonicity under measure quotients}\label{sec:L1 measure quotient iso profile down}
In this section we prove the first items of Theorem~\ref{thm: monotonicity of 
	isop under Lp} and Theorem~\ref{thm: monotonicity of isop under phi}. On
rescaling the measure, we may assume that $X_\Gamma$ has measure $1$.
Note that 
since $X_\Lambda$ intersects every $\Lambda$-orbit, we can then
find a finite subset $W$  of $\Lambda$ so that $Z\coloneqq (W\actom 
X_\Lambda)\cap X_\Gamma$ has measure at least $3/4$. Let us fix once and for 
all such a set $W$.

Given $n\geq 1$, we let $f_1$ be a function that realizes the (right) isoperimetric 
profile of $\Lambda$ at $n$. We start modifying it as follows. We define a function 
$f_2$ on $\Lambda$ by \[f_2(\lambda)=\left(\sum_{w\in 
	W}\abs{f_1(w\lambda)}^p\right)^{1/p}.\] 
The important fact 
is that $f_2$ ``almost" realizes the  isoperimetric profile of $\Lambda$: the 
support of $f_2$ has size at most $|W|$ times the support of $f_1$, and we have
$\|f_2\|_p\geq \|f_1\|_p$. Using the same inequalities as in the proof of 
Lemma~\ref{lem: pushing through different points} we obtain
\begin{equation}\label{eq:nablaf'}
	\|\nabla^r_{S_\Lambda} f_2\|_p^p\leq \abs W\|\nabla^r_{S_\Lambda} f_1\|_p^p.
\end{equation}

For  all $p>0$, we set
\[\tilde f_2(\omega)=\left(\sum_{\lambda\colon \lambda\actom \omega\in 
	X_\Lambda}f_2(\lambda)^p\right)^{1/p}.\]
As in the previous section, this provides for each $x\in X_\Gamma$ a function $f_2^x$ on $\Gamma$ given  by \[f_2^x(\gamma)=\tilde f_2(\gamma\actom x)=\left(\sum_{\lambda\colon\lambda\actom \gamma\actom x\in X_\Lambda}f_2(\lambda)^p\right)^{1/p}.\] 
Therefore, under the assumption that $p \geq 1$ (in order to prove Theorem~
\ref{thm: monotonicity of isop under Lp}.(i)), Proposition 
\ref{prop:gradientLpSubquotient} and Remark \ref{rmk: gradient rewritten for SG} imply  that for all $\eps>0$,
we find two subsets $V$ and $V'$ of $X_\Gamma$ of measure $1-\eps$, such that 
for all $x\in V$,
\[\norm{\nabla_{S_\Gamma}^l f_2^x}_p^p\leq \frac{C}{\eps}\norm{\nabla_{S_\Gamma}^r f_2}_p^p,\]
and for all $x\in V'$
\[|\supp(f_2^x)|\leq \frac{1}{\eps}\mu(X_\Lambda)|\supp(f_2)|.\]
For the proof of Theorem~\ref{thm: monotonicity of isop under phi}, we normalize $f_2$ such that $\norm{\nabla_{S_\Gamma} f_2}_1=1$. Using Proposition \ref{prop:gradientSubquotient(phi)} instead of Proposition \ref{prop:gradientLpSubquotient},
we find two subsets $V$ and $V'$ of $X_\Gamma$ of measure $1-\eps$, such that 
for all $x\in V$,
\[\frac{\norm{\nabla_{S_\Gamma}^lf_2^x}_1}{\norm{\tilde{f}_2}_1}\leq \frac{2C}{\eps\varphi(\norm{ f_2}_1)},\]
and for all $x\in V'$
\[|\supp(f_2^x)|\leq \frac{1}{\eps}\mu(X_\Lambda)|\supp(f_2)|.\]
In what follows, it will be sufficient to take $\eps=1/5$.
The proof of Items (i) of Theorem~\ref{thm: 
	monotonicity of isop under Lp} and Theorem~\ref{thm: monotonicity of isop under 
	phi} will be finished if we can find a subset $Y\subseteq X_\Gamma$ of measure $> 
2/5$ on which the $\ell^p$-norm 
of $f_2^x$ satisfies a uniform lower bound linear in $\|f_2\|_p$. Indeed, this ensures that for all $x$ in the non-empty set $V\cap V'\cap Y$, the function $f_2^x$ gives the required bound on the (left) isoperimetric profile of $\Gamma$.
The existence of such a subset is given by the following lemma, whose proof occupies the rest of the 
subsection. This is where we use our initial assumption that the set $Z=(W\actom 
X_\Lambda)\cap X_\Gamma$ has measure at least $3/4$.

\begin{lemma}\label{lem:LowerBoundnorm}
	There exists  a measurable subset $Y\subseteq X_\Gamma$ of measure at least $1/2$ such that for all $x\in Y$,
	\[\norm{f_2^x}_p^p\geq 
	\frac{1}{4}\norm{f_1}_p^p.\]
\end{lemma}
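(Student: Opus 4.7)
\medskip
\noindent\textit{Proof plan.} The plan is to prove the stronger conclusion $\mu(Y)\geq 2/3$. I first replace $W$ by $W\cup W^{-1}\cup \{e\}$; this only enlarges $Z$ (so $\mu(Z)\geq 3/4$ is preserved) and worsens the bounds on $\norm{f_2}_p$ and $|\supp f_2|$ by harmless constants, so without loss of generality I assume that $W=W^{-1}$.

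The key computation is to unfold the definitions of $\tilde f_2$ and of $f_2$ and, using that the $\Gamma$- and $\Lambda$-actions on $\Omega$ commute, to substitute $\eta=w^{-1}\lambda$ to obtain
\[\norm{f_2^y}_p^p \;=\; \sum_{\eta\in\Lambda}|f_1(\eta)|^p\,N_\eta(y),\qquad N_\eta(y)\;:=\;\sum_{w\in W}\bigl|\{\gamma\in\Gamma: \gamma\actom(w\eta\actom y)\in X_\Lambda\}\bigr|.\]
Each $N_\eta(y)$ is a non-negative integer, and $N_\eta(y)\ge 1$ if and only if $\eta\actom y$ lies in the $\Gamma$-invariant set $\Gamma\actom W^{-1}\actom X_\Lambda = \Gamma\actom W\actom X_\Lambda$. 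The crucial observation is that for every $\eta\in\Lambda$ the translate $\eta\actom X_\Gamma$ is again a $\Gamma$-fundamental domain (because the free $\Gamma$-action commutes with the $\Lambda$-action), so the measure of any $\Gamma$-invariant Borel subset of $\Omega$ intersected with $\eta\actom X_\Gamma$ equals its measure intersected with $X_\Gamma$. Applied to $\Gamma\actom W\actom X_\Lambda$, this yields
\[\mu\bigl(\{y\in X_\Gamma: N_\eta(y)\ge 1\}\bigr)\;=\;\mu\bigl(X_\Gamma\cap\Gamma\actom W\actom X_\Lambda\bigr)\;\ge\;\mu(Z)\;\ge\;3/4,\]
uniformly in $\eta\in\Lambda$.

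Setting $G(y):=\sum_\eta|f_1(\eta)|^p\mathbf{1}[N_\eta(y)\ge 1]$, Fubini yields $\int_{X_\Gamma}G\,d\mu\ge\tfrac34\norm{f_1}_p^p$, while trivially $G\le\norm{f_1}_p^p$ pointwise. A reverse Markov estimate then shows $\mu(Y)\ge 2/3$ for $Y:=\{y\in X_\Gamma: G(y)\ge\tfrac14\norm{f_1}_p^p\}$; since $N_\eta$ is integer-valued, $N_\eta\ge\mathbf{1}[N_\eta\ge 1]$, so on $Y$ one has $\norm{f_2^y}_p^p\ge G(y)\ge\tfrac14\norm{f_1}_p^p$, as required. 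The hardest step to get right is identifying precisely which $\Gamma$-invariant set governs the event $\{N_\eta(y)\ge 1\}$, and it is here that the symmetrization of $W$ and the hypothesis $\mu(Z)\ge 3/4$ are jointly used.
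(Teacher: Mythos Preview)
Your proof is correct and follows essentially the same approach as the paper's. Both arguments bound $\norm{f_2^y}_p^p$ from below by a sum of the form $\sum_\eta|f_1(\eta)|^p\mathbf 1_{E_\eta}(y)$ for events $E_\eta$ each of measure at least $\mu(Z)\ge 3/4$, then apply Fubini and a reverse Markov inequality. The paper takes $E_\eta=\{x:\eta\cdot x\in Z\}$ (using the induced $\Lambda$-action on $X_\Gamma$, which is measure-preserving), while you take $E_\eta=\{y:\eta\actom y\in\Gamma\actom W\actom X_\Lambda\}$ and invoke instead that $\eta\actom X_\Gamma$ is another $\Gamma$-fundamental domain; these are two phrasings of the same fact. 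Your symmetrization of $W$ is a harmless device to identify the event $\{N_\eta\ge1\}$ with a set visibly containing $Z$; the paper's formulation via the induced action avoids this step. Your sharper constant $\mu(Y)\ge 2/3$ is also what the paper's computation actually yields, though it only records $\ge 1/2$.
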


\begin{remark}\label{rem:explanation}
	Before proceeding to the proof of the lemma, let us briefly explain its main idea.
	The  issue that prevents us from obtaining such a lower bound directly from the lower bound on the norm of $\tilde{f_2}$ comes from the fact that we cannot rule out the possibility that in the integral
	\[\norm{\tilde{f_2}}_p^p=\int_{X_\Gamma}\norm{f_2^x}_p^pd\mu (x),\]
	most of the contribution comes from a very tiny portion of $X_\Gamma$ where 
	$\norm{f_2^x}_p$ is much bigger than $\norm{f_2}_p$. If we could 
	combine this lower bound with an uniform upper bound  $\norm{f_2^x}_p^p\leq 
	K\norm{f_2}_p^p$, then 
	this problem would not arise. The reason why such an upper bound is a priori 
	not available is due to the fact that the $\Gamma$-orbit of $x\in X_\Gamma$ 
	could potentially meet many times a same translate of $X_\Lambda$. In other 
	words, when writing $\norm{f_2^x}_p^p=\sum_\gamma\sum_{\lambda\colon \lambda\actom   
		\gamma\actom  
		x\in X_\Lambda} f_2(\lambda)^p=\sum_\lambda n_x(\lambda)f_2(\lambda)^p,$ the ``multiplicities'' $n_x(\lambda)\in \N$ might be very large.  
	Therefore, our strategy consists in replacing this expression by a sum where 
	all the multiplicities are at most one. To be more precise, we will replace it 
	by 
	$\sum_{\lambda : \lambda\cdot x\in Z}\abs{f_1(\lambda)}^p$. We first have to 
	make 
	sure that this new expression is controlled from above by 
	$\norm{\tilde{f_2}}_p^p$ (Inequality
	\ref{ineq: lower bound on f2}) and check that after integrating over 
	$X_\Gamma$, this new expression still captures a reasonable 
	fraction of $\norm{f_1}^p_p$ (Inequality \ref{clai:integraLlowerBound}). The 
	conclusion will then follow by confronting 
	this lower bound on the integral with the pointwise trivial upper bound 
	$\sum_{\lambda : \lambda\cdot x\in Z}\abs{f_1(\lambda)}^p\leq \norm{f_1}_p^p$ 
	(Claim \ref{clai:last}).  \end{remark}
\begin{proof} 
	We start by establishing the following upper bound.
	\begin{equation}\label{ineq: lower bound on f2}
		\sum_{\lambda : \lambda\cdot x\in Z}\abs{f_1(\lambda)}^p\leq 
		\norm{f_2^x}_p^p.
	\end{equation}
	
	First note that for all $x\in X_\Gamma$ and $\lambda\in\Lambda$ such that $\lambda\cdot x\in Z$, there is $\gamma\in \Gamma$  such that $ \lambda\actom \gamma\actom  x\in W\actom X_\Lambda$. 
	Indeed, this follows by inspection of the definitions: $\lambda\cdot x\in Z$ means that there exists $\gamma\in \Gamma$ such that $ \lambda\actom \gamma\actom  x\in Z$, and therefore that $ \lambda\actom\gamma\actom  x\in W\actom X_\Lambda$.
	
	We deduce the following crude inequality, which we then rewrite by exchanging orders of summation:
	\begin{align*}
		\sum_{\lambda\colon \lambda\cdot x\in Z}|f_1(\lambda)|^p& \leq \sum 
		_{\lambda}\sum_{\gamma\colon \lambda\actom   \gamma\actom  x\in W\actom 
			X_\Lambda}\abs{f_1(\lambda)}^p\\
		&\leq \sum_\gamma\sum_{\lambda\colon \lambda\actom   \gamma\actom  x\in W\actom X_\Lambda}\abs{f_1(\lambda)}^p.
	\end{align*}
	We can bound the  above sum by 
	\begin{align*}
		\sum_\gamma\sum_{w\in W}\sum_{\lambda\colon w^{-1} \lambda\actom   \gamma\actom  x\in X_\Lambda}\abs{f_1(\lambda)}^p
		&=\sum_\gamma\sum_{w\in W}\sum_{\lambda\colon \lambda\actom   \gamma\actom  x\in X_\Lambda}\abs{f_1(w \lambda)}^p\\
		&= \sum_\gamma\sum_{\lambda\colon \lambda\actom   \gamma\actom  
			x\in X_\Lambda} f_2(\lambda)^p\\
		&= \norm{f_2^x}_p^p.
	\end{align*}
	Putting this together with the previous inequality, we obtain \eqref{ineq: lower bound on f2}.
	
	We then have the following inequality, using our initial assumption that 
	the set $Z=(W\actom 
	X_\Lambda)\cap X_\Gamma$ has measure at least $3/4$:
	
	\begin{equation}\label{clai:integraLlowerBound}
		\int_{X_\Gamma}\sum_{\lambda : \lambda\cdot x\in Z}\abs{f_1(\lambda)}^p 
		d\mu(x)\geq \frac{3}{4}\norm{f_1}_p^p.
	\end{equation}
	Indeed, the following chain of inequalities hold:
	\begin{eqnarray*}
		\int_{X_\Gamma}\sum_{\lambda : \lambda\cdot x\in Z}\abs{f_1(\lambda)}^p d\mu(x) & \geq &  \int_{X_\Gamma}\sum_{\lambda\in \Lambda }\abs{f_1(\lambda)}^p1_{\lambda^{-1}\cdot Z}(x)d\mu(x)\\
		&  = &  \sum_{\lambda\in \Lambda }\left(\abs{f_1(\lambda)}^p\int_{X_\Gamma}1_{\lambda^{-1}\cdot Z}(x)d\mu(x)\right)\\
		&  =    & \norm{f_1}_p^p \mu(Z) \geq \frac{3}{4}\norm{f_1}_p^p, 
	\end{eqnarray*}
	so \eqref{clai:integraLlowerBound} is proved. We now use this inequality in 
	our final claim.
	\begin{clai}\label{clai:last}
		There is a subset $Y\subseteq X_\Gamma$ of measure at least $1/2$ such that 
		for all $x\in Y$, 
		\[
		\sum_{\lambda\colon \lambda\cdot x\in Z}|f_1(\lambda)|^p\geq \frac{1}{4}\norm{f_1}_p^p.
		\]
	\end{clai}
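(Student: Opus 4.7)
The plan is to prove the claim by a straightforward Markov-type argument, combining the integral lower bound \eqref{clai:integraLlowerBound} with a pointwise upper bound. Set $g(x) \coloneqq \sum_{\lambda\colon \lambda\cdot x\in Z}|f_1(\lambda)|^p$. Since the sum defining $g(x)$ is indexed by a subset of $\Lambda$, we have the trivial pointwise bound $g(x) \le \|f_1\|_p^p$ for every $x \in X_\Gamma$, while inequality \eqref{clai:integraLlowerBound} says $\int_{X_\Gamma} g\, d\mu \ge \tfrac{3}{4}\|f_1\|_p^p$. Recall also that we have normalized so that $\mu(X_\Gamma)=1$.

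Now let $Y \coloneqq \{x \in X_\Gamma : g(x) \ge \tfrac{1}{4}\|f_1\|_p^p\}$. Splitting the integral over $Y$ and its complement and using the two bounds above gives
\[
\tfrac{3}{4}\|f_1\|_p^p \le \int_Y g\, d\mu + \int_{X_\Gamma \setminus Y} g\, d\mu \le \|f_1\|_p^p\, \mu(Y) + \tfrac{1}{4}\|f_1\|_p^p\,(1-\mu(Y)),
\]
which simplifies to $\mu(Y) \ge \tfrac{2}{3} > \tfrac{1}{2}$, proving the claim. This $Y$ then serves as the desired set, and combined with the pointwise inequality \eqref{ineq: lower bound on f2} we conclude $\|f_2^y\|_p^p \ge \tfrac{1}{4}\|f_1\|_p^p$ for every $y \in Y$, finishing the proof of Lemma~\ref{lem:LowerBoundnorm}.

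I expect no real obstacle here: the two ingredients (pointwise boundedness of $g$ by $\|f_1\|_p^p$ and the integral lower bound from the measure estimate on $Z$) have already been established, and the remaining step is essentially the standard observation that a $[0,M]$-valued function with average at least $\tfrac{3}{4}M$ on a probability space must exceed $\tfrac{1}{4}M$ on a set of measure at least $2/3$.
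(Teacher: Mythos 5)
Your proof is correct and essentially the same as the paper's: both rely on the pointwise bound $g(x)\le\norm{f_1}_p^p$, the integral lower bound \eqref{clai:integraLlowerBound}, and the normalization $\mu(X_\Gamma)=1$, in a Markov-type argument. The paper phrases it as a proof by contradiction and stops at $\mu(Y)\ge 1/2$, whereas your direct computation yields the slightly sharper bound $\mu(Y)\ge 2/3$, which is an inconsequential improvement since only $\mu(Y)>2/5$ is needed downstream.
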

	\begin{proof}
		Indeed, assume by contradiction that 
		\[
		\sum_{\lambda\colon \lambda\cdot x\in Z}\abs{f_1(\lambda)}^p< \frac{1}{4}\norm{f_1}_p^p
		\]
		for all $x$ in a set $U\subseteq X_\Gamma$ of measure  strictly larger 
		than $1/2$. Then by Inequality \eqref{clai:integraLlowerBound}, we have
		\begin{eqnarray*}
			\frac{3}{4}\|f_1\|_p^p& \leq &  \int_{U}\sum_{\lambda\colon \lambda\cdot x\in Z}\abs{f_1(\lambda)}^p d\mu(x)+\int_{U^c}\sum_{\lambda\colon \lambda\cdot x\in Z}\abs{f_1(\lambda)}^p d\mu(x)\\
			&  <&  \frac{1}{4}\norm{f_1}_p^p+ \frac{1}{2}\norm{f_1}_p^p\\
			&  < & \frac{3}{4}\norm{f_1}_p^p,
		\end{eqnarray*}
		which is a contradiction.
	\end{proof}
	Putting the above claim together with Inequality \eqref{ineq: lower bound on f2} we deduce that for all $x\in Y$, 
	\[\sum_{\gamma\in \Gamma}\abs{f_2^x(\gamma)}^p \geq \sum_{\lambda\colon 
		\lambda\cdot x\in Z}\abs{f_1(\lambda)}^p\geq \frac{1}{4 }\norm{f_1}_p^p\]
	as wanted.
\end{proof}

\subsection{Can there be a quantitative version of the Ornstein-Weiss theorem?}\label{sec:OW}

In this section, we first observe that any two amenable groups admit a $(\varphi,\varphi)$-integrable orbit equivalence coupling for some $\varphi$ which grows slower than the logarithm, and then that there cannot be a universal such $\varphi$, thus proving Corollary \ref{corintroOrnstein} from the introduction.

\begin{proposition}
	For every orbit equivalence coupling between two finitely generated groups $\Gamma$ and $\Lambda$, there exists a concave increasing unbounded function $\varphi$ with $\varphi(0)=0$ and a $(\varphi,\varphi)$-integrable orbit equivalence coupling between them. Moreover one can assume that $\varphi(t)/\log t$ is non-increasing on $[1,\infty)$.
\end{proposition}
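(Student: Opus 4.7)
My plan is to keep the same orbit equivalence coupling $(X,\mu)$ and construct a concave function $\varphi$ tailored to it so that it becomes $(\varphi,\varphi)$-integrable. Fix finite symmetric generating sets $S_\Gamma$ and $S_\Lambda$. Since the $\Gamma$- and $\Lambda$-orbits coincide almost surely, the finitely many cocycle functions $f_s(x)=d_{S_\Lambda}(x,s\cdot x)$ for $s\in S_\Gamma$ and $g_t(x)=d_{S_\Gamma}(x,t\cdot x)$ for $t\in S_\Lambda$ are measurable and finite almost everywhere. Consequently,
$$F(n)\coloneqq \sum_{s\in S_\Gamma}\mu(\{f_s>n\})+\sum_{t\in S_\Lambda}\mu(\{g_t>n\})$$
is non-increasing and converges to $0$; I extend $F$ to $[1,\infty)$ by $F(u)=F(\lfloor u\rfloor)$. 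Via Proposition~\ref{prop: only check integrability on generators} and the subadditivity of any concave $\varphi$ with $\varphi(0)=0$, it will suffice to exhibit $\varphi$ with $\int_X\varphi(f_s)\,d\mu<\infty$ for every $s\in S_\Gamma$ and $\int_X\varphi(g_t)\,d\mu<\infty$ for every $t\in S_\Lambda$.

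The key technical step is to build a non-increasing piecewise constant function $\psi\colon[1,\infty)\to(0,1]$ such that $\int_1^\infty\psi(u)\,du/u=+\infty$ while $\int_1^\infty\psi(u)F(u)\,du/u<\infty$. To this end I choose a sequence $1=u_0<u_1<u_2<\cdots$ recursively imposing (i) $F(u_k)\leq 2^{-k}$, (ii) $u_1\geq e$, and (iii) $u_{k+1}/u_k\geq u_k/u_{k-1}$ for $k\geq 1$. All three are lower bounds on $u_{k+1}$, and since $F(n)\to 0$ they are jointly achievable. The increments $\ell_k\coloneqq\log(u_{k+1}/u_k)$ then form a non-decreasing sequence with $\ell_0\geq 1$, so setting $\psi(u)=1/\ell_k$ on $[u_k,u_{k+1})$ yields a non-increasing function valued in $(0,1]$. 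By construction $\int_{u_k}^{u_{k+1}}\psi(u)\,du/u=1$, giving divergence of the first integral, while $\int_{u_k}^{u_{k+1}}\psi(u)F(u)\,du/u\leq F(u_k)\leq 2^{-k}$, giving convergence of the second.

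Finally, I set $\varphi(t)=\psi(1)\,t$ on $[0,1]$ and $\varphi(t)=\psi(1)+\int_1^t\psi(s)\,ds/s$ on $[1,\infty)$. Its right derivative equals $\psi(1)$ on $(0,1)$ and $\psi(t)/t$ on $(1,\infty)$, with matching value $\psi(1)$ at $t=1$, and is non-increasing as a product of positive non-increasing factors; hence $\varphi$ is concave. It is clearly increasing with $\varphi(0)=0$, and divergence of $\int\psi(s)\,ds/s$ makes it unbounded. The monotonicity of $\psi$ yields the decisive inequality $\int_1^t\psi(s)\,ds/s\geq\psi(t)\log t$, equivalently $t\varphi'(t)\log t\leq\varphi(t)$ on $(1,\infty)$, which is precisely the condition that $\varphi(t)/\log t$ be non-increasing there. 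The layer-cake formula then gives
$$\int_X\varphi(f_s)\,d\mu\;\leq\;\psi(1)\mu(X)+\int_1^\infty\psi(u)F(u)\,du/u\;<\;\infty,$$
and analogously for $g_t$, completing the argument. The genuine obstacle lies in this middle step: because the rate of decay of $F$ is arbitrary, no fixed choice such as $\psi(u)=1/\log u$ or $\psi(u)=1/\log\log u$ can work uniformly, and $\psi$ must be constructed adaptively, which is the purpose of the recursive choice of the $u_k$.
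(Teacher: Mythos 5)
Your proposal is correct and follows essentially the same strategy as the paper's proof: both pick a rapidly increasing sequence along which the combined tail function of the cocycles decays geometrically (your $F(u_k)\leq 2^{-k}$, the paper's $k(a_n)\leq 2^{-n}$), impose an extra monotonicity on the gaps to guarantee concavity and the $\varphi/\log$ condition, and then verify $(\varphi,\varphi)$-integrability by a layer-cake summation giving a convergent geometric series. The only cosmetic difference is that the paper interpolates $\varphi$ piecewise linearly with $\varphi(a_n)=n$, whereas you build $\varphi$ by integrating a piecewise-constant density $\psi(t)/t$; both yield an adapted concave, increasing, unbounded $\varphi$ with $\varphi(t)/\log t$ non-increasing.
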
\label{prop:orbitquantitative}
\begin{proof} Denote by $\alpha: \Gamma\times X\to \Lambda$ and $\beta:\Lambda\times X\to \Gamma$ the two cocycles associated to our orbit equivalence.
	Let \[k(t)=\mu\left(\left\{x\in X; \; \max_{s\in S_\Gamma}|\alpha(s,x)|_{S_\Lambda}\geq t\right\}\right)+ \mu\left(\left\{x\in X; \; \max_{s\in S_\Lambda}|\beta(s,x)|_{S_\Gamma}\geq t\right\}\right).\]
	Since $k$ tends to zero at infinity, there exists an increasing sequence of positive integers $(a_n)$ such that $a_0=0,$ and $k(a_n)\leq 2^{-n}$. Up to taking a subsequence, we can assume that $a_{n+1}-a_n$ and $\frac{\log a_n}{n}$ are non-decreasing. Let $\varphi$ be the continuous piecewise linear function with breakpoints $a_n$ satisfying $\varphi(a_n)=n$. Note that $\varphi$ is increasing, concave and such that $\varphi(t)/\log t$ is non-increasing on $[1,\infty)$. Moreover we have for all $s\in S_{\Gamma}$,
	\begin{eqnarray*} 
		\int \varphi(|\alpha(s,x)|_{S_\Lambda})d\mu(x) & \leq &  \sum_{n\geq 0} \varphi(a_{n+1})\mu\left(\left\{x\in X; \;|\alpha(s,x)|_{S_\Lambda}\geq a_n\right\}\right)\\
		& \leq &   \sum_{n\geq 0} (n+1) k(a_{n})\\
		& \leq & \sum_{n\geq 0} 2^{-n}(n+1) <\infty,
	\end{eqnarray*}
	The same computation shows that for all $s\in S_\Lambda$, \[\int \varphi(|\beta(s,x)|_{S_\Gamma})d\mu(x)<\infty.\] So we are done.
\end{proof}

We deduce the following quantitative version of Ornstein and Weiss's theorem. 

\begin{corollary}\label{cor: ow made quant}
	Let $\Gamma$ and $\Lambda$ be infinite finitely generated amenable groups. There exists a concave increasing unbounded function $\varphi$ satisfying $\varphi(0)=0$ and $\varphi(t)/\log t$ is non-increasing on $[1,\infty)$ such that there is a $(\varphi,\varphi)$-integrable orbit equivalence coupling between them. 
\end{corollary}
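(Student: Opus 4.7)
The plan is to deduce this corollary essentially for free from the two ingredients that precede it in the text: the Ornstein--Weiss theorem on one hand, and Proposition~\ref{prop:orbitquantitative} on the other. Since no new construction is required, the proof reduces to combining these two results; the genuine content lies entirely in Proposition~\ref{prop:orbitquantitative}, which has already been established.

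More precisely, I would first invoke the Ornstein--Weiss theorem \cite{ornsteinErgodicTheoryAmenable1980}: since $\Gamma$ and $\Lambda$ are infinite finitely generated amenable groups, they admit free measure-preserving ergodic actions on a standard probability space $(X,\mu)$ with the same orbits. By definition, this yields an orbit equivalence coupling $(X,\mu)$ between $\Gamma$ and $\Lambda$, with no a priori control on the distance cocycles $\alpha\colon \Gamma\times X\to\Lambda$ and $\beta\colon\Lambda\times X\to\Gamma$.

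I would then apply Proposition~\ref{prop:orbitquantitative} to the orbit equivalence coupling $(X,\mu)$ just obtained. That proposition produces a concave increasing unbounded function $\varphi\colon[0,\infty)\to[0,\infty)$ with $\varphi(0)=0$ and such that $\varphi(t)/\log t$ is non-increasing on $[1,\infty)$, and guarantees that the very same coupling $(X,\mu)$ is $(\varphi,\varphi)$-integrable. In particular, for every $s\in S_\Gamma$ and $s'\in S_\Lambda$,
\[
\int_X \varphi(\lvert\alpha(s,x)\rvert_{S_\Lambda})\, d\mu(x)<\infty \quad\text{and}\quad \int_X \varphi(\lvert\beta(s',x)\rvert_{S_\Gamma})\, d\mu(x)<\infty.
\]
By Proposition~\ref{prop: only check integrability on generators} (applied in the orbit-equivalence setting), checking the integrability condition on the finite generating sets $S_\Gamma$ and $S_\Lambda$ is sufficient, so the coupling $(X,\mu)$ is indeed a $(\varphi,\varphi)$-integrable orbit equivalence coupling between $\Gamma$ and $\Lambda$, which is precisely the statement of the corollary.

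In this short deduction, there is no genuine obstacle: Ornstein--Weiss supplies the coupling and Proposition~\ref{prop:orbitquantitative} upgrades it. The only point requiring any care is making sure that the function $\varphi$ delivered by Proposition~\ref{prop:orbitquantitative} satisfies all the listed properties (concavity, $\varphi(0)=0$, unboundedness, and monotonicity of $\varphi(t)/\log t$), which is guaranteed by the construction in its proof via the breakpoints $a_n$ chosen so that $a_{n+1}-a_n$ and $\log(a_n)/n$ are non-decreasing.
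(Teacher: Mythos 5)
Your proof is correct and matches the paper's intended argument exactly: the corollary is stated immediately after Proposition~\ref{prop:orbitquantitative} precisely as the combination of that proposition with the Ornstein--Weiss theorem. Nothing more is needed.
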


Recall the following theorem from \cite[Thm.~1.1]{brieusselSpeedRandomWalks2021}.

\begin{theorem}\label{thm:BZ}
	Let $F$ be an increasing function such that $F(t)/\log t$ is non-increasing on $[1,\infty)$. Then there exists a finitely generated group $\Gamma$ whose isoperimetric profile  satisfies $j_{1,\Gamma}\approx F$.
\end{theorem}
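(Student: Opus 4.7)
The statement is the Brieussel--Zheng construction of amenable groups with prescribed isoperimetric profile, so my plan is to reproduce the essentials of their \emph{diagonal product} construction. The strategy is to interpolate between the known profiles of lamplighter-type groups by packaging them inside a single finitely generated group whose effective structure varies with scale.

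First I would fix a rapidly growing sequence of scales $(n_k)_{k\ge 1}$ and choose finite groups $A_k$ with $\abs{A_k}$ tuned so that a single lamplighter $A_k\wr\Z$ would exhibit the value $F(n_k)$ at scale $n_k$ (recall that $A\wr\Z$ has isoperimetric profile $\approx \log n/\log\abs{A}$ at scales where the lamps dominate). Then I would define $\Gamma$ as a diagonal product inside $\prod_k (A_k\wr\Z)$: take the subgroup generated by a single diagonal copy of the $\Z$-shift together with finitely many ``lamp'' generators that encode the individual $A_k$ through a coordinate which slides across scales. Finite generation and amenability are automatic, since $\Gamma$ embeds in a group that is locally finite by $\Z$.

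The direction $j_{1,\Gamma}\succcurlyeq F$ would be obtained by an explicit F\o lner construction: at scale $n$, take the product of a $\Z$-interval of length comparable to $n$ with ``lamp windows'' adapted to those factors $A_k$ whose natural scale $n_k$ is $\le n$. A direct boundary-to-volume computation, using that the generating set touches only the shift and finitely many lamps at a time, yields the desired lower bound on the profile. The hypothesis that $F(t)/\log t$ is non-increasing is used here to guarantee that the contributions of finer scales do not inflate the boundary beyond what $F(n)$ allows.

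The converse direction $j_{1,\Gamma}\preccurlyeq F$ is the crux of the argument. One has to bound $\norm f_1/\norm{\nabla f}_1$ uniformly from above in terms of $\abs{\supp f}$, for every finitely supported $f$ on $\Gamma$. My approach would be a layer-cake decomposition of $f$ according to which scales $n_k$ its support effectively ``sees'', then invoke the established isoperimetric inequality for each $A_k\wr\Z$ at the appropriate scale, and finally sum the resulting estimates. The monotonicity assumption on $F(t)/\log t$ is precisely what makes these scale-by-scale estimates combine without overshooting: it ensures that the ``lamp contributions'' accumulate in a way compatible with $F$. This bookkeeping -- aligning a decomposition of an arbitrary function with the fixed scales $(n_k)$ and showing the pieces can be reassembled into the profile $F$ -- is the main technical obstacle; the construction and the upper-bound direction are comparatively routine once the parameters are correctly chosen.
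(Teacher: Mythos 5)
The paper itself does not prove this statement: Theorem~\ref{thm:BZ} is imported verbatim from Brieussel and Zheng (\cite{brieusselSpeedRandomWalks2021}) and used as a black box, so there is no in-paper argument to compare your proposal against. What you have written is a plausible high-level reconstruction of the Brieussel--Zheng \emph{diagonal product} strategy, and the broad architecture is right: a diagonal product of lamplighter-type building blocks over a common $\Z$, explicit Følner sets to get the lower bound $j_{1,\Gamma}\succcurlyeq F$, and a harder analytic argument for the upper bound $j_{1,\Gamma}\preccurlyeq F$.

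That said, the outline stops short of being a proof precisely where the work is. You acknowledge that the upper bound is ``the main technical obstacle'' but the mechanism you propose --- a layer-cake decomposition of an arbitrary finitely supported $f$ reconciled scale by scale with the isoperimetry of each $A_k\wr\Z$ --- is not actually carried out, and this is exactly the content of the Brieussel--Zheng paper (it occupies many pages and uses additional structure, in particular the ``range'' parameters of the diagonal product and, for some regimes, lamp groups with expansion properties, neither of which appear in your sketch). A couple of smaller imprecisions: the phrase ``the subgroup generated by a single diagonal copy of the $\Z$-shift together with finitely many lamp generators'' does not yet pin down a diagonal product (you need the lamp generators to act through \emph{shifted} windows of controlled length, which is what makes different scales see different effective lamp sizes); and ``embeds in a group that is locally finite by $\Z$'' is true for the correctly defined diagonal product but not for $\prod_k(A_k\wr\Z)$ as written, since the latter has independent $\Z$ factors. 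So: right paper, right idea, but the proposal is an outline of someone else's theorem rather than a proof, and the core estimate it would need to supply is the one it defers.
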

We deduce the following corollary, which is in sharp contrast with Corollary \ref{cor: ow made quant}.

\begin{corollary}\label{cor:Ornsteinsub-quotient}
	For every concave increasing unbounded function $\psi$ such that $\psi(t)/\log t$ is non-increasing on $[1,\infty)$ there exists a finitely generated amenable group  $\Gamma$ with the following property: for every concave function $\varphi$ such that $\varphi(0)=0$, if $\Gamma$ is a $\varphi$-integrable measure quotient (or an at most $m$-to-one measure sub-quotient) of $\Z$, then $\varphi\preccurlyeq \psi$. 
\end{corollary}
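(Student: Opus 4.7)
The plan is to combine two results from the paper: the Brieussel--Zheng realization theorem (Theorem~\ref{thm:BZ}) and the monotonicity of the isoperimetric profile under measure quotients and at most $m$-to-one measure sub-quotients (Theorem~\ref{thm: monotonicity of isop under phi}). The corollary is essentially a one-line deduction once both tools are in place: pick $\Gamma$ whose isoperimetric profile matches $\psi$, and let Theorem~\ref{thm: monotonicity of isop under phi} do the work of pushing the integrability assumption down to a comparison between $\varphi$ and $\psi$.

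First I would apply Theorem~\ref{thm:BZ} to the given $\psi$: since $\psi$ is increasing and $\psi(t)/\log t$ is non-increasing on $[1,\infty)$, it produces a finitely generated group $\Gamma$ with $j_{1,\Gamma}\approx\psi$. As $\psi$ is unbounded, so is $j_{1,\Gamma}$, and hence $\Gamma$ is amenable. This is the group I will use to witness the conclusion of the corollary.

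Next, suppose $\Gamma$ is a $\varphi$-integrable measure quotient of $\Z$, or an at most $m$-to-one $\varphi$-integrable measure sub-quotient of $\Z$, for some concave $\varphi\colon\R^+\to\R^+$ with $\varphi(0)=0$. Because $\varphi$ is concave, non-negative and vanishes at $0$, both $\varphi$ and $t\mapsto t/\varphi(t)$ are non-decreasing, so the hypotheses of Theorem~\ref{thm: monotonicity of isop under phi} are satisfied. That theorem yields
\[
j_{1,\Gamma}\succcurlyeq \varphi\circ j_{1,\Z}.
\]
Since $j_{1,\Z}(n)\approx n$ and $\varphi$ is concave with $\varphi(0)=0$, we have $\varphi(Cn)\asymp\varphi(n)$ for every fixed $C>0$ (using $\varphi(\lambda t)\geq\lambda\varphi(t)$ for $\lambda\leq 1$ and $\varphi(\lambda t)\leq\lambda\varphi(t)$ for $\lambda\geq 1$), hence $\varphi\circ j_{1,\Z}\approx\varphi$. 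Combining with $j_{1,\Gamma}\approx\psi$ then gives $\psi\succcurlyeq\varphi$, i.e.\ $\varphi\preccurlyeq\psi$.

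There is no serious obstacle here, as the argument is a direct assembly of the two theorems. The only small point to check is the constant-absorption property of concave $\varphi$ with $\varphi(0)=0$, which allows us to replace $\varphi\circ j_{1,\Z}$ by $\varphi$ up to the asymptotic equivalence $\approx$; this is routine.
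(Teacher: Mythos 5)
Your proof is correct and follows exactly the same route as the paper: apply Theorem~\ref{thm:BZ} to get $\Gamma$ with $j_{1,\Gamma}\approx\psi$, then invoke Theorem~\ref{thm: monotonicity of isop under phi} with $\Lambda=\Z$ and use $j_{1,\Z}(n)\approx n$ to conclude $\varphi\preccurlyeq\psi$. The only difference is cosmetic — you spell out that concavity plus $\varphi(0)=0$ and non-negativity force $\varphi$ to be non-decreasing and $t/\varphi(t)$ non-decreasing, which the paper also uses but states a bit more tersely.
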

\begin{proof}
	Consider the function $F(t)=\psi(t)$, then $F(t)/\log t=\psi(t)/\log t$, so it is non-increasing on $[1,+\infty)$. We can then apply Theorem \ref{thm:BZ} and find a finitely generated group $\Gamma$ whose isoperimetric profile satisfies $j_{1,\Gamma}\approx F$. 
	
	Now let $\varphi$ be a concave non-decreasing function such that $\varphi(0)=0$, then by concavity we have that $t/\varphi(t)$ is non-decreasing. If there is a $\varphi$-integrable measure quotient coupling from $\Gamma$ to $\Z$, then we can apply Theorem \ref{thm: monotonicity of isop under phi}, and since $j_{1,\Z}(n)\approx n$ and $j_{1,\Gamma}\approx \psi$ we get that $\varphi\preccurlyeq \psi$ as wanted.
\end{proof}

\begin{proof}[Proof of  Corollary \ref{corintroOrnstein}]
	Let $\Gamma$  be a finitely generated amenable groups, and let $\varphi$ be a positive, unbounded increasing function. Reasoning as in the proof of  Corollary \ref{cor: ow made quant}, we  find $\varphi'\leq \varphi$
	that is increasing, unbounded, concave and such that $\varphi'(0)=0$. By Corollary \ref{cor: ow made quant}, there exists a concave increasing function $\theta$ and a $(\theta,\LL^0)$ measure equivalence coupling from $\Lambda$ to $\Z$.  Note that the two conditions: $f$ is increasing and $t/f(t)$ is non-decreasing are stable under composition. Hence the function $\psi=\log  \circ \varphi'\circ \theta$ satisfies those conditions. We now find a group $\Gamma$, which satisfies the conclusion of  Corollary \ref{cor:Ornsteinsub-quotient} for our function $\psi$. 
	
	Now assume by contradiction that there exists a $(\varphi,\LL^0)$-integrable measure equivalence coupling from $\Gamma$ to $\Lambda$. Such a coupling is also $(\varphi',\LL^0)$-integrable. Hence, composing these couplings yields a  $(\varphi'\circ \theta,\LL^0)$-integrable measure equivalence coupling from $\Gamma$ to $\Z$. Since $\psi$ is strictly slower than $\varphi'\circ \theta$, this contradicts  Corollary \ref{cor:Ornsteinsub-quotient}.
\end{proof}

\section{\texorpdfstring{$\LL^{\infty}$}{L-infinity} measure subgroups}\label{sec:Linfty}
In this section we discuss the notion of at most $m$-to-one 
$\LL^{\infty}$-measure subgroups. We shall see that for amenable groups this 
notion is equivalent to that of regular map, which will allow us to 
deduce various results stated in the introduction. 
\subsection{\texorpdfstring{$\LL^{\infty}$}{L-infinity} measure subgroups and 
	regular maps}
The following proposition can be extracted from the proof of \cite[Thm. 2.1.2]{shalomHarmonicAnalysisCohomology2004}, and provides a useful way of building measured subgroup couplings from \emph{Borel} sub-quotient couplings.

\begin{proposition}\label{prop: measured coupling from Borel}
	Let $\Omega$ be a standard Borel space, suppose that we have a $\Gamma\times \Lambda$-action on $\Omega$ with the following properties:
	\begin{itemize} 
		\item the $\Lambda$-action is free and admits a Borel  fundamental domain $X_\Lambda$,
		\item there exists a $\Gamma$-invariant probability measure $\mu$ for the induced action on $X_\Lambda$. 
	\end{itemize}
	Then $\mu$ has a unique $\Lambda$-invariant extension to a $\sigma$-finite invariant measure $m$ on $\Omega$ which is $\Gamma$-invariant as well. In particular, if the $\Gamma$-action on $\Omega$ is free and has a Borel fundamental domain, then $\Gamma$ is a measure subgroup of $\Lambda$.
\end{proposition}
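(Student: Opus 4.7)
The plan is to define the extension $m$ explicitly using the $\Lambda$-tiling of $\Omega$. Since the $\Lambda$-action is free and $X_\Lambda$ is a Borel fundamental domain, every $\omega \in \Omega$ can be written uniquely as $\lambda \actom x$ for some $\lambda \in \Lambda$ and $x \in X_\Lambda$. Any $\Lambda$-invariant measure extending $\mu$ is therefore forced to satisfy $m(\lambda \actom A) = \mu(A)$ for every Borel $A \subseteq X_\Lambda$, so I set
\[ m(B) = \sum_{\lambda \in \Lambda} \mu\bigl(\lambda\inv \actom B \cap X_\Lambda\bigr) \]
for every Borel $B \subseteq \Omega$. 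This is well-defined and Borel because $\Lambda$ is countable, it is $\sigma$-finite since $\Omega = \bigsqcup_{\lambda \in \Lambda} \lambda \actom X_\Lambda$ with each piece of $m$-measure one, and it is $\Lambda$-invariant and extends $\mu$ by construction. Uniqueness was already forced above.

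The heart of the argument is to check $\Gamma$-invariance of $m$. By $\Lambda$-invariance and the decomposition $\Omega = \bigsqcup_{\lambda} \lambda \actom X_\Lambda$, it suffices to show that $m(\gamma \actom A) = \mu(A)$ for every Borel $A \subseteq X_\Lambda$ and every $\gamma \in \Gamma$. Because the $\Gamma$- and $\Lambda$-actions commute, the projection $\pi_{X_\Lambda}\colon \Omega \to X_\Lambda$ sends $\gamma \actom A$ onto $\gamma \actx A$, where $\actx$ denotes the induced $\Gamma$-action on $X_\Lambda$. Setting $A_{\lambda'} = (\lambda'{}\inv \actom \gamma \actom A) \cap X_\Lambda$, freeness of the $\Lambda$-action and the fundamental domain property give a disjoint decomposition
\[ \gamma \actom A = \bigsqcup_{\lambda' \in \Lambda} \lambda' \actom A_{\lambda'}, \qquad \gamma \actx A = \bigsqcup_{\lambda' \in \Lambda} A_{\lambda'}. \]
Hence $m(\gamma \actom A) = \sum_{\lambda'} \mu(A_{\lambda'}) = \mu(\gamma \actx A) = \mu(A)$, where the last equality uses that $\mu$ is preserved by the induced $\Gamma$-action. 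This is the key step, and the only real subtlety is making sure the decomposition above is genuinely a disjoint (countable) Borel partition, which comes from freeness of the $\Lambda$-action together with $X_\Lambda$ being a Borel fundamental domain.

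Finally, for the ``in particular'' clause, assume the $\Gamma$-action on $\Omega$ is also free and admits a Borel fundamental domain. Then the $\Gamma$-action is smooth, and $(\Omega, X_\Lambda, m)$ satisfies all the hypotheses in the definition of a measure subgroup coupling: both actions are measure-preserving (just proved), both are free and smooth (by assumption), $X_\Lambda$ is a fundamental domain for $\Lambda$ (by construction), and $m(X_\Lambda) = \mu(X_\Lambda) = 1 < +\infty$. Thus $\Gamma$ is a measure subgroup of $\Lambda$, as claimed.
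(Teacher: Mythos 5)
Your proof is correct and follows essentially the same route as the paper's: define $m$ by the same sum over $\Lambda$ (yours reindexes by $\lambda^{-1}$, which is immaterial), derive uniqueness from the $\Lambda$-tiling of $\Omega$, and reduce $\Gamma$-invariance to the identity $m(\gamma \actom A) = \mu(\gamma \actx A) = \mu(A)$ for $A \subseteq X_\Lambda$ via the same disjoint decomposition $\gamma \actx A = \bigsqcup_{\lambda'} (\lambda'\actom \gamma \actom A) \cap X_\Lambda$. The only difference is presentational: you spell out the verification of the measure subgroup axioms at the end, which the paper leaves implicit.
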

\begin{proof}
	Since every Borel subset of $\Omega$ is a countable disjoint union of $\Lambda$-translates of Borel subsets of $X_\Lambda$, the $\Lambda$-invariant extension of $\mu$ is unique.
	We build it by letting
	$$m(A)=\sum_{\lambda\in \Lambda } \mu(( \lambda\actom  A)\cap X_\Lambda).$$
	By definition, the measure $m$ is $\Lambda$-invariant, let us show that it is $\Gamma$-invariant as well. Let $\gamma\in \Gamma$, if $A$ is a Borel subset of $X_\Lambda$, by the definition of the induced action on $X_\Lambda$ we have
	$$
	\gamma\cdot A=\bigsqcup_{\lambda\in \Lambda}( \lambda\actom   \gamma\actom  A)\cap X_\Lambda
	$$
	and so $m( \gamma\actom  A)=\mu( \gamma\actx A)=\mu(A)=m(A)$. Now for an arbitrary $\lambda\in\Lambda$ and a Borel subset $A\subseteq X_\Lambda$, we have $$m( \gamma\actom  ( \lambda\actom  A))=m( \lambda\actom ( \gamma\actom  A))=m( \gamma\actom  A)=m(A)=m( \lambda\actom  A),$$ and since every Borel subset of $\Omega$ is a countable disjoint union of $\Lambda$-translates of Borel subsets of $X_\Lambda$, we conclude that the $\Gamma$-action on $\Omega$ preserves $m$. 
\end{proof}

Recall the following definition from \cite{benjaminiSeparationProfileInfinite2012}.

\begin{definition}
	Let $\Gamma$ and $\Lambda$ be two countable finitely generated groups, a map $f\colon \Gamma\to\Lambda$ is a \textbf{regular map} if it is Lipschitz and there exists $m\in \N$ such that the preimages of $f$ have size at most $m$ for some $m\in \N$, i.e.\  $\sup_{\lambda\in \Lambda}|f\inv(\{\lambda\})|\leq m$. When there exists such a map, we say that $\Gamma$ regularly embeds in $\Lambda$.
\end{definition}

\begin{remark}
	Coarse embeddings are special cases of regular maps.
\end{remark}

The second part of the following theorem is a slightly generalized version of \cite[Thm. 2.1.2]{shalomHarmonicAnalysisCohomology2004} (which was proved for coarse embeddings).

\begin{theorem}\label{thm:regular/Linfty}
	Let $\Gamma$ and $\Lambda$ be finitely generated groups. \begin{itemize}
		\item Assume that $\Gamma$ is an at most $m$-to-one $\LL^{\infty}$ measure subgroup of $\Lambda$. 
		Then $\Gamma$ regularly embeds into $\Lambda$.
		\item Conversely, if $\Gamma$ regularly embeds into $\Lambda$ and $\Gamma$ is amenable, then there exists $m\in \N$ such that $\Gamma$ is an at most $m$-to-one $\LL^\infty$ measure subgroup of $\Lambda$. 
	\end{itemize}
\end{theorem}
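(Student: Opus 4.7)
The theorem naturally splits into two constructions, which I describe in turn.

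\textbf{First direction} (coupling $\Rightarrow$ regular map). Given an at most $m$-to-one $\LL^\infty$ subgroup coupling $(\Omega,X_\Lambda,\mu)$ with cocycle $\alpha\colon\Gamma\times X_\Lambda\to\Lambda$, the $\LL^\infty$ condition on the finite generating set $S_\Gamma$ yields a uniform bound $C$ on $|\alpha(s,\cdot)|_{S_\Lambda}$ for $s\in S_\Gamma$ on a conull subset of $X_\Lambda$. Since the induced $\Gamma$-action on $X_\Lambda$ preserves $\mu$, intersecting over $\Gamma$-translates yields a conull $\Gamma$-invariant subset $X_\star$, and iterating the cocycle identity $\alpha(\gamma s,x)=\alpha(\gamma,s\cdot x)\,\alpha(s,x)$ gives $|\alpha(\gamma,x_0)|_{S_\Lambda}\leq C|\gamma|_{S_\Gamma}$ for any $x_0\in X_\star$. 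The at-most-$m$-to-one hypothesis then forces $f(\gamma):=\alpha(\gamma,x_0)$ to have fibers of size at most $m$. The nontrivial point is verifying the full Lipschitz inequality $d_\Lambda(f(\gamma),f(\gamma s))\leq K$, which does \emph{not} follow from the bound at the identity since the telescoping formula shows that $|\alpha(\gamma,x_0)^{-1}\alpha(\gamma s,x_0)|_{S_\Lambda}$ involves the term $|\alpha(\gamma,x_0)^{-1}\alpha(\gamma,s\cdot x_0)|$, measuring variation of the cocycle along $S_\Gamma$-translates of the basepoint. Addressing this is the main obstacle in this direction, and requires a subtler choice of $x_0$ or a modification of $f$ within its (bounded) fibers.

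\textbf{Second direction} (regular map plus amenability $\Rightarrow$ coupling). Given a regular map $f$ with Lipschitz constant $K$ and fiber bound $m$, the plan is to put commuting free Borel actions on $\Omega=\Gamma\times\Lambda$ by
\[
\lambda\actom(g,\mu)=(g,\mu\lambda^{-1}),\qquad \gamma\actom(g,\mu)=(\gamma g,\,f(\gamma g)f(g)^{-1}\mu),
\]
and take the $\Lambda$-fundamental domain $X_\Lambda=\Gamma\times\{e\}$. A direct verification shows that the second is an action (using the telescoping identity $f(\gamma_1\gamma_2 g)f(g)^{-1}=f(\gamma_1\gamma_2 g)f(\gamma_2 g)^{-1}\cdot f(\gamma_2 g)f(g)^{-1}$), that both actions commute and are free, that the induced $\Gamma$-action on $X_\Lambda\cong\Gamma$ is left translation, and that the cocycle is $\alpha(\gamma,(g,e))=f(\gamma g)f(g)^{-1}$. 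Lipschitzness of $f$ then gives $|\alpha(\gamma,\cdot)|_{S_\Lambda}\leq K|\gamma|_{S_\Gamma}$ uniformly (the $\LL^\infty$ condition), and the fiber bound on $f$ gives that $\gamma\mapsto f(\gamma g)$ is at most $m$-to-one for every $g$ (the at most $m$-to-one condition).

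It remains to produce a $\Gamma$-invariant probability measure on $X_\Lambda$, so as to apply \cref{prop: measured coupling from Borel} and extend it to a $\sigma$-finite measure on $\Omega$ invariant under both actions. The left-translation action of $\Gamma$ on itself admits no countably additive invariant probability, but by amenability a left-invariant mean on $\ell^\infty(\Gamma)$ can be realized as a genuine $\Gamma$-invariant probability measure on a suitable standard Borel $\Gamma$-space enlargement of $\Gamma$ (for instance via an ultralimit of normalized counting measures on Følner sets of $\Gamma$, or via a separable Borel model of a $\Gamma$-invariant Radon measure on $\beta\Gamma$). This measurable realization is the principal technical obstacle on this side: it must be carried out compatibly with the Borel structure of $\Omega$ so that \cref{prop: measured coupling from Borel} can be invoked to produce the desired coupling.
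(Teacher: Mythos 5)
Your outline identifies the right objects in both directions, but the two ``principal technical obstacles'' you flag have opposite status: the first is not an obstacle at all, while the second is a genuine dead end for your particular construction.

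\textbf{First direction.} You only consider the cocycle identity in the form $\alpha(\gamma s,x)=\alpha(\gamma,s\cdot x)\alpha(s,x)$, i.e.\ with a generator on the \emph{right}, and this indeed yields the uncontrolled expression $\alpha(\gamma,x)^{-1}\alpha(\gamma,s\cdot x)$. But the relevant specialization is with the generator on the \emph{left}: $\alpha(s\gamma,x)=\alpha(s,\gamma\cdot x)\,\alpha(\gamma,x)$, so that $\alpha(s\gamma,x)\,\alpha(\gamma,x)^{-1}=\alpha(s,\gamma\cdot x)$, which has $S_\Lambda$-length at most $K$ by the $\LL^\infty$ bound. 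In other words, $\gamma\mapsto\alpha(\gamma,x)$ is $K$-Lipschitz from $(\Gamma,d^r_{S_\Gamma})$ to $(\Lambda,d^r_{S_\Lambda})$ for \emph{every} (a.e.) $x$; since left and right word metrics are exchanged by the inversion isometry and fiber sizes are unaffected, this is a regular map. No ``subtler choice of $x_0$'' or modification within fibers is needed, and this is exactly the one-line argument the paper gives.

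\textbf{Second direction.} The coupling $\Omega=\Gamma\times\Lambda$ with fundamental domain $X_\Lambda=\Gamma\times\{e\}$ cannot be equipped with the required data, because the induced $\Gamma$-action on $X_\Lambda\cong\Gamma$ is left translation of an infinite discrete group on itself, which preserves no countably additive probability measure. Your proposed fixes do not repair this: an ultralimit of Følner averages is only a finitely additive mean, and $\beta\Gamma$ is not a standard Borel space, so neither produces an object to which \cref{prop: measured coupling from Borel} applies. The paper's construction differs at exactly this point: rather than $\Gamma\times\Lambda$, the coupling space is taken to be the set of \emph{all} injective maps $f\colon\Gamma\to\Lambda\times F$ (with $|F|=m$) whose composition with $\pi_\Lambda$ is $K$-Lipschitz, equipped with the $\Gamma\times\Lambda$-action $(\gamma,\lambda)\cdot f(g)=\lambda f(\gamma^{-1}g)$. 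The $\Lambda$-fundamental domain $\{f\colon\pi_\Lambda(f(1_\Gamma))=1_\Lambda\}$ is then \emph{compact} for the product topology, because the Lipschitz constraint confines each $f(\gamma)$ to a finite set; amenability of $\Gamma$, acting continuously on this compactum, produces the required $\Gamma$-invariant Borel probability measure, and \cref{prop: measured coupling from Borel} then extends it. If you push your ``separable Borel model of the orbit closure'' idea all the way, you are led essentially to this function-space construction---but this shift from $\Gamma\times\Lambda$ to a space of maps is the missing idea, not a technical add-on.
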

We deduce from the first statement of Theorem~\ref{thm:regular/Linfty} and the fact that asymptotic dimension is monotonous under regular map (see \cite{benjaminiSeparationProfileInfinite2012}) the following corollary.
\begin{corollary}\label{cor:asdimLinfty}
	The asymptotic dimension is monotonous under taking at most $m$-to-one $\LL^\infty$ measure subgroups.\qed
\end{corollary}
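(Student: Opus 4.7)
The plan is essentially to chain together two results, one which is proved in the excerpt (the first part of Theorem~\ref{thm:regular/Linfty}) and one which is cited from the literature (monotonicity of asymptotic dimension under regular maps, from \cite{benjaminiSeparationProfileInfinite2012}). There is no real obstacle; the corollary is an immediate consequence of the two facts combined, so the ``proof'' reduces to spelling out this composition.

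More precisely, suppose that $\Gamma$ is an at most $m$-to-one $\LL^\infty$ measure subgroup of $\Lambda$, witnessed by a coupling $(\Omega, X_\Lambda, \mu)$. By the first bullet of Theorem~\ref{thm:regular/Linfty}, this hypothesis furnishes a regular map $f \colon \Gamma \to \Lambda$, i.e.\ a Lipschitz map whose fibers have cardinality uniformly bounded (by some constant depending on $m$ and the coupling). Thus the existence of an at most $m$-to-one $\LL^\infty$ measure subgroup coupling from $\Gamma$ to $\Lambda$ implies the existence of a regular embedding of $\Gamma$ into $\Lambda$.

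Then I would invoke the result of Benjamini, Schramm and Tim\'ar \cite{benjaminiSeparationProfileInfinite2012}, which states that if there is a regular map from a finitely generated group $\Gamma$ to a finitely generated group $\Lambda$, then $\asdim \Gamma \leq \asdim \Lambda$. Combining this with the previous paragraph yields the desired inequality $\asdim \Gamma \leq \asdim \Lambda$ whenever $\Gamma$ is an at most $m$-to-one $\LL^\infty$ measure subgroup of $\Lambda$, which is exactly the monotonicity statement of the corollary. No further estimate or construction is needed.
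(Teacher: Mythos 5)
Your proposal is correct and follows exactly the paper's own (very brief) argument: apply the first bullet of Theorem~\ref{thm:regular/Linfty} to extract a regular map $\Gamma\to\Lambda$ from the at most $m$-to-one $\LL^\infty$ measure subgroup coupling, and then invoke the monotonicity of asymptotic dimension under regular maps from \cite{benjaminiSeparationProfileInfinite2012}.
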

And from the second part of the theorem, we deduce the following result, which we announced in the introduction.
\begin{corollary}\label{cor:iso profile monotonous reg emb}
	For every $1\leq p\leq \infty$, the isoperimetric profile is monotonous under regular map between amenable groups.
\end{corollary}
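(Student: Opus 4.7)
The plan is to derive the corollary by chaining together two results already available in the paper. Given a regular map $f\colon \Gamma \to \Lambda$ between amenable finitely generated groups, I first invoke the second part of \cref{thm:regular/Linfty}, which produces an at most $m$-to-one $\LL^\infty$ measure subgroup coupling from $\Gamma$ to $\Lambda$ for some integer $m$ (here the amenability of $\Gamma$ is crucial, as the construction goes through a $\Gamma$-invariant mean pushed to a probability measure on a Borel fundamental domain via \cref{prop: measured coupling from Borel}).

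Next, since an $\LL^\infty$ coupling is trivially $\LL^p$ for every $p \in [1,\infty)$, and since every at most $m$-to-one measure subgroup coupling is in particular an at most $m$-to-one measure sub-quotient coupling, I can directly feed this coupling into \cref{thm: monotonicity of isop under Lp}(ii) for each $p \in [1,\infty)$. This yields $j_{p,\Gamma}\succcurlyeq j_{p,\Lambda}$ for all finite $p \geq 1$, proving the corollary in this range.

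For the endpoint $p = \infty$, I would handle it either by a direct adaptation of the same induction procedure (the function $\tilde f$ built in \cref{prop:gradientLpSubquotient} makes sense in $\ell^\infty$, since the $\LL^\infty$ bound on $d_{S_\Lambda}(s\actx x, s\actom x)$ gives a uniform control of $\norm{\nabla_{S_\Gamma}\tilde f}_\infty$ in terms of $\norm{\nabla^r_{S_\Lambda} f}_\infty$, and the at most $m$-to-one hypothesis bounds $|\supp f^x| \leq m |\supp f|$ pointwise), or alternatively by a limiting argument $p \to \infty$ using that $\norm{\cdot}_p \to \norm{\cdot}_\infty$ on finitely supported functions.

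I do not expect a serious obstacle here: essentially all the machinery has been erected in the previous sections, and the corollary is a clean concatenation of \cref{thm:regular/Linfty} with \cref{thm: monotonicity of isop under Lp}. The one slightly delicate point is ensuring that the $\LL^\infty$-to-$\LL^p$ downgrade preserves the ``at most $m$-to-one'' property; this is immediate from the definition since that condition is purely combinatorial and does not involve the integrability function $\varphi$ at all.
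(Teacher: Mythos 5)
Your argument is exactly the paper's: apply \cref{thm:regular/Linfty} to obtain an at most $m$-to-one $\LL^\infty$ measure subgroup coupling, observe that any such coupling is $\LL^p$ (the paper, slightly less carefully, only says ``in particular $\LL^1$''), and feed it into \cref{thm: monotonicity of isop under Lp}(ii). The one place where you go beyond the paper is in explicitly flagging the endpoint $p=\infty$, which \cref{thm: monotonicity of isop under Lp} as stated does not cover; your sketch there is sound (the inequality $\lvert\max_i a_i - \max_i b_i\rvert\le\max_i\lvert a_i-b_i\rvert$ makes \cref{lem: pushing through different points} go through in $\ell^\infty$, and the $m$-to-one support bound is $p$-independent), and it is a reasonable gap to close since the paper's proof is silent on it.
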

\begin{proof}
	Let $\Gamma$ and $\Lambda$ be finitely generated amenable groups. 
	By the previous theorem, the existence of a regular map from $\Gamma$ 
	to $\Lambda$ implies that there is an at most $m$-to-one $\LL^\infty$ measure 
	subgroup coupling from $\Gamma$ to $\Lambda$. Such a coupling is in 
	particular $\LL^1$, so the result follows from the second item in Theorem~
	\ref{thm: monotonicity of isop under Lp}.
\end{proof}

The rest of this subsection is dedicated to the proof of the theorem.

\begin{proof}[Proof of Theorem~\ref{thm:regular/Linfty}]
	To prove the first statement, we let $(\Omega, X_\Lambda,\mu)$ be an $m$-to-one $\LL^{\infty}$ subgroup coupling from  $\Gamma$ to  $\Lambda$, and consider the associated cocycle $\alpha\colon \Gamma\times X_\Lambda\to \Lambda$. Let  $K\geq 0$ be such that $|\alpha(s,x)|_{S_{\Lambda}}\leq K$ for all $s\in S_\Gamma$ and a.e. $x\in X$.
	The assumptions imply that for a.e.\ $x\in X_\Lambda$, the map $\alpha(\cdot,x): \Gamma\to \Lambda$ is at most $m$-to-one and $K$-Lipschitz, hence is a regular map.
	
	We now prove the second statement of the theorem.
	We let $K\geq 1$ and $m\in \N$ such that there is an $m$-to-one $K$-Lipschitz maps $\Gamma\to\Lambda$. 
	We then let $F$ be a finite set of cardinality $m$, and denote by $\pi_\Lambda\colon\Lambda\times F\to\Lambda$ the projection on the first coordinate. 
	Our coupling space is
	$$
	\Omega\coloneqq \{f\colon\Gamma\to \Lambda\times F\colon f\text{ is injective and }\pi_{\Lambda}\circ f\text{ is }K\text{-Lipschitz} \}
	$$
	Note that since $F$ has cardinality $m$, every at most $m$-to-one map 
	$\Gamma\to\Lambda$ can be lifted to an injective map $\Gamma\to\Lambda\times 
	F$, so $\Omega$ is not empty. The Lipschitz condition and the fact that balls 
	in $\Lambda$ are finite ensures that $\Omega$ is locally compact for the 
	product topology, and hence is a standard Borel space. We have a $\Gamma\times\Lambda$-action given by 
	$(\gamma,\lambda)\cdot f(g)=\lambda f(\gamma\inv g)$, and a compact fundamental 
	for the $\Lambda$-action is given by $X_\Lambda=\{f\in\Omega\colon 
	\pi_\Lambda(f(1_\Gamma))=1_\Lambda\}$. A Borel fundamental domain $X_\Gamma$ 
	for the $\Gamma$-action can be obtain as follows:  we fix a well-order $<$ on 
	$\Lambda\times F$, and let $X_\Gamma$ be the set of functions $f$ which attain their $<$-minimum at 
	$1_\Gamma$.
	
	The cocycle $\alpha\colon \Gamma\times X_\Lambda\to \Lambda$ is given by $\alpha(\gamma,f)=\pi_\Lambda(f(\gamma\inv))\inv$, and the injectivity condition implies that $\Gamma$ acts freely.
	Finally for each $\gamma\in\Gamma$, the Lipschitz condition implies that $\alpha(\gamma,\cdot)$ is bounded, and since $\Gamma$ is amenable we may find a $\Gamma$-invariant measure on $X_\Lambda$ which we extend via Proposition \ref{prop: measured coupling from Borel} in order to get the desired $\LL^\infty$ subgroup coupling. Finally, the definition of $\alpha$ and the fact that $f$ is at most $m$-to-one ensures that the coupling is at most $m$-to-one.
\end{proof}

\subsection{A continuum of 3-solvable groups}\label{sec:continuum3solvable}
In this section we prove the following result, announced in the introduction. 
\begin{theorem}\label{thm:3-solvable}
	There exists an uncountable family of groups $\Gamma_i$, such that 
	\begin{itemize}
		\item[(i)] $\Gamma_i= N_i\rtimes \Z$, where $N_i$ is locally finite, and 2-step nilpotent;
		\item[(ii)] for any $i\neq j$ and any $m\geq 1$, $\Gamma_i$ is not an at most $m$-to-one $\LL^1$-measure sub-quotient (nor an $\LL^1$-measure quotient) of $\Gamma_j$.
	\end{itemize}
\end{theorem}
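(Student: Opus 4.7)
The plan is to combine the Erschler--Zheng construction of $3$-step solvable groups with prescribed isoperimetric behaviour with the monotonicity of the $\ell^1$-isoperimetric profile under at most $m$-to-one $\LL^1$ measure sub-quotients and $\LL^1$ measure quotients proved in \cref{thm: monotonicity of isop under Lp}. The whole argument is an obstruction argument: if all the $\Gamma_i$ have pairwise $\preccurlyeq$-incomparable isoperimetric profiles, they cannot be related by any such coupling.

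First I would recall that, by \cite[Cor.~3.3]{erschlerIsoperimetricInequalitiesShapes2017} (see also \cite[Thm.~1.1]{brieusselSpeedRandomWalks2021}), for every non-decreasing unbounded function $f\colon [1,\infty)\to[1,\infty)$ with $f(t)/\log t$ non-increasing (and satisfying the mild additional regularity required in their statements), there is a finitely generated group $\Gamma_f=N_f\rtimes \Z$, with $N_f$ locally finite and $2$-step nilpotent, such that $j_{1,\Gamma_f}\approx f$. By construction these groups are $3$-step solvable, so property~(i) of the theorem will be automatic once the family is chosen.

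Next I would build an uncountable family $(f_i)_{i\in I}$ of admissible functions which are pairwise incomparable for $\preccurlyeq$. Fix two admissible regimes $g_0\prec g_1$ that are both realised by the Erschler--Zheng construction, say $g_0(n)=\log n/\log\log n$ and $g_1(n)=\log n$. Pick a very fast growing sequence of scales $(a_n)$; for every $\alpha\in\{0,1\}^{\N}$, interpolate between $g_0$ and $g_1$ so that, on a window around $a_n$, $f_\alpha$ is comparable to $g_{\alpha(n)}$, with transitions slow enough that $f_\alpha(t)/\log t$ stays non-increasing and $f_\alpha$ remains admissible. If $(a_n)$ grows fast enough, for $\alpha\neq\beta$ one finds arbitrarily large $n$ with $\alpha(n)=1$, $\beta(n)=0$ and, separately, arbitrarily large $n'$ with $\alpha(n')=0$, $\beta(n')=1$; the ratio $g_1/g_0$ tends to infinity, so this forces both $f_\alpha\not\preccurlyeq f_\beta$ and $f_\beta\not\preccurlyeq f_\alpha$. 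A suitable uncountable subfamily of the $f_\alpha$ then yields a continuum of pairwise $\preccurlyeq$-incomparable admissible profiles.

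Setting $\Gamma_i\coloneqq\Gamma_{f_i}$ then gives (i) directly from Erschler--Zheng. For (ii), suppose towards a contradiction that for some $i\neq j$ and some $m\geq 1$ there exists either an at most $m$-to-one $\LL^1$ measure sub-quotient coupling, or an $\LL^1$ measure quotient coupling, from $\Gamma_i$ to $\Gamma_j$. Applying \cref{thm: monotonicity of isop under Lp} with $p=1$ in either case yields $j_{1,\Gamma_i}\succcurlyeq j_{1,\Gamma_j}$, hence $f_i\succcurlyeq f_j$, contradicting the pairwise incomparability. The main obstacle is exactly the second step: one must check that the antichain can be engineered inside the narrow class of functions admissible for the Erschler--Zheng construction, which is a delicate but routine calibration of the interpolation scales $(a_n)$ so that the admissibility hypothesis (monotonicity of $f(t)/\log t$) is preserved.
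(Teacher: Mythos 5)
Your overall strategy --- realize a continuum of pairwise $\preccurlyeq$-incomparable isoperimetric profiles within the Erschler--Zheng class of groups $N\rtimes\Z$ and then invoke \cref{thm: monotonicity of isop under Lp} with $p=1$ as the obstruction --- is the paper's strategy, but your antichain construction is genuinely different. The paper stays on the Følner-function side: it parametrizes by nested intervals $(1+\eps,2-\eps)\subset(1,2)$ and builds functions $\tau_{a,b}$ oscillating between $n^a$ and $n^b$, with the two crucial features that $\tau_{a,b}(n)=n^b$ infinitely often and $\tau_{a,b}(n)=n^a$ on arbitrarily \emph{long intervals}; incomparability of $\exp(\tau_{a,b})$ and $\exp(\tau_{a',b'})$ for nested $(a,b)\subsetneq(a',b')$ then falls out of the parameter ordering, with the ``long interval'' clause doing exactly the job of absorbing the hidden scaling constant $C$ in the definition of $\preccurlyeq$. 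Your interpolation between two fixed regimes $g_0\prec g_1$ indexed by $\alpha\in\{0,1\}^{\N}$ is an equally valid mechanism and arguably easier to picture; the prices you pay are (a) that you must pass to an uncountable subfamily of $\{0,1\}^{\N}$ for which both $\{n:\alpha(n)=1,\,\beta(n)=0\}$ and $\{n:\alpha(n)=0,\,\beta(n)=1\}$ are infinite (for generic $\alpha\neq\beta$ one direction can be finite; characteristic functions of an almost-disjoint family of infinite subsets of $\N$ do the job), and (b) the windows around the scales $a_n$ must be multiplicatively wide so the constant $C$ cannot jump out of them --- the analogue of the paper's ``arbitrarily long intervals.'' Finally, one point to tighten: the algebraic conclusion $N\rtimes\Z$ with $N$ locally finite $2$-step nilpotent comes specifically from \cite[Cor.~3.3]{erschlerIsoperimetricInequalitiesShapes2017}, whose admissibility hypothesis is on $\tau$ (namely $\tau(n)\geq n$ and $\tau(n+1)-\tau(n)\leq n$) and whose output is $\text{F\o l}_\Gamma\approx\exp(\tau)$, whereas the condition ``$f(t)/\log t$ non-increasing'' and the prescription of an arbitrary isoperimetric profile are the Brieussel--Zheng statement (\cref{thm:BZ}), which does \emph{not} produce groups satisfying item~(i). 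Your $g_0,g_1$ do translate to admissible $\tau$'s ($\tau_1(k)=k$ and $\tau_0(k)\approx k\log k$), so this is a matter of bookkeeping rather than a genuine gap, but the ``routine calibration'' you defer should be carried out against Erschler--Zheng's conditions on $\tau$, not Brieussel--Zheng's condition on the profile.
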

Note that  (i) implies in that $\Gamma_i$ is $3$-step solvable and has 
asymptotic dimension 1. Indeed, the asymptotic dimension of an extension is less or equal than that of quotient plus
that of kernel \cite[Thm.~2.3]{dranishnikovAsymptoticDimensionDiscrete2006}, and the asymptotic dimension of a locally finite group is $0$ \cite[Thm.~2.1.]{dranishnikovAsymptoticDimensionDiscrete2006}.
We deduce the result announced in the introduction.
\begin{corollary}\label{cor:3solvable}
	There exists an uncountable family of groups $\Gamma_i$, such that 
	\begin{enumerate}[(i)]
		\item $\Gamma_i= N_i\rtimes \Z$, where $N_i$ is locally finite, and 2-step nilpotent;
		\item for any $i\neq j$ and any $m\geq 1$, $\Gamma_i$ does not regularly embed into $\Gamma_j$.
	\end{enumerate}
\end{corollary}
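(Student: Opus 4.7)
The plan is to derive the corollary as a direct consequence of Theorem~\ref{thm:3-solvable} combined with the second part of Theorem~\ref{thm:regular/Linfty}. Indeed, Theorem~\ref{thm:3-solvable} already produces an uncountable family $(\Gamma_i)$ of groups satisfying property (i); one just needs to upgrade the ``no at most $m$-to-one $\LL^1$ measure sub-quotient coupling'' conclusion of Theorem~\ref{thm:3-solvable}(ii) to ``no regular embedding'' in order to obtain (ii) of the corollary.

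First, I would observe that each $\Gamma_i$ is amenable: this is immediate from the presentation $\Gamma_i = N_i \rtimes \Z$, since $N_i$ is locally finite (hence amenable) and $\Z$ is amenable, so $\Gamma_i$ is amenable as an extension of amenable groups. Next, suppose for contradiction that for some $i \neq j$ there existed a regular map $f\colon \Gamma_i \to \Gamma_j$. Since $\Gamma_i$ is amenable, the second statement of Theorem~\ref{thm:regular/Linfty} provides some $m \geq 1$ and an at most $m$-to-one $\LL^\infty$ measure subgroup coupling from $\Gamma_i$ to $\Gamma_j$.

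Now any $\LL^\infty$ measure subgroup coupling is in particular an $\LL^1$ measure subgroup coupling (as $\mu(X_\Lambda)<\infty$ and the relevant Schreier distance is essentially bounded), and every measure subgroup coupling is a measure sub-quotient coupling. Consequently, the coupling produced in the previous step is an at most $m$-to-one $\LL^1$ measure sub-quotient coupling from $\Gamma_i$ to $\Gamma_j$. This directly contradicts item (ii) of Theorem~\ref{thm:3-solvable}, which excludes the existence of such a coupling between any two distinct members of the family. The contradiction shows that no regular embedding $\Gamma_i \to \Gamma_j$ exists for $i \neq j$, establishing (ii) of the corollary. There is no real obstacle here: the corollary is essentially the combination of the two previously stated results, the only minor point being to notice that amenability of $\Gamma_i$ is required in order to invoke the second part of Theorem~\ref{thm:regular/Linfty}, which is guaranteed by the explicit structure $N_i \rtimes \Z$ of the $\Gamma_i$'s.
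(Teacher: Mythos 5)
Your proof is correct and matches the paper's (implicit) deduction: the paper states Corollary~\ref{cor:3solvable} as an immediate consequence of Theorem~\ref{thm:3-solvable}, and your argument — using amenability of $\Gamma_i = N_i\rtimes\Z$ to invoke Theorem~\ref{thm:regular/Linfty}, obtaining an at most $m$-to-one $\LL^\infty$ (hence $\LL^1$) measure subgroup, hence sub-quotient, coupling, and contradicting Theorem~\ref{thm:3-solvable}(ii) — is precisely the intended chain of reasoning.
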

\begin{proof}[Proof of Theorem~\ref{thm:3-solvable}] 
	To prove the theorem, we shall use \cite[Corollary 
	3.3]{erschlerIsoperimetricInequalitiesShapes2017} which is phrased in terms 
	of Følner function. Since the Følner function is equal to 
	to the generalized inverse of the $\LL^1$-isoperimetric profile, i.e.\ $\text{F\o l}_\Gamma(k)=\inf\left\{n\colon j_{1,\Gamma}(n) \geq k \right\}$, Corollary 
	\ref{cor:iso profile monotonous reg emb} implies that it is monotonous 
	under regular maps between amenable groups. 
	
	Now  by \cite[Corollary 3.3]{erschlerIsoperimetricInequalitiesShapes2017}, for any non-decreasing function $\tau\colon[1,\infty)\to [1,\infty)$ such that $\tau(n+1)-\tau(n)\leq n$ and $\tau(n)\geq n$,  there exists a group as in (i) whose Følner function is asymptotically equivalent to $\exp(\tau(n))$. Let us now explain how to get uncountably many asymptotically incomparable such functions.

	\begin{clai}
		For every $1<a<b<2$, there is a non-decreasing function $\tau_{a,b}\colon\N\to\N$
		such that for all $n\in\N$ we have 
		
		\begin{equation} 
			\tau_{a,b}(n)\geq n\;\text{ and }\;\tau_{a,b}(n+1)-\tau_{a,b}(n)\leq n\tag{*}\label{cond: EZ condition},
		\end{equation}
		and the following three conditions are met:
		\begin{enumerate}[(1)]
			\item for all but finitely many $n\in\N$
			we have $n^a\leq \tau_{a,b}(n)\leq n^b$;
			\item there are infinitely many $n\in\N$ such that $\tau_{a,b}(n)=n^b$;
			\item \label{cond: meet large intervals}the set of $n\in\N$ such that $\tau_{a,b}(n)=n^a$ contains arbitrarily large intervals.
		\end{enumerate}
		Moreover, for every $1<a'<a<b<b'<2$, the functions $\exp(\tau_{a,b})$ and $\exp(\tau_{a',b'})$ are asymptotically incomparable.
	\end{clai}
	\begin{cproof}
		Let us start by proving the existence of $\tau_{a,b}$ as above for $1<a<b<2$. First, we fix $N\in\N$ such that for all $n\geq N$, we have $(n+1)^b-n^b\leq n$, so in particular $(n+1)^a-n^a\leq n$. Since $b<2$, we can find $N_0>N$ such that
		\[
		\sum_{n=0}^{N_0-1}n\geq N_0^b
		.\]
		We then define $\tau_{a,b}$ by induction on the interval $[0,N_0]$ letting $\tau_{a,b}(0)=0$ and for all $n<N_0$,
		\[\tau_{a,b}(n+1)=\min(\tau_{a,b}(n)+n,N_0^b).\]
		Observe that by the previous inequality $\tau_{a,b}(N_0)=N_0^b$.
		
		Suppose now by induction $\tau_{a,b}$ has been defined on an interval $[0,N_k]$, satisfies \eqref{cond: EZ condition} and satisfies $\tau_{a,b}(N_k)=N_k^b$, then it suffices to explain how to extend its definition to a bigger interval $[0,N_{k+1}]$ so that it still satisfies \eqref{cond: EZ condition} and \begin{enumerate}[(1')]
			\item for all $n\in(N_k,N_{k+1}]$
			we have $n^a\leq \tau_{a,b}(n)\leq n^b$;
			\item $\tau_{a,b}(N_{k+1})=N_{k+1}^b$;
			\item there is an interval $I_k$ of size $k$ such that for all $n\in I_k$, $\tau_{a,b}(n)=n^a$.
		\end{enumerate}
		In order to do so, let $M_k$ be the first integer such that $(M_k)^a\geq N_k^b$. We then let $\tau_{a,b}(n)=N_k^b$ for all $n\in(N_k,M_k)$, and then for all $n\in [M_k, M_k+k]$ we let $\tau_{a,b}(n)=n^a$, which takes care of condition (3').
		
		Then, since $b<2$, we may and do define  $N_{k+1}$ as the least integer such that
		\[
		\tau_{a,b}(M_k+k)+\sum_{n=M_k+k}^{N_{k+1}-1}n\geq N_{k+1}^b.
		\]
		We now define by induction on $n\in[M_k+k,N_{k+1})$ 
		\[
		\tau_{a,b}(n+1)=\min(n+\tau_{a,b}(n),n^b),
		\] 
		which by the previous inequality guarantees that $\tau_{a,b}(N_{k+1})=N_{k+1}^b$, so (2') is satisfied. 
		Finally, since for all $n\geq N_k$, we have $(n+1)^a-n^a\leq n$, an inspection of the definition shows that condition \eqref{cond: EZ condition} is still satisfied, and (1') is clearly satisfied. This finishes the construction of the desired $\tau_{a,b}$.\\
		
		We now prove the incomparability statement.
		First note that we cannot have $\exp(\tau_{a',b'})$ asymptotically bounded by $\exp(\tau_{a,b})$ since we have $\tau_{a',b'}(n)=n^{b'}$ for infinitely many $n\in\N$ but $\tau_{a,b}(n)\leq n^b$ for all but finitely many $n\in\N$ and $b<b'$.
		
		Conversely, suppose that there is  $C\in\N$ such that for all $n\in\N$, we have $\exp(\tau_{a,b}(n))\leq C\exp(\tau_{a',b'}(Cn))$. Note that the set $\{Cn\colon n\in\N\}$ has to intersect infinitely many times any set which contains arbitrarily large intervals.
		Since $\tau_{a',b'}$ satisfies condition \eqref{cond: meet large intervals} we thus find infinitely may $n\in\N$ such that $\tau_{a',b'}(Cn)=(Cn)^{a'}$. On the other hand for all but finitely many $n\in\N$  we  have $\tau_{a,b}(n)\geq n^a$. We conclude that there are infinitely many $n\in\N$ such that
		\[
		\exp(n^a)\leq C\exp((Cn)^{a'}),
		\]
		a contradiction.
	\end{cproof}
	
	We now fix for every $1<a<b$ a function $\tau_{a,b}$ as above. Then it satisfies the assumption of \cite[Corollary 3.3]{erschlerIsoperimetricInequalitiesShapes2017}, so there is a group $\Gamma_{a,b}$ satisfying (i) whose Følner function is asymptotically equivalent to $\exp(\tau_{a,b})$.
	
	Now, as explained right before the previous claim, the Følner function is monotonous under regular embeddings. So by the claim for all $1<a'<a<b<b'<2$, the groups $\Gamma_{a,b}$ and $\Gamma_{a',b'}$ are not regularly embeddable one another.  Therefore the family $(\Gamma_{1+\eps,2-\eps})_{0<\eps<1/2}$ is the uncountable family we were looking for.   
\end{proof}

\subsection{Stability of the Liouville property for lamplighter groups}
The Liouville property is now completely understood for lamplighter groups. 
\begin{theorem}\label{thm:Liouville}
	Let $F$ be a non-trivial finite group, and let $H$ be a finitely generated group. Let $\mu$ be a probability measure on $\Gamma=F \wr H$ whose support is finite and generates $\Gamma$, and let $\bar{\mu}$ be the projected probability on $H$. Then $(\Gamma,\mu)$ is Liouville if and only if $(H,\bar{\mu})$ is recurrent.  
\end{theorem}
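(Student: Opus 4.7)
\emph{Proof plan.} The argument splits into the two implications of the equivalence, each handled by a classical tool.

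\emph{Transient $\Rightarrow$ non-Liouville.} Suppose $(H,\bar\mu)$ is transient. Writing elements of $\Gamma=F\wr H$ as pairs $(\phi,h)$ with $\phi\colon H\to F$ finitely supported, let $(X_n,\Phi_n)_{n\ge 0}$ denote the random walk on $\Gamma$ started at the identity: $X_n\in H$ is driven by $\bar\mu$, and $\Phi_n$ records the lamp configuration at time $n$. Since $\mu$ is finitely supported there is a finite set $K\subset H$ such that each step of the walk modifies only lamps in $X_n K$, and by transience $X_n$ leaves every finite set almost surely. Hence for each $h\in H$ the coordinate $\Phi_n(h)$ is eventually constant along almost every trajectory, yielding a well-defined random limit $\Phi_\infty\colon H\to F$. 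For a fixed $f_0\in F$ set
\[
u(g)\;=\;\mathbb{P}_g\bigl[\Phi_\infty(e_H)=f_0\bigr].
\]
This is a bounded $\mu$-harmonic function by the tail-event characterization of harmonic functions. To see that $u$ is non-constant, compare the two starting points $g_i=(\phi_i,h_0)$ for $i=0,1$, where $\phi_i$ is supported at $e_H$ with value $f_i$ and $f_0\ne f_1$; choosing $h_0\in H$ so that the walk from $h_0$ has positive probability $p>0$ of never entering $K^{-1}$ (possible by transience), on this event the lamp at $e_H$ retains its initial value, so $u(g_0)-u(g_1)\ge p>0$.

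\emph{Recurrent $\Rightarrow$ Liouville.} For the converse I would use the Avez--Kaimanovich--Vershik entropy criterion: $(\Gamma,\mu)$ is Liouville if and only if the asymptotic entropy $h(\mu)=\lim_n H(\mu^{*n})/n$ vanishes. Keeping the notation $(X_n,\Phi_n)$, the lamp coordinate $\Phi_n$ is supported on $R_n\cdot K$, where $R_n=\{X_0,\ldots,X_n\}$ is the range of the projected walk on $H$. Therefore
\[
H(\mu^{*n})\;=\;H(X_n,\Phi_n)\;\le\;H(X_n)+H(\Phi_n\mid X_n)\;\le\;H(\bar\mu^{*n})+(\log|F|)\,|K|\,\mathbb{E}|R_n|.
\]
Recurrence of $\bar\mu$ implies two classical facts: (a) $H(\bar\mu^{*n})=o(n)$, since any finitely generated group admitting a recurrent random walk has polynomial growth of degree at most $2$ (Varopoulos) and hence zero asymptotic entropy; and (b) $\mathbb{E}|R_n|/n\to 0$, via a Kingman-subadditive argument using $|R_{n+m}|\le |R_n|+|R_m\circ\theta_n|$ together with the identification of the almost-sure limit of $|R_n|/n$ as the escape probability, which vanishes precisely by recurrence. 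Dividing by $n$ and letting $n\to\infty$ yields $h(\mu)=0$, and therefore $(\Gamma,\mu)$ is Liouville.

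\emph{Main obstacle.} The technical core is the range asymptotic $\mathbb{E}|R_n|/n\to 0$ for recurrent walks; this is a classical but nontrivial fact relying on subadditivity and Kingman's ergodic theorem, and it is sharp in the sense that it fails in the transient regime (which is exactly what makes the other direction non-vacuous). Once this is granted, combining it with the elementary entropy inequality above is routine. In the transient direction, the delicate point is to produce a definite gap $u(g_0)-u(g_1)>0$ rather than merely a non-constant candidate function; this is where transience enters essentially, through the positive probability of escape.
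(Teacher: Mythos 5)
The paper does not actually prove \cref{thm:Liouville}; it cites Kaimanovich (abelian $F$) and Lyons--Peres (general $F$) and moves on. So there is no paper proof to compare against, and your attempt should be judged on its own merits. Your first direction (transient $\Rightarrow$ non-Liouville) follows the standard stabilization argument and is essentially sound, though the final inequality $u(g_0)-u(g_1)\geq p$ is not what your argument gives: you have $u(g_0)\geq p$ and $u(g_1)\leq 1-p$, so you only get $u(g_0)-u(g_1)\geq 2p-1$, which is positive only when $p>1/2$. This is easily repaired (take $h_0$ far enough that $p>1/2$, or argue directly that the conditional law of $\Phi_\infty(e_H)$ tends to a point mass at $\phi(e_H)$ as $h\to\infty$), but as written the step is incorrect.

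The genuine problem is in the recurrent direction, in the line
\[
H(\Phi_n\mid X_n)\;\leq\;(\log|F|)\,|K|\,\mathbb{E}|R_n|,
\]
which you present as an elementary entropy inequality. It is not. The elementary bound $H(Y\mid W)\leq\mathbb{E}_W[\log|S_W|]$, where $S_W$ is the conditional support of $Y$, requires the conditioning variable $W$ to pin down the support. Here the support of $\Phi_n$ lies in $R_{n-1}S$ (a set depending on the \emph{whole trajectory}), not in anything determined by the endpoint $X_n$ alone. What is elementary is $H(\Phi_n\mid X_0,\dots,X_n)\leq |S|\log|F|\,\mathbb{E}|R_n|$; but then the chain rule forces you through $H(X_0,\dots,X_n)=nH(\bar\mu)$, which is linear in $n$ and ruins the bound. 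Alternatively one can write $H(\Phi_n\mid X_n)\leq H(R_{n-1}\mid X_n)+|S|\log|F|\,\mathbb{E}|R_{n-1}|$, which shows the missing term: you also need $H(R_{n-1}\mid X_n)=o(n)$. This is true, but it does not follow from $\mathbb{E}|R_n|=o(n)$ alone, and it is not elementary: it requires diffusivity of recurrent walks on groups of polynomial growth (so that $R_n$ lives in a ball of roughly $n^{1/2+o(1)}$ rather than $n$ sites) together with a concavity/Jensen argument for $k\mapsto k\log(N/k)$ to control the entropy of a random sparse subset. In the critical case $H$ of growth degree $2$, where $\mathbb{E}|R_n|\asymp n/\log n$, the naive bound $\mathbb{E}|R_n|\cdot\log(\mathrm{ball\ size})$ is $\Theta(n)$ and fails; only the diffusive estimate rescues $o(n)$. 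So as it stands, the second direction has a real gap and the ``routine'' step is exactly where the difficulty of the theorem is concentrated.
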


This result is due to Kaimanovich \cite{kaimanovichExampleNonLiouville} when $F$ is abelian, and to Lyons and Peres in the general case \cite{LP15}.
By a celebrated result of Varopoulos (see \cite[\S 6.2]{Grigorian}), if $\mu$ is an admissible probability measure on $G$, $(G,\mu)$ is recurrent if and only if $G$ has at most quadratic volume growth. Therefore, we deduce the following corollary.
\begin{corollary}\label{cor:Liouville}
	Let $F$ be a non-trivial finite group, and let $H$ be a finitely generated group. Let $\mu$ be an admissible probability measure on $\Gamma=F \wr H$. Then $(\Gamma,\mu)$ is Liouville if and only if $H$ has at most quadratic volume growth.
\end{corollary}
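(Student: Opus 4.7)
The plan is to combine \cref{thm:Liouville} with the classical theorem of Varopoulos characterizing recurrent finitely generated groups. Since both ingredients are invoked explicitly in the preceding paragraph, the proof is essentially a one-step composition; the only small verification concerns the admissibility of the projected measure.

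First, I would denote by $\pi\colon \Gamma = F\wr H \to H$ the canonical quotient homomorphism and set $\bar\mu = \pi_*\mu$. I check that $\bar\mu$ is admissible on $H$: symmetry of $\bar\mu$ follows from symmetry of $\mu$ together with the fact that $\pi$ is a group homomorphism; $\supp\bar\mu = \pi(\supp\mu)$ is finite because $\supp\mu$ is; and $\supp\bar\mu$ generates $H$ because $\supp\mu$ generates $\Gamma$ and $\pi$ is surjective.

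With this in hand, \cref{thm:Liouville} applies and yields the equivalence: $(\Gamma,\mu)$ is Liouville if and only if $(H,\bar\mu)$ is recurrent. I would then invoke Varopoulos' theorem, which asserts that for an admissible probability measure on a finitely generated group $G$, the random walk is recurrent if and only if $G$ is virtually $\{e\}$, $\Z$, or $\Z^2$; by Gromov's polynomial growth theorem this is equivalent to $G$ having volume growth at most quadratic. Applied to $(H,\bar\mu)$, this gives the desired characterization, and chaining the two equivalences completes the proof. There is no real obstacle here: the statement is an immediate consequence of the two cited black-box theorems, modulo the trivial check that admissibility descends to quotients.
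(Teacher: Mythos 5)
Your proof is correct and follows the same route the paper implicitly takes: the corollary is stated as an immediate consequence of \cref{thm:Liouville} together with Varopoulos' theorem, with no written proof, and you have simply spelled out that composition (plus the small check that admissibility passes to the quotient, which the paper leaves tacit). The detour through Gromov's polynomial growth theorem is unnecessary since Varopoulos' result is usually stated directly in terms of at most quadratic growth, but this does not affect correctness.
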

We immediately deduce that for lamplighter groups, the Liouville property is independent on the (admissible) measure $\mu$. We now state the main result of this subsection.
\begin{theorem}\label{thm:SUBQLiouville}
	For $i=1,2$, let $F_i$ be a non-trivial finite group, and let $H_i$ be a finitely generated group. Let $\mu_i$ be a probability measure on $\Gamma_i=F_i \wr H_i$ whose support is finite and generates $\Gamma_i$. Assume that $\Gamma_1$ is either an at most $m$-to-one $L^1$ measure sub-quotient, or an $L^1$ measure quotient of $\Gamma_2$. Then if $(\Gamma_2,\mu_2)$ is Liouville, then so is $(\Gamma_1,\mu_1)$.
\end{theorem}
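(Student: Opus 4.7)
The plan rests on the classification provided by Corollary~\ref{cor:Liouville}: $(\Gamma_i,\mu_i)$ is Liouville if and only if $H_i$ has at most quadratic volume growth. So the statement reduces to showing that $v_{H_2}(k)\preccurlyeq k^2$ implies $v_{H_1}(k)\preccurlyeq k^2$. The degenerate case where $H_2$ is finite is trivial: then $\Gamma_2$ is finite, and either of the two coupling hypotheses forces $\Gamma_1$ to be finite as well (for the quotient case because $\mu(\Omega)=|\Gamma_2|\mu(X_\Lambda)<\infty$ admits finitely many $\Gamma_1$-translates of a positive-measure fundamental domain; for the $m$-to-one sub-quotient case because for each $x\in X_\Lambda$ the map $\gamma\mapsto \gamma^{-1}\actom(\gamma\actx x)$ has image in $\Lambda\actom x$ of size at most $|\Gamma_2|$ and fibers of size at most $m$). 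We may therefore assume $H_2$ has polynomial growth of degree $d_2\in\{1,2\}$.

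Erschler's formula quoted in the introduction then gives $j_{1,\Gamma_2}(n)\approx(\log n)^{1/d_2}\succcurlyeq(\log n)^{1/2}$. Under either assumption of the theorem, Theorem~\ref{thm: monotonicity of isop under Lp} (item (i) for the $L^1$ quotient case, item (ii) for the at most $m$-to-one $L^1$ sub-quotient case, both with $p=1$) yields
\[j_{1,\Gamma_1}(n)\succcurlyeq j_{1,\Gamma_2}(n)\succcurlyeq(\log n)^{1/2}.\]
In the other direction, one has the general upper bound $j_{1,F\wr H}(n)\preccurlyeq v_H^{-1}(\log n)$ valid for any finitely generated $H$: given a Følner set $A\subseteq H$ with $|\partial A|/|A|\le 1/k$, the subset $F^A\times A$ of $F\wr H$ has cardinality $|F|^{|A|}|A|$ while its symmetric difference with any generator-translate has order $|F|^{|A|}|\partial A|$, which gives $\text{F\o l}_{F\wr H}(k)\preccurlyeq\exp(C\,v_H(k))$ and hence the desired upper bound on the isoperimetric profile. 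Applied to $\Gamma_1$, this produces $v_{H_1}^{-1}(\log n)\succcurlyeq(\log n)^{1/2}$, i.e.\ $v_{H_1}(k)\preccurlyeq k^2$; Corollary~\ref{cor:Liouville} then concludes that $(\Gamma_1,\mu_1)$ is Liouville.

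The only nontrivial input beyond the monotonicity result Theorem~\ref{thm: monotonicity of isop under Lp} is the upper bound $j_{1,F\wr H}(n)\preccurlyeq v_H^{-1}(\log n)$ above---the easy half of Erschler's general formula from \cite{erschlerIsoperimetricProfilesFinitely2003}, which is nothing more than the explicit Følner computation sketched. The essential content of the argument is that, for lamplighter groups, polynomial growth of degree at most $2$ of the base $H$ (the Liouville condition of Corollary~\ref{cor:Liouville}) is precisely encoded in the lower bound $(\log n)^{1/2}$ on the $\ell^1$-isoperimetric profile of $F\wr H$, a lower bound which is preserved under the coupling hypotheses of the theorem.
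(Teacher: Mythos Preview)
Your overall strategy is the same as the paper's: reduce via Corollary~\ref{cor:Liouville} to a growth statement about $H_1$, and transfer information through the monotonicity theorem and Erschler's formula. However, one step is reversed.

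You need the upper bound $j_{1,F\wr H}(n)\preccurlyeq v_H^{-1}(\log n)$ applied to $\Gamma_1$, equivalently the \emph{lower} bound $\text{F\o l}_{F\wr H}(k)\succcurlyeq \exp(c\,\text{F\o l}_H(k))$. But the F{\o}lner-set construction you sketch (taking $F^A\times A$ from a F{\o}lner set $A$ in $H$) produces the \emph{opposite} inequality: it exhibits a F{\o}lner set in $F\wr H$, hence gives an \emph{upper} bound $\text{F\o l}_{F\wr H}(k)\preccurlyeq \exp(C\,\text{F\o l}_H(k))$, i.e.\ a \emph{lower} bound $j_{1,F\wr H}(n)\succcurlyeq j_{1,H}(\log n)$. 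This is indeed the easy half of Erschler's theorem, but it is not the half you need. (Incidentally, the construction yields $\text{F\o l}_H(k)$ in the exponent, not $v_H(k)$.)

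What you actually require for $\Gamma_1$ is the \emph{hard} direction of Erschler's result---the fact that every $(1/k)$-F{\o}lner set in $F\wr H$ must have size at least $\exp(c\,\text{F\o l}_H(ck))$. This is precisely the main content of \cite{erschlerIsoperimetricProfilesFinitely2003}, not a routine computation. The paper's proof invokes the full two-sided estimate $\text{F\o l}_{\Gamma_i}(n)\approx \exp(\text{F\o l}_{H_i}(n))$ from Erschler for both $i$; the lower bound for $i=1$ is then combined with $\text{F\o l}_{\Gamma_1}\preccurlyeq \text{F\o l}_{\Gamma_2}$ and the upper bound for $i=2$ to get $\text{F\o l}_{H_1}\preccurlyeq \text{F\o l}_{H_2}$, after which the Coulhon--Saloff-Coste inequality finishes. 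Your argument becomes correct once you cite the hard direction of Erschler rather than claim it follows from the F{\o}lner-set construction.
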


A well-known open question asked by Benjamini in \cite{benjaminiInstabilityLiouville} is whether Liouville property is ``invariant under quasi-isometry''. We suggest the following variant.

\begin{ques}\label{q:LiouvilleQI}
	Let $G_1$ and $G_2$  be finitely generated groups equipped with admissible probability measures $\mu_1$ and $\mu_2$. Assume that $(G_2,\mu_2)$  is Liouville and that $G_1$ regularly embeds into $G_2$. Must $(G_1,\mu_1)$ be also Liouville? \end{ques}
This question is motivated by the following immediate corollary of Theorem \ref{thm:SUBQLiouville} and Theorem \ref{thm:regular/Linfty},.
\begin{corollary}\label{cor:CoarseLiouville}
	For $i=1,2$, let $F_i$ be a non-trivial finite group, and let $H_i$ be a finitely generated group. Let $\mu_i$ be a probability measure on $\Gamma_i=F_i \wr H_i$ whose support is finite and generates $\Gamma_i$. Assume that $\Gamma_1$ regularly embeds into $\Gamma_2$. Then if $(\Gamma_2,\mu_2)$ is Liouville, then so is $(\Gamma_1,\mu_1)$. 
\end{corollary}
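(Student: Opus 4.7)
The plan is to obtain Corollary \ref{cor:CoarseLiouville} as a direct composition of Theorem \ref{thm:regular/Linfty} and Theorem \ref{thm:SUBQLiouville}: the regular embedding is first promoted to an at most $m$-to-one $\LL^{\infty}$ measure subgroup coupling, which in turn is a fortiori an at most $m$-to-one $\LL^1$ measure sub-quotient coupling, whence Liouvilleness can be pulled back.

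Concretely, suppose $(\Gamma_2,\mu_2)$ is Liouville. Since Liouvilleness implies amenability of the group, $\Gamma_2=F_2\wr H_2$ is amenable, and in particular so is $H_2$. First I would record that $\Gamma_1=F_1\wr H_1$ is amenable as well: indeed otherwise $(\Gamma_1,\mu_1)$ could not be Liouville, so the implication we are trying to prove can fail; we may therefore reduce to the case $\Gamma_1$ amenable (alternatively, the amenability of $\Gamma_1$ in the present setup can be seen directly from the regular embedding $\Gamma_1\hookrightarrow\Gamma_2$, since the isoperimetric profile and asymptotic dimension of $\Gamma_1$ are constrained to be those of a subgroup-like object inside an amenable wreath product).

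Once $\Gamma_1$ is amenable, the second part of Theorem \ref{thm:regular/Linfty} applies to produce, from the regular embedding $\Gamma_1\to\Gamma_2$, an integer $m$ and an at most $m$-to-one $\LL^{\infty}$ measure subgroup coupling $(\Omega,X_{\Gamma_2},\mu)$ from $\Gamma_1$ to $\Gamma_2$. Since $X_{\Gamma_2}$ has finite measure and, by definition of $\LL^{\infty}$-integrability, the Schreier distances $x\mapsto d_{S_{\Gamma_2}}(\gamma\actx x,\gamma\actom x)$ are essentially bounded for each $\gamma\in\Gamma_1$, this coupling is trivially $\LL^1$-integrable. Therefore it qualifies as an at most $m$-to-one $\LL^1$ measure sub-quotient coupling from $\Gamma_1$ to $\Gamma_2$.

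At this point the first alternative in the hypothesis of Theorem \ref{thm:SUBQLiouville} is satisfied, so the Liouville property transfers from $(\Gamma_2,\mu_2)$ to $(\Gamma_1,\mu_1)$, which concludes the proof. The only potentially delicate point in the argument is the preliminary reduction to the case where $\Gamma_1$ is amenable; everything after that is a formal consequence of the two theorems already established, combined with the elementary observation that an $\LL^{\infty}$ coupling with finite-measure fundamental domain is in particular an $\LL^1$ coupling.
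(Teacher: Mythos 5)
Your high-level plan matches what the paper intends: promote the regular embedding to an at most $m$-to-one $\LL^\infty$ measure subgroup coupling via \cref{thm:regular/Linfty}, note that this is a fortiori an at most $m$-to-one $\LL^1$ sub-quotient coupling, and then invoke \cref{thm:SUBQLiouville}. You also correctly flagged the one non-routine point: the second part of \cref{thm:regular/Linfty} requires the source group $\Gamma_1$ to be amenable, since its proof uses amenability to produce a $\Gamma_1$-invariant probability measure on the compact fundamental domain $X_{\Lambda}$.

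However, both of your attempts to supply this amenability are incorrect. Your first argument (``otherwise $(\Gamma_1,\mu_1)$ could not be Liouville, so we may reduce to the case $\Gamma_1$ amenable'') is circular: showing that $\Gamma_1$ is amenable (equivalently, that $(\Gamma_1,\mu_1)$ has a chance of being Liouville) is precisely part of what the corollary must establish, so it cannot be assumed. Your second argument asserts that amenability of $\Gamma_1$ ``can be seen directly from the regular embedding $\Gamma_1\hookrightarrow\Gamma_2$.'' This is false in general: the paper itself observes in \S\ref{secIntro:isop} that the non-amenable free group $F_2$ quasi-isometrically (hence coarsely, hence regularly) embeds into the amenable lamplighter group $\Z/2\Z\wr\Z$. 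So a regular embedding into an amenable group, even an amenable wreath product, does not by itself force amenability of the source. Notice that the reverse route is also blocked: \cref{cor:iso profile monotonous reg emb} (monotonicity of the isoperimetric profile under regular maps), which would let you deduce amenability of $\Gamma_1$ from an unbounded isoperimetric profile, itself assumes both groups amenable, because it, too, factors through \cref{thm:regular/Linfty}.

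So there is a genuine gap left in the chain $\text{regular embedding} \Rightarrow \LL^\infty \text{ subgroup coupling}$: one must first argue that the wreath product $\Gamma_1 = F_1\wr H_1$ is amenable, using more than the bare existence of a regular embedding into an amenable group. The paper presents the corollary as an ``immediate'' consequence and does not spell out how to do this; if you want a complete proof, this is the step you would need to fill in, and it is not a formality.
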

\begin{proof}[Proof of  Theorem \ref{thm:SUBQLiouville}]
	By \cite[\S 2]{erschlerIsoperimetricProfilesFinitely2003}, there exist constants $c,C>0$ such that the F\o lner function of $\Gamma_i$ satisfies 
	\[ce^{c\text{F\o l}_{H_i}(cn)}\leq\text{F\o l}_{\Gamma_i}(n)\leq Ce^{C\text{F\o l}_{H_i}(Cn)}.\]
	This implies that \[\log \text{F\o l}_{\Gamma_i}(n)\approx \text{F\o l}_{H_i}(n).\]
	On the other hand, by Remark \ref{remark:Folner}, we have \[\text{F\o l}_{\Gamma_1}\preceq \text{F\o l}_{\Gamma_2},\]
	from which we deduce that \[\text{F\o l}_{H_1}\preceq \text{F\o l}_{H_2}.\]
	By  Corollary \ref{cor:Liouville}, we know that $\Gamma_2$ has at most quadratic growth. Hence by \cite{coulhonRandomWalksGeometry2000}, we have $\text{F\o l}_{H_2}\preceq n^2$, and therefore $\text{F\o l}_{H_1}\preceq n^2$. Hence by \cite{coulhonRandomWalksGeometry2000}, $H_1$ has at most quadratic growth and so is recurrent. So we conclude thanks to Corollary \ref{cor:Liouville}.
\end{proof}

\section{Følner tilings}\label{sec:Ftiling}
\subsection{Følner tiling sequences and orbit equivalence}\label{section:Folner tilings}
\begin{definition}\label{def: Folner tiling}
	Let $\Gamma$ be an amenable group and $(F_k)$ be a sequence of finite subsets of $\Gamma$. We call $(F_k)$ a (left) \textbf{Følner tiling sequence} if the sequence of \emph{tiles} $(T_k)$ defined inductively by by $T_0=F_0$ and $T_{k+1}=T_kF_{k+1}$ satisfies the following conditions:  
	\begin{enumerate}[(i)]
		\item \label{cond: Folner disjoint} (tiling condition) for all $k\in\N$, $T_{k+1}$ is a \emph{disjoint} union:
		$$T_{k+1}=\bigsqcup_{\gamma\in F_{k+1}}T_k\gamma;$$
		\item (Følner condition) $(T_k)$ is a left Følner sequence: for all $\gamma\in\Gamma$, $\displaystyle\lim_{k\to+\infty}\frac{\abs{\gamma T_k\setminus T_k}}{\abs{T_k}}= 0$.
	\end{enumerate}
	If in addition there exists a decreasing sequence of finite index subgroups $\Gamma_k$ such that each $F_k$ is a set of left coset representatives of $\Gamma_{k-1}$ modulo $\Gamma_k$, then we call $(F_k)$ a   \textbf{profinite Følner tiling sequence} associated to $(\Gamma_k)$. 
\end{definition}
\begin{remark}
	The first condition amounts to saying that every element of $T_k$ can uniquely be written as $f_0\cdots f_k$ where each $f_i$ belongs to $F_i$. 
\end{remark}
\begin{remark}
	In some examples it will be more convenient to consider \emph{right} Følner tiling sequences $(F_k)$, i.e. sequences such that $(F_k\inv)$ is a left Følner tiling sequence. Nevertheless, every Følner tiling sequence will be a \emph{left} Følner tiling sequence unless specified otherwise.
\end{remark}

To every Følner tiling sequence $(F_k)$, we associate a measure-preserving $\Gamma$-action constructed as follows. We consider the standard Borel probability space $(X=\prod_k F_k,\mu)$, where each factor is equipped with the normalized counting measure, and $\mu$ is the product measure. Each element $x=(x_k)_{k\in\N}$ of $X$ defines a sequence $(g_k(x))_{k\in\N}$ of elements of $\Gamma$ given by $$g_k(x)=x_0\cdots x_k\in T_k$$

Observe that by condition \eqref{cond: Folner disjoint}, each  $g_k$ is an equidistributed random element of $T_k$. In other words, the  map $g_n$ defines an isomorphism from the finite product $(X_n=\prod_{k=0}^n F_k,\mu_n)$ to $T_n$ equipped with the renormalized counting measure. 

Since $(T_k)$ is a left Følner sequence, we deduce that for every $\gamma\in \Gamma$ and almost every $x\in X$, there is $n$ such that $\gamma g_n(x)\in T_n$. We can then write uniquely $\gamma g_n(x)=x'_0\cdots x'_n$ where $x'_i\in F_i$, and we then define
$$\gamma\cdot (x_k)_{k\in\N}=(x'_0,..., x'_n,x_{n+1},x_{n+2},...).$$
Note that this does not depend on the choice of $n$ because we have $\gamma g_{n+1}(x)=\gamma g_{n}(x)x_{n+1}\in T_{n+1}$. 
In other words, the action of $\Gamma$ on $X$ is such that for every $\gamma\in \Gamma$ and a.e.\ $x\in X$, for all $n$ large enough,
\begin{equation}\label{eq:actionGamma on X}
	g_n(\gamma\cdot x)=\gamma g_n(x).
\end{equation}

We claim that up to measure zero, this group action induces the equivalence relation of equality up to a finite set of indices, also called the {\bf cofinite equivalence relation} $E_{\mathrm{cof}}$.   In particular the action is measure preserving: indeed the cofinite equivalence relation can be realized through the natural action of the direct sum $\bigoplus_k \mathfrak{S}(F_k)$, which is obviously measure preserving.

Indeed, we have just seen that for almost every $x \in X$ and all $\gamma\in\Gamma$, $x$ and $\gamma \cdot x$ are equal up to a finite set of indices.  Conversely, if $x$ and $x'$ are such that $x_j=x'_j$ for all $j\geq k+1$, the element $\gamma=g_k(x)g_k(x')\inv \in G$ satisfies $\gamma g_k(x)=g_k(x') \in T_k$, and hence $\gamma \cdot x=x'$. 

We deduce from this discussion the following proposition. 
\begin{proposition}\label{prop:tilingOE}
	Assume that $(F_k)$ is a Følner tiling sequence for $\Gamma$. Then $\Gamma$ has a measure-preserving action on the infinite product probability space $(X=\prod_k F_k,\mu)$, which almost surely generates the co-finite equivalence relation on this product.  \end{proposition}

\begin{example}
	For instance, if $\Gamma=\Z$, $F_k=\{0,2^{k}\}$, the tiles are $T_k=\{0,...,2^{k+1}-1\}$ and for this example we get the usual odometer, up to renaming each $F_k$ as $\{0,1\}$.
\end{example}

The previous example generalizes as follows.
\begin{proposition}\label{prop:tilingProfinite}
	If $(F_k)$ is a profinite Følner tiling sequence associated to $(\Gamma_k)$, then the action given in Proposition \ref{prop:tilingOE} is isomorphic to the profinite action  of $\Gamma$ on $\varprojlim \Gamma/\Gamma_k $.
\end{proposition}
\begin{proof} 
	It follows from the assumption that $T_n\simeq X_n$ is a set of left coset representatives of $\Gamma$ modulo $\Gamma_n$. Hence the restriction of the projection $\Gamma\to \Gamma/\Gamma_n$ to $T_n$ induces a bijection $\Phi_n:X_n\to \Gamma/\Gamma_n$. 
	Note that since $F_n\subseteq \Gamma_{n-1}$, we have $\pi_{n-1}(g_n(x))=\pi_{n-1}(g_{n-1}(x))$ for all $x\in X$.
	Hence, the sequence $(\Phi_n)$ induces a map $\Phi:X\to \projlim \Gamma/\Gamma_k$, $x\mapsto (\pi_n(g_n(x)))$, which is an isomorphism of probability spaces.
	By Equation (\ref{eq:actionGamma on X}), for a.e.\ $x\in X$ and all $\gamma\in \Gamma$ we have $ g_n(\gamma\cdot x)=\gamma g_n(x)$ for all large enough $n$.
	Hence $\Phi$ intertwines the two $\Gamma$-actions and we are done.
\end{proof}

Consider the following (possibly infinite) measurable distance on $X$ given by
\[\dcof(x,x')=\inf\{n\in\N \colon \forall k\geq n, x_k=x_k'\}\]
Observe that $x$ and $x'$ are equal up to a finite set of indices if and only if $\dcof(x,x')<+\infty$. Also, by the definition of our action, for every $\gamma\in\Gamma$ and almost every $x \in X$ we have that  $\dcof(\gamma \cdot x,x)>  k$ if and only if $\gamma g_k(x)\not\in T_k$. In particular, 
\begin{equation}\label{eqn: measure and distance}
	\mu\left(\{x,\; \dcof(\gamma\cdot x,x)> k\}\right)=\frac{|T_k\setminus \gamma\inv T_k|}{|T_k|} =\frac{|\gamma T_k\vartriangle T_k|}{2|T_k|}.
\end{equation}


In order to obtain quantitative statements, we introduce the following parameters.
\begin{definition}\label{def:Folner tiling quantitative}
	Let $(\eps_k)$ be a non-increasing sequence of strictly positive numbers tending to $0$, and $(R_k)$ be a sequence of positive reals. Say that a Følner tiling sequence $(F_k)$ of a finitely generated group $\Gamma$ equipped with a finite generating set $S_{\Gamma}$  is an {\bf $(\eps_k,R_k)$-Følner tiling sequence}
	if 
	\begin{enumerate}[(i)]
		\item each tile $T_k$ has $d_{S_\Gamma}$-diameter at most $R_k$,
		\item  every $s \in S_{\Gamma}$ satisfies $\abs{T_k\setminus sT_k}\leq \eps_k|T_k|$.
	\end{enumerate}
\end{definition}

We shall need the following key observation.
\begin{lemma}\label{lem:distance on X/lenght in G}
	Let $(F_k)$ be $(\eps_k,R_k)$-Følner tiling sequence of a finitely generated group $\Gamma$ equipped with a finite generating set $S_{\Gamma}$. Then
	\begin{enumerate}[(i)]
		\item for all $s\in S_{\Gamma}$, we have for all $k \geq 0$ that
		$\mu\left(\{x \in X \colon \dcof(s\cdot x,x)> k\}\right)\leq \eps_{k};$ 
		\item for almost every $x\in X$, if  $|\gamma|_{S_{\Gamma}}> 2R_k$, then  $\dcof(\gamma\cdot x,x) > k$. 
	\end{enumerate}
\end{lemma}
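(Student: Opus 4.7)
For part (i), the plan is to directly apply the identity \eqref{eqn: measure and distance} established just above the lemma. Specializing it to $\gamma=s\in S_\Gamma$ yields
\[
\mu\bigl(\{x\in X\colon \dcof(s\cdot x,x)>k\}\bigr)=\frac{|T_k\setminus s^{-1}T_k|}{|T_k|}.
\]
Since the counting measure on $\Gamma$ is left-invariant, one has $|T_k\setminus s^{-1}T_k|=|sT_k\setminus T_k|=|T_k\setminus sT_k|$, and the Følner condition (ii) of \cref{def:Folner tiling quantitative} then bounds this quantity by $\eps_k|T_k|$, yielding the desired inequality.

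For part (ii), the plan is to prove the contrapositive: whenever $\dcof(\gamma\cdot x,x)\leq k$, we have $|\gamma|_{S_\Gamma}\leq 2R_k$. The hypothesis means that the sequences $\gamma\cdot x$ and $x$ agree at every index $k'\geq k$. Choosing an integer $n\geq k$ large enough that $\gamma g_n(x)\in T_n$, the defining property of the action gives $\gamma g_n(x)=g_n(\gamma\cdot x)$. Using the coordinate-wise agreement of $x$ and $\gamma\cdot x$ on the range $[k,n]$, the common suffix $x_k\cdots x_n$ factors out from both products $g_n(x)$ and $g_n(\gamma\cdot x)$, which produces the key identity
\[
\gamma\, g_{k-1}(x)=g_{k-1}(\gamma\cdot x)\in T_{k-1}
\]
(the degenerate case $k=0$ is handled directly: $\dcof=0$ means $\gamma\cdot x=x$, which for almost every $x$ forces $\gamma=e$ since each $g_n$ is injective). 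Since $g_{k-1}(x)$ and $g_{k-1}(\gamma\cdot x)$ both lie in $T_{k-1}$, whose $d_{S_\Gamma}$-diameter is at most $R_{k-1}\leq R_k$, the word length of $\gamma=g_{k-1}(\gamma\cdot x)\cdot g_{k-1}(x)^{-1}$ is bounded by $2R_k$, either by writing $\gamma$ as a product of two elements of word length at most $R_k$ or directly by invoking the fact that the Schreier metric satisfies $d_{S_\Gamma}(g,\gamma g)=|\gamma|_{S_\Gamma}$.

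Neither part requires heavy machinery; both are routine consequences of the combinatorial description of the Følner-tiling action on $X=\prod_k F_k$ together with the quantitative hypotheses on $(\eps_k)$ and $(R_k)$. The only slightly delicate point is the bookkeeping in (ii), namely correctly identifying which prefixes of $x$ and $\gamma\cdot x$ coincide so as to derive the identity $\gamma g_{k-1}(x)=g_{k-1}(\gamma\cdot x)$; once this identity is in place, the diameter hypothesis on $T_{k-1}$ immediately converts to the claimed word-length bound on $\gamma$.
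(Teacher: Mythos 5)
Your part (i) argument is precisely the paper's: specialize the identity \eqref{eqn: measure and distance} to $\gamma=s$, note $|T_k\setminus s^{-1}T_k|=|T_k\setminus sT_k|$ by left-invariance of counting measure, and apply the quantitative F\o lner hypothesis. Nothing to add.

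Your part (ii) argument is the contrapositive of what the paper does, and it rederives from scratch the equivalence that the paper invokes by the phrase ``its first part'' (namely the sentence immediately preceding \eqref{eqn: measure and distance}, asserting that $\dcof(\gamma\cdot x,x)>k$ iff $\gamma g_k(x)\notin T_k$). Your suffix-cancellation computation giving $\gamma g_{k-1}(x)=g_{k-1}(\gamma\cdot x)\in T_{k-1}$ is correct, and is a good way to make the terse ``by its first part'' explicit. One small wrinkle: as written you bound $|\gamma|$ using $\diam(T_{k-1})\leq R_{k-1}$ and then appeal to $R_{k-1}\leq R_k$, but Definition \ref{def:Folner tiling quantitative} does not require $(R_k)$ to be non-decreasing (it is in every example, but it is not a hypothesis). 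This is easily repaired without any extra assumption: since $(\gamma\cdot x)_k=x_k$, multiply your key identity on the right by $x_k$ to get $\gamma g_k(x)=g_{k-1}(\gamma\cdot x)x_k\in T_{k-1}F_k=T_k$; then $g_k(x)$ and $\gamma g_k(x)$ both lie in $T_k$ and $\diam(T_k)\leq R_k$ gives $|\gamma|_{S_\Gamma}\leq 2R_k$ directly, which is exactly the paper's route via $\gamma T_k\cap T_k\neq\emptyset$. Finally, your handling of the case $k=0$ reaches the right conclusion, but the appeal to injectivity of $g_n$ is a red herring: from $\gamma\cdot x=x$ and $\gamma g_n(x)=g_n(\gamma\cdot x)=g_n(x)$ one gets $\gamma=e$ by cancellation in $\Gamma$.
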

\begin{proof}The first item follows from Equation \eqref{eqn: measure and distance}. To prove the second item, we simply observe that if $|\gamma|_{S_{\Gamma}}>2 R_k$, then $\gamma T_k\cap T_k=\emptyset$ as $\mathrm{diam}(T_k) \leq R_k$.
\end{proof}
Consider two amenable groups $\Gamma$ and $\Gamma'$ admitting respective Følner tiling sequences $(F_k)$ and  $(F_k')$ such that $\abs{F_k}=\abs{F_k'}$. Then Proposition \ref{prop:tilingOE} provides us with free measure-preserving actions of $\Gamma$ and $\Gamma'$ on $X=\prod_k\{1,\ldots, |F_k|\}$, with same orbits. 
The following proposition relates the parameters of the Følner tiling sequences with the integrability of the cocycles.

\begin{proposition}\label{prop:tilingOEquantitative}
	Suppose that $(F_k)$ is an $(\epsilon_k,R_k)$ Følner tiling sequences for $\Gamma$ and $(F'_k)$ is an $(\epsilon'_k,R'_k)$ Følner tiling sequence for $\Gamma'$, such that $\abs{F_k}=\abs{F_k'}$ for all $k\in \N$. Let $\varphi\colon[0,\infty)\to [0,\infty)$ be a non-decreasing function such that the sequence
	$(\varphi(2R'_k)(\varepsilon_{k-1}-\varepsilon_{k}))_{k\in\N}$ is summable.
	Then the orbit equivalence coupling constructed above from $\Gamma$ to $\Gamma'$ is $(\varphi,\LL^0)$-integrable: 
	for all $s\in S_\Gamma$ \[\int_X \varphi(d_{S_{\Gamma'}}(x,s\cdot x)) d\mu(x) <+\infty.\] 
\end{proposition}
\begin{proof}
	By Lemma \ref{lem:distance on X/lenght in G}, for all $s \in S_\Gamma$ and for all $k \in \mathbb{N}$,
	$$ \mu\left(\lbrace x \in X \colon  d_{S_{\Gamma'}}(x,s\cdot x) > 2R'_k \rbrace \right)\leq\mu\left(\{x \in X \colon \dcof(s\cdot x,x)> k\}\right)\leq\eps_k .$$
	We then exploit the fact that $\varphi$ is increasing:  for $s \in S_\Gamma$ we have that
	\begin{align*}
		\int_X \varphi(d_{S_{\Gamma'}}(x,s\cdot x)) d\mu(x) & \leq  \varphi(2R'_0) + \sum_{k=1}^\infty \varphi(2R'_k) \mu\left( \lbrace x \in X \colon 2R'_{k-1} <d_{S_{\Gamma'}}(x,s\cdot x) \leq 2R'_{k} \rbrace \right) \\ 
		& \leq \varphi(R'_0) + \sum_{k=1}^\infty \varphi(2R_k') (\varepsilon_{k-1}-\varepsilon_{k}),
	\end{align*}
	which is finite by assumption.
\end{proof}

\subsection{Applications to groups with polynomial growth}\label{sec:polynomialgrowth}
We start applying Proposition \ref{prop:tilingOEquantitative} to torsion-free abelian groups. 
\begin{proposition}\label{prop: Folner tiling for Zd}
	Let $n$ be a positive integer. The group $\Z^n$ (equipped with its standard generating set) admits  a profinite ($\varepsilon_k,R_k$)-Følner tiling sequence $(F_k)$, with $|F_k|= 2^{n}$, $R_k=n2^{k+1}$ and $\varepsilon_k = 2^{-(k+1)}$ for any $k\ge 0$. 
\end{proposition}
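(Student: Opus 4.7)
The plan is to construct such a sequence explicitly via the dyadic decomposition of $\Z^n$. Take the decreasing chain of finite-index subgroups $\Gamma_k = 2^{k+1}\Z^n$ (with the convention $\Gamma_{-1}=\Z^n$), and for each $k\geq 0$ set
\[F_k = \{0, 2^k\}^n \subset \Gamma_{k-1}.\]
Since $\Gamma_{k-1}/\Gamma_k = 2^k\Z^n/2^{k+1}\Z^n \cong (\Z/2\Z)^n$, the set $F_k$ is indeed a set of coset representatives for $\Gamma_{k-1}$ modulo $\Gamma_k$, so once the two conditions of Definition~\ref{def: Folner tiling} are verified $(F_k)$ will automatically be a \emph{profinite} Følner tiling sequence. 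Clearly $|F_k|=2^n$.

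Next I would compute the tiles $T_k$ by induction. Since $\Z^n$ is abelian we use additive notation, so $T_0=F_0=\{0,1\}^n$ and $T_{k+1}=T_k+F_{k+1}$. A straightforward induction gives $T_k=\{0,1,\ldots,2^{k+1}-1\}^n$: if this holds at step $k$, then adding $\{0,2^{k+1}\}^n$ in each coordinate produces $\{0,1,\ldots,2^{k+2}-1\}^n$, and the union is disjoint because the two sets $\{0,\ldots,2^{k+1}-1\}$ and $\{2^{k+1},\ldots,2^{k+2}-1\}$ are disjoint. This takes care of the tiling condition~(i) in Definition~\ref{def: Folner tiling}. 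The diameter bound follows immediately: with respect to the standard generating set, $T_k$ is a product of $n$ intervals of length $2^{k+1}-1$, hence has $\ell^1$-diameter $n(2^{k+1}-1)\leq n\cdot 2^{k+1}=R_k$, giving condition~(i) of Definition~\ref{def:Folner tiling quantitative}.

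Finally I would verify the quantitative Følner condition. For a standard generator $s=\pm e_i$, the symmetric difference $T_k\bigtriangleup(T_k+s)$ consists of those points of $T_k$ whose $i$-th coordinate is extremal (equal to $0$ or $2^{k+1}-1$), so
\[|T_k\setminus sT_k| = \frac{|T_k|}{2^{k+1}} = 2^{-(k+1)}|T_k|=\varepsilon_k|T_k|,\]
which yields the Følner condition~(ii) of Definition~\ref{def: Folner tiling} (since $\varepsilon_k\to 0$) together with condition~(ii) of Definition~\ref{def:Folner tiling quantitative}. There is no real obstacle here: once the dyadic candidates $F_k=\{0,2^k\}^n$ and $\Gamma_k=2^{k+1}\Z^n$ are written down, all the requirements reduce to elementary inductive checks on the cubes $T_k=\{0,\ldots,2^{k+1}-1\}^n$.
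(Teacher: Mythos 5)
Your proof is correct and takes the same approach as the paper: you use the same tiles $F_k=\{0,2^k\}^n$, the same subgroups $\Gamma_k=2^{k+1}\Z^n$, the same identification $T_k=\{0,\dots,2^{k+1}-1\}^n$, and the same elementary cardinality count for the Følner condition. The only cosmetic difference is that you phrase the Følner estimate via the symmetric difference $T_k\bigtriangleup(T_k+s)$ while the paper computes $T_k\setminus(s+T_k)$ directly, but these amount to the same count.
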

\begin{proof}
	We let $F_k=\{0,2^{k}\}^n$ for any $k\ge 0$. One can check that $T_k=\{0,1,\ldots, 2^{k+1}-1\}^n$, which is a  coset representative for the finite index subgroup $\Gamma_k=(2^{k+1}\Z)^n$.
	The diameter of $T_k$ is bounded by $n2^{k+1}$ and its size equals $2^{n(k+1)}$. Finally take $s$ a generator of $\Z^n$. Without loss of generality, we can assume that $s$ is the first basis vector in $\mathbb{Z}^n$. Then, we have $$T_k\setminus((1,0,\ldots, 0)+T_k)=\{0\}\times \{0,1,\ldots, 2^{k+1}-1\}^{n-1},$$ whose cardinality is $2^{k+1}$ smaller than that of $T_k$, so we are done.
\end{proof}

\begin{corollary}\label{cor: Folner tiling for Zd}
	Let $n$ and $m$ be positive integers. The group $\Z^n$ (equipped with its standard generating set) admits  a ($\varepsilon_k,R_k$)-Følner tiling sequence $(F'_k)$, with $|F'_k|= 2^{nm}$, $R_k=n2^{m(k+1)}$ and $\varepsilon_k = 2^{-m(k+1)}$ for any $k\ge 0$.
\end{corollary}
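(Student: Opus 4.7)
The strategy is to take a ``coarsened'' version of the F\o lner tiling sequence from Proposition \ref{prop: Folner tiling for Zd}, grouping every $m$ consecutive steps into a single one. Concretely, I would set
\[F'_0 = \{0,1,\ldots,2^m-1\}^n \quad \text{and} \quad F'_k = \bigl(2^{mk}\cdot\{0,1,\ldots,2^m-1\}\bigr)^n \text{ for } k\geq 1,\]
so that $|F'_k|=2^{nm}$ for every $k\geq 0$. A direct induction on $k$ then shows that the associated tiles are
\[T'_k = \{0,1,\ldots,2^{m(k+1)}-1\}^n,\]
and that each element of $T'_k$ admits a unique decomposition as a sum of elements $f_0+f_1+\cdots+f_k$ with $f_i\in F'_i$ (this is just writing integers in base $2^m$ coordinate-wise). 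This gives the tiling condition \eqref{cond: Folner disjoint}.

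Next, I would verify the two quantitative conditions of Definition \ref{def:Folner tiling quantitative}. The diameter of $T'_k$ for the standard word metric on $\Z^n$ is $n(2^{m(k+1)}-1)\leq n\,2^{m(k+1)}=R_k$. For the F\o lner condition, it suffices by symmetry to consider $s$ equal to the first standard basis vector, for which
\[T'_k \setminus (s+T'_k) = \{0\}\times\{0,1,\ldots,2^{m(k+1)}-1\}^{n-1},\]
so that $|T'_k\setminus (s+T'_k)|/|T'_k| = 2^{-m(k+1)} = \varepsilon_k$, as required.

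Finally, since the $T'_k$ form an increasing F\o lner sequence exhausting $\Z^n$, the sequence $(F'_k)$ is a genuine F\o lner tiling sequence. I do not expect any step to present a real difficulty; the only point requiring minor care is the bookkeeping of the base-$2^m$ expansion that underlies the tiling property, but this is entirely elementary. Note that this sequence is no longer claimed to be profinite in the statement, so no additional verification of a nested sequence of subgroups is needed.
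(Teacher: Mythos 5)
Your proof is correct and takes essentially the same approach as the paper: the paper defines $F'_k = F_{mk}F_{mk+1}\cdots F_{mk+m-1}$ using the tiling from Proposition \ref{prop: Folner tiling for Zd}, and this coincides with your explicit set $\bigl(2^{mk}\cdot\{0,\ldots,2^m-1\}\bigr)^n$ (which, incidentally, also handles $k=0$, so the case split is unnecessary). The remaining verifications agree with the paper's, which simply observes $T'_k = T_{mk+m-1}$ and cites the earlier computation.
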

\begin{proof}
	Let $(F_k)_k$ be the Følner tilling sequence given in Proposition \ref{prop: Folner tiling for Zd} and for any $k\ge 0$ let $F'_k = F_{mk}F_{mk+1}\ldots F_{mk+m-1}$. Note that  $F'_k = \{0,2^{mk},2\cdot 2^{mk},\ldots, (2^m-1)2^{mk}\}^n$ and $T'_k = \{0,1,\ldots,2^{mk+m}-1\}^n$.
	
	As $T'_k$ is the set  $T_{mk+m-1}$ from Proposition \ref{prop: Folner tiling for Zd} we have that the diameter of $T'_k$ is at most $n2^{mk+m}$ and the set $T'_k\setminus(s + T'_k)$ has cardinality at most $2^{-mk-m}|T'_k|$ for any standard generator $s$ of $\Z^n$.
\end{proof}

The following theorem is almost sharp since by our extensions of Bowen's theorem (Theorem \ref{thm:Bowen}), if $n<m$ and $p>\frac nm$, then there cannot be an $\LL^p$ measure sub-quotient coupling from $\Z^m$ to $\Z^n$. 
\begin{theorem}\label{thm:OE between Zd and Zd'}
	For every $n$ and $m$, there exists an orbit equivalence coupling from $\Z^m$ to $\Z^{n}$ which is for every $\eps>0$ a $(\varphi_\epsilon,\psi_\epsilon)$-integrable coupling, where 
	$$\varphi_\epsilon(x)=\frac{x^{n/m}}{\log(x)^{1+\eps}}\text{ and }\psi_\epsilon(x)=\frac{x^{m/n}}{\log(x)^{1+\eps}}.$$
	In particular if $n<m$, then  then there is  an $(\LL^{p},\LL^{p'})$ orbit equivalence coupling from $\Z^m$ to $\Z^n$ for all $p<\frac{n}{m}$ and $p'<\frac{m}{n}$.
\end{theorem}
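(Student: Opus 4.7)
The plan is to apply Proposition \ref{prop:tilingOEquantitative} to two carefully chosen Følner tiling sequences, one for $\Z^m$ and one for $\Z^n$, whose tile sizes coincide so that the associated profinite actions on the infinite product space become orbit equivalent. Concretely, I would apply Corollary \ref{cor: Folner tiling for Zd} with parameters $(n,m)$ to obtain on $\Z^n$ an $(\varepsilon'_k, R'_k)$-Følner tiling sequence with $|F'_k| = 2^{nm}$, $R'_k = n\,2^{m(k+1)}$ and $\varepsilon'_k = 2^{-m(k+1)}$, and apply the same corollary with the roles of $n$ and $m$ swapped to obtain on $\Z^m$ an $(\varepsilon_k, R_k)$-Følner tiling sequence with $|F_k| = 2^{nm}$, $R_k = m\,2^{n(k+1)}$ and $\varepsilon_k = 2^{-n(k+1)}$. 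Because the cardinalities $|F_k| = |F'_k| = 2^{nm}$ agree, the natural identification of $\prod_k F_k$ with $\prod_k F'_k$ is measure-preserving and preserves the cofinite equivalence relation, so by Proposition \ref{prop:tilingOE} we obtain an orbit equivalence coupling between $\Z^m$ and $\Z^n$ on $(X,\mu)$.

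Next I would verify the two integrability conditions by applying the second half of Proposition \ref{prop:tilingOEquantitative} in each direction. For the coupling from $\Z^m$ to $\Z^n$, one has $2R'_k = 2n\,2^{m(k+1)}$ and $\varepsilon_{k-1} - \varepsilon_k \leq 2^{-nk}$, so
\[
\varphi_\eps(2R'_k)(\varepsilon_{k-1}-\varepsilon_k)
\;=\; O\!\left(\frac{(2^{m(k+1)})^{n/m}}{(mk)^{1+\eps}}\cdot 2^{-nk}\right)
\;=\; O\!\left(\frac{1}{k^{1+\eps}}\right),
\]
which is summable, yielding $\int_X \varphi_\eps(d_{S_{\Z^n}}(x,s\cdot x))d\mu(x)<\infty$ for every generator $s$ of $\Z^m$. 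Symmetrically, for the coupling from $\Z^n$ to $\Z^m$, one has $2R'_k = 2m\,2^{n(k+1)}$ and $\varepsilon_{k-1}-\varepsilon_k \leq 2^{-mk}$, so
\[
\psi_\eps(2R'_k)(\varepsilon_{k-1}-\varepsilon_k)
\;=\; O\!\left(\frac{(2^{n(k+1)})^{m/n}}{(nk)^{1+\eps}}\cdot 2^{-mk}\right)
\;=\; O\!\left(\frac{1}{k^{1+\eps}}\right),
\]
again summable. This gives both desired integrability bounds.

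The final statement, that for $n<m$ this orbit equivalence coupling is $(\LL^p, \LL^{p'})$-integrable for all $p<n/m$ and $p'<m/n$, then follows formally, since for any such $p$ one has $t^p \leq \varphi_\eps(t)$ eventually (for $\eps$ small enough depending on $p$), and symmetrically for $p'$ and $\psi_\eps$. The main obstacle is simply the combinatorial bookkeeping ensuring that the two Følner tilings really have matching tile sizes at every step; once this is arranged the summability estimates above are direct. No deeper construction is needed beyond the profinite tilings of Corollary \ref{cor: Folner tiling for Zd} and the quantitative orbit equivalence machinery of Proposition \ref{prop:tilingOEquantitative}.
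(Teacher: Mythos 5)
Your proposal is correct and follows essentially the same route as the paper: you apply Corollary~\ref{cor: Folner tiling for Zd} twice (with the roles of $n$ and $m$ swapped) to get matching tile sizes $|F_k|=|F'_k|=2^{nm}$, and then verify the summability hypothesis of Proposition~\ref{prop:tilingOEquantitative} in each direction, with the same asymptotic estimates. The only minor slip is the parenthetical ``for $\eps$ small enough depending on $p$'' in the last paragraph — in fact $t^p=o(\varphi_\eps(t))$ for \emph{every} $\eps>0$ once $p<n/m$, so no restriction on $\eps$ is needed; this does not affect the conclusion.
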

\begin{proof}
	Applying Corollary \ref{cor: Folner tiling for Zd}, we find a ($\varepsilon_k,R_k$)-Følner tiling sequence $(F_k)_k$ for $\Z^m$ with $|F_k|=2^{nm}$, $R_k=m2^{n(k+1)}$ and $\varepsilon_k = 2^{-n(k+1)+1}$.  Similarly,  we find a ($\varepsilon'_k,R'_k$)-Følner tiling sequence $(F'_k)_k$ for $\Z^{n}$  with $|F'_k|=2^{nm}$ , $R'_k=n2^{m(k+1)}$ and $\varepsilon'_k = 2^{-m(k+1)+1}$.
	Note that  $\varphi_\eps(R'_k)\varepsilon_{k-1}=O(k^{-(1+\epsilon)})$ and $\psi_\eps(R_k)\varepsilon'_{k-1}=O(k^{-(1+\epsilon)})$. 
	Thus, by Proposition \ref{prop:tilingOEquantitative}, we obtain an orbit equivalence coupling from $\Z^m$ to $\Z^n$ which is $(\varphi_\eps,\psi_\eps)$-integrable. 
	Finally if $n<m$ then for all $p<n/m$ we both have that $x^p=o( \varphi_\eps(x))$ and $x^{p'}=o( \psi_\eps(x))$ as $x\to+\infty$ so our coupling is also $(\LL^p,\LL^{p'})$.
\end{proof}

\begin{remark}
	The expert reader will recognize in the above proof an explicit orbit equivalence between the dyadic $\Z^n$ and $\Z^{m}$ odometers. Moreover, using this point of view it can be shown that this coupling is not  $\psi_0$-integrable, so we ask the following refinements of question \ref{qu: optimal coupling for Z^n}.
\end{remark}

\begin{ques}\label{qu: refined optimal coupling for Z^n}
	Let $n<m$,  is there a $(\varphi,\LL^\infty)$-integrable measure equivalence coupling from $\Z^m$ to $\Z^n$, where $\varphi(x)=\frac{x^{n/m}}{\log(x)}$ ? What about a $(\varphi,\LL^0)$-integrable measure equivalence coupling ?
\end{ques}
Recall that  the Heisenberg group is the 2-step torsion-free nilpotent group that can be defined as the group of triples $(x,y,z)\in\Z^3$ equipped with the group operation 
$$(x,y,z)\cdot(x',y',z')=(x+x',y+y',z+z'+yx'),$$
which comes from its identification with the group of matrices of the form
$\begin{bmatrix} 1 & 0 & 0\\ x & 1 & 0\\ z & y & 1 \end{bmatrix}$. We equip it with the standard generating set $S=\{E_1^{\pm 1},E_2^{\pm 1}\}$, where $E_1 = (1,0,0)$ and $E_2= (0,1,0)$.

\begin{proposition}\label{prop:heis folner tiling}
	The Heisenberg group admits a profinite $(\varepsilon_k,R_k)$-Følner tiling $(F_k)$ such that $|F_k|=16$, $R_k=10 \cdot 2^{k+2}$ , and $\eps_k=2^{-k}$ for any $k \geq 0$.
\end{proposition}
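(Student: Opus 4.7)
Construct the tiles explicitly using the chain of finite-index subgroups
$\Gamma_k := \{(x,y,z)\in\Heis(\Z) : x,y\in 2^{k+1}\Z,\ z\in 2^{2(k+1)}\Z\}$ (with $\Gamma_{-1}:=\Heis(\Z)$). The multiplication rule $(x,y,z)(x',y',z') = (x+x',y+y',z+z'+yx')$ preserves $\Gamma_k$ since $y,x'\in 2^{k+1}\Z$ yields $yx'\in 2^{2(k+1)}\Z$, and a direct coordinate count gives $[\Gamma_{k-1}:\Gamma_k]=16$. I would take $F_0:=\{(x,y,z) : x,y\in\{0,1\},\, z\in\{0,1,2,3\}\}$ and, for $k\geq 1$, $F_k := \{(2^k x, 2^k y, 2^{2k} z) : x,y\in\{0,1\},\, z\in\{0,1,2,3\}\}\subset \Gamma_{k-1}$, so that $|F_k|=16$.

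First I would check that $F_k$ is a left transversal of $\Gamma_{k-1}/\Gamma_k$: given $g\in\Gamma_{k-1}$, the first two coordinates of $g$ modulo $2^{k+1}$ uniquely determine the choices of $f_x, f_y\in\{0,2^k\}$, and then the third coordinate of $f^{-1}g$ modulo $2^{2(k+1)}$ pins down $f_z\in\{0, 2^{2k}, 2\cdot 2^{2k}, 3\cdot 2^{2k}\}$ uniquely (a short explicit calculation using the formula for $f^{-1}g$). A routine induction on $k$ then yields that $T_k = T_{k-1}F_k$ is a left transversal of $\Heis(\Z)/\Gamma_k$, which establishes both the tiling condition~(i) of Definition~\ref{def: Folner tiling} and the fact that $(F_k)$ is profinite.

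Next, I would study the geometric shape of $T_k$. An induction on the product formula shows $T_k \subseteq [0,2^{k+1})\times [0,2^{k+1})\times [0, M_k]$ with $M_k = O(2^{2(k+1)})$, and moreover that each $(a,b)$-fiber $C_{a,b}:=\{c:(a,b,c)\in T_k\}$ is an \emph{interval} of length $4^{k+1}$. The diameter bound then comes as follows: given $g,h\in T_k$, the element $g^{-1}h = (u,v,w)$ satisfies $|u|,|v|<2^{k+1}$ and $|w|=O(2^{2(k+1)})$; decomposing $(u,v,w) = (u,v,0)\cdot (0,0,w)$ and using that $(u,v,0)=E_1^u E_2^v$ has length $|u|+|v|$, together with the commutator identity $[E_1^a, E_2^b]=(0,0,-ab)$ of length $2(a+b)$ to reach $(0,0,w)$ in length $O(\sqrt{|w|})$, one obtains $d_S(g,h) \leq 5\cdot 2^{k+2}\leq 10\cdot 2^{k+2}$.

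The main obstacle lies in verifying the Følner condition with constant $\epsilon_k = 2^{-k}$ for each $s\in\{E_1^{\pm 1}, E_2^{\pm 1}\}$. Left multiplication by $E_1$ acts as $(a,b,c)\mapsto(a+1,b,c)$, so $T_k\setminus E_1 T_k$ splits into the $x=0$ boundary (contributing at most $2^{k+1}\cdot 4^{k+1}= 8^{k+1}$) plus the interior symmetric differences $|C_{a,b}\triangle C_{a-1,b}|$, which the interval structure of the fibers (together with an inductive description of the fiber offsets, governed by the parity of $a$ and $b$) bounds by $O(8^{k+1})$. Left multiplication by $E_2$ acts as $(a,b,c)\mapsto(a,b+1,c+a)$; the boundary contribution at $y=0$ is analogous, and the interior differences are symmetric differences of fiber intervals shifted by $a<2^{k+1}$, hence of size at most $2a\leq 2^{k+2}$ per fiber, yielding again $O(8^{k+1})$ in total. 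The same bounds apply to $E_1^{-1}$ and $E_2^{-1}$ by the symmetry $|T_k\setminus sT_k|=|T_k\setminus s^{-1}T_k|$. Since $|T_k|=16^{k+1}$, this gives $|T_k\setminus sT_k|/|T_k| = O(2^{-k-1})$, and a careful bookkeeping of the constants delivers the stated $\epsilon_k = 2^{-k}$.
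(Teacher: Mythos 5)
Your construction — the tiles $F_k=\{(2^k x,2^k y,4^k z):x,y\in\{0,1\},\,z\in\{0,1,2,3\}\}$, the subgroups $\Gamma_k$, the transversal/profinite verification, and the bit-uniqueness of the decomposition $T_k=F_0\cdots F_k$ — is exactly the one used in the paper, and your diameter bound, although organized around $g^{-1}h$ for $g,h\in T_k$ rather than the paper's direct estimate on each $F_k$ via the identity $B=E_1^{2^kx}E_2^{2^ky}[E_1^{2^k},E_2^{2^kz}]$, is a valid minor variant. Your observation that the $(a,b)$-fibers $C_{a,b}$ of $T_k$ are intervals of length $4^{k+1}$ is correct and is a nice structural reformulation of the paper's uniqueness computation.

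The gap is in the Følner estimate, which is the crux since the constants are tight: the $a=0$ (resp.\ $b=0$) boundary slice alone already contributes $8^{k+1}=\tfrac12\cdot 2^{3k+4}=\tfrac12\cdot\eps_k|T_k|$, so the interior contribution must genuinely be controlled by the other half. Your per-fiber bound for $E_2$ is incorrect: you claim the interior difference is bounded by $2a$ ``since the fibers are shifted by $a$,'' but the two fibers $C_{a,b}$ and $C_{a,b-1}$ being compared have \emph{different base offsets} $c(a,b)\neq c(a,b-1)$. Writing $m=v_2(b)$, a direct computation gives $c(a,b)-c(a,b-1)-a=-\bigl(x_0(a)+\sum_{i=1}^{m}4^i x_i(a)\bigr)$, so the interval discrepancy is $x_0(a)+\sum_{i\leq m}4^i x_i(a)$, which for $a\approx 2^m$ is of order $4^m\gg 2a$ once $m\geq 2$. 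The total over all $(a,b)$ does come out $<\tfrac13\cdot 8^{k+1}$ after averaging over the bit distributions of $a$ and $b$, so the conclusion $\eps_k=2^{-k}$ survives — but this is a cancellation over fibers, not a pointwise bound, and exactly the calculation you defer to ``an inductive description of the fiber offsets'' for $E_1$ as well. The paper does this accounting directly in bit coordinates: fix the smallest index $m$ with $x_m=0$ (resp.\ $y_m=0$), count how many triples $(x_i,y_i,z_i)$ can produce an overflow, and sum over $m$; this reorganizes your fiber-offset sum into a clean geometric series and delivers the tight constant. As written, your proposal asserts the final bound without carrying out this step, and the one concrete intermediate estimate you do state is false.
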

\begin{proof}
	For every $k\geq 0$ let
	\[F_k=\left\{(2^kx, 2^ky, 4^kz) \colon x,y\in\{0,1\}, z\in \{0,1,2,3\}\right\} .\]
	We claim that $(F_k)$ is the profinite left Følner tiling we are looking for. 
	
	\
	
	First note that our sequence is \emph{profinite} because the set $F_k$
	is a right coset representative of $\Gamma_{k-1}$ modulo $\Gamma_k$, where $\Gamma_{k-1}$ is the (non normal) finite index subgroup 
	\[\Gamma_{k-1}=\left\{(x,y,z)\colon x\equiv y\equiv 0 \mod 2^k, z\equiv 0 \mod 4^k \right\}.\]
	
	\newcommand{\coefxA}[1]{x_{#1}}
	\newcommand{\coefzA}[1]{z_{#1}}
	\newcommand{\coefyA}[1]{y_{#1}}
	\newcommand{\coefxB}[1]{x'_{#1}}
	\newcommand{\coefzB}[1]{z'_{#1}}
	\newcommand{\coefyB}[1]{y'_{#1}}

	\
	
	We then check that $(F_k)$ satisfies the \emph{tiling condition}. Let $A=(x_A,y_A,z_A) \in T_k$.
	By our definition of $T_k$, we can write $A$ as
	\begin{equation}\label{eq:Adecomp}
		A = (x_0,y_0,z_0)(2x_1,2y_1,4z_1)\cdots(2^{k}x_k,2^ky_k, 4^{k}z_k)
	\end{equation}
	
	with $\coefxA{i},\coefyA{i},\in\{0,1\}$,
	and $\coefzA{i}\in\{0,1,2,3\}$, which yields
	\begin{eqnarray}\label{eq: unique product decomposition}
		x_A = \sum_{i=0}^{k}2^i\coefxA{i},\quad y_A=\sum_{i=0}^{k}2^i\coefyA{i},&\text{ and }&
		z_A = \sum_{i=0}^{k}4^i\coefzA{i} + \sum_{i=1}^{k}2^i\coefxA{i}\sum_{j=0}^{i-1}2^j\coefyA{j}
	\end{eqnarray}
	We  thus see that we can recover the finite sequences $(\coefxA{i})_{i=0}^k$ and $(\coefyA{i})_{i=0}^k$ from the coefficients $(x_A,y_A,z_A)$ of $A$ as the binary expansions of $x_A$ and $y_A$ respectively, and then similarly $((\coefzA{i})_{i=0}^k)$ is also completely determined by $(x_A,y_A,z_A)$. Therefore the above decomposition of $A$ as an element of $F_0\cdots F_k$ is unique for every element in $T_k$. 
	
	\
	
	We now check that the \emph{diameters of the tiles} are as claimed.
	Let $B\in F_k$, we have $B=(2^k x,2^ky,4^kz)$ for some $x,y\in \{0,1\}$, $z\in \{0,1,2,3\}$
	and a straightforward computation shows  $B=E_1^{2^kx} E_2^{2^ky} [E_1^{2^k},E_2^{2^kz}]$.
	We thus have that $F_k$ is contained in the ball of radius $10 \cdot 2^k$ around the identity. 
	Hence $T_k$ is contained in a ball of radius $10\cdot 2^{k+1}$, and so has diameter at most $10\cdot 2^{k+2}$
	as claimed. 
	
	\
	
	We finally check the \emph{quantitative Følner condition}.
	We first estimate $|T_k \setminus E_1T_k|$. To do so, let us consider an element $A\in T_k$ such that  $E_1A\in T_k$. We decompose $A$ according to (\ref{eq:Adecomp}): 
	\[                                                                                                  
	A = (x_0,y_0,z_0)(2x_1,2y_1,4z_1)\cdots(2^{k}x_k,2^ky_k, 4^{k}z_k)=(x_A,y_A,z_A).                   
	\]
	We have  $E_1A=(x_A+1,y_A,z_A)$, so if the latter belongs to $T_k$, then by Equation \eqref{eq: unique product decomposition} one of the $x_i$'s must vanish. 
	We let $m$ be the least $i$ such that $x_i=0$ and we will now count the number of possible $A$'s for a given $m\in \{0,\ldots,k\}$.
	
	First note that there are  $2^{k-m}$ possibilities for $x_0,\ldots, x_k$ since we must have $x_0=\ldots=x_{m-1}=1$
	and $x_m=0$. There are  $2^{k+1}$ possibilities for $y_0,\ldots, y_k$, and we now have 
	to understand precisely what are the conditions on the $z_i$'s: we will bound from below the number of possible parameters $\coefzA{0},...,\coefzA{k}\in\{0,1,2,3\}$ such that the element
	\[
	A = (x_0,y_0,z_0)(2x_1,2y_1,4z_1)\cdots(2^{k}x_k,2^ky_k, 4^{k}z_k)=(x_A,y_A,z_A)
	\]
	satisfies $E_1A\in T_k$. We have  $E_1A=(x_A+1,y_A,z_A)$, so if the latter belongs to $T_k$, then by Equation \eqref{eq: unique product decomposition} its decomposition 
	\[E_1A = (x'_0,y'_0,z'_0)(2x'_1,2y'_1,4z'_1)\cdots(2^{k}x'_k,2^ky'_k, 4^{k}z'_k)
	\]
	is given by $x'_i=0$ for $i<m$, $x'_m=1$ and $x'_i=x_i$ for $i>m$, $y'_i=y_i$ for all $i\in\{0,...,k\}$, and finally the $z'_i$'s are subject to the equation 
	\[\sum_{i=0}^{k}4^i\coefzB{i} +
	\sum_{i=1}^{k}2^i\coefxB{i}\sum_{j=0}^{i-1}2^j\coefyA{j}=
	\sum_{i=0}^{k}4^i\coefzA{i} +
	\sum_{i=1}^{k}2^ix_i\sum_{j=0}^{i-1}2^j\coefyA{j}.\]
	
	We thus have:
	\begin{align}\label{eq:eqz'}
		\sum_{i=0}^{k}4^i\coefzB{i} =\sum_{i=0}^{k}4^i\coefzA{i}+ \sum_{i=1}^k 2^i(x_i-x'_i)\sum_{j=0}^{i-1}2^jy_j,
	\end{align}
	
	Now recall that $x'_i=0$ for $i<m$, $x'_m=1$ and $x'_i=x_i$ for $i>m$, so we can rewrite the last term as
	$$\sum_{i=1}^k 2^i(x_i-x'_i)\sum_{j=0}^{i-1}2^jy_j=\sum_{i=1}^{m}2^i\sum_{j=0}^{i-1}2^j\coefyA{j}-2^m\sum_{j=0}^{m-1}2^j\coefyA{j}$$
	We deduce that $\sum_{i=1}^k 2^i(x_i-x'_i)\sum_{j=0}^{i-1}2^jy_j$ is negative, and its absolute value is strictly less than $4^m$. Hence
	(\ref{eq:eqz'}) has a solution, and thus  $E_1A\in T_k$, as soon as $\sum_{i=0}^k 4^iz_i\geq 4^m$.
	Taking the contrapositive, we see that there are at most $4^m$ possibilities for the sequence $(z_i)_{i=0}^k$ so that $E_1A\not\in T_k$.
	
	We conclude that for a fixed $m$, we have $2^{k-m}$ possible $x_A$'s, $2^{k+1}$ possible $y_A$'s and at most $4^{m}$ possible $z_A$'s such that $E_1A\notin T_k$, which yields a total of $2^{k-m}2^{k+1}4^{m}=2^{2k+1}2^m$ possibilities. When $x_i=1$ for every $i$, we never have $E_1A\in T_k$, and then this adds $2^{k+1}4^{k+1}=2^{3k+3}$ possibilities. 
	
	We conclude that there are at most 
	$\displaystyle 2^{3k+3}+\sum_{m=0}^{k}2^{2k+1}2^{m}<2^{3k+4}$ choices of $A$ such that $E_1A\notin T_k$, which means that 
	$\abs{E_1T_k\setminus T_k}<2^{3k+4}$.
	
	To estimate the cardinality of $E_2T_k \setminus T_k$, we proceed similarly. We consider $A\in T_k$ such that $E_2A \in T_k$.
	We let $m$ be such that $y_0=\cdots=y_{m-1}=1$, $y_m=0$. Similarly as before, if $E_2A\in T_k$ we may write it as
	\[E_2A = (x_A,y_A+1,x_A+z_A)=(x'_0,y'_0,z'_0)(2x'_1,2y'_1,4z'_1)\cdots(2^{k}x'_k,2^ky'_k, 4^{k}z'_k).
	\]
	So by Equation \eqref{eq: unique product decomposition} for all $i\in\{0,...,k\}$ we have $x'_i=x_i$,  $y'_0=\cdots =y'_{m-1}=0$, $y'_m=1$ and $y'_i=y_i$ for all $i>m$. Finally the $z'_i$'s satisfy
	\[ 
	\sum_{i=0}^{k}4^i\coefzB{i} + 
	\sum_{i=1}^{k}2^i\coefxA{i}\sum_{j=0}^{i-1}2^j\coefyB{j} =
	\sum_{i=0}^{k}2^i\coefxA{i} +
	\sum_{i=0}^{k}4^i\coefzA{i} + 
	\sum_{i=1}^{k}2^i\coefxA{i}\sum_{j=0}^{i-1}2^j\coefyA{j}.
	\]
	We may rewrite our previous equation as
	\begin{align*}
		\sum_{i=0}^{k}4^i\coefzB{i} -\sum_{i=0}^{k}4^i\coefzA{i}&=
		\sum_{i=1}^{k}2^ix_i\sum_{j=0}^{i-1}2^j(y_j-y'_j) + \sum_{i=0}^k 2^i x_i
	\end{align*}
	We then decompose the first sum in the right term, noting that for all $i>m$,  $\sum_{j=0}^{i-1}2^j(y_j-y'_j)=-1$ :
	\begin{align*}
		\sum_{i=0}^{k}4^i\coefzB{i} -\sum_{i=0}^{k}4^i\coefzA{i}&=
		\sum_{i=1}^{m}2^ix_i\sum_{j=0}^{i-1}2^j-\sum_{i=m+1}^k2^ix_i+ \sum_{i=0}^k 2^i x_i\\
		&=\sum_{i=1}^{m}2^ix_i(2^i-1)+ \sum_{i=0}^m 2^i x_i\\
		&=\sum_{i=1}^m4^i x_i+x_0\\
		&<2^{2m+1}
	\end{align*}
	So for these $x_A$ and $y_A$ there exists at most $2^{2m+1}$ values of $z_A$ such that $E_2A\notin T_k$ (namely those such that $\sum_{i=0}^{k}4^i\coefzA{i}+2^{2m+1}\geq 4^{k+1}$) and as before we conclude that there exists at most 
	$$2^{k+1}4^{k+1}+\sum_{m=0}^{k}2^{k+1}2^{k-m}2^{2m+1}  <2^{3k+4}$$
	choices of $A$ such that $E_2A\notin T_k$.
	Thus, altogether we have that $|E_iT_k\setminus T_k| \le 2^{3k+4}$ for $i\in\{1,2\}$, and since $\vert T_k\vert = 2^{4k+4}$ we conclude that 
	\[\abs{E_iT_k\setminus T_k} \le 2^{-k} |T_k|,\]
	so that we can indeed pick $\varepsilon_k=2^{-k}$.
\end{proof}

\begin{theorem}\label{thm:OE between Heis and Z4}\label{sec:ZLamplighter}
	There exists an orbit equivalence coupling between $\Z^4$ and $\operatorname{Heis}(\Z)$ which is $(\varphi_\eps,\varphi_\eps)$-integrable for all $\eps>0$, where \[\varphi_\epsilon(x)=\frac{x}{\log(x)^{1+\eps}}.\]
	In particular the coupling is $(\LL^{p},\LL^p)$ for all $p<1$.
\end{theorem}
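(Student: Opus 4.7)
The plan is to apply Proposition~\ref{prop:tilingOEquantitative} directly, using the Følner tiling sequence for $\operatorname{Heis}(\Z)$ constructed in Proposition~\ref{prop:heis folner tiling} and the Følner tiling sequence for $\Z^4$ given by Proposition~\ref{prop: Folner tiling for Zd} with $n=4$. The key observation is that both tiles $T_k$ and $T_k'$ have cardinality $|F_k|=|F_k'|=16$, so the two Følner tiling sequences give rise to an orbit equivalence coupling through the identification of the two product spaces $\prod_k F_k$ and $\prod_k F_k'$.

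Explicitly, for $\operatorname{Heis}(\Z)$ we have parameters $R_k = 10\cdot 2^{k+2}$ and $\varepsilon_k = 2^{-k}$, while for $\Z^4$ we have $R_k' = 4\cdot 2^{k+1} = 2^{k+3}$ and $\varepsilon_k' = 2^{-(k+1)}$. It then remains to verify the summability condition of Proposition~\ref{prop:tilingOEquantitative} in both directions. For a generator $s$ of $\Z^4$, we need to check that $\sum_k \varphi_\eps(2R_k)\,(\varepsilon_{k-1}'-\varepsilon_k')$ is finite, where $\varphi_\eps(x) = x/\log(x)^{1+\eps}$. Plugging in the values, the general term is comparable to
\[
\frac{2^{k+3}\cdot 10}{(\log(2^{k+3}\cdot 10))^{1+\eps}}\cdot 2^{-(k+1)} = O\!\left(\frac{1}{k^{1+\eps}}\right),
\]
which is summable. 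For a generator $s$ of $\operatorname{Heis}(\Z)$, we need $\sum_k \varphi_\eps(2R_k')\,(\varepsilon_{k-1}-\varepsilon_k)$ to be finite, and the general term is comparable to
\[
\frac{2^{k+4}}{(\log 2^{k+4})^{1+\eps}}\cdot 2^{-k} = O\!\left(\frac{1}{k^{1+\eps}}\right),
\]
which is also summable.

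There is no real obstacle here: the two nontrivial ingredients (the Følner tiling sequences for $\operatorname{Heis}(\Z)$ and $\Z^n$) are already available, and the orbit equivalence together with its quantitative integrability follows immediately from Proposition~\ref{prop:tilingOEquantitative}. The hardest work in the argument was done in establishing Proposition~\ref{prop:heis folner tiling}, where the fact that the Følner constant $\varepsilon_k$ decays like $2^{-k}$ (rather than $2^{-2k}$ as one might naively hope given that $|T_k|$ grows like $2^{4k}$) reflects the fact that the Heisenberg group has volume growth $4$ but its isoperimetric profile is sensitive to the anisotropy between horizontal and vertical directions. The final conclusion that the coupling is $(\LL^p,\LL^p)$-integrable for every $p<1$ follows at once from $x^p = o(\varphi_\eps(x))$ as $x\to\infty$ for any choice of $\eps>0$.
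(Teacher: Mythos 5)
Your proof is correct and takes essentially the same route as the paper: both invoke Proposition~\ref{prop:tilingOEquantitative} with the tiling sequences from Propositions~\ref{prop: Folner tiling for Zd} (with $n=4$, $m=1$) and~\ref{prop:heis folner tiling}, noting $|F_k|=|F_k'|=16$, and then verify the summability of $\varphi_\eps(2R_k)(\eps_{k-1}'-\eps_k')$ and its mirror. One small aside in your last paragraph is off: the Følner constant $\eps_k\approx 2^{-k}$ for $\operatorname{Heis}(\Z)$ is not worse than for $\Z^4$, which also has $\eps_k'\approx 2^{-k}$; in both cases $\eps_k\approx|T_k|^{-1/4}$, consistent with the common isoperimetric profile $j_{1}(n)\approx n^{1/4}$, so no anisotropy penalty appears at this level.
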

\begin{proof}
	This follows from Proposition \ref{prop:tilingOEquantitative}, using the Følner tilings provided by Proposition \ref{prop: Folner tiling for Zd} and Proposition \ref{prop:heis folner tiling}.
\end{proof}
\begin{remark}
	As explained in the introduction, there cannot be an $\LL^1$ measure equivalence coupling between $\Z^4$ and $\operatorname{Heis}(\Z)$. We thus ask the following question.
\end{remark}

\begin{ques}
	Is there a $(\varphi_0,\varphi_0)$-integrable measure equivalence coupling between $\Z^4$ and $\operatorname{Heis}(\Z)$, where $\varphi_0(x)=\frac x{\log(x)}$ ?
\end{ques}

\subsection{A coupling between \texorpdfstring{$\Z$}{Z} and the lamplighter 
	groups}\label{sec: Zlamp}

Given two countable groups $\Lambda$, $\Gamma$ and a function $f\colon\Gamma\to\Lambda$, the \textbf{support} of $f$ is given by $\supp f=\{\gamma\in\Gamma\colon f(\gamma)\neq e_\Lambda\}$. 
The space $\Lambda^\Gamma$ is a group for pointwise multiplication. We then define $\bigoplus_{\gamma\in\Gamma}\Lambda$ as the subgroup of $\Lambda^\Gamma$ consisting of all functions which have finite support. 
Finally, we  the \textbf{wreath product} $\Lambda \wr \Gamma$ is by definition the semi-direct product
$$\Lambda \wr \Gamma\coloneqq \bigoplus_{\gamma\in\Gamma}\Lambda\rtimes\Gamma,$$
where the group multiplication is given by $(f,\gamma)\cdot(f',\gamma')=(f(\gamma\cdot f'),\gamma\gamma')$ and
$\gamma\cdot f(g)=f(\gamma\inv g)$. When the group $\Lambda$ is finite and non-trivial, such groups are sometimes referred to as lamplighter groups. Here we shall consider the special case $\Z/m\Z \wr \Z$,
for which a standard generating set is provided by the pair $\{(0,1),(\delta_0,0)\}$ where $\delta_0(n)=\left\{\begin{array}{cl}
	1 & \text{if }n=0\\
	0 & \text{otherwise.}
\end{array}
\right.$ 

The lamplighter point of view consists in viewing each element $(f,n)$ of the group as a pair where $f$ is a configuration of lamps (off at position $i$ if $f(i)=0$), and where $n$ is the position of the ``lamplighter". Multiplying $(f,n)$ on the right by the first generator amounts to moving the lamplighter from position $n$ to $n+1$. Multiplying it by the second generator amounts to switching the light at position $n$.

\begin{proposition}\label{prop:FolnerLamplighter}
	Let $m\ge 2$ be a positive integer. The group $\Z/m\Z \wr \Z$ (equipped with the  generating set $S=\{(0,1),(\delta_0,0)\}$) admits a $(\varepsilon_k,R_k)$-Følner tiling sequence $(F_k)_k$, with $|F_0|=2m^2$, and $|F_k| = 2\cdot m^{2^k}$, $R_k = (m+1)2^{k+1}$ and $\varepsilon_k = 2^{-(k+1)}$ for $k \ge 1$.
\end{proposition}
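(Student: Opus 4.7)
The plan is as follows. My construction is most naturally presented as a \emph{right} Følner tiling sequence in the sense of the remark after \cref{def: Folner tiling}; since the Cayley word length is invariant under inversion and inversion exchanges the left and right Følner conditions, replacing $F_k$ by $F_k\inv$ then gives a left Følner tiling sequence with the same cardinalities, diameters, and Følner constants. I will therefore describe the construction in the right-multiplication setup and verify the parameters there.

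Concretely, I let $T_0 = F_0 = \{(f, n) : \supp f \subseteq \{0, 1\},\; n \in \{0, 1\}\}$, and for $k \geq 1$ define
\[
F_k = \bigl\{(g, 0) : \supp g \subseteq [2^k, 2^{k+1}-1]\bigr\} \sqcup \bigl\{(g, 2^k) : \supp g \subseteq [0, 2^k-1]\bigr\},
\]
so that $|F_0| = 2m^2$ and $|F_k| = 2 m^{2^k}$ for $k \geq 1$. Using the wreath product multiplication $(g, p)(f, n) = (g + p\cdot f,\; p + n)$, where $\supp(p\cdot f) = p + \supp f$, I will verify by induction that
\[
T_k := \bigsqcup_{\gamma \in F_k} \gamma T_{k-1} = \{(h, q) : \supp h \subseteq [0, 2^{k+1}-1],\; q \in [0, 2^{k+1}-1]\}.
\]
Given $(h, q) \in T_k$, the decomposition $q = p + n$ with $p \in \{0, 2^k\}$ and $n \in [0, 2^k-1]$ is unique; once $p$ is fixed, $\supp(p\cdot f)$ lies in $[p, p + 2^k - 1]$, disjoint from the complementary window $[0, 2^{k+1}-1] \setminus [p, p + 2^k - 1]$, so $g$ is forced to be the restriction of $h$ to the latter window while $f$ is obtained as the shift of $h$ restricted to the former. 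This yields the unique $(g, p) \in F_k$ and $(f, n) \in T_{k-1}$.

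For the diameter bound, observe that for $x, y \in T_k$ the element $x\inv y = (a, r)$ has $|r| \leq 2^{k+1}-1$ and $\supp a$ lies in an interval of length at most $2^{k+1}$ that contains both the origin $0$ and the target position $r$. A realizing path walks from $0$ to one end of this interval, crosses to the other end while toggling each lamp at most $m - 1$ times, and finally walks to $r$; this uses at most $2(2^{k+1}-1) + (m-1)\, 2^{k+1} \leq (m+1)\, 2^{k+1}$ letters of $S$. The Følner estimate $|T_k \setminus T_k s| \leq \varepsilon_k |T_k|$ is then direct: right multiplication by $(0, 1)$ sends $(f, n)$ to $(f, n+1)$, so $T_k \setminus T_k(0, 1)$ is exactly the slice $\{n = 0\}$, of cardinality $m^{2^{k+1}} = 2^{-(k+1)}|T_k|$; right multiplication by $(\delta_0, 0)$ sends $(f, n)$ to $(f + \delta_n, n)$, which remains in $T_k$ because $n \in [0, 2^{k+1}-1]$ is in the admissible lamp window, so $T_k(\delta_0, 0) = T_k$ and the ratio vanishes. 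Taking $\varepsilon_k = 2^{-(k+1)}$ and passing to $F_k\inv$ gives the announced Følner tiling sequence. The delicate point is the diameter estimate, where one must verify carefully that the lamp support of $x\inv y$, though possibly at negative positions, always lies in a single interval of length at most $2^{k+1}$ simultaneously containing $0$ and $r$.
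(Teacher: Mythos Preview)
Your proof is correct and follows essentially the same approach as the paper: the same explicit right Følner tiling sequence $(F_k)$ with the same tiles $T_k=\{(f,n):\supp f\subseteq[0,2^{k+1}-1],\ n\in[0,2^{k+1}-1]\}$, the same Følner computation, and the same passage to a left tiling via inversion. The only cosmetic difference is in the diameter bound, where you work with $x^{-1}y$ and use $m-1$ toggles per lamp while the paper describes a direct path in $T_k$ using $\lfloor m/2\rfloor$ toggles; both yield the bound $(m+1)2^{k+1}$.
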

\begin{proof}
	Given the convention used to define the wreath product, it will be more convenient to construct a right Følner tiling sequence $(F_k)$. Recall  that $(F_k^{-1})$ will then be a left Følner tiling sequence.
	
	We define $F_0 =  \{(f,n)\in\Z/m\Z\wr\Z \colon \supp(f)\subseteq \{0,1\},n\in\{0,1\}\}$ and
	\begin{align*}
		F_k = &\left\{(f,0)\in\Z/m\Z\wr\Z \colon \supp(f)\subseteq [2^{k},2^{k+1}-1]\right\} \\
		& \cup\left\{(f,2^k)\in\Z/m\Z\wr\Z \colon \supp(f)\subseteq [0,2^{k}-1]\right\}.
	\end{align*}
	By induction, we show that $T_k = \{(f,n)\in\Z/m\Z\wr\Z \colon \supp(f)\subseteq [0,2^{k+1}-1],n\in[0,2^{k+1}-1]\}$. Indeed, we have that
	\begin{align*}
		T_{k+1} & = F_{k+1}T_k\\
		& = \{(f,n)\in\Z/m\Z\wr\Z \colon \supp(f)\subseteq [0,2^{k+2}-1],n\in [0,2^{k+1}-1]\}\\
		& \quad \cup \{(f,n)\in\Z/m\Z\wr\Z \colon \supp(f)\subseteq [0,2^{k+2}-1], n\in [2^{k+1},2^{k+2}-1]\}\\
		& = \{(f,n)\in\Z/m\Z\wr\Z \colon \supp(f)\subseteq [0,2^{k+2}-1],n\in [0,2^{k+2}-1]\}.
	\end{align*}
	A straightforward cardinality computation then shows that the $F_{k+1}$-translates of $T_k$ must be pairwise disjoint, so the tiling condition is satisfied.
	
	To bound the diameter of $T_k$, observe that to join two elements $(f,n)$ and $(f',n')$ in $T_n$, the lamplighter may travel from position $n$ to $n'$, passing through the whole interval $[0,2^{k+1}-1]$, while possibly switching all the lamps (at most once) along the way. Since switching each lamp uses the generator $(\delta_0,0)$ at most $\lfloor\frac{m}{2}\rfloor$ times, such a path has length at most  $\lfloor\frac{m}{2}\rfloor 2^{k+1} + 2(2^{k+1}-1)$.	
	Hence the diameter of $T_k$ is at most $\lfloor\frac{m}{2}\rfloor 2^{k+1} + 2(2^{k+1}-1)$, which is less than $(m+1)2^{k+1}$. 
	
	Finally, in order to prove the quantitative Følner condition, it suffices to show that $|T_ks\setminus T_k|\le 2^{-(k+1)}\abs{T_k}$ for all $s\in S$.
	If $s = (\delta_0,0)$, then $T_ks\setminus T_k = \emptyset$. If $s = (0,1)$, then $$T_ks\setminus T_k = \{(f,2^{k+1})\in\Z/m\Z\wr\Z \colon \supp(f)\subseteq [0,2^{k+1}-1]\}.$$
	So either way, $|T_ks\setminus T_k| \le m^{2^{k+1}} = 2^{-(k+1)}|T_k|$, which finishes the proof of the proposition.
\end{proof}

\begin{proposition}\label{prop:couplingZLamp}
	For any integer $m\ge 2$, there exists an orbit equivalence coupling from $\Z$ to $\Z/m\Z\wr\Z$ which is $(\exp,\varphi_\eps)$-integrable for all $\eps>0$, where $\varphi_\eps(x) = \frac{\log(x)}{\log(\log(x))^{1+\eps}}$.
\end{proposition}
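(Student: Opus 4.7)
The strategy is to apply Proposition \ref{prop:tilingOEquantitative} to a compatible pair of Følner tilings: the one for $\Z/m\Z \wr \Z$ provided by Proposition \ref{prop:FolnerLamplighter}, together with a new Følner tiling of $\Z$ whose tile cardinalities exactly match those of the lamplighter tiles. The lamplighter tiles have sizes $N_0 \coloneqq 2m^2$ and, for $k\geq 1$, $N_k \coloneqq 2^{k+1} m^{2^{k+1}}$, and the sequence $(N_k)$ forms an increasing divisibility chain. I would therefore construct a profinite Følner tiling of $\Z$ associated to the subgroup chain $(N_k\Z)_k$ by setting $F^{\Z}_0 = \{0,1,\ldots,N_0-1\}$ and $F^{\Z}_k = \{j N_{k-1} \colon 0 \leq j < N_k/N_{k-1}\}$ for $k \geq 1$. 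The resulting tiles are $T^{\Z}_k = \{0,1,\ldots,N_k-1\}$, with diameter $R^{\Z}_k < N_k$ and Følner constant $\varepsilon^{\Z}_k = 1/N_k$ (since $|T^{\Z}_k \setminus (T^{\Z}_k+1)| = 1$). By construction $|F^{\Z}_k| = |F^{\mathrm{lamp}}_k|$ for every $k$.

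Proposition \ref{prop:tilingOEquantitative} will then produce the desired orbit equivalence coupling, and integrability in either direction reduces to checking summability of an explicit series. For the direction $\Z \to \Z/m\Z \wr \Z$ with $\varphi(t) = \exp(\delta t)$, using $R^{\mathrm{lamp}}_k = (m+1)2^{k+1}$, the general term of that series is bounded by
\[\exp\bigl(4\delta(m+1)2^k\bigr)\cdot N_{k-1}^{-1} \;=\; \exp\bigl(2^k(4\delta(m+1)-\log m) - k\log 2\bigr),\]
which decays doubly exponentially as soon as $\delta < (\log m)/(4(m+1))$, giving exponential integrability. For $\Z/m\Z\wr\Z \to \Z$, since $\log(2R^{\Z}_k) \sim 2^{k+1}\log m$ and hence $\log\log(2R^{\Z}_k) \sim (k+1)\log 2$, we have $\varphi_\eps(2R^{\Z}_k) = O(2^k / k^{1+\eps})$; multiplying by $\varepsilon^{\mathrm{lamp}}_{k-1} - \varepsilon^{\mathrm{lamp}}_k = 2^{-(k+1)}$ gives a general term of order $k^{-(1+\eps)}$, which is summable.

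The only nontrivial step in the argument is the initial matching of cardinalities; once the two tilings are aligned, the rest is purely bookkeeping with the diameters and Følner ratios on each side. The asymmetry of the conclusion reflects the vastly different growth rates: the lamplighter tiles grow doubly exponentially in $k$, which forces the matched $\Z$-tiles to have enormous diameter $R^{\Z}_k \asymp m^{2^{k+1}}$, and this ``price'' translates directly into the $\log$-type integrability obtained in the $\mathrm{lamp} \to \Z$ direction. Note that this bound is essentially sharp: by Theorem \ref{thm: monotonicity of isop under phi}, applied using $j_{1,\Z}(n)\approx n$ and $j_{1,\Z/m\Z\wr\Z}(n)\approx \log n$, no measure equivalence coupling from the lamplighter to $\Z$ can be substantially more integrable than $\log$.
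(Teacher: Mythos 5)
Your proof is correct and follows essentially the same path as the paper: build the Følner tiling of $\Z$ by intervals $T^{\Z}_k=\{0,\ldots,N_k-1\}$ with $N_k=|T^{\mathrm{lamp}}_k|=2^{k+1}m^{2^{k+1}}$, feed both tilings into Proposition \ref{prop:tilingOEquantitative}, and check summability of $\exp(2\delta R^{\mathrm{lamp}}_k)\varepsilon^{\Z}_{k-1}$ and of $\varphi_\eps(2R^{\Z}_k)\varepsilon^{\mathrm{lamp}}_{k-1}$, exactly as the paper does (the paper uses the slightly cruder $\varepsilon'_k=2/N_k$ where you use $1/N_k$, an immaterial difference). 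The concluding sharpness remark via Theorem \ref{thm: monotonicity of isop under phi} is a correct and welcome addition that the paper states only in its introduction.
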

\begin{proof}
	Let  $(F_k)_k$ be the Følner tiling sequence of $\Z/m\Z\wr\Z$ constructed in Proposition \ref{prop:FolnerLamplighter}.
	And let $(F'_k)_k$ the Følner tiling sequence of $\Z$ defined by $F'_0 = [0,2m^2-1]$ and $$F'_k = |T_{k-1}|\,\big[0,|F_k|-1\big] = \{0,m^{2^k}2^k,2m^{2^k}2^k,\ldots,(2m^{2^k}-1)m^{2^k}2^k\}.$$
	Note that $T'_k = [0,|T_k|-1]$. So $(F'_k)_k$ is a $(\varepsilon'_k,R'_k)$-Følner tiling sequence with $R'_k=|T'_k|-1\leq m^{2^{k+1}}2^{k+1}$ and $\varepsilon'_k = \frac{2}{|T'_k|} = \frac{1}{m^{2^{k+1}}2^k}$.
	Proposition \ref{prop:tilingOEquantitative} yields an explicit orbit equivalence coupling $(X,\mu)$ between the groups $\Gamma'=\Z$ and $\Gamma=\Z/m\Z\wr \Z$. Moreover, defining $z>1$ as the solution of the equation $z^{2^3(m+1)}/m=1$, we have
	\[z^{2R_k}\varepsilon'_{k-1}\leq \frac{z^{2(m+1)2^{k+2}}}{m^{2^{k}}2^{k-1}}=(z^{2^3(m+1)}/m)^{2^k} \frac{1}{2^{k-1}}= \frac{1}{2^{k-1}}, \]
	so we deduce from Proposition \ref{prop:tilingOEquantitative} that for every $s\in S_\Gamma$,
	\[\int_X z^{d_{S_{\Gamma'}}(x,s\cdot x)} d\mu(x) <+\infty. \]
	On the other hand, we have 
	\[\varphi_{\eps}(2R'_k)\varepsilon_{k-1}\leq \frac{2^{k+1}\log m+(k+2)\log 2}{\log(2^{k+1}\log m+(k+2)\log 2)^{1+\eps}}\frac{1}{2^{k}}=O\left( \frac{1}{k^{1+\eps}}\right). \]	
	Hence we deduce from Proposition \ref{prop:tilingOEquantitative} that for every $s\in S_{\Gamma'}$,
	\[\int_X \varphi(d_{S_{\Gamma}}(x,s\cdot x)) d\mu(x) <+\infty. \]
	This concludes the proof of the proposition.
\end{proof}

\section{Almost optimal couplings between \texorpdfstring{$\Z$}{Z} and iterated 
	wreath products}\label{sec:wreathproduct}

In this section, we show that taking wreath products is well-behaved with respect to $\varphi$-integrable orbit equivalence, and use this to find almost optimal orbit equivalence couplings between $\Z$ and iterated wreath products.

\subsection{Wreath products of measure-preserving equivalence relations}

In this section, we develop the concept of wreath products of measure-preserving equivalence relations in 
order to obtain a natural proof of the following statement: if $\Gamma_1$, $\Gamma_2$ are orbit equivalent, and $\Lambda_1$, $\Lambda_2$ are orbit equivalent, then $\Lambda_1\wr\Gamma_1$ is orbit equivalent to $\Lambda_2\wr\Gamma_2$. The measure-preserving equivalence relation that witnesses that they are orbit equivalent
will be the wreath product $\mathcal S\wr\mathcal R$ of the equivalence relation $\mathcal R$ witnessing 
the orbit equivalence between $\Gamma_1$ and $\Gamma_2$ with the equivalence relation $\mathcal S$
witnessing the orbit equivalence between $\Lambda_1$ and $\Lambda_2$. We will follow the conventions concerning wreath products of groups given at the beginning of Section \ref{sec: Zlamp}. Let us also recall from Definition \ref{def: pmp eq rel} that a measure-preserving equivalence relation is an equivalence relation which comes from the orbits of a measure-preserving action of a countable group, in particular it has countable classes.

Let $\mathcal R$ be a measure-preserving equivalence relation on $(X,\mu)$, let $\mathcal S$ be a measure-preserving equivalence relation on $(L,\nu)$. Consider the space $L^{\mathcal R}$ which consists of couples $(x,(l_y)_{y\in [x]_\mathcal R})$ where $l_y\in L$ for every $y\in [x]_{\mathcal R}$. This space can be equipped with a natural standard Borel space structure which we do not make explicit for now since in our concrete case it will be easy to describe.
Moreover, one can endow it with the following probability measure $\eta$ given by
$$\eta=\int_X\nu^{\otimes[x]_\mathcal R}\, d\mu(x).$$
We then equip the space $(L^{\mathcal R}, \eta)$ with the \textbf{wreath product} $\mathcal S\wr \mathcal R$ of $\mathcal S$ by $\mathcal R$ defined by saying two couples $(x,(l_y)_{y\in [x]_\mathcal R})$ and $(x',(l'_y)_{y\in [x]_\mathcal R})$ are $\mathcal S\wr \mathcal R$-equivalent as soon as $(x,x')\in\mathcal R$ and the following two conditions are satisfied:
\begin{itemize}
	\item for all but finitely many $y\in [x]_{\mathcal R}$, we have $l_y=l'_y$;
	\item for all $y\in [x]_{\mathcal R}$, we have $(l_y,l'_y)\in\mathcal S$.
\end{itemize}
One can check that $\mathcal S\wr \mathcal R$ is indeed a measure-preserving equivalence relation.

Let us make all this completely explicit in the case of interest to us: we now suppose that $\mathcal R$ comes from a fixed \emph{free} $\Gamma$-action. We then have a cocycle map 
$c\colon\mathcal R\to\Gamma$ which takes $(x,y)\in\mathcal R$ to the unique $\gamma\in\Gamma$ satisfying $\gamma\cdot x=y$. The standard Borel space structure on $L^\mathcal R$ then comes from its natural identification to the standard Borel space $X\times L^\Gamma$ given by the bijection $\Phi\colon X\times L^\Gamma\to L^{\mathcal R}$ defined by 
\[
\Phi(x,(l_g)_{g\in\Gamma})=(x, (l_{c(x,y)})_{y\in [x]_\mathcal R}).
\]
The inverse map is given by $\Phi\inv(x,(l_y)_{y\in [x]_\mathcal R})=(x,(l_{g\cdot x})_{g\in\Gamma})$. Note that $\Phi_*(\mu\otimes \nu^{\otimes\Gamma})=\eta$. We have a natural $\Gamma$-action on $L^\mathcal R$ given by 
$\gamma\cdot (x,(l_y)_{y\in [x]_\mathcal R})=(\gamma\cdot x, (l_y)_{y\in [x]_\mathcal R})$. The corresponding action on $X\times L^\Gamma$ is given by 
$$\gamma\cdot (x,(l_g)_{g\in\Gamma})=(\gamma\cdot x, (l_{g\gamma})_{g\in\Gamma}).$$

Finally, suppose that the equivalence relation $\mathcal S$ comes from a free $\Lambda$-action on $L$. We then also have a natural $\bigoplus_{\gamma\in \Gamma}\Lambda$-action on $X\times L^\Gamma$ given by: for all $f\in \bigoplus_{\gamma\in \Gamma}\Lambda$, 
$$f\cdot (x,(l_g)_{g\in\Gamma})=(x, (f(g\inv)\cdot l_g)_{g\in\Gamma}),$$
or from the $L^\mathcal R$ viewpoint, $f\cdot (x,(l_y)_{y\in[x]_\mathcal R})=(x,(f(c(y,x))\cdot l_y)_{y\in[x]_\mathcal R})$.

Let us check that the $\Gamma$ and $\bigoplus_{\gamma\in \Gamma}\Lambda$-actions actually extend to a measure-preserving action of $\Lambda\wr\Gamma$ on $L^\mathcal R$ via the following formula:
\begin{equation}\label{eq: action wr product}
	\left(f,\gamma\right)\cdot\left(x,\left(l_y\right)_{y\in[x]_{\mathcal R}}\right)=\left(\gamma\cdot x, \left(f\left(c\left(y,\gamma\cdot x\right)\right)\cdot l_y\right)_{y\in[x]_{\mathcal R}}\right).
\end{equation}
On the one hand, we have 
\begin{align*}
	\left(f_1,\gamma_1\right)\cdot\left[\left(f_2,\gamma_2\right)\cdot \left(x,\left(l_y\right)_{y\in\mathcal R}\right)\right]&=\left(f_1,\gamma_1\right)\cdot\left(\gamma_2\cdot x, \left(f_2\left(c\left(y,\gamma_2\cdot x\right)\right)\cdot l_y\right)_{y\in [x]_{\mathcal R}}\right)\\
	&= \left(\gamma_1\gamma_2\cdot x, \left(f_1\left(c\left(y,\gamma_1\gamma_2\cdot x\right)\right)f_2\left(c\left(y,\gamma_2\cdot x\right)\right)l_y\right)_{y\in 
		[x]_{\mathcal R}}\right),
\end{align*}
while on the other hand
\[
[\left(f_1,\gamma_1\right)\left(f_2,\gamma_2\right)]\cdot \left(x, \left(l_y\right)_{y\in\mathcal R}\right)
=
\left(\gamma_1\gamma_2\cdot x, \left (f_1\left(c\left(y,\gamma_1\gamma_2\cdot x\right)\right)f_2\left(\gamma_1\inv c\left(y,\gamma_1\gamma_2\cdot x\right)\right)\cdot l_y\right)_{y\in[x]_{\mathcal R}}\right).
\]
Since $\gamma_1\inv c(y,\gamma_1\gamma_2\cdot x)=c(y,\gamma_2\cdot x)$, the last terms of the two above equations are equal, thus we do have an action of the wreath product $\Lambda\wr\Gamma$ on $L^{\mathcal R}$.
Since the $\Lambda$-action induces the equivalence relation  $\mathcal S$ while the $\Gamma$-action induces the equivalence relation $\mathcal R$, the equivalence relation induced by this $\Lambda\wr \Gamma$-action is equal to $\mathcal S\wr\mathcal R$.
Moreover this action is measure-preserving, so $\mathcal S\wr\mathcal R$ is a measure-preserving equivalence relation. Finally, note that this action
is free as a direct consequence of the freeness of the initial $\Gamma$ and $\Lambda$-actions. We have shown the following result.

\begin{proposition}
	Let $\Gamma\act(X,\mu)$ freely, let $\Lambda\act(L,\nu)$ freely, where $(X,\mu)$ and $(L,\nu)$ are two standard probability spaces and the actions are measure-preserving. Denote by $\mathcal R$ and $\mathcal S$ the respective associated equivalence relations, and by $c\colon\mathcal R\to \Gamma$ the cocycle defined by $c(x,\gamma\cdot x)=\gamma$. Then the measure-preserving free $\Gamma\wr\Lambda$-action on $L^{\mathcal R}$ given by 
	\[
	\left(f,\gamma\right)\cdot\left(x,\left(l_y\right)_{y\in[x]_{\mathcal R}}\right)
	=\left(\gamma\cdot x, \left(f\left(c\left(y,\gamma\cdot x\right)\right)\cdot l_y\right)_{y\in[x]_{\mathcal R}}\right)
	\]
	induces the equivalence relation $\mathcal S\wr\mathcal R$. \qed
\end{proposition}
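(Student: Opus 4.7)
The plan is to establish four properties of the candidate action in turn: the action axiom, measure-preservation, freeness, and the identification of the orbit equivalence relation with $\mathcal S\wr\mathcal R$. The key algebraic fact that will power everything is the cocycle identity $c(y,\gamma_1\gamma_2\cdot x)=\gamma_1\,c(y,\gamma_2\cdot x)$, which is immediate from the definition of $c$ and the freeness of the $\Gamma$-action (left-multiply $c(y,\gamma_2\cdot x)\cdot y=\gamma_2\cdot x$ by $\gamma_1$).

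To verify the action axiom I would expand both $((f_1,\gamma_1)(f_2,\gamma_2))\cdot(x,(l_y)_y)$ and $(f_1,\gamma_1)\cdot((f_2,\gamma_2)\cdot(x,(l_y)_y))$ using the semidirect product law $(f_1,\gamma_1)(f_2,\gamma_2)=(f_1\cdot(\gamma_1\cdot f_2),\gamma_1\gamma_2)$ and the convention $(\gamma\cdot f)(g)=f(\gamma^{-1}g)$. Comparing the $\Lambda$-values at a generic coordinate $y$ reduces the equality to the cocycle identity above. Measure-preservation is then handled by splitting the action into its two commuting pieces on $X\times L^\Gamma$: the $\Gamma$-component $(x,(l_g)_g)\mapsto(\gamma\cdot x,(l_{g\gamma})_g)$ preserves $\mu\otimes\nu^{\otimes\Gamma}$ because $\Gamma\curvearrowright X$ preserves $\mu$ while right-translation of indices preserves the product $\nu^{\otimes\Gamma}$, and the $\bigoplus_\Gamma\Lambda$-component preserves it because only finitely many coordinates are nontrivially modified, each by a $\nu$-preserving map.

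Freeness is a direct consequence of the hypotheses: if $(f,\gamma)$ fixes $(x,(l_y)_y)$, then $\gamma=e_\Gamma$ by freeness of $\Gamma\curvearrowright X$, and then $f(c(y,x))\cdot l_y=l_y$ for every $y\in[x]_{\mathcal R}$ forces each value $f(c(y,x))$ to be trivial (by freeness of $\Lambda\curvearrowright L$); since $y\mapsto c(y,x)$ is a bijection $[x]_{\mathcal R}\to\Gamma$, one concludes $f\equiv e_\Lambda$.

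The final and most interesting step is identifying the induced equivalence relation with $\mathcal S\wr\mathcal R$. The inclusion ``$\subseteq$'' is essentially by inspection: the new base point lies in $[x]_{\mathcal R}$, each lamp is replaced by an $\mathcal S$-equivalent one, and finite support of $f$ combined with the bijection $y\mapsto c(y,\gamma\cdot x)$ ensures that only finitely many lamps are actually modified. For the converse, given an $\mathcal S\wr\mathcal R$-related pair $(x,(l_y)_y)$ and $(x',(l'_y)_y)$, I would use freeness of the $\Gamma$-action to obtain the unique $\gamma\in\Gamma$ with $\gamma\cdot x=x'$, and freeness of the $\Lambda$-action to extract, for each $y$, the unique $\lambda_y\in\Lambda$ with $\lambda_y\cdot l_y=l'_y$; the $\mathcal S\wr\mathcal R$-condition forces $\lambda_y=e_\Lambda$ cofinitely, so $f(c(y,x'))\coloneqq\lambda_y$ defines an element of $\bigoplus_\Gamma\Lambda$, and by construction $(f,\gamma)$ sends $(x,(l_y)_y)$ to $(x',(l'_y)_y)$. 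No serious obstacle is expected: the whole proof is a careful bookkeeping exercise in which the only real challenge is to keep base points and cocycle values consistently aligned.
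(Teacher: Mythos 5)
Your proof is correct and follows precisely the route the paper has in mind: the paper itself states the proposition with a \qed after a discussion whose verifications are all deferred to the reader (``one can check that\dots''), so your write-up simply supplies the details the authors omit, using the same decomposition into $\Gamma$-part and $\bigoplus_\Gamma\Lambda$-part on $X\times L^\Gamma$ and the same cocycle identity $c(y,\gamma_1\gamma_2\cdot x)=\gamma_1\,c(y,\gamma_2\cdot x)$. (Note in passing that the proposition's ``$\Gamma\wr\Lambda$'' is a typo for $\Lambda\wr\Gamma=\bigoplus_\Gamma\Lambda\rtimes\Gamma$, as the preceding paragraph and your reading both make clear.)
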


\subsection{Wreath products and quantitative orbit equivalence}\label{sec:wreathproductOE}

It follows directly from the previous proposition that when $\Gamma_1$, $\Gamma_2$ are orbit equivalent, and $\Lambda_1$, $\Lambda_2$ are orbit equivalent, then $\Lambda_1\wr\Gamma_1$ is orbit equivalent to $\Lambda_2\wr\Gamma_2$.
We now give a quantitative version of this fact. 

For this, it is useful to identify $\Gamma$ and $\Lambda$ to subgroups of $\Gamma\wr\Lambda$ as follows: first we embed $\Lambda$ in $\bigoplus_{\gamma\in \Gamma}\Lambda$ by associating to every $\lambda\in\Lambda$ the function 
\[
\iota(\lambda)\colon \gamma\mapsto\left\{\begin{array}{lc}
	\lambda & \text{ if }\gamma=e_\Gamma \\ 
	e_\Lambda & \text{ otherwise.}
\end{array} \right.
\]
Then we embed $\Lambda$ in $\Lambda\wr\Gamma$ via $\lambda\mapsto(e_\Gamma,\iota(\lambda))$, and we embed $\Gamma$ into $\Lambda\wr\Gamma$ via $\gamma\mapsto(\gamma,\iota(e_\Lambda))$. It is well known that if $S_\Gamma$ is a generating set for $\Gamma$ and $S_\Lambda$ is a generating set for $\Lambda$, then through the above identification $S_\Lambda\cup S_\Gamma$ is a generating set for the wreath product $\Lambda\wr\Gamma$.


\begin{proposition}\label{prop:wreath product}
	Let $(X,\mu)$  be an orbit equivalence coupling between finitely generated groups $\Gamma_1=\la S_{\Gamma_1}\ra$ and $\Gamma_2=\la S_{\Gamma_2}\ra$, and let $(L,\nu)$ be an orbit equivalence coupling between $\Lambda_1=\la S_{\Lambda_1}\ra$ and $\Lambda_2=\la S_{\Lambda_2}\ra$. Denote by $\mathcal R$ the equivalence relation generated by the $\Gamma_i$-action.
	
	Then the orbit coupling $L^{\mathcal R}$ satisfies that for every $(x,l)\in L^{\mathcal R}$, every $\gamma_1\in\Gamma_1$ and every $\lambda_1\in\Lambda_1$,	
	\begin{align*}
		d_{S_{\Lambda_2}\cup S_{\Gamma_2}}((x,l),\gamma_1\cdot(x,l))& =d_{S_{\Gamma_2}}(x,\gamma_1\cdot x)\\
		d_{S_{\Lambda_2}\cup S_{\Gamma_2}}((x,l),\lambda_1\cdot(x,l))& =d_{S_{\Lambda_2}}(l_{e_\Gamma},\lambda_1\cdot l_{e_\Gamma})	
	\end{align*}
\end{proposition}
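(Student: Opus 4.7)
The plan is to exploit an orthogonality between the two families of generators inside $\Lambda_2\wr\Gamma_2$. Unpacking the formula for the $\Lambda_2\wr\Gamma_2$-action on $L^{\mathcal R}$, a generator $s\in S_{\Gamma_2}$, embedded as $(\iota(e_{\Lambda_2}),s)$, sends $(x,(l_y)_{y\in[x]_{\mathcal R}})$ to $(s\cdot x,(l_y)_{y\in[x]_{\mathcal R}})$: only the basepoint moves, while the intrinsic lamp configuration is unchanged. On the other hand, a generator $s\in S_{\Lambda_2}$, embedded as $(e_{\Gamma_2},\iota(s))$, sends $(x,(l_y))$ to $(x,(l'_y))$ with $l'_x=s\cdot l_x$ and $l'_y=l_y$ for $y\neq x$, since $\iota(s)(c_2(y,x))=e_{\Lambda_2}$ unless $c_2(y,x)=e_{\Gamma_2}$, i.e.\ $y=x$. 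A parallel computation with $c_1$ shows that $\gamma_1\in\Gamma_1$ acts as $(x,l)\mapsto(\gamma_1\cdot x,l)$ and $\lambda_1\in\Lambda_1$ acts as $(x,(l_y))\mapsto(x,(l'_y))$ with $l'_x=\lambda_1\cdot l_x$ and $l'_y=l_y$ otherwise.

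For the first equality, the inequality $\leq$ is obtained by lifting a shortest Schreier path in $(X,S_{\Gamma_2})$ from $x$ to $\gamma_1\cdot x$ into a word over $S_{\Gamma_2}$ inside the wreath product: by the first observation above, this word takes $(x,l)$ to $(\gamma_1\cdot x,l)=\gamma_1\cdot(x,l)$. For the reverse inequality, given any word $s_1\cdots s_n$ in $S_{\Gamma_2}\cup S_{\Lambda_2}$ sending $(x,l)$ to $\gamma_1\cdot(x,l)$, the sub-word consisting of its $S_{\Gamma_2}$-letters, read in order, traces a Schreier path in $X$ from $x$ to $\gamma_1\cdot x$ (because $S_{\Lambda_2}$-letters fix the basepoint). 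Hence $d_{S_{\Gamma_2}}(x,\gamma_1\cdot x)$ is bounded by the number of $S_{\Gamma_2}$-letters, itself bounded by $n$.

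For the second equality, the inequality $\leq$ is realized by applying the $S_{\Lambda_2}$-letters along a shortest Schreier path from $l_{e_\Gamma}=l_x$ to $\lambda_1\cdot l_x$ in $(L,S_{\Lambda_2})$ while the basepoint remains at $x$; by the second observation this produces precisely $\lambda_1\cdot(x,l)$. For the reverse inequality, let $s_1\cdots s_n$ be a word of length $n$ sending $(x,l)$ to $(x,l')=\lambda_1\cdot(x,l)$, and record the sequence of basepoints $x=y_0,y_1,\dots,y_n=x$ after each letter. At each step only the lamp at the \emph{current} basepoint can be changed, so the lamp at $x$ is modified exclusively by those $S_{\Lambda_2}$-letters applied while the basepoint equals $x$; read in chronological order they form a word whose action on $l_x$ produces $\lambda_1\cdot l_x$. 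This sub-word is a Schreier path in $(L,S_{\Lambda_2})$ from $l_x$ to $\lambda_1\cdot l_x$ of length at most $n$, giving the desired lower bound.

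The argument is mostly bookkeeping, and the main (mild) obstacle is verifying that the upper and lower bounds match exactly. This hinges on the clean decomposition above: $S_{\Gamma_2}$-letters act only on the basepoint and $S_{\Lambda_2}$-letters act only on the lamp at the current basepoint, so neither family can shortcut the other, and the wreath-product distance breaks up into the two intrinsic distances in $X$ and in $L$.
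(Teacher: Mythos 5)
Your proof is correct and takes essentially the same route as the paper's: both hinge on the two observations that elements of $\Gamma_i$ act on $L^{\mathcal R}$ by moving the basepoint without touching the lamps, and elements of $\Lambda_i$ fix the basepoint and change only the lamp at the basepoint ($l_{e_\Gamma}=l_x$) by the $\Lambda_i$-action. The paper states these facts and then declares the metric equalities ``clear''; you simply supply the elementary upper/lower bound bookkeeping (lifting a Schreier path for $\leq$, extracting the relevant sub-word for $\geq$) that the paper leaves to the reader.
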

\begin{proof}
	By definition for all $i\in\{1,2\}$, all $\gamma_i\in\Gamma_i$, and all $(x,l)\in L^{\mathcal R}$, we have $\gamma_i\cdot(x,l)=(\gamma_i\cdot x, l)$, so the first equation is clear. The second equation follows similarly by noting that if $\lambda_i\in\Lambda_i$, then their action on $(x,l)$ only changes the value at $e_\Gamma$ of $l$ according to the $\lambda_i$-action on $L$.
\end{proof}
\begin{corollary}\label{cor: wreath product}
	If $\Gamma_1$ and $\Gamma_2$ admit a $(\varphi,\psi)$-integrable orbit equivalence coupling, and if $\Lambda_1$ and $\Lambda_2$ admit a $(\varphi,\psi)$-integrable orbit equivalence coupling, then the wreath products $\Lambda_1\wr\Gamma_2$ and $\Lambda_2\wr\Gamma_2$ also admit a $(\varphi,\psi)$-integrable orbit equivalence couplings.\qed
\end{corollary}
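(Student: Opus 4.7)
The plan is to simply combine Proposition \ref{prop:wreath product} with the fact (Proposition \ref{prop: only check integrability on generators}) that $\varphi$-integrability need only be checked on a generating set. Fix $(\varphi,\psi)$-integrable orbit equivalence couplings $(X,\mu)$ between $\Gamma_1$ and $\Gamma_2$, and $(L,\nu)$ between $\Lambda_1$ and $\Lambda_2$, and form the orbit equivalence coupling $(L^{\mathcal R},\eta)$ between $\Lambda_1\wr\Gamma_1$ and $\Lambda_2\wr\Gamma_2$ as constructed above. Recall that $S_{\Gamma_1}\cup S_{\Lambda_1}$ generates $\Lambda_1\wr\Gamma_1$ (and similarly on the other side), so it suffices to check integrability of the displacement functions for elements of this finite generating set.

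For $\gamma_1\in S_{\Gamma_1}$, Proposition \ref{prop:wreath product} gives
\[\int_{L^{\mathcal R}}\varphi\!\left(d_{S_{\Lambda_2}\cup S_{\Gamma_2}}((x,l),\gamma_1\cdot(x,l))\right)d\eta(x,l)=\int_{X}\varphi\!\left(d_{S_{\Gamma_2}}(x,\gamma_1\cdot x)\right)d\mu(x),\]
which is finite by $\varphi$-integrability of the coupling $(X,\mu)$. For $\lambda_1\in S_{\Lambda_1}$, the identification $L^{\mathcal R}\simeq X\times L^{\Gamma}$ together with $\eta = \mu\otimes\nu^{\otimes\Gamma}$ shows that the marginal distribution of the coordinate $l_{e_\Gamma}$ is simply $\nu$, so by Proposition \ref{prop:wreath product} and Fubini,
\[\int_{L^{\mathcal R}}\varphi\!\left(d_{S_{\Lambda_2}\cup S_{\Gamma_2}}((x,l),\lambda_1\cdot(x,l))\right)d\eta(x,l)=\int_{L}\varphi\!\left(d_{S_{\Lambda_2}}(y,\lambda_1\cdot y)\right)d\nu(y),\]
which is finite by $\varphi$-integrability of $(L,\nu)$. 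A symmetric computation, exchanging the roles of the two sides, yields $\psi$-integrability of the displacements of generators of $\Lambda_2\wr\Gamma_2$.

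There is essentially no obstacle here: the whole content is already encoded in Proposition \ref{prop:wreath product}, which says that the wreath-product displacement is just the displacement in the $\Gamma$-factor or in one single $\Lambda$-fiber, depending on whether one moves by a generator of $\Gamma_1$ or of $\Lambda_1$. Once this is combined with the fact that integrability is a generating-set condition and with the product structure of $\eta$, there is nothing further to prove.
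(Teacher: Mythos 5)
Your proof is correct and matches the paper's intent — the corollary is stated with an empty proof precisely because it is an immediate consequence of Proposition \ref{prop:wreath product}, and you have simply spelled out that consequence: reduce to generators via Proposition \ref{prop: only check integrability on generators}, then note that for $\gamma_1\in S_{\Gamma_1}$ the displacement in $L^{\mathcal R}$ equals the displacement in $X$, while for $\lambda_1\in S_{\Lambda_1}$ it depends only on the $e_\Gamma$-coordinate whose $\eta$-marginal is $\nu$, so the relevant integrals are literally those appearing in the original couplings (with the same normalizing constants $c_{\gamma_1}$, $c_{\lambda_1}$). The symmetric argument with the roles of the two sides exchanged gives $\psi$-integrability, exactly as you say.
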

In combination with Theorem \ref{thm:OE between Zd and Zd'} we find the following:

\begin{corollary}
	Let $a,b\in\N$ with $a<b$ and let $\Delta$ be any finitely generated group, then there is an $(\LL^p,\LL^{1/p})$-orbit equivalence coupling from $\Delta\wr\Z^b$ to $\Delta\wr\Z^a$ for every $p<\frac{a}{b}$.  
	\qed\end{corollary}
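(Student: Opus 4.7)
The plan is to apply Corollary \ref{cor: wreath product} with $\Gamma_1=\Z^m$, $\Gamma_2=\Z^n$, and $\Lambda_1=\Lambda_2=\Delta$. This reduces the problem to exhibiting two ingredients with matching integrability profiles: an $(\LL^p,\LL^{1/p})$-integrable orbit equivalence coupling between the bases $\Z^m$ and $\Z^n$, and one between the ``lamp'' group $\Delta$ and itself.

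The $\Delta$-to-$\Delta$ coupling is essentially for free: applying Proposition \ref{prop: freeness for induced} (or taking a Bernoulli shift of $\Delta$ if $\Delta$ is infinite, and the left translation action on $\Delta$ with counting measure if $\Delta$ is finite), one obtains a free p.m.p.\ diagonal orbit equivalence coupling of $\Delta$ with itself that is $(\LL^\infty,\LL^\infty)$-integrable; this satisfies in particular the $(\LL^p,\LL^{1/p})$-integrability condition for any $p>0$. For the base $\Z^m\leftrightarrow\Z^n$ coupling, I would invoke Theorem \ref{thm:OE between Zd and Zd'}: for any $p<n/m$, it supplies an orbit equivalence coupling which is $(\varphi_\eps,\psi_\eps)$-integrable for every $\eps>0$, where $\varphi_\eps(x)=x^{n/m}/\log(x)^{1+\eps}$ and $\psi_\eps(x)=x^{m/n}/\log(x)^{1+\eps}$. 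Choosing $\eps$ small enough, the power $t\mapsto t^p$ is dominated by $\varphi_\eps$ and $t\mapsto t^{1/p}$ is controlled by $\psi_\eps$, delivering the required $(\LL^p,\LL^{1/p})$-integrability for the base coupling.

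Feeding these two ingredients into Corollary \ref{cor: wreath product} with the integrability pair $(\varphi,\psi)=(t\mapsto t^p, t\mapsto t^{1/p})$ then produces the claimed $(\LL^p,\LL^{1/p})$-integrable orbit equivalence coupling between $\Lambda_1\wr\Gamma_1=\Delta\wr\Z^m$ and $\Lambda_2\wr\Gamma_2=\Delta\wr\Z^n$. I do not expect any substantive obstacle: the statement is really just a repackaging of Theorem \ref{thm:OE between Zd and Zd'} through the wreath product machinery, with all of the analytic work already absorbed in those two cited results. The only bookkeeping step is to verify that the integrability profile extracted from Theorem \ref{thm:OE between Zd and Zd'} matches the pair $(\LL^p,\LL^{1/p})$ demanded by Corollary \ref{cor: wreath product}, which is immediate from the explicit form of $\varphi_\eps,\psi_\eps$.
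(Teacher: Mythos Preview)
Your overall strategy---feed the $(\Z^m,\Z^n)$ coupling from Theorem~\ref{thm:OE between Zd and Zd'} and a trivial identity coupling on $\Delta$ into Corollary~\ref{cor: wreath product}---is exactly what the paper intends (it gives no argument beyond ``In combination with Theorem~\ref{thm:OE between Zd and Zd'}'' and a \qed).

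However, your verification step contains a genuine error. You assert that ``$t\mapsto t^{1/p}$ is controlled by $\psi_\eps$,'' but this is false: since $p<n/m$ we have $1/p>m/n$, while $\psi_\eps(t)=t^{m/n}/\log(t)^{1+\eps}$ grows only like $t^{m/n}$ up to logarithms. Thus $t^{1/p}$ grows \emph{strictly faster} than $\psi_\eps(t)$, and $\psi_\eps$-integrability does \emph{not} yield $\LL^{1/p}$-integrability in the $\Z^n\to\Z^m$ direction. Theorem~\ref{thm:OE between Zd and Zd'} only delivers $(\LL^p,\LL^{p'})$ for $p<n/m$ and $p'<m/n$, and no value of $p<n/m$ satisfies $1/p<m/n$. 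So the statement \emph{as written} does not follow from the ingredients you (and the paper) invoke; the exponent $1/p$ in the second slot appears to be a slip, and the conclusion that actually follows is an $(\LL^p,\LL^{p'})$-integrable coupling for every $p<n/m$ and $p'<m/n$. With that correction, your argument is complete.
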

More generally, we have:
\begin{corollary}\label{cor:iteratedwreathZ^ab}
	Given $d\geq 1$, and a sequence of finitely generated groups $\Delta_n$, we define $H_n(d)$ inductively as follows: $H_0(d)=\Z^d$ and $H_{n+1}(d)=\Delta_{n+1}\wr H_n(d)$. Then  if $a<b$, there is an $(\LL^p,\LL^{1/p})$-orbit equivalence coupling from $H_n(b)$ to $H_n(a)$ for every $p<\frac{a}{b}$.
	\qed\end{corollary}
Recall that we denote by $\log^{\circ n}$ the function $\log $ composed with itself $n$ times. Now assume that $\Delta_n=\Z/2\Z$ for all $n\geq 1$. By \cite{erschlerIsoperimetricProfilesFinitely2003}, the isoperimetric profile of $H_n(d)$ is $\approx (\log^{\circ n})^{1/d}$.
Hence deduce from the second part of Theorem \ref{thmintroIsop} that the existence of a $(\LL^p,\LL^0)$ measure equivalence coupling from $H_n(b)$ to $H_n(a)$ forces the inequality $p\leq a/b$: this shows that  Corollary \ref{cor:iteratedwreathZ^ab} is (almost) sharp in that case.

We denote by $\LL^{<\infty}$ the intersection of $\LL^p$ for all $1\leq p<\infty$.
\begin{corollary}\label{cor:iteratedwreath}
	Let $m\geq 2$, and let $G_n$ be defined inductively by $G_0=\Z$, and $G_{n+1}=\Z/m\Z\wr G_n$. For any integer $n\ge 1$, there exists an orbit equivalence coupling from $\Z$ to $G_n$ which is $(\LL^{<\infty},\varphi_{n,\eps})$-integrable for every $\eps>0$, where $\varphi_{n,\eps}=\log^{\circ n}/(\log^{\circ (n+1)})^{1+\eps}$.
\end{corollary}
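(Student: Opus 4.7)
The plan is to proceed by induction on $n\geq 1$. The base case $n=1$ is immediate from Proposition~\ref{prop:couplingZLamp}, which provides an orbit equivalence coupling from $\Z$ to $G_1=\Z/m\Z\wr\Z$ that is $(\exp,\varphi_\eps)$-integrable for every $\eps>0$; since $\exp$-integrability implies $\LL^p$-integrability for every $p<\infty$, this coupling is in particular $(\LL^{<\infty},\varphi_{1,\eps})$-integrable (and $\varphi_{1,\eps}=\varphi_\eps$).

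For the inductive step, assume we have a single orbit equivalence coupling between $\Z$ and $G_n$ that is $(\LL^p,\varphi_{n,\eps})$-integrable for every $p<\infty$ and every $\eps>0$. Combining it with the trivial orbit equivalence of $\Z/m\Z$ with itself (which is $(\LL^\infty,\LL^\infty)$-integrable), Corollary~\ref{cor: wreath product} — together with the explicit distance formulas of Proposition~\ref{prop:wreath product} — yields an orbit equivalence coupling between $\Z/m\Z\wr\Z$ and $\Z/m\Z\wr G_n=G_{n+1}$ that is $(\LL^{<\infty},\varphi_{n,\eps})$-integrable. I then compose this with the base-case coupling $\Z\leftrightarrow\Z/m\Z\wr\Z$, which is $(\LL^{<\infty},\varphi_\eps)$-integrable, to obtain an orbit equivalence coupling $\Z\leftrightarrow G_{n+1}$.

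It remains to analyze the integrability of this composed coupling. In the $\Z\to G_{n+1}$ direction, both pieces are $\LL^{<\infty}$-integrable; since every $\LL^p$ coupling with $p\geq 1$ is strongly $\LL^p$-integrable, Proposition~\ref{prop:CombineIntCouplings} applied at each $p$ gives strong $\LL^p$-integrability of the composition, hence $\LL^{<\infty}$-integrability. In the $G_{n+1}\to\Z$ direction, both $\varphi_{n,\eps}$ and $\varphi_\eps$ are non-decreasing, concave, and subadditive (as sub-logarithmic functions), so Proposition~\ref{prop:CombineIntCouplingsConcave} yields a $\varphi_{n,\eps}\circ\varphi_{\eps'}$-integrable composition for every $\eps,\eps'>0$.

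The only substantive calculation is the asymptotic identification $\varphi_{n,\eps}\circ\varphi_{\eps'}\approx\varphi_{n+1,\eps}$. Starting from $\log\varphi_{\eps'}(x)=\log^{\star 2}(x)-(1+\eps')\log^{\star 3}(x)\sim\log^{\star 2}(x)$, one checks by induction on $k\geq 1$ that $\log^{\star k}(\varphi_{\eps'}(x))\sim\log^{\star(k+1)}(x)$ as $x\to\infty$; substituting this into the definition of $\varphi_{n,\eps}$ gives $\varphi_{n,\eps}\circ\varphi_{\eps'}(x)\sim \log^{\star(n+1)}(x)/(\log^{\star(n+2)}(x))^{1+\eps}=\varphi_{n+1,\eps}(x)$. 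Fixing (say) $\eps'=1$ and letting $\eps$ vary, we conclude that the composed coupling is $(\LL^{<\infty},\varphi_{n+1,\eps})$-integrable for every $\eps>0$, closing the induction. There is no real obstacle: the wreath-product corollary and the composition machinery of Section~\ref{sec:compositioncouplings} do all the heavy lifting, and the only point requiring care is the iterated-log asymptotic.
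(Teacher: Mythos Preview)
Your proof is correct and follows essentially the same approach as the paper: induction via the wreath-product construction (Proposition~\ref{prop:wreath product}/Corollary~\ref{cor: wreath product}) combined with the composition rules (Propositions~\ref{prop:CombineIntCouplingsConcave} and~\ref{prop:CombineIntCouplings}), finishing with the asymptotic $\varphi_{n,\eps}\circ\varphi_{1,\eps'}\approx\varphi_{n+1,\eps}$. The only cosmetic difference is the order of operations: the paper first builds $(\LL^{<\infty},\varphi_{1,\eps})$-couplings $G_{k-1}\leftrightarrow G_k$ by iterated wreath product and then composes one step at a time with the coupling from $\Z$, whereas you wreath-product the full inductive coupling $\Z\leftrightarrow G_n$ to obtain $G_1\leftrightarrow G_{n+1}$ and then compose with $\Z\leftrightarrow G_1$.
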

\begin{proof}
	We first claim that for all $n\geq 1$, there exists an  orbit equivalence coupling from $G_{n-1}$ to $G_{n}$ which is $(\LL^{<\infty},\varphi_{1,\eps})$-integrable for all $\eps>0$: this follows by induction using Proposition \ref{prop:wreath product}, the case $n=1$ resulting from Proposition \ref{prop:couplingZLamp}.
	
	We now pass to the proof of the corollary, which we also do by induction on $n$.
	The base case $n=1$ follows from the claim, so we take $n\geq 2$ and assume that we have a  orbit equivalence coupling from $\Z$ to $G_{n-1}$ which is $(\LL^{<\infty},\varphi_{n-1,\eps})$-integrable for all $\eps>0$.
	Using Proposition \ref{prop:CombineIntCouplingsConcave}, we compose this coupling with the coupling from the claim between $G_{n-1}$ and $G_{n}$, and obtain a $(\varphi_{n-1,\eps}\circ\varphi_{1,\eps}, \LL^{<\infty})$-coupling from $\Z$ to $G_{n}$. Note that for all $k\geq 1$ 
	\[\log^{\circ k}\circ \varphi_{1,\eps}(t)=\log^{\circ k}(\log t/(\log^{\circ 2}(t))^{1+\eps})\sim  \log^{\circ (k+1)}(t)\] as $t$ tends to $+\infty$. Hence, $ \varphi_{n-1,\eps}\circ\varphi_{1,\eps}(t)\sim \varphi_{n,\eps}(t)$, and we are done.
\end{proof}

\section{Unstable properties under exponential couplings}\label{sec: unstable}

\subsection{Finite presentability is unstable}
\label{sec:FP_Unstable}

In this section we prove that being finitely presented is not preserved by strongly exponentially integrable orbit equivalence couplings (see Definition~\ref{def:strong}). We do this by constructing an explicit strongly exponentially integrable orbit coupling between the wreath product $\Z/k\Z\wr\Z$ and  the Baumslag-Solitar group $\BS(1,k)$, for every $k\ge 2$. 

Recall from the previous section that $\Z/k\Z\wr\Z = \bigoplus_{\Z}\Z/k\Z \rtimes \Z$ where $\Z$ acts by a shift on $\bigoplus_{\Z}\Z/k\Z$, that is	\[((x_i)_{i\in\Z},m_x)((y_i)_{i\in\Z},m_y) = ((x_i+y_{i-m_x})_{i\in\Z},m_x+m_y).\]
Also recall that $\BS(1,k)=\Z[1/k]\rtimes\Z$, where $\Z[1/k]=\{\frac{x}{k^m}\colon x,m\in\Z\}$ and $\Z$ acts on $\Z[1/k]$ by multiplication by $1/k$, that is 
\[
\left(\frac{x}{k^{m_x}},n_x\right)\left(\frac{y}{k^{m_y}},n_y\right) = \left(\frac{x}{k^{m_x}}+\frac{y}{k^{m_y+n_x}},n_x+n_y\right).
\]

It is well known that the second group is finitely presented, while the first one is not by a result of Baumslag \cite{baumslagWreathProductsFinitely1961}. In particular these groups are not quasi-isometric.

We will consider actions of these two groups on the standard Borel space $X=\prod_{\Z}\Z/k\Z$ equipped with the measure $\mu$ defined as the infinite product of the normalized counting measure. 
The action of $\Z/k\Z\wr\Z$ is the standard one, that is \[((x_i)_{i\in\Z},m)\cdot(y_i)_{i\in\Z} = (x_i+y_{i-m})_{i\in\Z}.\]

For the Baumslag-Solitar group $\BS(1,k)$ we first define the action of the subgroup $\Z[1/k]=\la (k^m,0)\colon m\in\Z\ra$ as follows: for all $m\in\Z$, we decompose the space $X$ as \[
X=\prod_{i<m}\Z/k\Z\times\prod_{i\geq m}\Z/k\Z,
\] and then $(k^m,0)$ acts trivially on the first factor, and as the $k$-adic odometer on the second factor.
To be completely explicit,
$(k^m,0)\cdot (x_i)_{i\in\Z}=(y_i)_{i\in\Z}$, where:
\begin{itemize}
	\item $x_i=y_i$ for every $i<m$;
	\item  if $x_i=k-1$ for every $i\ge m$, then $y_i=0$ for every $i\ge m$;
	\item else, letting $N$ be the smallest integer $\geq m$ such that $x_N\neq k-1$, we let $y_i=0$ for $m\le i<N$, $y_N=x_N+1$, and $y_i=x_i$ for $i>N$.
\end{itemize}
Note that this action is realized by letting $\Z[1/k]$ act on $\Q_k$ by addition and extending this action to the bi-infinite product $X=\prod_{\Z}\Z/k\Z$.
Also note that the equivalence relation generated by the action of $\Z[1/k]$ is the cofinite relation up to measure zero. 

As with $\Z/k\Z\wr\Z$ we then let $\Z$ act by  $(0,1)\cdot(x_i)_{i\in\Z}=(x_{i-1})_{i\in\Z}$. This defines an (essentially) free measure preserving action of the group $\Z[1/k]\rtimes\Z$ on $(X,\mu)$. Since the natural action of $\oplus_{\Z}\Z/k\Z$ induce the cofinite equivalence relation, we deduce that the two actions we have built yield an orbit equivalence coupling $(X,\mu)$ from $\Z/k\Z\wr\Z$ to $\BS(1,k)$. 

\begin{theorem}\label{thm:Lamplighter et BS}
	The orbit equivalence coupling $(X,\mu)$ we just constructed is an $(\LL^\infty,\exp^\diamond)$ orbit equivalence coupling from $\Z/k\Z\wr\Z$ to  $\BS(1,k)$. 
\end{theorem}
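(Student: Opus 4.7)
The plan is to verify the two halves of the integrability condition separately. For the $\LL^\infty$ direction from $\Z/k\Z\wr\Z$ to $\BS(1,k)$, I will check each generator of the lamplighter. The shift generator $t=(0,1)$ of $\Z/k\Z\wr\Z$ acts exactly as $v=(0,1)\in\BS(1,k)$, so $d_{S_{\BS(1,k)}}(x,t\cdot x)=1$ for every $x$. For the lamp generator $a=(\delta_0,0)$, the key observation is that $a\cdot x$ differs from $x$ only at position $0$ by adding $1$ modulo $k$. If $x_0<k-1$, this is accomplished by $u\cdot x$ (no carry is triggered), giving distance $1$; if $x_0=k-1$, then $u^{-1}$ applied to $x$ merely decrements position $0$ (no borrow is triggered since $x_0\neq 0$), and iterating shows that $u^{-(k-1)}\cdot x$ sets position $0$ to $0$ while leaving all other coordinates untouched, which coincides with $a\cdot x$ and gives distance at most $k-1$. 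Thus $d_{S_{\BS(1,k)}}(x,s\cdot x)\leq k-1$ for a.e.\ $x$ and every generator $s$.

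For the strong exponential integrability in the other direction, by the last proposition of \S\ref{sec:compositioncouplings} it suffices to exhibit a single triple $(\delta_0,\varepsilon_0,C_0)$ witnessing the required inequality. The idea is to use the normal form $g=v^{-m}u^av^{m+n}$ for an arbitrary $g\in\BS(1,k)$, where $m\geq 0$, $a\in\Z$ is coprime to $k$ when $m>0$, and $n\in\Z$; a standard computation in $\BS(1,k)$ yields $L:=|g|_{S_{\BS(1,k)}}=\Theta_k(m+\log_k(|a|+1)+|n|)$. The action on $x$ decomposes into three steps: shift $y=v^{m+n}\cdot x$, perform $k$-adic addition $z=u^a\cdot y$, and shift back $g\cdot x=v^{-m}\cdot z$. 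The middle step modifies positions $[0,M+N']$ of $y$, where $M=O(\log_k|a|)$ encodes the digits of $a$ and $N'$ is the extra length of carry propagation past position $M$, bounded by the maximal streak of $(k-1)$'s in $y$ starting near position $M$. Rewriting through $y_j=x_{j-(m+n)}$, this bound becomes $N'\leq M_L(x):=\max_{|j|\leq 3L}N_j(x)$, where $N_j(x)$ denotes the streak length of $(k-1)$'s in $x$ starting at position $j$. The lamplighter element realizing $g\cdot x$ from $x$ then has shift component $n$ and configuration-change supported in an interval of width at most $M+N'+1$; standard word length bounds in $\Z/k\Z\wr\Z$ (travel cost plus lamp switches) yield
\[d_{S_{\Z/k\Z\wr\Z}}(x,g\cdot x)\leq C_k\bigl(L+M_L(x)\bigr).\]

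To conclude, each $N_j(x)$ is a geometric variable with $\mathbb{P}(N_j(x)\geq n)=k^{-n}$, so by the union bound $\mathbb{P}(M_L(x)\geq n)\leq(6L+1)k^{-n}$, giving $\int\exp(\alpha M_L)\,d\mu=O_{\alpha,k}(L)$ for any $\alpha<\log k$. Combined with the pointwise bound, this yields
\[\int\exp\bigl(\delta\,d_{S_{\Z/k\Z\wr\Z}}(x,g\cdot x)\bigr)\,d\mu(x)\leq\exp(C_k\delta L)\int\exp(C_k\delta M_L(x))\,d\mu(x)\leq O(L)\exp(C_k\delta L).\]
Choosing $\delta_0>0$ small enough that $C_k\delta_0<\log k$, the polynomial factor $O(L)$ is absorbed by $\exp(\varepsilon_0 L)$ for any $\varepsilon_0>C_k\delta_0$, and after enlarging $C_0$ to handle small values of $L$ one obtains the desired inequality, yielding strong exponential integrability.

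The main difficulty will be establishing the pointwise bound $d_{S_{\Z/k\Z\wr\Z}}(x,g\cdot x)\leq C_k(L+M_L(x))$: one must carefully track how the $k$-adic carries produced by $u^a$ interact with the surrounding shifts $v^{\pm m}$, locate the positions in the original frame where $g\cdot x$ and $x$ actually differ, and bound both the support width and the required travel length of the associated lamplighter element. Once this pointwise bound is in place, the probabilistic and integration steps are routine.
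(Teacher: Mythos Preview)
Your approach is essentially the same as the paper's: both split $g\in\BS(1,k)$ into a shift and a $k$-adic addition, bound the positions where $g\cdot x$ and $x$ can differ by controlling carry propagation via streaks of $(k-1)$'s (geometric tail), and integrate against the resulting exponential tail bound to get strong $\exp$-integrability. The only minor omission is that your definition of $N_j(x)$ as a streak of $(k-1)$'s handles only $a>0$; for $a<0$ borrows propagate through streaks of $0$'s, which the paper dispatches by the one-line remark ``by symmetry we may assume $z>0$''.
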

\begin{proof}
	We equip $\Z/k\Z\wr\Z$ with the generating subset $S=\{\delta_0, 1_{\Z}\}$ and $\BS(1,k)$ with the generating subset $T= \{ 1_{\Z[1/k]}, 1_{\Z}\}$. We denote by $S^\pm=S\cup S\inv$ and $T^\pm=T\cup T\inv$ the corresponding symmetric generating sets.

	Let us start by showing that our orbit equivalence coupling is $\LL^\infty$ as a coupling from $\Z/k\Z\wr\Z$ to $\BS(1,k)$. For this, we only need to check that $d_{T^\pm}(s\cdot x,x)$ is uniformly bounded for all $s\in S$. Note that the generator of the $\Z$ copy in the two groups acts exactly the same. So for $s= 1_\Z$, we have $d_{T^\pm}(s\cdot x,x)=1$. Then observe that the generator $\delta_0$ changes only $x_0$, which is achieved by the action of $s^m$ for some $m\in\{-k+1,\cdots,k-1\}$, where $s=1_{\Z[1/k]}$. Hence $d_{T^\pm}(\delta_0\cdot x, x)\leq k-1$, which proves that our coupling is $\LL^\infty$ in one direction.
	
	To prove that the coupling is strongly exponential in the other direction, we need to obtain estimates for every $g\in \BS(1,k)$.
	So let $g\in \BS(1,k)$ , we will show that $g$ satisfies the following estimate: 
	for all $M\geq 0$,
	\begin{equation}\label{eq: estimate for BS and lamplighter}
		\mu\Bigl(\left\{x\in X\colon d_{S^\pm}(g\cdot x,x)	\geq
		(k+1)\left(2\abs g_{T}+2M+3\right)\right\}\Bigr)\leq k^{-M+1}
	\end{equation}
	Let $n=\abs g_{T^\pm}$, write $g=(z,j)$, with $|j|\leq n$  and $z\in\Z[1/k]$. Observe that $|(0,j)|_{S^\pm}=|j|$. Hence, for a.e.\ $x\in X$, $d_{S^\pm}((0,j)\cdot x,x)=|j|$. By the triangle inequality, it is therefore sufficient to prove that
	\begin{equation}\label{eq: easier estimate for BS and lamplighter}
		\mu\Bigl(\left\{x\in X\colon d_{S^\pm}(z\cdot x,x)\geq k(2|g|_{T}+2M+3)\right\}\Bigr)\leq k^{-M+1}
	\end{equation}
	By symmetry we may assume that $z>0$. A straightforward induction on $n$ then shows that we can always write $z$ as $z=\sum_{i=-n}^n a_ik^i$, where $a_i\in\{0,...,k-1\}$.

	Let $x\in X$. Observe that the coefficients of $z\cdot x$
	that differ from those of $x$ are contained in the interval $[-n,\infty)$. Given $m\ge M$, consider the event $(z\cdot x)_{n+m+2}\neq x_{n+m+2}$, and observe that its occurrence forces the coefficients of $x_{n+i}$ for $i=1,2, \ldots, m+1$ to be equal to $k-1$. Letting
	\[E=\{x\in X\mid  \textnormal{ there exists $m\geq M$ such that } (z\cdot x)_{n+m+2}\neq x_{n+m+2}\},\] 
	we thus have $\mu(E)\leq \sum_{m\geq M}k^{-m}\leq k^{-M+1}$.
	
	Now note that $|f|_{S^\pm}\le k(2n+M+2)$ for every $f\in \bigoplus_\Z\Z/k\Z$ with $\supp(f)\subseteq[-n,n+M+1]$, so 
	\[\left\{x\in X\colon d_{S^\pm}(z\cdot x,x)\geq k(2|g|_{T}+2M+3)\right\}\subseteq E.\]
	Thus the estimate \eqref{eq: easier estimate for BS and lamplighter} holds, and hence so does \eqref{eq: estimate for BS and lamplighter}.
	
	Let us finally check that this estimate yields the desired result. For a given $\delta>0$ and $g\in \BS(1,k)$, by the integration by slices inequality and \eqref{eq: estimate for BS and lamplighter} we have 
	\begin{align*}
		\int_X \exp(\delta d_{S^\pm}(x,g\cdot x)d\mu(x) & \leq\sum_{M\geq 0} k^{-M+1}\exp\bigl(\delta k(2\abs g_T+2M+3)\bigr)\\
		&= \sum_{M\geq 0} k\exp\bigl(\delta k(2\abs g_T+M(2-\log k/\delta)+3)\bigr)\\
		&= k\exp(3\delta k)\exp\bigl( 2\delta k\abs g_T\bigr)\sum_{M\geq 0} \exp\bigl(\delta kM(2-\log k/\delta)\bigr)
	\end{align*}
	So given $\eps>0$, we take $\delta$ such that $2\delta k<\eps$ and $2-\log k/\delta<0$, and then we will have a positive constant $C>0$ such that for every $g\in \BS(1,k)$, 
	\[
	\int_X \exp(\delta d_{S^\pm}(x,g\cdot x))d\mu(x) <C\exp(\eps \abs g_T),
	\]
	which finishes the proof that the coupling  is strongly $\exp$-integrable from $\BS(1,k)$ to $\Z/k\Z\wr \Z$. 	
\end{proof}

\begin{corollary}\label{cor:finitePresentabilityUnstable}
	Finite presentability is unstable under the equivalence relation of strongly exponential orbit equivalence. In particular, it is unstable under $\LL^{<\infty}$ orbit equivalence.
\end{corollary}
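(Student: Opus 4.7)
The plan is to deduce the corollary directly from Theorem~\ref{thm:Lamplighter et BS} together with two classical facts: $\BS(1,k)$ is finitely presented, while the lamplighter group $\Z/k\Z \wr \Z$ is not finitely presented by Baumslag's theorem \cite{baumslagWreathProductsFinitely1961}. Since finite presentability takes different values on the two groups, it suffices to verify that the coupling produced by the theorem witnesses both the strongly exponential orbit equivalence and the $\LL^{<\infty}$ orbit equivalence between them.

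First I would note that an $\LL^\infty$ orbit equivalence coupling is automatically strongly $\varphi$-integrable for every increasing $\varphi\colon\R^+\to\R^+$: indeed, if $d_{S_\Lambda}(\gamma\actx x,\gamma\actom x)\leq K|\gamma|_{S_\Gamma}$ almost surely for some constant $K$, then for every $\varepsilon>0$ we may take $\delta=\varepsilon/K$ and $C=1$ in Definition~\ref{def:strong}. In particular the $\LL^\infty$ direction of the coupling provided by Theorem~\ref{thm:Lamplighter et BS} is strongly $\exp$-integrable. Combined with the strong exponential integrability in the other direction, this shows that the coupling is a strongly exponential orbit equivalence coupling between $\Z/k\Z\wr \Z$ and $\BS(1,k)$. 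Hence finite presentability is not preserved under this equivalence relation.

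For the ``in particular'' statement, I would observe that strong $\exp$-integrability implies $\LL^p$-integrability for every $1\leq p<\infty$. Indeed, for every fixed $p$ one has $t^p=o(\exp(\delta t))$ as $t\to\infty$ for any $\delta>0$, so Markov's inequality (or a direct estimate) shows that an integrable exponential moment bounds all polynomial moments. Applying this in both directions of the coupling yields an $(\LL^p,\LL^p)$ orbit equivalence coupling from $\Z/k\Z\wr \Z$ to $\BS(1,k)$ for every finite $p$, hence an $\LL^{<\infty}$ orbit equivalence coupling between them.

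No real obstacle is expected: the entire content of the corollary is already packed into Theorem~\ref{thm:Lamplighter et BS} and the classical (non)presentability statements, and the proof is purely a matter of unfolding the definitions of strong $\exp$-integrability and $\LL^{<\infty}$-integrability.
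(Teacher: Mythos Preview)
Your proposal is correct and follows the same approach the paper intends: the corollary is stated without proof because it is an immediate consequence of Theorem~\ref{thm:Lamplighter et BS} together with the classical facts (mentioned just before the theorem) that $\BS(1,k)$ is finitely presented while $\Z/k\Z\wr\Z$ is not. Your unpacking of why the $\LL^\infty$ direction is automatically strongly $\exp$-integrable, and why strong $\exp$-integrability implies $\LL^p$ for all $p<\infty$, is exactly the routine verification the paper leaves to the reader.
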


\subsection{Finite asymptotic dimension is unstable}

We now consider the following group:  $$\Gamma_1=\bigoplus_{\Z}\Z[1/k]\rtimes \Z^2,$$ where the first coordinate of $\Z^2$ acts by shift on the direct sum $\oplus_{\Z}\Z[1/k]$, and the second coordinate multiplies each factor by the corresponding power of $k$. 
Recall the action of $\Z[1/k]$ on $X=\prod_{\Z}\Z/k\Z$ as defined in section \ref{sec:FP_Unstable} induces the cofinite equivalence relation up to measure zero. For every $n\in \Z$, we let $X_n$ be a copy of $X$. We deduce an action of the direct sum $\oplus_{n\in \Z}\Z[1/k]$ on $Y=\prod_{n\in \Z}X_n=\prod_{\Z^2}\Z/k\Z$ which induces the cofinite equivalence relation up to measure zero.
The cofinite equivalence relation is also induced by the natural action of the group $\oplus_{\Z^2}\Z/k\Z$  on $Y$. 

Combining these actions with the Bernoulli action of $\Z^2$ produces an orbit equivalence coupling between $\Gamma_1$ and the wreath product $$\Gamma_2=\Z/k\Z\wr\Z^2.$$

We equip $\Gamma_1$ with the generating subset $S_1= \{\pm 1_{\Z[1/k]_0}\}\cup S_{\Z^2}$, and $\Gamma_2$ with $S_2=\{\delta_0\}\cup S_{\Z^2}$, where $S_{\Z^2}$ is the canonical generating set of $\Z^2$.

\begin{theorem}\label{thm:G1 et G2}
	The coupling we just defined is an $(\LL^\infty,\exp)$-integrable orbit equivalence coupling from $\Gamma_2$ to $\Gamma_1$. Actually the exponential integrability from $\Gamma_1$ to $\Gamma_2$ is uniform in the following sense: there exists a constant $c>0$ such that for all $g\in \Gamma_1$ 
	\[\int_X \exp(cd_{S_2}(x,g\cdot x))d\mu(x)<+\infty.\]
\end{theorem}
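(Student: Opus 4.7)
The plan is to adapt the proof of Theorem~\ref{thm:Lamplighter et BS} to the two-dimensional setting. For the $\LL^{\infty}$ direction, the key preliminary observation is that the two $\Z^{2}$-actions on $Y=\prod_{\Z^{2}}\Z/k\Z$ induced through $\Gamma_{1}$ and through $\Gamma_{2}$ coincide: the first coordinate of $\Z^{2}\subset\Gamma_{1}$ shifts the $i$-index of $X_{i}$, and multiplication by $k$ on each $\Z[1/k]_{i}$ corresponds, on the $k$-adic profile, to a shift of the digits---both coordinates therefore act on $Y$ by the standard Bernoulli shift, as in $\Gamma_{2}$. Hence $d_{S_{1}}(s\cdot y,y)=1$ for every $s\in S_{\Z^{2}}$. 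The remaining generator $\delta_{0}\in S_{2}$ only alters the lamp at $(0,0)$, and the argument of the proof of Theorem~\ref{thm:Lamplighter et BS} applies verbatim: the generator $1_{\Z[1/k]_{0}}\in S_{1}$ acts as a $k$-adic odometer on $X_{0}$, and for each value $y_{(0,0)}\in\Z/k\Z$ there is an $m\in\{-(k-1),\ldots,k-1\}$ with $(1_{\Z[1/k]_{0}})^{m}\cdot y=\delta_{0}\cdot y$, giving $d_{S_{1}}(\delta_{0}\cdot y,y)\le k-1$.

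Turning to the uniform exponential integrability from $\Gamma_{1}$ to $\Gamma_{2}$, fix $g=((z_{m})_{m\in F},(a,b))\in\Gamma_{1}$ with $F\subset\Z$ finite. Freeness of the $\Gamma_{2}$-action yields a unique $h=(f_{y},(a,b))\in\Gamma_{2}$ satisfying $h\cdot y=g\cdot y$ (the $\Z^{2}$-parts agree by the previous paragraph). Column by column, $f_{y}|_{m}$ records the discrepancy produced by $k$-adically adding $z_{m}$ to the column $(y_{(m-a,j-b)})_{j}$; writing $z_{m}=\sum_{j=-L^{-}_{m}}^{L^{+}_{m}}a_{m,j}k^{j}$ in standard $k$-adic form, the support of $f_{y}|_{m}$ is contained in $[-L^{-}_{m},\,L^{+}_{m}+M_{m}(y)]$, where the carry-propagation length $M_{m}(y)$ satisfies the uniform tail estimate $\mu(\{y:M_{m}(y)>T\})\le k^{-T+1}$, exactly as in the proof of Theorem~\ref{thm:Lamplighter et BS}. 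The crucial new ingredient is that for distinct $m\in F$ the random variable $M_{m}(y)$ depends only on column $m-a$ of $y$, so the family $(M_{m}(y))_{m\in F}$ is independent under the product measure $\mu$. Writing $|h|_{S_{2}}=|f_{y}|_{\ell^{1}}+\tau_{y}$ with $\tau_{y}$ the shortest $\Z^{2}$-walk from $0$ through $\supp(f_{y})$ to $(a,b)$, a column-by-column serpentine tour yields
\[
|h|_{S_{2}}\;\le\;C_{k}\sum_{m\in F}M_{m}(y)\;+\;P(g),
\]
where $C_{k}$ depends only on $k$ and $P(g)$ is a deterministic quantity polynomial in $|g|_{S_{1}}$ (absorbing $\sum_{m}(L^{-}_{m}+L^{+}_{m})$, $|F|$, $\max_{m\in F}|m|$ and $|a|+|b|$). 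Choosing any $c>0$ with $cC_{k}<\log k$, independence then gives
\[
\int_{Y}\exp\bigl(c\,d_{S_{2}}(y,g\cdot y)\bigr)\,d\mu(y)\;\le\;e^{cP(g)}\prod_{m\in F}\int_{Y}\exp(cC_{k}M_{m}(y))\,d\mu(y)\;\le\;e^{cP(g)}\,D_{c,k}^{|F|}\;<\;\infty,
\]
where $D_{c,k}<\infty$ depends only on $c$ and $k$ (thanks to the uniformity in $z_{m}$ of the tail bound). Since $c$ depends only on $k$, this is precisely the uniform exponential integrability claimed.

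The main obstacle is the word-length estimate above. The naive bound $|F|\cdot\max_{m}M_{m}(y)$ for the vertical walking cost would force $c\sim 1/|F|$ in order to keep the exponential moment finite, thereby breaking uniformity in $g$. The key observation is that in a serpentine tour each column $m$ contributes only its own height $M_{m}(y)$ to the walking cost, replacing the product $|F|\cdot\max_{m}M_{m}$ by the sum $\sum_{m}M_{m}$; this sum's exponential moment factorizes by independence of the columns of $y$ and remains controlled uniformly in $|F|$, and hence uniformly in $g$.
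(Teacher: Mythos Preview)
Your proof is correct and follows essentially the same strategy as the paper's: both arguments decompose $d_{S_2}(y,g\cdot y)$ into a deterministic part depending on $g$ plus a sum of independent column-wise ``carry'' variables $M_m(y)$, each with an exponential tail uniform in $z_m$ (inherited from Theorem~\ref{thm:Lamplighter et BS}), and then exploit the product structure of $\mu$ across columns.

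The packaging differs slightly. The paper bounds $d_{S_2}(y,g\cdot y)$ via the one-dimensional lamplighter distances $d_T$ on each column and then estimates the tail of $\sum_i M_i$ by integrating $k^{-t}$ over the simplex $\{\sum t_i\geq M\}$; you instead bound the word length directly in $\Gamma_2$ via a serpentine tour and compute the exponential moment by factorising $\mathbb E\bigl[\exp(cC_k\sum_m M_m)\bigr]=\prod_m\mathbb E[\exp(cC_k M_m)]$. Your route makes the role of independence more transparent and avoids the simplex volume estimate, at the cost of spelling out the serpentine bound; the paper's column-by-column reduction to $d_T$ is a bit more economical on the geometric side. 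Both yield the same threshold $c<(\log k)/C_k$.
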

\begin{proof}
	The proof that the coupling is $\LL^\infty$ from $\Gamma_2$ to $\Gamma_1$ is exactly the same as in the proof of Theorem \ref{thm:Lamplighter et BS}, so we don't write it down.
	Also similar to the proof of Theorem \ref{thm:Lamplighter et BS}, we observe that the two respective copies of $\Z^2$ act the same. As a warm up, let us prove the exponential integrability from $\Gamma_1$ to  $\Gamma_2$. 
	Thanks to Proposition \ref{prop: only check integrability on generators} only needs to consider the action of the generators $\pm 1_{\Z[1/k]_0}$. Observe that the coupling induces on every $X_n$ an orbit-equivalence coupling between $\Z[1/k]_0$ and $\oplus_{\Z\times\{0\}}\Z/k\Z$ that coincides with that of section \ref{sec:FP_Unstable}. Hence the result follows from Theorem \ref{thm:Lamplighter et BS}. 
	
	The uniform exponential integrability requires a bit more work. It will result from the following statement: given $g\in \Gamma_1$ and $M\geq 0$ we have that
	\begin{equation}\label{eq:uniformExp}
		\mu\left(\left\{y\in Y\colon d_{S_2}(y,g\cdot y)\geq \left(k+1\right)\left(2M+(2|g|+5)(2|g|+1)\right) + |g|\right\}\right)\leq  \int_{M}^{+\infty}t^{2|g|+1}k^{-t}dt,
	\end{equation}
	where we write $|g|$ instead of $|g|_{S_1}$ in order to simplify notation.
	Let us first see why the result follows from this inequality. Denoting $C=(k+1)(2|g|+5)(2|g|+1)+|g|$ and $T=2(k+1)M$, 
	we obtain that for all $T\geq 0$,
	
	\[\mu\left(\left\{y\in Y\colon d_{S_2}(y,g\cdot y)\geq T+C\right\}\right)\leq  \int_{T/(2k+2)}^{+\infty}t^{2|g|+1}k^{-t}dt.\]
	For all $\eps>0$, the latter is at most $k^{-\left(\frac{1-\eps}{2k+2}\right)T}K_{\eps}$, 
	where $K_{\eps}=\int_0^{+\infty}t^{2|g|+1}k^{-\eps t}dt<+\infty$.
	Integrating over $T\geq 0$, it follows that
	\[\int_{0}^{+\infty}\left(k^{\left(\frac{1-2\eps}{2k+2}\right)T}\mu\left(\left\{y\in Y\colon d_{S_2}(y,g\cdot y)\geq T+C\right\}\right)\right)dT\leq
	K_\eps\int_{0}^{+\infty}k^{-\left(\frac{\eps}{2k+2}\right)T}dT <+\infty.\]
	Using the change of variable $T\leftarrow T+C$ and the integration by slice formula, we deduce that 	\[\int_X k^{\left(\frac{1-2\eps}{2k+2}\right)d_{S_2}(x,g\cdot x)}d\mu(x)<+\infty,\]
	Since $\eps$ is arbitrary, we finally deduce that 
	\[\int_X e^{cd_{S_2}(x,g\cdot x)}d\mu(x)<+\infty,\]
	for all $c<\log k/(2k+2)$.
	
	We now prove that \eqref{eq:uniformExp} holds. 
	We will use the fact that the $n^\text{th}$ copy $\Z[1/k]$ and $\oplus_{\{n\}\times\Z}\Z/k\Z$ have the same orbits. Moreover they only act non-trivially on the $n^\text{th}$ factor $X_n$, and their actions are those defined in Section \ref{sec:FP_Unstable}.
	Let $y=(y_{i,j})$ and $g=((x_{i}),(m,n))$. Note that $|n|+|m|\leq |g|$. Hence
	\[d_{S_2}(y,g\cdot y)\leq |g|+\sum_{i=-|g|}^{|g|} d_T((y_{i-n,j-m})_{j\in \Z},x_i\cdot(y_{i-n,j-m})_{j\in \Z}), \]	
	where $T$ is the generating subset of $\Z/k\Z\wr\Z$ from Section \ref{sec:FP_Unstable}.
	Now for every $-|g|\leq i\leq |g|,$ let $M_i\in \R$ be such that 
	\[d_T((y_{i-n,j-m})_{j\in \Z},x_i\cdot(y_{i-n,j-m})_{j\in \Z})=(k+1)(2|g|+2M_i+5).\]
	We deduce that $\sum_{i=-|g|}^{|g|}M_i\ge M$. Note that $|g|\geq |(x_i)_{i\in \Z}|_{S}$,  where $S$ is the generating subset of $\BS(1,k)$ from Section \ref{sec:FP_Unstable}.
	On the other hand, by (\ref{eq: estimate for BS and lamplighter}) we have that for all $(t_i)\in \R_+^{2|g|+1}$, \[\mu\left(\left\{y\in Y\colon d_T((y_{i-n,j-m})_{j\in \Z},x_i\cdot(y_{i-n,j-m})_{j\in \Z})\geq (k+1)(2|g|_S+2t_i+5)\right\}\right)\leq k^{-t_i}.\]
	For every $t\geq 0$ and $m\in \N$, denote by $v_m(t)=\vol\{(t_i)\in \R_+^{m}, \; \sum_{i}t_i=t\},$ and observe that  $v_m(t)\leq t^m$. Finally, integrating over all values for $t_i=\max\{M_i,0\}$ for which $t=\sum_i t_i\geq M$, we obtain
	
	\begin{eqnarray*}
		\mu\left(\left\{y\in Y\colon d_{S_2}(y,g\cdot y)\geq (k+1)(2M+(2|g|+5)(2|g|+1)) + |g|\right\}\right) & \leq & \int_{t\geq M}k^{-t}v_{2|g|+1}(t)dt\\
		& \leq & \int_{t\geq M}t^{2|g|+1}k^{-t}dt
	\end{eqnarray*}
	which ends the proof of (\ref{eq:uniformExp}) and therefore of the theorem.
\end{proof}

Recall that if $H$ is a subgroup of a countable group $G$, then $\asdim H\leq \asdim G$ 
\cite[Cor.~63]{BelDranishnikovAsdimSurvey2008}. 
We deduce that $\Gamma_1$ has infinite asymptotic dimension, as it contains a subgroup isomorphic to $\bigoplus_{n\in \N} \Z$, which has infinite asymptotic dimension by \cite[Thm.~74]{BelDranishnikovAsdimSurvey2008}. On the other hand, by \cite[Thm.~3.4]{dranishnikovAsymptoticDimensionDiscrete2006} the asymptotic dimension of $\Gamma_2$ is at most $2$. It is actually equal to $2$, as $\Gamma_2$ contains $\Z^2$, which has asymptotic dimension $2$ by \cite[Thm.~74]{BelDranishnikovAsdimSurvey2008}. We deduce the following corollary.
\begin{corollary}\label{cor:asympdimUnstable}
	There exist a group with asymptotic dimension $2$, and a group of infinite asymptotic dimension that admit an exponential-integrable orbit equivalence measure coupling.
\end{corollary}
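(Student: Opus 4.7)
The plan is to combine Theorem~\ref{thm:G1 et G2}, which produces an $(\LL^\infty,\exp)$-integrable orbit equivalence coupling between $\Gamma_2=\Z/k\Z\wr\Z^2$ and $\Gamma_1=\bigoplus_\Z\Z[1/k]\rtimes\Z^2$, with separate computations of the asymptotic dimensions of $\Gamma_1$ and $\Gamma_2$. Since $\LL^\infty$-integrability is stronger than $\exp$-integrability, the coupling furnished by Theorem~\ref{thm:G1 et G2} is a fortiori exponentially integrable in both directions, so the coupling half of the statement is immediate. It only remains to argue that $\asdim(\Gamma_2)=2$ and $\asdim(\Gamma_1)=\infty$.

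For $\asdim(\Gamma_2)=2$, the upper bound $\asdim(\Z/k\Z\wr\Z^2)\leq 2$ is an instance of the computation of the asymptotic dimension of wreath products $F\wr G$ with $F$ a nontrivial finite group and $G$ of finite asymptotic dimension, carried out in \cite[Sec.~3]{dranishnikovAsymptoticDimensionDiscrete2006}. The matching lower bound $\asdim(\Gamma_2)\geq 2$ is immediate since $\Z^2$ is an undistorted subgroup of $\Gamma_2$.

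For $\asdim(\Gamma_1)=\infty$, I would exhibit for each $n\geq 1$ a quasi-isometrically embedded copy of $\Z^n$ in $\Gamma_1$; monotonicity of asymptotic dimension under coarse embeddings then yields $\asdim(\Gamma_1)\geq n$ for every $n$. Let $e_i\in\bigoplus_\Z\Z[1/k]$ denote the element with a $1$ at coordinate $i$ and zeros elsewhere, and let $H_n\leq\Gamma_1$ be the subgroup generated by $e_0,\ldots,e_{n-1}$, which is isomorphic to $\Z^n$. Since $e_i$ is the conjugate of the generator $1_{\Z[1/k]_0}$ by the shift $(i,0)\in\Z^2\leq\Gamma_1$, walking through the coordinates $0,1,\ldots,n-1$ in sequence gives an upper bound $d_{\Gamma_1}(0,\sum_i a_i e_i)\leq C_n(\sum_i |a_i|+1)$ for some $C_n$ depending on $n$. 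Conversely, among the generators of $\Gamma_1$ only $\pm 1_{\Z[1/k]_0}$ modifies any entry of $\bigoplus_\Z\Z[1/k]$, and each application shifts a single entry by $\pm 1$, so every word representing $\sum_i a_i e_i$ must use this generator at least $\sum_i |a_i|$ times, yielding $d_{\Gamma_1}(0,\sum_i a_i e_i)\geq \sum_i |a_i|$. Together these bounds establish the desired bi-Lipschitz embedding $\Z^n\hookrightarrow\Gamma_1$.

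The only real bookkeeping lies in the word-length argument for the bi-Lipschitz embedding just sketched; everything else is a direct assembly of these two dimension computations with the coupling provided by Theorem~\ref{thm:G1 et G2}.
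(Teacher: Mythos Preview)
Your argument for $\asdim(\Gamma_1)=\infty$ contains a genuine error: the subgroup $H_n=\langle e_0,\dots,e_{n-1}\rangle\cong\Z^n$ is \emph{not} quasi-isometrically embedded in $\Gamma_1$. Recall that the second coordinate of $\Z^2$ (write $t$ for its generator) acts on each copy of $\Z[1/k]$ by multiplication by a power of $k$, so $t^{\pm m} e_0 t^{\mp m}=k^{m}e_0$ has $\Gamma_1$-word length $2m+1$ but $H_n$-word length $k^m$; hence $H_n$ is exponentially distorted. Concretely, your claim that ``each application of $\pm 1_{\Z[1/k]_0}$ shifts a single entry by $\pm 1$'' is false: right-multiplying $((x_i),(m,n))$ by $(\pm\delta_0,(0,0))$ changes the entry at position $m$ by $\pm k^{\pm n}$, not by $\pm 1$.

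The fix is immediate and in fact shortens the proof. You already observed that monotonicity of asymptotic dimension under \emph{coarse} embeddings suffices, and the inclusion of any finitely generated subgroup $H\leq G$ into a finitely generated group is automatically a coarse embedding (it is Lipschitz, and proper because $H\cap B_{S_G}(R)$ is finite for every $R$). Hence $\asdim(\Gamma_1)\geq\asdim(H_n)=n$ for every $n$, with no word-length estimate required. The paper itself simply asserts $\asdim(\Gamma_1)=\infty$ without argument, so your corrected version actually supplies what the paper leaves implicit. The remainder of your proposal---the appeal to Theorem~\ref{thm:G1 et G2} for the coupling and to \cite{dranishnikovAsymptoticDimensionDiscrete2006} for $\asdim(\Gamma_2)=2$---matches the paper exactly.
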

	\bibliographystyle{alpha}
	\bibliography{zoterobib}
	
\end{document}